\title{Vertex functions for bow varieties and their Mirror Symmetry}
\author{T. M. Botta$^\diamond$, H. Dinkins$^\star$}
\date{\today}
\email{tommaso.botta@columbia.edu}
\email{hunte864@mit.edu}
\address{$^\diamond$ Department of Mathematics, Columbia University, New York, USA \\
$^\star$ Department of Mathematics, MIT, Cambridge, MA, USA}
\begin{document}

\begin{abstract}
    In this paper, we study the vertex functions of finite type $A$ bow varieties. Vertex functions are $K$-theoretic analogs of $I$-functions, and 3d mirror symmetry predicts that the $q$-difference equations satisfied by the vertex functions of a variety and its 3d mirror dual are the same after a change of variable swapping the roles of the various parameters. Thus the vertex functions are related by a matrix of elliptic functions, which is expected to be the elliptic stable envelope of M. Aganagic and A. Okounkov. We prove all of these statements.

    The strategy of our proof is to reduce to the case of cotangent bundles of complete flag varieties, for which the $q$-difference equations can be explicitly identified with Macdonald difference equations. A key ingredient in this reduction, of independent interest, involves relating vertex functions of the cotangent bundle of a partial flag variety with those of a ``finer" flag variety. Our formula involves specializing certain K\"ahler parameters (also called Novikov parameters) to singularities of the vertex functions. In the $\hbar \to \infty$ limit, this statement is expected to degenerate to an analogous result about $I$-functions of flag varieties.
    
\end{abstract}

\maketitle

\onehalfspacing
\setcounter{tocdepth}{1} 
\tableofcontents

\section{Introduction}

\subsection{Motivation from 3d mirror symmetry}

Three dimensional supersymmetric quantum field theories have been a key source of inspiration for algebraic geometry and representation theory. One aspect of these theories, called \textit{3d mirror symmetry}, predicts deep relationships between different algebraic varieties. Given such a theory $\mathcal{T}$, one can construct a Higgs branch $X_{0}$ and a Coulomb branch $X^{!}_{0}$, which are Poisson affine algebraic varieties and are expected to have symplectic singularities. 

We assume both branches have symplectic resolutions, denoted by $X$ and $X^{!}$ respectively. We will refer to $X$ and $X^{!}$ throughout this paper as \textit{3d mirror dual varieties}. One basic expectation is that these spaces are equipped with actions of tori $\Tt$ and $\Tt^{!}$ such that $|X^{\Tt}|=|(X^{!})^{\Tt^{!}}|$; in particular, these sets are both finite. See \cite{kamnitzer} for a survey of other statements. A much more sophisticated expectation, stemming from the insights of Okounkov \cite{okounkov2017enumerative, Okounkov_video}, asserts that particular curve counts in $X$ and $X^{!}$ known as \textit{vertex functions} are ``equivalent". This is the focus of the present paper.

When $\mathcal{T}$ is a quiver gauge theory, $X$ is a Nakajima quiver variety \cite{Nakajimaquiver}, and vertex functions were defined in \cite{okounkov2017enumerative} using quasimap technology developed in \cite{qm}. Vertex functions are generating functions recording equivariant counts of quasimaps from a parameterized $\mathbb{P}^{1}$ to $X$. The ``pole subtraction" property proven in \cite{aganagic2016elliptic} suggests that vertex functions of $X$ should be equivalent to vertex functions of $X^{!}$. Unfortunately, since quasimap technology only applies to varieties with a GIT quotient presentation, it is not known how to define vertex functions of $X^{!}$.

Nevertheless, another expectation from physics \cite{IntSei} is that there should exist a dual theory $\mathcal{T}^{!}$ such that the Higgs branch of $\mathcal{T}^{!}$ is the Coulomb branch of $\mathcal{T}$, and vice versa. If this is true, then $X^{!}$ should also have a GIT quotient presentation leading to a definition of vertex functions. Although it is not known how to construct $\mathcal{T}^{!}$ from $\mathcal{T}$ in general, there are some cases in which it can be done. Strictly speaking, the theories themselves are not well-defined mathematical objects, but the Higgs and Coulomb branches are. Bow varieties, which we turn to now, provide a rich class of examples in which both $X$ and $X^{!}$ can be given a uniform and rigorous description. 

\subsection{Bow varieties}

\textit{Bow varieties}, originally defined by Cherkis \cite{cherkis3, cherkis2, cherkis1} and later given a quiver description by Nakajima and Takayama \cite{Nakajima_Takayama}, are a class of smooth holomorphic symplectic varieties that fit into the above discussion. In particular if $X$ is a Nakajima quiver variety of type $A$, then both $X$ and $X^{!}$ can be described as bow varieties. More generally, the dual of any bow variety is another bow variety, although the same statement is generally false for a quiver variety\footnote{Whether a bow variety is isomorphic to a Nakajima quiver variety can be established directly from the brane diagram. We refer to \cite[\S 5.1]{rimanyi2020bow} for the precise statement and examples.}.

A bow variety $X$ is determined by a combinatorial object called a \textit{brane diagram} $\D$, like the following 
\begin{equation*}
\D=
\begin{tikzpicture}[baseline=0,scale=.2]
\draw [thick,red] (0.5,0) --(1.5,2); 
\draw[thick] (1,1)--(2.5,1) node [above] {$2$} -- (31,1);
\draw [thick,blue](4.5,0) --(3.5,2);  
\draw [thick](4.5,1)--(5.5,1) node [above] {$2$} -- (6.5,1);
\draw [thick,red](6.5,0) -- (7.5,2);  
\draw [thick](7.5,1) --(8.5,1) node [above] {$2$} -- (9.5,1); 
\draw[thick,blue] (10.5,0) -- (9.5,2);  
\draw[thick] (10.5,1) --(11.5,1) node [above] {$4$} -- (12.5,1); 
\draw [thick,red](12.5,0) -- (13.5,2);   
\draw [thick](13.5,1) --(14.5,1) node [above] {$3$} -- (15.5,1);
\draw[thick,red] (15.5,0) -- (16.5,2);  
\draw [thick](16.5,1) --(17.5,1) node [above] {$3$} -- (18.5,1);  
\draw [thick,red](18.5,0) -- (19.5,2);  
\draw [thick](19.5,1) --(20.5,1) node [above] {$4$} -- (21.5,1);
\draw [thick,blue](22.5,0) -- (21.5,2);
\draw [thick](22.5,1) --(23.5,1) node [above] {$3$} -- (24.5,1);  
\draw[thick,red] (24.5,0) -- (25.5,2); 
\draw[thick] (25.5,1) --(26.5,1) node [above] {$2$} -- (27.5,1);
\draw [thick,blue](28.5,0) -- (27.5,2);  
\draw [thick](28.5,1) --(29.5,1) node [above] {$2$} -- (30.5,1);   
\draw [thick,blue](31.5,0) -- (30.5,2);   

\end{tikzpicture}.
\end{equation*}
The horizontal black segments are called D5 branes, the red segments are called NS5 branes, and the blue ones are called D5 branes. The non-negative integers lying above the D3 branes are the bow variety analogs of the dimensions assigned to gauge vertices of a quiver variety. 

Given $\D$, it is trivial to construct the brane diagram $\D^{!}$ for the mirror dual $X^{!}$: it suffices to swap D5 branes (written as $\bs$) and NS5 branes (written as $\fs$):
\begin{equation*}
\D^!=
\begin{tikzpicture}[baseline=0,scale=.2]
\draw [thick,blue] (1.5,0) --(0.5,2); 
\draw[thick] (1,1)--(2.5,1) node [above] {$2$} -- (31,1);
\draw [thick,red](3.5,0) --(4.5,2);  
\draw [thick](4.5,1)--(5.5,1) node [above] {$2$} -- (6.5,1);
\draw [thick,blue](7.5,0) -- (6.5,2);  
\draw [thick](8.5,1) node [above] {$2$} --(7.5,1)  -- (9.5,1); 
\draw[thick,red] (9.5,0) -- (10.5,2);  
\draw[thick] (11.5,1) node [above] {$4$} --(10.5,1)  -- (12.5,1); 
\draw [thick,blue](13.5,0) -- (12.5,2);   
\draw [thick](14.5,1) node [above] {$3$} --(13.5,1)  -- (15.5,1);
\draw[thick,blue] (16.5,0) -- (15.5,2);  
\draw [thick](17.5,1)  node [above] {$3$} --(16.5,1) -- (18.5,1);  
\draw [thick,blue](19.5,0) -- (18.5,2);  
\draw [thick](19.5,1) --(20.5,1) node [above] {$4$} -- (21.5,1);
\draw [thick,red](21.5,0) -- (22.5,2);
\draw [thick](23.5,1) node [above] {$3$} --(22.5,1)  -- (24.5,1);  
\draw[thick,blue] (25.5,0) -- (24.5,2); 
\draw[thick] (26.5,1) node [above] {$2$} --(25.5,1)  -- (27.5,1);
\draw [thick,red](27.5,0) -- (28.5,2);  
\draw [thick](29.5,1) node [above] {$2$} --(28.5,1)  -- (30.5,1);   
\draw [thick,red](30.5,0) -- (31.5,2);   

\end{tikzpicture}.
\end{equation*}
Rimanyi and Shou \cite{rimanyi2020bow} showed that $X$ (resp. $X^{!}$) is equipped with the action of a torus $\Tt$ (resp. $\Tt^{!}$) with finitely many fixed points. They also describe explicitly the expected bijection 
\[
X^{\Tt} \ni f \leftrightarrow f^{!} \in (X^{!})^{\Tt^{!}}
\]
between fixed points. 

For a bow variety $X$ and some choices of additional data, one can define a certain elliptic characteristic class called the \textit{elliptic stable envelope}, denoted $\MSstab^{X}(f)$, for each fixed point $f \in X^{\Tt}$. By restricting to torus fixed points, one obtains a matrix of elliptic functions $\MSstab^{X}_{gf}:=\MSstab^{X}(f)|_{g}$ depending on variables $Q_1,\ldots,Q_{m-1},u_1,\ldots,u_{n-1},\hbar,q$\footnote{Here $m$ and $n$ are the number of NS5 and D5 branes in $\D$, respectively.}. Three dimensional mirror symmetry of bow varieties from the perspective of elliptic stable envelopes was studied by Rimanyi and the first author in \cite{BR}.

\subsection{Main theorem}

The vertex function of $X$ is defined as the formal power series 
\[
\ver^{X}:=\sum_{d} \ev_{p,*}\left(\vrs^{d} \right) Q^{d} \in K_{\Tt\times \mathbb{C}^{\times}_{q}}(X)_{loc}[[Q]],
\]
where $\ev_{p}: \qm^{d}_{\ns p}(X) \to X$ and $\qm^{d}_{\ns p}(X)$ is the moduli space of stable degree $d$ quasimaps from $\mathbb{P}^{1}$ to $X$ nonsingular at a chosen point $p \in \mathbb{P}^{1}$. We refer to \S\ref{sec: quasimaps} for further explanation of the notation. Although we only write explicitly the dependence on $Q$, the \emph{K\"ahler parameters}, the restriction of the vertex function to a fixed point $f \in X^{\Tt}$ is an element of
\[
\ver^{X}_{f}(Q):=\ver^{X}(Q)|_{f} \in \mathbb{Q}(u_1,\ldots,u_{n-1},\hbar,q)[[Q_{1},\ldots,Q_{m-1}]].
\]
In \eqref{eq: def of MSver}, we specify a particular normalization of the vertex which we denote by $\MSver^{X}(Q)$.

Similarly, we have the vertex functions of $X^{!}$:
\[
\MSver^{X^{!}}_{f^{!}}(Q^{!}) \in \mathbb{Q}(u_1^{!},\ldots,u_{m-1}^{!},\hbar^{!},q^{!})[[Q_{1}^{!},\ldots,Q_{n-1}^{!}]].
\]
Notice that the roles of $m$ and $n$ have switched. 

The choice of character for the GIT quotient used to construct $X^{!}$ provides a chamber $\chamb$ for the torus action on $X$, and vice versa. With respect to this chamber, we can decompose the tangent space at a fixed point into attracting and repelling directions:
\[
T_{f} X= N_{f}^{-}+N_{f}^{+}.
\]
We need one more transcendental function, the reciprocal of the $q$-gamma function, defined on nonzero torus characters by
\[
\Phi(x):=\prod_{i=0}^{\infty}(1- x q^i).
\]
and extended to sums and differences by the rule $\Phi(x+y)=\Phi(x)\Phi(y)$. Then we define
\[
\Phi^{\pm}_{f}=\Phi((q-\hbar^{-1})N_{f}^{\pm}).
\]

Our main theorem, mirror symmetry of vertex functions, is the following.

\begin{theorem}[Theorem \ref{thm: mirsym}]\label{thm: mirsym intro}
Let $X$ be a bow variety with dual $X^{!}$. For each $f \in X^{\Tt}$, we have
\[
\mirmap\left( \Phi^{+}_{f^{!}} \MSver^{X^{!}}_{f^!}(Q^{!}) \right)=\sum_{g\in X^{\Tt}}\MSstab^X_{gf} \Phi^{-}_{g} \MSver^{X}_{g}(Q^{-1}).
\]
where the identification of parameters $\mirmap$ is defined by
\[
q^{!} \mapsto q \qquad \hbar^{!} \mapsto \frac{1}{\hbar q} \qquad (q^{!})^{-\w(\Zb_{i}^{!})} Q_{i}^{!}\mapsto u_{i} \qquad u_{i}^{!} \mapsto  q^{\w(\Zb_{i})} Q_{i}
\]
and $\w(\cdot)$ is the weight of the brane.
\end{theorem}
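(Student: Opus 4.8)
The plan is to reduce the general bow variety case to the case of cotangent bundles of complete flag varieties, where the vertex functions are governed by Macdonald difference operators and mirror symmetry can be checked explicitly. First I would establish the statement for $X = T^*\mathrm{Fl}(n)$, the cotangent bundle of the complete flag variety. Here the vertex functions $\MSver^X_g(Q)$ are, up to the explicit normalization in \eqref{eq: def of MSver}, hypergeometric-type series which are eigenfunctions of the Macdonald $q$-difference operators in the variables $u_i$ (acting as shift operators). The key classical fact is that the Macdonald operators are self-dual under the transformation swapping the Macdonald variables with the shift variables (the $t\leftrightarrow q$, $x \leftrightarrow$ momentum symmetry); this is precisely the combinatorial shadow of 3d mirror symmetry, since $T^*\mathrm{Fl}(n)$ is self-dual as a bow variety (its brane diagram is symmetric under swapping $\fs$ and $\bs$). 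The identification $\mirmap$ matches the $q$-difference equations of $\MSver^{X}$ and $\MSver^{X^!}$ as difference equations in the same set of variables, so the two sides of the claimed identity solve the same holonomic system. It then remains to pin down the transition matrix between the two bases of solutions, and I would argue — using the known asymptotics of $\Phi^\pm_f \MSver_f$ near the boundary of the Kähler cone together with the triangularity and normalization properties of the elliptic stable envelope — that this matrix is exactly $\MSstab^X_{gf}$. The factors $\Phi^\pm_f$ are exactly the corrections needed so that the connection matrix is elliptic (doubly periodic) rather than merely meromorphic, matching the defining properties of elliptic stable envelopes recalled in the earlier sections.

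Second, I would handle the reduction from a general finite type $A$ bow variety to the complete-flag case. Every such bow variety $X$ is obtained from some $T^*\mathrm{Fl}(\mathbf{n})$ (a partial flag variety cotangent bundle), and by Hanany–Witten transitions and the combinatorics of brane diagrams one can relate $X$ to a bow variety built from a ``finer'' flag. The crucial technical input — flagged prominently in the abstract as a result of independent interest — is the comparison formula relating $\ver^{T^*\mathrm{Fl}(\text{partial})}$ with $\ver^{T^*\mathrm{Fl}(\text{finer})}$, obtained by specializing certain Kähler parameters $Q_i$ to the singularities (poles) of the vertex function. I would invoke this comparison to express $\MSver^X_g$ in terms of the complete-flag vertex functions, tracking carefully how the normalization factors and the parameter $\hbar$ transform under the specialization. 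Simultaneously, one must check that the elliptic stable envelope $\MSstab^X$ is compatible with this specialization — this follows from the triangularity and the known behavior of elliptic stable envelopes under the abelianization/resolution procedures, and is consistent with the treatment of mirror symmetry of elliptic stable envelopes in \cite{BR}.

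Finally, I would assemble the pieces: the comparison formula rewrites both sides of the claimed identity for $X$ in terms of the corresponding quantities for $T^*\mathrm{Fl}(n)$ and its dual, the established complete-flag case supplies the identity there, and one checks that the parameter dictionary $\mirmap$ for $X$ degenerates/restricts correctly to the one for the complete flag under the specialization of Kähler parameters (this is where the weights $\w(\Zb_i)$ and $\w(\Zb_i^!)$ enter, encoding the framing shifts induced by Hanany–Witten moves). The main obstacle I anticipate is the second step: controlling the specialization of Kähler parameters to poles of the vertex function rigorously — ensuring the limits exist, identifying the residues with the vertex functions of the finer flag with the correct shift in equivariant and $\hbar$ parameters, and verifying that the elliptic stable envelope transforms in the matching way. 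By contrast, the complete-flag base case, once the Macdonald-operator identification is set up, should reduce to a clean (if intricate) check of solution asymptotics and the elliptic normalization of the connection matrix.
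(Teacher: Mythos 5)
Your proposal follows essentially the same route as the paper: establish the base case for cotangent bundles of complete flag varieties via the identification of the $q$-difference equations with Macdonald operators (the paper cites this from \cite{dinkms2}), then reduce the general case by resolving branes, with the Kähler-parameter specialization to a pole of the vertex function (Theorem \ref{thm: intro ns5vertex}) as the delicate step — which you correctly single out as the main obstacle. One point you leave implicit that the paper treats as a separate, essential pillar: the induction requires \emph{two} compatibilities, the NS5 one (Kähler specialization $\psi^{*}$ across the Lagrangian correspondence) \emph{and} the D5 one ($j^{*}\varphi^{*}\MSver^{\wt X}=\MSver^{X}$, a specialization of equivariant parameters through a closed embedding, proved by comparing quasimap obstruction theories). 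Because an NS5 resolution of $X$ is mirror to a D5 resolution of $X^{!}$, each single inductive step in the paper's proof uses both, via the intertwining $\psi^{*}\,\mirmap_{\ol X^{!}\to\ol X}=\mirmap_{X^{!}\to X}\,\varphi^{*}$; your sketch would need to make this second ingredient explicit to close the argument.
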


At first glance, Theorem \ref{thm: mirsym intro} does not appear symmetric. By using mirror symmetry of stable envelopes from \cite{BR}, we show in Theorem \ref{thm: opmirsym} how to rewrite mirror symmetry of vertex functions with the roles of $X$ and $X^{!}$ swapped.

It is instructive to consider a degeneration of the previous theorem. Upon specializing $\hbar=1$ (equivalently, $\hbar^{!} q^{!}=1$), the various terms degenerate as follows:
\begin{align*}
    \MSver^{X}_{g}(Q^{-1}) \to 1 \qquad \MSstab^{X}_{gf} \to \delta_{g,f}\qquad 
    \Phi^{+}_{f^{!}} \to 1 \qquad \Phi_{g}^{-} \to \prod_{\substack{\text{weights $w$} \\ \text{of $N_{g}^{-}$}}}(1-w)^{-1}
\end{align*}
So we obtain the equality
\[
\mirmap\left( \MSver^{X^{!}}_{f^{!}}(Q^{!})|_{\hbar^{!} q^{!} =1}\right)= \prod_{\substack{\text{weights $w$} \\ \text{of $N_{g}^{-}$}}}(1-w)^{-1}
\]
allowing one to extract the weights of $TX$ at fixed points from the vertex functions of $X^{!}$. Conversely, we immediately see that $\MSver^{X^{!}}_{f^{!}}(Q^{!})|_{\hbar^{!} q^{!} =1}$ is the power series expansion of a rational function of $Q^{!}$. 

Other interesting and less-extreme degenerations of Theorem \ref{thm: mirsym intro} were studied by Smirnov and the second author in \cite{dinksmir4} and \cite{dinksmir}. The techniques of \cite{dinksmir4} immediately apply to our setting, giving an interpretation of $K$-theoretic stable envelopes of $X$ in terms of the ``index limits" of vertex functions of $X^{!}$. As in the $\hbar=1$ degeneration, rationality of these index limits, a nontrivial and new result, follows immediately. 

3d mirror symmetry for vertex functions of cotangent bundles of Grassmannians and their duals was studied from a combinatorial perspective in \cite{dinkms1}. There it is shown how a special case of mirror symmetry is equivalent to an explicit description of eigenvectors and eigenvalues of the Ruijsenaars-Macdonald operators of row type defined in \cite{NoumiSano}.

\subsection{Strategy of the proof}

The proof of Theorem \ref{thm: mirsym intro} parallels the strategy used in \cite{BR} to prove mirror symmetry of elliptic stable envelopes. Namely, we prove the statement by induction on the \textit{weights} of the D5 and NS5 branes in $\D$, with the base case being when all the branes have weight 1, see \S\ref{sec: bow} for the definition of weight. When all the branes have weight 1, the bow variety is known to be isomorphic to the cotangent bundle of a complete flag variety for which mirror symmetery was proven by the second author in \cite{dinkms2} using the fact that the $q$-difference equations satisfied by vertex functions in both the $Q$ and $u$ variables are Macdonald difference equations.

\subsection{Reduction of D5 weights}

The reduction of D5 brane weights takes the following form. Let $\D$ be a brane diagram and let $\Ab$ be a D5 brane in $\D$ of weight $\w\geq 2$. Choosing a splitting $\w=\w'+\w''$, we consider the brane diagram $\wt \D$ in which $\Ab$ has been replaced by two D5 branes $\Ab'$ and $\Ab''$ of weights $\w'$ and $\w''$. Let $X$ and $\wt X$ be the corresponding bow varieties which are equipped with actions of tori $\Tt$ and $\wt \Tt$, respectively. As we will review in \S\ref{sec: d5 resolutions}, \cite{BR} constructs a closed embedding $j: X \hookrightarrow \wt X$ equivariant along an inclusion $\varphi:\Tt \hookrightarrow \wt \Tt$. We prove that the corresponding pullback in $K$-theory preserves vertex functions:

\begin{theorem}[Theorem \ref{thm: d5 vertex}]\label{thm: intro d5 vertex}
    The following holds:
    \[
     j^* \varphi^* \MSver^{\wt X}= \MSver^{X}.
    \]
\end{theorem}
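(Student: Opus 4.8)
The plan is to compare the quasimap moduli spaces of $X$ and $\wt X$ degree by degree, and to reduce the identity to a comparison of virtual tangent complexes that is absorbed by the normalization in \eqref{eq: def of MSver}. Two preliminary observations set the stage. First, splitting the D5 brane $\Ab$ leaves the NS5 branes — hence the Kähler lattice — untouched, so $\MSver^{X}$ and $\MSver^{\wt X}$ are power series in the \emph{same} variables $Q_{1},\dots,Q_{m-1}$, and $\varphi^{*}$ acts only on the equivariant variables, identifying the parameters of $\Ab'$ and $\Ab''$ (up to a power of $\hbar$) with that of $\Ab$. Second, since $X^{\Tt}$ is finite and $K_{\Tt\times\mathbb{C}^{\times}_{q}}(X)_{loc}$ injects into $\bigoplus_{g\in X^{\Tt}}K_{\Tt\times\mathbb{C}^{\times}_{q}}(\mathrm{pt})_{loc}$, and since the embedding $j$ of \S\ref{sec: d5 resolutions} carries $X^{\Tt}$ into $\wt X^{\wt\Tt}$, it suffices to prove $\varphi^{*}\bigl(\MSver^{\wt X}_{j(g)}\bigr)=\MSver^{X}_{g}$ for every $g\in X^{\Tt}$.

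Unwinding the construction of $j:X\hookrightarrow\wt X$ at the level of GIT data — it replaces the local data attached to $\Ab$ by the data attached to $\Ab'$ and $\Ab''$ together with the new D3 space sitting between them — I would produce, for each effective degree $d$, a regular closed embedding of quasimap moduli $\iota_{d}:\qm^{d}_{\ns p}(X)\hookrightarrow\qm^{d}_{\ns p}(\wt X)$ (for the same $d$, by the first paragraph) with $\ev_{p}\circ\iota_{d}=j\circ\ev_{p}$. The feature that controls $\iota_{d}$ is \emph{rigidity}: over any quasimap the new tautological bundle on $\mathbb{P}^{1}$ and the new maps are canonically determined by the remaining data, so no extra summation variable is introduced; one must also check that the relevant derived fiber of $\ev^{\wt X}_{p}$ over $j(X)$ is exactly $\qm^{d}_{\ns p}(X)$, i.e. that there are no parasitic components (a quasimap that lands in $X$ at $p$ is forced into $X$ by rigidity together with the stability and degree constraints). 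Granting this, $K$-theoretic base change gives $j^{*}\varphi^{*}\ev_{p,*}\bigl(\wt{\vrs}^{\,d}\bigr)=\ev_{p,*}\bigl(\varphi^{*}\iota_{d}^{*}\wt{\vrs}^{\,d}\bigr)$, and comparing virtual tangent complexes yields $\iota_{d}^{*}T^{\mathrm{vir}}_{\qm^{d}(\wt X)}=T^{\mathrm{vir}}_{\qm^{d}(X)}+E_{d}$ for an explicit complex $E_{d}$ built from the new D3 space and the new maps; thus $\varphi^{*}\iota_{d}^{*}\wt{\vrs}^{\,d}$ differs from $\vrs^{d}$ precisely by the symmetrized $K$-class of $E_{d}$.

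The crux, and the step I expect to be the main obstacle, is to pin down $E_{d}$ and show that its contribution is cancelled by the ratio of the normalization prefactors in the two instances of \eqref{eq: def of MSver}, after the substitution $\varphi^{*}$. Concretely this is a finite product of $\Phi$-factors over the $\Tt\times\mathbb{C}^{\times}_{q}$-weights of $E_{d}$ at $j(g)$ that must telescope against the normalization correction — a $q$-hypergeometric identity of the same flavor as the pole-subtraction manipulations standard in the vertex-function literature — and establishing it requires a careful reading of the construction of $j$ in \cite{BR}. An alternative that sidesteps the moduli-space bookkeeping is to insert the explicit localization formula for the bare vertices $\MSver^{X}_{g}$ and $\MSver^{\wt X}_{j(g)}$ (following \cite{okounkov2017enumerative} and the quasimap setup of \S\ref{sec: quasimaps}) and match the two $q$-series term by term; this reduces to the very same cancellation, so in either approach this single computation is the real content of the theorem. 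Once it is established, summing over $d$ and over $g\in X^{\Tt}$ yields $j^{*}\varphi^{*}\MSver^{\wt X}=\MSver^{X}$.
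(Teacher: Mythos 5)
Your overall skeleton --- an embedding $J$ of quasimap spaces covering $j$, the Cartesian property of the evaluation square, and base change --- matches the paper's strategy, and your observation that a quasimap meeting $j(X)$ at $p$ must lie entirely in $j(X)$ is indeed the key geometric input (the paper proves it via affineness of the handsaw variety: the composition $\mathbb{P}^1\to \wt X_0$ is constant, so it suffices to test the evaluation at $p$; ``rigidity plus stability and degree constraints'' is not by itself an argument). But the step you identify as the crux is set up incorrectly. You pull back the symmetrized virtual sheaf by the ordinary restriction $\iota_d^*$, extract an excess complex $E_d$, and propose to cancel its contribution against ``the ratio of the normalization prefactors in the two instances of \eqref{eq: def of MSver}''. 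There is no such ratio: the substitution in \eqref{eq: def of MSver} depends only on the NS5 weights $\w(\Zb_i)$, which are untouched by a D5 resolution, so the normalizations of $\MSver^{X}$ and $\MSver^{\wt X}$ are literally the same substitution and cannot absorb anything. If a nontrivial multiplicative factor coming from $E_d$ genuinely survived, the theorem as stated would be false; the statement already holds for the unnormalized vertices $\ver^{X}$ and $\ver^{\wt X}$.

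The mechanism your proposal is missing is that the excess is exactly the normal bundle of the regular embedding $HS\hookrightarrow \wt{HS}$ pulled back to $\qm(X)$: the paper shows that $R^\bullet pr_* u^* \Tt_{\wt\FX/\FX}$ has vanishing higher derived pushforwards and equals $q^*N_{\wt{HS}/HS}$, i.e.\ the perfect obstruction theories of $\qm(X)$ and $\qm(\wt X)$ are \emph{compatible} over $j_0\colon HS\hookrightarrow \wt{HS}$. Consequently the refined (virtual) pullback $j^!$ --- not the naive $\iota_d^*$ --- carries $\vrs^{\qm_{\ns p}(\wt X)}$ to $\vrs^{\qm_{\ns p}(X)}$ on the nose, with no leftover factor and no $q$-hypergeometric identity to verify; the polarization and virtual-canonical twists also match because $\det\qmpol|_{p}/\det\qmpol|_{p'}$ only sees the topologically nontrivial tautological bundles, which live on the NS5 side. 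Base change for the Cartesian square then finishes the proof. So the single computation you flag as ``the real content'' is not a cancellation at all but a compatibility-of-obstruction-theories statement, and the repair your argument needs is precisely the replacement of $\iota_d^*$ by $j^!$ together with the identification of your $E_d$ with the normal bundle that $j^!$ already accounts for. (Your alternative route via term-by-term localization is legitimate --- it is how the quiver-variety case was handled in an earlier paper --- but it too would reveal that the two series agree exactly, not up to a normalization-absorbed factor.)
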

Intuitively, this result holds because of the following property of $j$. For any $x_1,x_2 \in X$, any morphism $f: \mathbb{P}^{1} \to \wt X$ satisfying $f(0)=j(x_1)$, $f(\infty)=j(x_2)$ must lie entirely in $j(X)$. Due to the presence of singularities, quasimaps to $\wt X$ need not literally have the form $f:\mathbb{P}^{1} \to \wt X$. Nevertheless, we are able to extend the above observation to quasimaps compatibly with the obstruction theories. 

\subsection{Reduction of NS5 weights}

The reduction of NS5 brane weights is more complicated and takes the following form. Let $\D$ be a brane diagram and let $\Zb$ be an NS5 brane in $\D$ of weight $\w\geq 2$. Let $\ol \D$ be the brane diagram obtained by replacing $\Zb$ by two NS5 branes $\Zb'$ and $\Zb''$ of weights $\w'$ and $\w''$, where $\w=\w'+\w''$. In this case, the two bow varieties $X$ and $\ol X$ are related by a Lagrangian correspondence:
 \begin{equation}\label{eq: ns5 diagram intro}
         \begin{tikzcd}
       \overline X & L \arrow[l, swap, "j"] \arrow[r, "p"] & X.
       \end{tikzcd}
     \end{equation}
where $j$ is a closed immersion and $p$ is a Grassmannian fibration. The two maps are equivariant for natural actions a single torus $\Tt$ on each term. It was proven in \cite{BR} that convolution in the above diagram relates stable envelopes of $X$ to those of $\ol X$. There is an extra K\"ahler parameter $Q'$ for $\ol X$, and this must be specialized as $\psi^{*} Q'=\hbar^{k}$ for some $k \in \mathbb{Z}$. In view of this, one might expect the vertex functions to be related by 
\[
\MSver^{X}(Q) \overset{?}{=} p_{*}\psi^{*}j^{*} \MSver^{\ol X}(Q,Q')
\]
Here we encounter a new difficulty with vertex functions: $\ver^{\ol X}$ is a formal power series in the variables $(Q,Q')$, and the specialization $\psi^{*}$ is not well-defined on power series. 

A first attempt to correct this issue proceeds as follows. The power series defining a vertex function is known to be convergent in a certain region in the K\"ahler parameters. So one may hope that $\psi^{*}$ may nevertheless be convergent. Unfortunately this does not happen either. Instead, the power series defining $\MSver^{\ol X}$ diverges at $\psi^{*}$. Moreover, $\psi^{*}$ is precisely a point on the \emph{boundary} of the radius of convergence.

As a second attempt, one may try to apply $\psi^{*}$ to the meromorphic function of $(Q,Q')$ defined by an analytic continuation of $\MSver^{\ol X}$. Since vertex functions are known to satisfy certain scalar $q$-difference equations \cite[\S8.3]{Okounkov_lectures}, they can automatically be analytically continued. At first glance, this idea also fails: the specialization $\psi^{*}$ really is a singularity of this meromorphic function. Nevertheless, we prove that the singularity at $\psi^{*}$ is a simple pole. By multiplying by an appropriate factor to cancel this pole, we are able to achieve our comparison.

\begin{theorem}[Theorems \ref{thm: ns5vertex} and \ref{thm: cosepNS5vertex}] \label{thm: intro ns5vertex}
    Let $f \in X^{\Tt}$ and let $\ol f \in L^{\Tt}$ such that $p(\ol f)=f$. Then 
\begin{equation*}
\MSver^{X}_{f}(Q)=\Phi_{\chamb_{\ol f},Y}\psi^{*}\left( \MSflaglim^{Y}(Q'^{-1})^{-1} \MSver^{\ol X}_{\ol f}(Q,Q')\right)
\end{equation*}
\end{theorem}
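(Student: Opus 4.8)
The plan is to analyze the behavior of $\MSver^{\ol X}_{\ol f}(Q,Q')$ as a function of $Q'$ near the point $\psi^*$ where $Q' = \hbar^k$. First I would recall the explicit structure of $\ol X$ as a bow variety whose NS5 resolution yields the Lagrangian correspondence \eqref{eq: ns5 diagram intro}; the extra parameter $Q'$ is the K\"ahler parameter associated to the new NS5 brane $\Zb'$, and the fibration $p: L \to X$ is a Grassmannian bundle whose fiber over $f$ is a partial flag factor $Y = \Fl$ built from the dimension vectors adjacent to $\Zb$. The key geometric input is that a $\Tt$-fixed point $\ol f \in \ol X^{\Tt}$ lying over $f$ decomposes its tangent space, and the summand corresponding to the ``new'' directions created by splitting $\Zb$ is precisely $T Y$ at the corresponding fixed point of $Y$. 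This lets one factor $\MSver^{\ol X}_{\ol f}$, at least to leading order in the new directions, as a product of $\MSver^{X}_{f}$ with a vertex-type contribution of the small resolution $Y$.

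The second step is to identify the $Q'$-dependence of $\MSver^{\ol X}_{\ol f}$ with a vertex function of the cotangent bundle $T^*Y$, up to the correcting factor $\MSflaglim^{Y}(Q'^{-1})$ and the normalization $\Phi_{\chamb_{\ol f},Y}$. Here I would use the factorization of quasimap spaces: degree-$d$ quasimaps to $\ol X$ near $\ol f$ that are nonsingular at $p$ split, via the $\Tt$-action, into quasimaps contributing only to the $X$-directions and quasimaps contributing to the $Y$-directions, and the latter are exactly quasimaps to $T^*Y$. Summing the geometric series in $Q'^{d'}$ and extracting the pole then amounts to recognizing that $\MSver^{T^*Y}_{\bullet}(Q')$ has a simple pole at $Q' = \hbar^k$ (a ``resonance'' of the $q$-hypergeometric series) with residue governed by $\MSflaglim^{Y}$, the $I$-function-type limit of the smaller flag variety. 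This is where Theorem \ref{thm: intro d5 vertex} can be leveraged indirectly: the reduction to complete flags and the Macdonald-difference-equation description of the $q$-difference equations (as in \cite{dinkms2}) control precisely which resonances occur and confirm the pole is simple.

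The third step is to perform the specialization $\psi^*$ rigorously. Since $\psi^*$ lies on the boundary of the radius of convergence of the power series in $Q'$, I would first analytically continue $\MSver^{\ol X}_{\ol f}$ in $Q'$ using the scalar $q$-difference equation it satisfies (the argument of \cite[\S8.3]{Okounkov_lectures} applies verbatim once the equation is known), obtaining a meromorphic function of $Q'$; then multiply by $\MSflaglim^{Y}(Q'^{-1})^{-1}$ to cancel the simple pole at $Q' = \hbar^k$; and only then evaluate at $\psi^*$. The prefactor $\Phi_{\chamb_{\ol f},Y}$ accounts for the mismatch between the normalization $\MSver$ (which includes a $\Phi^{-}$-type factor built from $N^-_{\ol f}$) on $\ol X$ versus the one on $X$: precisely the weights of $N^-_{\ol f}$ lost under $p$ are the weights of $T^- Y$, producing the factor $\Phi_{\chamb_{\ol f},Y} = \Phi((q-\hbar^{-1}) \, T^-_{\ol f} Y)$.

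\textbf{Main obstacle.} The hard part will be establishing that the singularity of the analytically continued $\MSver^{\ol X}_{\ol f}$ at $Q' = \hbar^k$ is exactly a simple pole, with residue identifiable with $\MSflaglim^{Y}$. A priori the $q$-difference equation only guarantees meromorphicity, and at a point on the boundary of convergence one could fear higher-order poles or an essential singularity from accumulating resonances. Controlling this requires a fairly precise hold on the $q$-hypergeometric series defining $\MSver^{T^*Y}$ — ideally an explicit product/integral (Mellin–Barnes) formula for the vertex function of a flag variety, from which the pole structure in the K\"ahler variable can be read off term by term — together with a uniformity argument showing the full sum over quasimap degrees inherits a simple pole rather than degenerating. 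Matching the residue to $\MSflaglim^{Y}$ then reduces to a computation with $q$-Gamma functions of the type $\Phi$, which is routine once the pole is pinned down but delicate to set up correctly with the weights $\w(\Zb')$ and the integer $k$.
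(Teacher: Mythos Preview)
Your proposal rests on a geometric factorization that does not exist, and this is the central gap. You suggest that quasimaps to $\ol X$ near $\ol f$ ``split, via the $\Tt$-action, into quasimaps contributing only to the $X$-directions and quasimaps contributing to the $Y$-directions,'' so that $\MSver^{\ol X}_{\ol f}$ factors (to leading order) as a product of $\MSver^X_f$ with a vertex of $T^*Y$. This is false: the degrees in the old and new K\"ahler directions are coupled in the localization formula, and the virtual tangent contributions mix the two. The paper is explicit on this point at the start of \S\ref{sec: NS5 and vertex}: the NS5 property is \emph{not} a term-by-term equality of power series, and the authors state that they do not know a geometric proof. Your Step~2 therefore has no mechanism; there is no clean quasimap-space decomposition to appeal to, and consequently no direct residue computation of the type you sketch.

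The paper's route is entirely different and worth contrasting. It first reduces to the case where all D5 branes have weight~$1$ (so $X$ and $\ol X$ are cotangent bundles of partial flag varieties) using the D5 resolution result, Theorem~\ref{thm: d5 vertex}. In that setting, the key input is the explicit scalar $q$-difference equation of Koroteev--Zeitlin (Theorem~\ref{thm: qdeflag}): $\Phi_{\chamb_f}\MSver^X_f$ is an eigenfunction of a Macdonald operator $\MSmacdop_f$ with eigenvalue $\mathsf{e}^X(Q)$. A direct calculation (Theorems~\ref{thm: sepcompareqdes} and~\ref{thm: cosepcompareqdes}) shows that $\psi^*$ intertwines the operators and eigenvalues for $X$ and $\ol X$. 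Since solutions of this equation in the relevant power-series ring are determined by their constant term in $a$ (Proposition~\ref{prop: uniqueness}, from \cite{NSmac}), it suffices to match the $a\to 0_{\chamb_{\ol f}}$ limits, which is Lemma~\ref{lem: different const NS5}. The simple-pole statement you correctly flag as the obstacle is handled by induction on $k(\ol X)=\sum_i(\w(\Zb_i)-1)$: the base case $k(\ol X)=0$ is the complete flag variety, where the pole structure is supplied by Noumi--Shiraishi (Proposition~\ref{prop:fullflagpoles}); the inductive step then shows that the poles of $\MSver^X_f$ agree with those of $\MSflaglim^X$, which feeds back into the next application of $\psi^*$. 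So the control over the singularity comes from an external analytic result about Macdonald functions plus an inductive bootstrap, not from a Mellin--Barnes or product formula for a general partial flag vertex.

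What you got right: the specialization $\psi^*$ genuinely lies on the boundary of convergence, analytic continuation via a $q$-difference equation is essential, and the simple-pole claim is the crux. But the mechanism that makes the equality hold is \emph{uniqueness of eigenfunctions} of a common difference operator after specialization, not a factorization of curve counts.
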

For the notation in this theorem, we refer to \S\ref{sec: NS5 and vertex}. It is interesting to consider a few special cases of Theorem \ref{thm: intro ns5vertex}. For a certain choice of brane diagram $\D$, $X=T^{*}\Fl_{d_1,d_2,\ldots,d_m}$, the cotangent bundle to the partial flag variety parameterizing quotients
\[
\Fl_{d_1,d_2,\ldots,d_m}=\{\mathbb{C}^{d_{m}}=V_{m} \twoheadrightarrow V_{m-1} \twoheadrightarrow \ldots \twoheadrightarrow V_{1} \, \mid \, \dim V_{i}=d_{i}\}
\]
Then $\ol X=T^{*}\Fl_{d_1,\ldots,d_{k-1},d,d_{k},\ldots,d_{m}}$, the cotangent bundle to a flag variety ``refining" that of $X$\footnote{In terms of brane weights as above, $\w=d_{k}-d_{k-1}$, $\w'=d-d_{k-1}$, and $\w''=d_{k}-d$.}. Theorem \ref{thm: intro ns5vertex} recovers the vertex functions of $X$ from those of $\ol X$. We are not aware of other places where curve counts, for example $I$-functions, of such varieties are related. In the $\hbar \to \infty$ limit, these vertex functions are expected degenerate to the $K$-theoretic $I$-functions of the corresponding flag varieties, see \cite[\S4.4]{KorZeit}.

Iterating the procedure, one can take $X=T^{*} \Fl_{1,n}$ and $\ol X= T^{*} \Fl_{1,2,\ldots,n}$. The vertex functions of $X$ are equal to the classical hypergeometric functions $_{n}\phi_{n-1}$, see \cite{Gasper_Rahman_2004}, for certain values of the parameters. The vertex functions of $\ol X$ are the ``Macdonald functions" (also called ``non-stationary Ruijsenaars functions") studied, for example, in \cite{BFS, LMS,NSmac}. Then Theorem \ref{thm: intro ns5vertex} roughly states that the basic hypergeometric function is a specialization of the Macdonald function. 

As a final example, one could take $X=T^{*} \Fl_{0,d_m}=\{pt\}$ and $\ol X =T^{*} \Fl_{0,d_1,d_2,\ldots,d_m}$. In this case, $\MSver^{X}=1$ and we obtain an explicit and nontrivial evaluation formula for $\MSver^{\ol X}(Q_1,\ldots,Q_{m})$ at $(Q_1,\ldots,Q_{m})=(1,\hbar^{-d_1},\hbar^{-d_2},\ldots,\hbar^{-d_{m-1}})$. There is nothing special about $T^{*}\Fl$ in this example and one could similarly obtain evaluation formulas for vertex functions of any bow variety.

\subsection{Connection with quantum groups actions}

Theorem \ref{thm: intro d5 vertex} and Theorem \ref{thm: intro ns5vertex} can be thought of as the enumerative manifestation of a classical representation theoretic construction known as fusion of $R$-matrices, which, albeit not directly used in the paper, has been an important source of inspiration. 

Up to isomorphism, a bow variety $X$ with $m$ NS5 and $n$ D5 branes is uniquely determined by a pair of dimension vectors $r\in \NN^m$ and $c\in \NN^n$ recording the charges of the branes. Accordingly, we will sometimes denote it by $X(r,c)$. 
The equivariant cohomology (resp. $K$-theory or elliptic cohomology) of the disjoint union $\sqcup_{r} X(r,c)$ admits a natural action of the Yangian\footnote{This action can be realized via stable envelopes and the RTT formalism (cf. \cite{FRT}) using the logic of \cite{maulik2012quantum} or via cohomological Hall algebras. The equivalence of the two approaches follows from \cite{BDCohaYangians, SV} and the fusion procedure described here.} (resp. quantum affine algebra or elliptic quantum group) of $\mathfrak{sl}_m$ and, with respect to this action, is identified with the evaluation representation
\[
\Lambda^{c_1}\C^m(a_1)\otimes \dots\otimes \Lambda^{c_n}\C^m(a_n).
\]
The braidings for the Yangian action can be reconstructed geometrically via the stable envelopes, whose transition matrix 
\[
R(a_1-a_2):=(\Stab_{\lbrace a_1<a_2 \rbrace })^{-1}\circ \Stab_{\lbrace a_1>a_2\rbrace }\in \End(\Lambda^{c_1}\C^m(a_1)\otimes \Lambda^{c_2}\C^m(a_2))
\]
provides a solution of the spectral Yang-Baxter equation 
\[
R^{(12)}(a_1-a_2)R^{(13)}(a_1-a_3)R^{(23)}(a_2-a_3)=R^{(23)}(a_2-a_3)R^{(13)}(a_1-a_3)R^{(12)}(a_1-a_2).
\] 
It is well known, see for example \cite{fusion}, that the $R$-matrices of all exterior power representations can be uniquely reconstructed from those of a suitable tensor product of fundamental evaluation representations $\Lambda^1\C^m(a)=\C^m(a)$. Tensor products of the latter correspond geometrically to those bow varieties that are isomorphic to the cotangent bundle of a partial flag variety.

This classical construction can be geometrically realized by means of the embeddings $j: X\to \wt X$, which, inductively, embed any bow variety in the cotangent bundle of a partial flag variety. In fact, it is shown in \cite[\S6.6]{BR} that this embedding preserves the stable envelopes of the bow varieties $X$ and $\wt X$, and induces fusion for the associated $R$-matrices. 

In conclusion, Theorem \ref{thm: intro d5 vertex} can be seen as an enumerative analog of fusion of $R$-matrices. Likewise, Theorem \ref{thm: intro ns5vertex} can be seen as the mirror dual statement, which was not known classically.

\subsection{Extended worked example}

To aid the reader, we have written all of our results explicitly in the simplest nontrivial example, $X=T^{*}\mathbb{P}^{1}$, in Appendix \ref{appendix MS}. We explicitly write out the mirror symmetry statements, including Theorem \ref{thm: mirsym intro}. We also write out the statements of Theorems \ref{thm: intro d5 vertex} and \ref{thm: intro ns5vertex}. Because the vertex functions of $X$ can be written in terms of the basic hypergeometric function $_{2} \phi_{1}$ introduced by Heine \cite{Heine}, we are also able to prove all the statements directly and explicitly using well-known properties of the basic hypergeometric function. We encourage the reader unfamiliar with vertex functions and stable envelopes to start here before proceeding to the rest of the paper.

\subsection{Outline of the paper}
In \S\ref{sec: bow}, we review the definition and basic properties of bow varieties, and we fix notation that will be used throughout the paper. 

In \S\ref{sec: stab}, we provide a brief review of elliptic stable envelopes and fix the normalizations that will be used in our main theorem. We also review the mirror symmetry theorem for stable envelopes proven in \cite{BR}.

In \S\ref{sec: quasimaps}, we review the theory of quasimaps to GIT quotients. We apply this to bow varieties. In particular, we provide an explicit combinatorial formula for vertex functions, study their invariance under Hanany-Witten transition, and fix the normalizations that appear in Theorem \ref{thm: mirsym intro}. When $X=T^{*}\Fl$, we give a refined formula which parallels that of the ``Macdonald function" of \cite{NSmac}. We also discuss various analytic properties of vertex functions.

In \S\ref{sec: d5 resolutions}, we prove Theorem \ref{thm: intro d5 vertex}.

In \S\ref{sec: ns5 resolutions}, we review the construction of \eqref{eq: ns5 diagram intro} and the corresponding compatibility of stable envelopes.

In \S\ref{sec: NS5 and vertex}, we prove Theorem \ref{thm: intro ns5vertex}. Our proof takes several steps. First, we review the explicit $q$-difference equations in the $u$ variables for the vertex functions of cotangent bundles of partial flag varieties identified in \cite{KorZeit}. A straightforward calculation shows the compatibility of these equations under $\psi^{*}$. Since these equations have unique solutions up to leading term, we are able to prove Theorem \ref{thm: intro ns5vertex} for these special cases. The subtle aspect of the proof involves justifying that the specialization $\psi^{*}$ is well-defined in the right hand side. We do that inductively by making use of results from \cite{NSmac} about the singularities of the Macdonald function. Then we prove Theorem \ref{thm: intro ns5vertex} in full generality by combining this partial result with Theorem \ref{thm: intro d5 vertex}.

In \S\ref{sec: mirsym}, we assemble all of our results together to prove mirror symmetry of vertex functions.

\subsection{Further directions}

One interesting manifestation of 3d mirror symmetry was explored recently by Smirnov and Varchenko, who use 3d mirror symmetry in cohomology to study $p$-adic approximations of vertex functions, \cite[Theorem 3.4]{SV}. It would be interesting to understand how the $K$-theoretic mirror symmetry studied here degenerates to cohomology. In $K$-theory, vertex functions can be written as $q$-integrals, whereas the cohomological vertex functions are expected to be legitimate contour integrals. In particular, a new phenomenon in cohomology is the appearance of multi-valued functions.

In this paper, we succeed in proving 3d mirror symmetry of vertex functions of finite type $A$ bow varieties. It is of great interest to extend our techniques to affine type $A$ bow varieties. In particular, the Hilbert scheme of points on $\mathbb{C}^{2}$ is self dual, and its quasimap vertex function is the same as the 1-leg PT vertex \cite{PT}. Similarly, vertex functions of Hilbert scheme of points on other resolutions of type $A$ surface singularities are related to curve counting in 3-folds, see \cite{Liuqmstab}. Liu has shown in \cite{Liuqmstab} that the fully equivariant crepant resolution conjecture for sheaf counting theories on $3$-folds, see \cite{BCR,CRC}, is a simple corollary of 3d mirror symmetry of vertex functions. We are unsure which, if any, of the techniques used here can be extended to affine type $A$.

\subsection{Conventions and notation}

All schemes and stacks are defined over $\C$. 

In this article, we will constantly compare certain objects associated to mirror dual bow varieties $X$ and $X^!$. To clarify which side of the mirror symmetry an object belongs to, we will sometime use superscripts. For instance, although our standard notation for the stable envelope is $\Stab$, we will sometimes write $\Stab^X$ or $\Stab^{X^!}$ to stress whether $\Stab$ is the stable envelope of $X$ or its dual $X^!$. 

We also have two notations for both the stable envelope ($\Stab$ and $\MSstab$) and the vertex function $\ver$ and $\MSver$. While $\Stab$ and $V$ are the ``bare'' stable envelopes and vertex functions, namely the ones most commonly defined in the literature, $\MSstab$ and $\MSver$ are those normalized to make mirror symmetry more natural. 

\subsection{Acknowledgments}

We would like to thank Davesh Maulik, Leonardo Mihalcea, Andrei Okounkov, Rich\'ard Rim\'anyi, Jun'ichi Shiraishi, and Andrey Smirnov for helpful conversations related to this project. The authors are particularly grateful to Rich\'ard Rim\'anyi for sharing parts of his code and for his involvement in the early stages of this project.

T.M. Botta was supported by grant P500PT-222219 of the Swiss National Science Foundation and by a Postdoctoral Fellowship at Columbia University. H. Dinkins was supported by NSF grant DMS-2303286 at MIT and the NSF RTG grant Algebraic Geometry and  Representation Theory at Northeastern University DMS–1645877.

\section{Bow varieties}\label{sec: bow}

\subsection{Brane diagrams} \label{sec: brane diagrams}
Combinatorial objects like $\D=\ttt{\fs 2\bs 2\fs 2\bs 4\fs 3\fs 3\fs 4\bs 3\fs 2\bs 2\bs}$ will be called (type A) brane diagrams. The red forward-leaning lines are called NS5 branes, denoted by $\Zb$. The blue backward-leaning lines are called D5 branes, denoted by $\Ab$. The positions between 5-branes are called D3 branes, denoted by $\Xb$, and the integer sitting there is called its multiplicity or dimension and will be written $d_{\Xb}$. 

If all NS5 branes are to the left of all D5 branes, we call the diagram {\em separated}. If all NS5 branes are to the right of all D5 branes, we call the diagram {\em co-separated}.

The {\em charge} of an NS5  brane $\Zb$ or a D5 brane $\Ab$ is defined by
\begin{align*}
\ch(\Zb)= & (d_{\Zb^+}-d_{\Zb^-})+|\{\text{D5 branes left of $\Zb$}\}|,
\\ 
\ch(\Ab)= & (d_{\Ab^-}-d_{\Ab^+})+|\{\text{NS5 branes right of $\Ab$}\}|.
\end{align*}
The superscripts $+, -$ refer to the branes directly to the right and left, respectively. 
We define the {\em local charge} (or ``{\em weight}'') of 5-branes by  $\w(\Zb)=|d_{\Zb^+}-d_{\Zb^-}|$, $\w(\Ab)=|d_{\Ab^+}-d_{\Ab^-}|$. For an NS5 brane $\Zb$, let $\ell(\Zb)$ denote the number of D5 branes left of $\Zb$. 

Branes of the same type are labelled from left to right with increasing positive integers. Accordingly, we collect the D5 (resp. NS5) charges in a vector $c=(c_1, \dots c_n)$ (resp. $r=(r_1, \dots r_n)$) where $r_i=\ch(\Ab_i)$ (resp. $c_i=\ch(\Zb_i)$).

\subsection{The bow variety} \label{sec:def of bow variety}

To a D3 brane $\Xb$, we associate a complex vector space $W_{\Xb}$ of dimension $d_{\Xb}$. To a D5 brane $\Ab$ we associate a one-dimensional space $\C_{\Ab}$ with the standard $\GL(\C_{\Ab})$ action and the ``three-way part''
\begin{align*}
\MM_{\Ab}= &\Hom(W_{{\Ab}^+},W_{{\Ab}^-}) \oplus
\hbar\Hom(W_{{\Ab}^+},\C_{\Ab}) \oplus \Hom(\C_{\Ab},W_{{\Ab}^-}) \\
&  \oplus\hbar\End(W_{{\Ab}^-}) \oplus \hbar\End(W_{{\Ab}^+}),
\end{align*} 
with elements denoted $(A_{\Ab}, b_{\Ab}, a_{\Ab}, B_{\Ab}, B'_{\Ab})$, and $\NN_{\Ab}=\hbar\Hom(W_{{\Ab}^+},W_{{\Ab}^-})$. To an NS5 brane $\Zb$ we associate the ``two-way part''
\[
\MM_{\Zb}= \hbar\Hom(W_{{\Zb}^+},W_{{\Zb}^-}) \oplus
\Hom(W_{{\Zb}^-},W_{{\Zb}^+}),
\]
whose elements will  be denoted by $(C_{\Zb}, D_{\Zb})$. To a D3 brane $\Xb$ we associate $\NN_{\Xb}=\hbar\End(W_{\Xb})$. In these formulas, the $\hbar$ factor means that the corresponding term is endowed with the weight 1 action of a one dimensional torus denoted by $\Cs_{\hbar}$.
Let 
\begin{align}
    \MM&=\bigoplus_{\Ab} \MM_{\Ab} \oplus \bigoplus_{\Zb} \MM_{\Zb}   \label{eq: MM}
    \\
    \NN&=\bigoplus_{\Ab} \NN_{\Ab} \oplus \bigoplus_{\Xb} \NN_{\Xb}.  \label{eq: NN}
\end{align}
We define a map $\mu:\MM \to \NN$ componentwise as follows.
\begin{itemize}
\item 
The $\NN_{\Ab}$-component of $\mu$ is 
$B_{\Ab} A_{\Ab} -A_{\Ab} B'_{\Ab}+a_{\Ab} b_{\Ab}$.
\item
The $\NN_{\Xb}$-components of $\mu$ depend on the diagram:
\begin{itemize}
\item[\ttt{\bs -\bs}] If $\Xb$ is in between two D5 branes then it is $B'_{\Xb^-}-B_{\Xb^+}$. 
\item[\ttt{{\fs}-{\fs}}] If $\Xb$ is in between two NS5 branes then it is $C_{\Xb^+}D_{\Xb^+}$ $-D_{\Xb^-}C_{\Xb^-}$.
\item[\ttt{{\fs}-\bs}] If $\Xb^-$ is an NS5 brane and $\Xb^+$ is a D5 brane then it is $-D_{\Xb^-}C_{\Xb^-}$ $-B_{\Xb^+}$.
\item[\ttt{\bs -{\fs}}] If $\Xb^-$ is a  D5 brane and $\Xb^-$ is an NS5 brane then it is $C_{\Xb^+}D_{\Xb^+}+B'_{\Xb^-}$.
\end{itemize}
\end{itemize}
Let $M$ consist of points of $\mu^{-1}(0)\subset \MM$ for which the stability conditions  
\begin{itemize}
\item[(S1)]  if $S\leq W_{{\Ab}^+}$ is a subspace with $B'_{\Ab}(S)\subset S$, $A_{\Ab}(S)=0$, $b_{\Ab}(S)=0$ then $S=0$,
\item[(S2)]  if $T \leq W_{{\Ab}^-}$ is a subspace with $B_{\Ab}(T)\subset T$, $Im(A_{\Ab})+Im(a_{\Ab})\subset T$ then $T=W_{\Ab^-}$
\end{itemize}
hold for all D5 branes $\Ab$. Although the stability conditions (S1) and (S2) are manifestly open conditions, the variety ${M}$ is affine \cite[\S2]{takayama_2016}.

Let $G=\prod_{\Xb} \GL(W_\Xb)$ and consider the character 
\begin{equation}\label{eq:character}
\chi: G\ \to \C^{\times}, 
\qquad\qquad
(g_\Xb)_{\Xb} \mapsto \prod_{\Xb'} \det(g_{\Xb'}),
\end{equation}
where the product in the definition of the character runs over D3 branes $\Xb'$ such that $(\Xb')^-$ is an NS5 brane (in picture: \ttt{{\fs}}$\!\Xb'$). The bow variety $X(\D)$ is defined as the GIT quotient of the affine variety ${M}$ with respect to the action of $G$ and the character $\chi$: 
\[
X(\D):=M\git^{\chi} G.
\]

The resulting variety $X(\D)$ is smooth and holomorphic symplectic. It comes with the action of the torus $\Tt=\At\times \C_{\hbar}^\times$, where $\At=\prod_{\Ab} \GL(\C_{\Ab})$ and the action of the subtorus $\Cs_{\hbar}$ is induced by the $\hbar$-weighting of $\MM$. The vector spaces $W_{\Xb}$ associated with the D3 branes induce ``tautological'' bundles $\tb_{\Xb}$ (of the given rank).

Since there are no strictly semistable points and all the stabilizers of the stable points are trivial, the bow variety is isomorphic to the quotient stack ${M}^{\chi\semis}/G$. As a consequence, we have an open embedding 
\[
X(\D)\hookrightarrow \FX(\D),
\]
where $\FX(\D)$ is the Artin stack $M/G$. Whenever the brane diagram is understood, we will drop it from the notation and write $X$ and $\FX$ in place of $X(\D)$ and $\FX(\D)$, respectively.

\subsection{Hanany-Witten isomorphims}
\label{subsec: HW iso}

The local surgery 
\[
\ttt{$d_1$\fs $d_2$\bs $d_3$} \leftrightarrow \ttt{$d_1$\bs $d_1+d_3-d_2+1$\fs $d_3$}
\] 
on diagrams is called Hanany-Witten transition. It is a fact \cite[Prop. 8.1]{Nakajima_Takayama} that under such transition the associated bow varieties are isomorphic.

The charges of 5-branes are invariant under HW transition, and in fact, the vectors of NS5 and D5 charges $r$ and $c$ are a complete invariant of a HW equivalence class. Any brane diagram can be made separated or co-separated by a sequence of HW moves. We will call a bow variety $X=X(\D)$ separated (resp. co-separated) if $\D$ is separated (resp. co-separated).

\subsection{Affinization and handsaw variety} 
\label{sec:Affinization}

Assume that $\D$ is either separated or co-separated. As recalled above, the variety $M$ is affine. As a consequence, bow varieties come with a projective morphism
\[
\pi:X(\D)\to X_0(\D):=\text{Spec}(\C[{M}]^{G})
\]
to an affine variety. By neglecting the two-way parts of the diagram, we get a map 
\[
\mu_{HS}:(\oplus_{\Ab} \MM_{\Ab})\to (\oplus_{\Ab} \NN_{\Ab}) \oplus (\oplus_{\Xb} \NN_{\Xb}).
\]
The subscript refers to the shape of the quiver associated to the domain, which resembles a handsaw. Set $G_{HS}=\prod_{\Ab}\GL(W_{\Ab_-})$.

Let $M_{HS}$ denote the subvariety of $\mu_{HS}^{-1}(0)$ satisfying the conditions (S1) and (S2). It is also affine and its quotient 
\[
HS(\D):=\text{Spec}(\C[M_{HS}]^{G_{HS}})
\]
is called a handsaw variety. As shown in \cite[Prop. 2.9 and Cor. 2.21]{takayama_2016}, the $G_{HS}$ action on ${M}$ is free and all its orbits are closed, so $HS(\D)$ is just the orbit space ${M}_{HS}/G_{HS}$. The natural projection ${M}\to {M}_{HS}$ descends to an affine morphism $\rho: X_0(\D)\to HS(\D)$. Including the stack $\FX(\D)$ to our picture, we have a commutative diagram 
\begin{equation}
    \label{maps from bow to affine bow and handsaw}
    \begin{tikzcd}
    \FX(\D)\arrow[r, "p"] & X_0(\D) \arrow[r, "\rho"] & HS(\D)\\
    X(\D) \arrow[u, hookrightarrow]\arrow[ur, "\pi"]
\end{tikzcd}
\end{equation}

\subsection{Tangent space and tautological bundles}
\label{sec: taut bundles and tangent}

Let $X(\D)$ be a bow variety. By definition, a polarization for $X$ is a K-theory class $T^{1/2}X(D)\in K_{\Tt}(X(D))$ such that 
\begin{equation}
    \label{eq: alpha is pol}
    TX=T^{1/2}X(\D) + \hbar(T^{1/2}X(\D))^\vee\in K_{\Tt}(X(\D)).
\end{equation}
Let $\alpha=\alpha_{\D} \in K_{\Tt}(X(\D))$ be the class
\begin{equation}
\label{eq: class alpha for general bow variety}
\alpha:=\hbar \left( 
\bigoplus_{\Ab} \Hom( \tb_{\Ab^+}, \tb_{\Ab^-} ) \oplus \Hom( \C_{\Ab}, \tb_{\Ab^-} ) 
\oplus
\bigoplus_{\Zb} \Hom( \tb_{\Zb^-}, \tb_{\Zb^+} )
\ominus 
\bigoplus_{\tb} \Hom( \tb, \tb)
\right)^\vee
\end{equation}

It is proven in \cite{Shou} that for separated or co-separated $X(\D)$, there exists a class $\beta \in K_{\mathbb{C}^{\times}_{\hbar}}(\pt)$ such that $\alpha+\beta$ is a polarization in the sense of \eqref{eq: alpha is pol}.
Therefore, if $X(\D)$ is separated or coseparated, we will make a slight abuse of terminology and refer to $\alpha$ itself as a polarization.

\begin{remark}
This result together with the Hananay-Witten isomorphism, cf. \S\ref{subsec: HW iso}, implies that any bow variety (possibly not separated nor coseparated) admits a polarization. However, we stress that this polarization may not be directly related to the class $\alpha$ in \eqref{eq: class alpha for general bow variety} because the Hanany-Witten isomorphism is not $\Tt$ equivariant, but only equivariant along a given automorphism of $\Tt$, see \cite[\S3.3]{rimanyi2020bow}.
\end{remark}

\subsection{Mirror dual bow varieties}

In physics, bow varieties arise naturally as (resolutions of) Higgs and Coulomb branches of $3d$ $\mathcal{N}=4$ quiver gauge theories of type $A$ \cite{Nakajima_Takayama}. Throughout this article, we refer to a pair of bow varieties that describe, respectively, the Higgs and Coulomb branch of a given gauge theory as ``dual varieties'' and denote them by $X$ and $X^!$. From a combinatorial perspective, the dual of a bow variety $X=X(\D)$ is obtained by simply swapping NS5 and D5 branes in $\D$. For example $\D=\ttt{\fs 1\fs 2\bs 2\fs 2\fs 3\bs 3\bs 3\fs 2\bs 2\fs 1\bs 1\fs}$ and $\D^{!}=\ttt{\bs 1\bs 2\fs 2\bs 2\bs 3\fs 3\fs 3\bs 2\fs 2\bs 1\fs 1\bs}$ are 3d mirror dual brane diagrams and we have $X^!=X(\D^!)$.

\subsection{Fixed points of bow varieties and their mirror symmetry}\label{sec:fixedpoints}

Let $X(\D)$ be a bow variety with $m$ NS5 and $n$ D5 branes and consider the action of the torus $\At$. Its torus fixed points are isolated and can be combinatorially described as follows:
\begin{theorem}[{\cite{rimanyi2020bow}}]
\label{thm: fixed points bow}
    The $\At$-fixed points in $X(\D)$ are in one-to-one correspondence with any of the following equivalent combinatorial objects:
    \begin{itemize}
        \item Tie diagrams over $\D$.
        \item $m \times n$ binary contingency tables (BCTs) with sums of the rows (resp. columns) equal to the charge vector $r$ (resp. $c$).
        \item Butterfly diagrams over $\D$, which are certain colored acyclic quivers $B^{f}=(B^{f}_{0},B^{f}_{1},\col)$ with vertices $B^{f}_{0}$, arrows $B^{f}_{1}$, and coloring map $\col: B^{f}_{0} \to \{\text{D3 branes}\}\cong\{1,2,\ldots,m+n-1\}$ satisfying $|\col^{-1}(\Xb)|=d_{\Xb}$ for all D3 branes $\Xb$. In other words, there are exactly $d_{\Xb}$ vertices of $B^{f}$ of color $\Xb$.
    \end{itemize}
\end{theorem}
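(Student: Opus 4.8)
The plan is to linearize the fixed point condition and then reduce to a local, brane-by-brane analysis. Since $X(\D)=M\git^{\chi}G$ and $G$ acts freely on the semistable locus (there being no strictly semistable points), an $\At$-fixed point is represented by a pair $(x,\rho)$, where $x$ lies in the semistable locus and $\rho\colon\At\to G$ is a homomorphism with $t\cdot x=\rho(t)\cdot x$ for all $t\in\At$; freeness forces $\rho$ to be uniquely determined by the fixed point. Through $\rho$ each tautological space $W_{\Xb}$ becomes an $\At$-module, and the constituents $(A_{\Ab},B_{\Ab},B'_{\Ab},a_{\Ab},b_{\Ab})$ and $(C_{\Zb},D_{\Zb})$ of $x$ become $\At$-equivariant for these gradings: $A_{\Ab},B_{\Ab},B'_{\Ab},C_{\Zb},D_{\Zb}$ preserve $\At$-weight, while $a_{\Ab_j}$ has image in the $a_j$-weight subspace and $b_{\Ab_j}$ vanishes off it, $a_j$ denoting the character of $\C_{\Ab_j}$.

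The first substantive step would be to show that the only $\At$-weights appearing in any $W_{\Xb}$ are the standard characters $a_1,\dots,a_n$. Here the stability conditions are essential: (S2) presents $W_{\Ab^-}$ as the smallest $B_{\Ab}$-invariant subspace containing $\mathrm{Im}(A_{\Ab})+\mathrm{Im}(a_{\Ab})$, and (S1) says $W_{\Ab^+}$ has no nonzero $B'_{\Ab}$-invariant subspace annihilated by both $A_{\Ab}$ and $b_{\Ab}$; propagating these conditions through the diagram shows every weight vector of every $W_{\Xb}$ lies in the $\At$-submodule generated by the framings $\C_{\Ab_j}$. Setting $d_{\Xb}^{(j)}=\dim W_{\Xb}(a_j)$, so $\sum_j d_{\Xb}^{(j)}=d_{\Xb}$, the $n$ integer-valued functions $\Xb\mapsto d_{\Xb}^{(j)}$ (vanishing outside the diagram) are where the combinatorics lives. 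Using $\mu=0$ together with (S1) and (S2) restricted to the weight-$a_j$ part of the datum at a D5 brane $\Ab_l$ with $l\neq j$, one finds $A_{\Ab_l}$ restricts there to an isomorphism, so $d^{(j)}_{\Xb}$ is constant across $\Ab_l$; thus $d^{(j)}$ can change only across NS5 branes and across the single D5 brane $\Ab_j$.

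I would then pass to a separated brane diagram by Hanany-Witten moves, which identify the fixed point sets (but not their equivariant structure). In that form the bow variety maps to a handsaw variety, $X(\D)\to HS(\D)$, whose $\At$-fixed points are governed by the known handsaw (Laumon space) combinatorics, and analyzing the $\At$-fixed points of $X(\D)$ over a given one via the local model at each 5-brane pins down the profiles $d^{(j)}_{\Xb}$. Matching the admissible jumps of these profiles against the charge formulas for $\ch(\Zb_i)$ and $\ch(\Ab_j)$ --- which is where the offset terms $|\{\text{D5 left of }\Zb\}|$ and $|\{\text{NS5 right of }\Ab\}|$ are absorbed --- shows that a fixed point determines, and is determined up to $G$ by, a matrix $N\in\{0,1\}^{m\times n}$ whose row sums are the NS5 charges $r$ and whose column sums are the D5 charges $c$; conversely every such $N$ should be realized by an explicit point in normal form that one checks to lie in $\mu^{-1}(0)$ and to be $\chi$-semistable. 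This gives the bijection with binary contingency tables, hence with tie diagrams (a tie between $\Zb_i$ and $\Ab_j$ being $N_{ij}=1$, the row/column sums being the charges). The equivalence with butterfly diagrams is then a purely combinatorial translation: a butterfly diagram records, D3 brane by D3 brane, the active lines $a_j$ with multiplicities $d^{(j)}_{\Xb}$ and the way they split, and the condition $|\col^{-1}(\Xb)|=d_{\Xb}$ is precisely $\sum_j d^{(j)}_{\Xb}=d_{\Xb}$.

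The hard part will be this middle step: the local analysis at the 5-branes and the accompanying rigidity claim --- that every contingency table with the prescribed margins comes from an honest semistable datum, and that two data of the same combinatorial type are $G$-conjugate. This hinges on the handsaw fixed point classification and a careful joint use of (S1) and (S2), and the bookkeeping reconciling the dimension jumps with the offset terms in the charge formulas is the most delicate point.
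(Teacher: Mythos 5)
The paper does not prove this statement: Theorem \ref{thm: fixed points bow} is quoted verbatim from \cite{rimanyi2020bow}, and the surrounding text explicitly defers even the definitions of tie diagrams, BCTs, and butterfly diagrams to that reference. So there is no in-paper argument to compare against; the benchmark is the Rim\'anyi--Shou proof. Measured against that, your outline has the right skeleton, and its one substantive computation is correct: representing a fixed point by a pair $(x,\rho)$ with $\rho\colon\At\to G$ (legitimate because $G$ acts freely on the stable locus), decomposing each $W_{\Xb}$ into $\At$-weight spaces, and showing that at a D5 brane $\Ab_l$ the map $A_{\Ab_l}$ restricts to an isomorphism on every weight space of weight $a_j$ with $j\neq l$ --- injectivity via (S1) applied to $\ker A\cap W_{\Ab_l^+}(a_j)$, which is $B'$-invariant since $AB'=BA$ on that weight space, and surjectivity via (S2) since $\operatorname{Im}(A|_{W(a_j)})$ is $B$-invariant there --- is exactly the local mechanism that drives the classification.

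As a proof, however, the proposal stops precisely where the theorem lives. Three things are asserted but not established. First, that the only $\At$-weights occurring in any $W_{\Xb}$ are $a_1,\dots,a_n$ requires propagating the stability conditions through the entire diagram, including the two-way parts $(C_{\Zb},D_{\Zb})$ at NS5 branes, which your sketch never analyzes; the structure of the fixed datum at an NS5 brane is also where one must show the profile $d^{(j)}_{\Xb}$ jumps by at most one, which is the source of the $0/1$ entries of the BCT --- without it you get integer contingency tables, not binary ones. Second, surjectivity: that every binary table with margins $(r,c)$ is realized by a point of $\mu^{-1}(0)$ satisfying (S1)--(S2) needs an explicit normal form and a semistability check. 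Third, rigidity: that two semistable data of the same combinatorial type are $G$-conjugate. You label the last two ``the hard part'' and leave them; they are the content of the theorem. Finally, the detour through Hanany--Witten reduction and handsaw/Laumon combinatorics is not obviously load-bearing: $HS(\D)$ is an affine quotient whose fixed-point combinatorics you would still have to establish independently, and the cited proof classifies fixed points by direct local analysis on an arbitrary brane diagram without this reduction.
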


We refer to the article in \emph{loc. cit.} for the definitions of tie diagrams, BCTs, and butterfly diagrams. As an example, consider the bow variety $X(\D)$ where $\D=\ttt{\fs 2\bs 2\fs 2\bs 4\fs 3\fs 3\fs 4\bs 3\fs 2\bs 2\bs}$. Its charge vectors are $r=(2,1,1,2,3,2)$ and $c=(5,2,2,0,2)$. One fixed point in $X(\D)$ is represented by the tie diagram
\begin{equation*}
\begin{tikzpicture}[baseline=0,scale=.3]
\draw [thick,red] (0.5,0) --(1.5,2); 
\draw[thick] (1,1)--(2.5,1) node [above] {$2$} -- (31,1);
\draw [thick,blue](4.5,0) --(3.5,2);  
\draw [thick](4.5,1)--(5.5,1) node [above] {$2$} -- (6.5,1);
\draw [thick,red](6.5,0) -- (7.5,2);  
\draw [thick](7.5,1) --(8.5,1) node [above] {$2$} -- (9.5,1); 
\draw[thick,blue] (10.5,0) -- (9.5,2);  
\draw[thick] (10.5,1) --(11.5,1) node [above] {$4$} -- (12.5,1); 
\draw [thick,red](12.5,0) -- (13.5,2);   
\draw [thick](13.5,1) --(14.5,1) node [above] {$3$} -- (15.5,1);
\draw[thick,red] (15.5,0) -- (16.5,2);  
\draw [thick](16.5,1) --(17.5,1) node [above] {$3$} -- (18.5,1);  
\draw [thick,red](18.5,0) -- (19.5,2);  
\draw [thick](19.5,1) --(20.5,1) node [above] {$4$} -- (21.5,1);
\draw [thick,blue](22.5,0) -- (21.5,2);
\draw [thick](22.5,1) --(23.5,1) node [above] {$3$} -- (24.5,1);  
\draw[thick,red] (24.5,0) -- (25.5,2); 
\draw[thick] (25.5,1) --(26.5,1) node [above] {$2$} -- (27.5,1);
\draw [thick,blue](28.5,0) -- (27.5,2);  
\draw [thick](28.5,1) --(29.5,1) node [above] {$2$} -- (30.5,1);   
\draw [thick,blue](31.5,0) -- (30.5,2);   

\draw [dashed, black](4.5,-.25) to [out=-45,in=225] (12.5,-.25);
\draw [dashed, black](10.5,-.25) to [out=-45,in=225] (12.5,-.25);
\draw [dashed, black](10.5,-.25) to [out=-45,in=225] (15.5,-.25);
\draw [dashed, black](10.5,-.25) to [out=-45,in=225] (24.5,-.25);
\draw [dashed, black](22.5,-.25) to [out=-45,in=225] (24.5,-.25);

\draw [dashed, black](1.5,2.25) to [out=45,in=-225] (3.5,2.25);
\draw [dashed, black](1.5,2.25) to [out=45,in=-225] (9.5,2.25);
\draw [dashed, black](13.5,2.25) to [out=45,in=-225] (21.5,2.25);
\draw [dashed, black](16.5,2.25) to [out=45,in=-225] (21.5,2.25);
\draw [dashed, black](19.5,2.25) to [out=45,in=-225] (30.5,2.25);
\draw [dashed, black](25.5,2.25) to [out=45,in=-225] (30.5,2.25);
\node at (.7,-1) {\small $\Zb_1$};
\node at (6.8,-1) {\small $\Zb_2$};
\node at (4,-1) {\small $\Ab_1$};
\node at (31.7,-1) {\small $\Ab_5$};
\node at (28.5,-1) {\small $\Ab_4$};
\node at (25,-1) {\small $\Zb_6$};

\end{tikzpicture},
\end{equation*}
or, equivalently, by the following BCT:
\[
\begin{tikzpicture}[baseline=1.4 cm, scale=.5]
\draw[ultra thin] (0,0) -- (5,0);
\draw[ultra thin]  (0,1) -- (5,1);
\draw[ultra thin]  (0,2) -- (5,2);
\draw[ultra thin]  (0,3) -- (5,3);
\draw[ultra thin]  (0,4) -- (5,4);
\draw[ultra thin]  (0,5) -- (5,5);
\draw[ultra thin]  (0,6) -- (5,6);
\draw[ultra thin]  (0,0) -- (0,6);
\draw[ultra thin]  (1,0) -- (1,6);
\draw[ultra thin]  (2,0) -- (2,6);
\draw[ultra thin]  (3,0) -- (3,6);
\draw[ultra thin]  (4,0) -- (4,6);
\draw[ultra thin]  (5,0) -- (5,6);
\node at (.5,6.4) {\small $5$}; \node at (1.5,6.4) {\small $2$}; \node at (2.5,6.4) {\small $2$}; \node at (3.5,6.4) {\small $0$}; \node at (4.5,6.4) {\small $2$};
\node at (.5,7.4) {\small $\Ab_1$}; \node at (1.5,7.4) {\small $\Ab_2$}; \node at (2.5,7.4) {\small $\Ab_3$}; \node at (3.5,7.4) {\small $\Ab_4$}; \node at (4.5,7.4) {\small $\Ab_5$};
\node at (-.5,0.5) {\small $2$}; \node at (-.5,1.5) {\small $3$}; \node at (-.5,2.5) {\small $2$}; \node at (-.5,3.5) {\small $1$}; \node at (-.5,4.5) {\small $1$}; \node at (-.5,5.5) {\small $2$};  
\node at (-1.8,0.5) {\small $\Zb_6$}; \node at (-1.8,1.5) {\small $\Zb_5$}; \node at (-1.8,2.5) {\small $\Zb_4$}; \node at (-1.8,3.5) {\small $\Zb_3$}; \node at (-1.8,4.5) {\small $\Zb_2$}; \node at (-1.8,5.5) {\small $\Zb_1$};  
\node[violet] at (0.5,0.5) {\small $1$};\node[violet] at (1.5,0.5) {\small $0$};\node[violet] at (2.5,0.5) {\small $0$};\node[violet] at (3.5,0.5) {\small $0$};\node[violet] at (4.5,0.5) {\small $1$};
\node[violet] at (0.5,1.5) {\small $1$};\node[violet] at (1.5,1.5) {\small $1$};\node[violet] at (2.5,1.5) {\small $0$};\node[violet] at (3.5,1.5) {\small $0$};\node[violet] at (4.5,1.5) {\small $1$};
\node[violet] at (0.5,2.5) {\small $1$};\node[violet] at (1.5,2.5) {\small $0$};\node[violet] at (2.5,2.5) {\small $1$};\node[violet] at (3.5,2.5) {\small $0$};\node[violet] at (4.5,2.5) {\small $0$};
\node[violet] at (0.5,3.5) {\small $0$};\node[violet] at (1.5,3.5) {\small $0$};\node[violet] at (2.5,3.5) {\small $1$};\node[violet] at (3.5,3.5) {\small $0$};\node[violet] at (4.5,3.5) {\small $0$};
\node[violet] at (0.5,4.5) {\small $1$};\node[violet] at (1.5,4.5) {\small $0$};\node[violet] at (2.5,4.5) {\small $0$};\node[violet] at (3.5,4.5) {\small $0$};\node[violet] at (4.5,4.5) {\small $0$};
\node[violet] at (0.5,5.5) {\small $1$};\node[violet] at (1.5,5.5) {\small $1$};\node[violet] at (2.5,5.5) {\small $0$};\node[violet] at (3.5,5.5) {\small $0$};\node[violet] at (4.5,5.5) {\small $0$};
\end{tikzpicture}.
\]
We also refer to \cite[Figure 3]{rimanyi2020bow} for the pictorial presentation of the corresponding butterfly diagram.

In the whole paper, we assume that $\D$ is such that $X(\D)$ has at least one fixed point. The Gale-Ryser theorem \cite[\S6.2.4]{BrualdiRyser} is a combinatorial characterization of the charge vectors for which this condition holds. It is expected that $X^{!}$ is empty (because there are no stable points) if $X$ has no torus fixed points. 

Fixed points of dual bow varieties are manifestly in one-to-one correspondence. Explicitly, given a bow variety $X$ and a fixed point $f\in X^{\At}$, we denote by $f^!\in (X^!)^{\At^!}$ the fixed point whose tie diagram is obtained by reflecting the tie diagram of $f$ along the $x$-axis of the page.

\section{Elliptic stable envelopes}\label{sec: stab}

\subsection{Recollection of elliptic cohomology}

Fix an elliptic curve $E=\Cs/q^{\ZZ}$. Throughout this article, we consider $G$-equivariant elliptic cohomology as a scheme-valued covariant functor
\[
\Ell_G(-):G\text{-spaces}\to \text{Schemes}_{\Ell_{G}}.
\]
We refer to \cite{aganagic2016elliptic, botta2021shuffle, BR} for details. Here, $\Ell_G:=\Ell_G(\pt)$ is the elliptic cohomology of the one-point space. If $G$ is either a rank $n$ torus $T$ or $\GL(n)$, then $\Ell_{T}=\text{Cochar}(T)\otimes_{\ZZ} E \cong  E^{n}$ 
and $\Ell_{GL(n)}= E^{(n)}$, the symmetrized $n$-{th} Cartesian power of $E$.

The Chern class of a rank $r$ vector bundle $V\to X$ is
 a morphism
 \begin{equation}
     \label{eq: chern class ell} c(V):\Ell_G(X)\to \Ell_{\GL(r)}(\pt)=E^{(r)},
 \end{equation}
called the elliptic characteristic class of $V$, see \cite[\S1.8]{ginzburg1995elliptic} and \cite[\S5]{Ganter_2014}. The coordinates in the target are called the elliptic Chern roots of $V$. Associated with $V$ is an invertible sheaf, the Thom sheaf of $V$
\[
\Theta(V):=c(V)^*\Sh{O}(D),
\]
obtained by pulling back the line bundle associated to the effective divisor $D=\lbrace0\rbrace+E^{(r-1)}\subset E^{(r)}$. We denote its canonical global section by $\vartheta(V)$, which is the fundamental example of an elliptic cohomology ``class". For computations, it is useful to pull-back $\vartheta(V)$ to $K$-theory via the Chern character map 
\[
\chern: \Spec(K_G(X)\otimes \mathbb{C})^{\text{an}}\to \Ell_G(X).
\]
Indeed, the pullback $\chern^*\Theta(V)$ is trivial for all $V$ and hence we can treat $\vartheta(V)$ as a function on $ \Spec(K_G(X) \otimes \mathbb{C} )^{\text{an}}$. In the case $X=\pt$ and $G=\Cs$ the Chern character is simply given by the quotient map $\Cs\to E=\Cs/q^{\ZZ}$, and we can choose a trivialization of $\chern^*\Sh{O}(0)$ that identifies its canonical section with the odd Jacobi theta function
\[
    \vartheta(x)=(x^{1/2}-x^{-1/2})\prod_{n>0} (1-q^nx)(1-q^nx^{-1}).
\]
Its quasi-periods are 
\begin{equation}
    \label{eq: quasi-period theta function}
    \vartheta(q^kx)= (-1)^k q^{-k^2/2}x^{-k}\vartheta(x) \qquad k\in \ZZ.
\end{equation}
It follows from the compatibility of the Chern character with the map \eqref{eq: chern class ell} that the trivialization of an arbitrary line bundle $\chern^*\Theta(V)$ can be uniquely determined by the previous universal case. With this convention, the pullback of $\vartheta(V)$ by $\chern$ is the function
\[
\prod_{i=1}^r\vartheta(t_i)
\]
in the Chern roots $t_i$ of $V$.

Fix now a one dimensional torus torus $\Cs_z$ acting trivially on $X$ and a vector bundle $V$. We define 
\begin{equation}
\label{eq: U-bundle}
    \Sh{U}(V,z):=\Theta\left((V-1^{\oplus \rk(V)})(z-1)\right)\in \Pic{}{\Ell_{G\times \Cs_z}(X)}.
\end{equation}
It admits a canonical meromorphic section, which is the pullback by $c(V)$ of the canonical section
\begin{equation}
    \label{eq: delta function}
    \prod_{i=1}^{r}\delta(t_i,z)=\prod_{i=1}^r \frac{\vartheta(t_iz)}{\vartheta(t_i)\vartheta(z)}
\end{equation}
of the line bundle $\bigotimes_{i=1}^r\Theta((t_i-1)(z-1))$ on $\Ell_{\GL(r)\times \Cs_{z}}(\pt)$. Notice that 
\begin{equation}
    \label{eq: quasiperiod delta}
     \delta(q^kx, y)= y^{-k}\qquad \delta(x,q^k y)= x^{-k}\qquad k \in \ZZ.
\end{equation}
A list of properties of $\Sh{U}(V,z)$ can be found in \cite[App. 1]{okounkov2020inductiveI} and \cite[Lemma 4.1]{BR}.

\subsection{Functorality}\label{ell functoriality}

Given a $G$-variety $X$, its elliptic cohomology $\Ell(X)$ can be defined as the relative spectrum of a sheaf of algebras $\Sh{O}_{\Ell(X)}$ over the variety $\Ell_G$, cf. \cite{Grojnowski2007EllipticCD}. Such sheaf of algebras can be thought as the analog of the (singular) cohomology ring $H_G(X)$, which is an algebra over $H_G(\pt)$. The standard singular cohomology $H_G(\pt)$ is contravariant with respect to any equivariant continuous map $f: X\to M$ or a change of group map $\varphi: H\to G$, and covariant for a proper complex oriented map $g: X\to M$. Elliptic cohomology shares similar properties, which are best to rephrase in sheaf theoretic language. Explicitly, given a continuous $G$-equivariant map $f: X\to Y$, and hence induced maps of schemes
\[
\begin{tikzcd}
    \Ell_G(X)\arrow[rr, "\Ell_G(f)"] \arrow[dr, swap, "q_X"]& & \Ell_G(Y)\arrow[dl, "q_Y"]
    \\ 
    & \Ell_G & 
\end{tikzcd}
\]
the adjunction $id \to (\Ell_G(f))_* (\Ell_G(f))^*$ induces a morphism of $\Sh{O}_{\Ell_G}$-modules
\[
f^{\oast}: (q_{Y})_{*}\Sh{F}\to (q_X)_*\Ell_G(f)^*\Sh{F},
\]
Here, $\Sh{F}$ is an arbitrary quasi-coherent sheaf of $\Ell_G(Y)$. 
This is the elliptic analog of pullback in singular equivariant cohomology. 

The elliptic change of group map is defined similarly: for every homomorphism $\varphi: H\to G$ and $G$-variety $X$ we have a functorial map $r: \Ell_H(X)\to \Ell_G(X)$ and the adjunction 
\[
\varphi^{\oast}: \Sh{F}\to r_*r^*\Sh{F}
\]
is the change of group map in elliptic cohomology. Finally, the pushforward in elliptic cohomology associated to a proper map $g: X\to Y$ and a locally free sheaf $\Sh{E}$ on $\Ell_G(Y)$ is a morphism 
\[
g_{\oast}: \Ell_G(g)_*(\Theta(TX-f^*TY)\otimes \Ell_G(g)^* \Sh{E})\to \Sh{E}.
\]
We refer to \cite[\S4]{BR} for a more detailed treatment.
Here, we only make the following important observation: if $p:X\to \pt $ is proper, then pushforward in elliptic cohoology gives a well-defined map
\[
\Theta(TX)\xrightarrow[]{p_{\oast}} \Sh{O}_{E_G(\pt)}.
\]
As a consequence, for any line bundle $\Sh{L}$ on $E_G(X)$, it is natural to define its dual by $\Sh{L}^\triangledown:= \Sh{L}^{-1}\otimes \Theta(TX)$ \cite{okounkov2020inductiveI}. Then, whenever $p:X\to \pt $ is proper (or, more generally, equivariantly proper, via localization),  we get a natural map
\[
\Sh{L}^\triangledown\otimes \Sh{L}=\Theta(TX)\xrightarrow[]{p_{\oast}} \Sh{O}_{E_G(\pt)}.
\]

\subsection{Elliptic stable envelopes--basics}\label{sec: attracting loci}

In the following two sections, we recall the definition and the main features of stable envelopes on bow varieties, cf. \cite{BR, okounkov2020inductiveI}. 
Let $X$ be a smooth variety equipped with the action of a torus $T$. Fix a subtorus $A\subset T$. The stable envelopes are certain distinguished correspondences depending on a choice of generic cocharacter $\lambda: \Cs\to A$, i.e. a cocharacter such that $X^{\lambda(\Cs)}=X^A$. Notice that a character is not generic if and only if it lies on some codimension one hyperplane in $\Cochar(A)\otimes_{\ZZ}\RR$ given by the zero locus of some $A$-weight of the normal bundle of $X^{A}$. We denote a connected component of the resulting chamber arrangement in $\Cochar(A)\otimes_{\ZZ}\RR$ by $\chamb$.

Intuitively, stable envelopes encode the geometry of the attractors 
\[
\Att_{\chamb}(X^{\Cs})=\{x\in X\; |\; \lim_{t\to 0} \lambda(t) \cdot x \in X^{A} \text{ for any (equiv. every) } \lambda \in \chamb\} .
\]

Notice that the latter is reducible. Its irreducible components are given by the closures of the subsets $\Att_{\chamb}(F)$, where $F\subset X^{A}$ is a connected component of $X^{A}$. In fact, $\Att_{\chamb}(F)$ is an affine bundle over $F$. If the variety $X$ admits a proper $A$-equivariant map to an affine variety $X\to X_0$, then the transitive closure of the relation 
\[
\Att_{\chamb}(F)\cap F'\neq \emptyset \implies F\geq F'
\]
induces a partial order on $\pi_0(X^{\Cs})$, sometimes known as a ``generalized Bruhat order''. 

For a fixed component $F$, consider the three $T$-invariant subvarieties
\begin{equation}
    \label{eq: spaces in the definition of stable envelopes}
    \begin{split}
    & \Att_{\chamb}^{\leq}(F):= \bigcup_{F'\leq F} F\times \Att_{\chamb}(F'),
    \qquad X^{\geq F}=X\setminus \bigcup_{F'< F} \Att_{\chamb}(F'), 
    \\
    & X^{> F} = X\setminus \bigcup_{F'\leq F} \Att_{\chamb}(F').
    \end{split}
\end{equation}
Notice that $X^{> F}\subseteq X^{\geq F}$ and the inclusion $j:  \Att_{\chamb}(F)\hookrightarrow F\times X^{\geq F}$ is well defined and closed. Informally speaking, the stable envelope of a fixed component $F$ is a specific correspondence on $F\times X$ that is supported on $\Att_{\chamb}^{\leq}(F)$ and restricts to the graph of $j$ on $F \times X^{\geq  F}$. Technically, to make this precise in elliptic cohomology one needs to fix a distinguished line bundle $\Sh{L}_{X}$ on $\Ell_T(X)$, known as an attractive line bundle. By definition, it is a line bundle such that 
\begin{equation}
    \label{defining equation attractive line bundle}
    \deg_A(\Ell_T(i_F)^*\Sh{L}_X)=\deg_A(\Theta(N^-_{F/X})),
\end{equation}
for all components $F\subset X^A$, where $i_{F}:F \hookrightarrow X$ is the inclusion. See \cite[App. B.3]{okounkov2020inductiveI} for the definition of $\deg_A(\cdot)$. 

\subsection{Elliptic stable envelopes--definition}

Let 
$ 
\Sh{L}_{A,F}=i_F^*\Sh{L}_X\otimes \Theta(-N_{F/X}^-)
$ and consider the exterior tensor product 
\[
(\Sh{L}_{A,F})^{\triangledown}\boxtimes \Sh{L}_X,
\]
which is a bundle bundle on the elliptic cohomology scheme $\Ell_T(F\times X)$. Here, the upperscript in $(\Sh{L}_{A,F})^{\triangledown}$ is the duality functor defined in \S\ref{ell functoriality}.

The following is Okounkov's definition of the elliptic stable envelope.

\begin{theorem} [{{\cite{okounkov2020inductiveI}}}]
\label{thm: definition elliptic stab}
Let $\mC$ be a chamber for the action of $A$ on $X$ and let $\Sh{L}_X$ be an attractive line bundle. For every connected component $F\subset X^{A}$, there exists a unique section
\[
\Stab_{\chamb}(F)\in \Gamma\left((\Sh{L}_{A,F})^{\triangledown}\boxtimes \Sh{L}_X(\infty \Delta)\right)
\]
proper over $X$, such that

\begin{enumerate}
    \item[(i)] (The diagonal axiom) The restriction of $\Stab_{\chamb}(F)$ on $F\times X^{\geq F}$ is equal to $\left[ \Att_{\chamb}(F)\right]$.
    \item[(ii)] (The support axiom) The restriction of $\Stab_{\chamb}(F)$ to $F\times X\setminus \Att_{\chamb}^\leq(F)$ is zero.
\end{enumerate}
\end{theorem}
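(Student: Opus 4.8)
The two assertions call for different treatment: uniqueness is a short quasi-periodicity argument on elliptic line bundles, while existence — the substantive content — I would establish by an inductive construction over the ``generalized Bruhat'' poset $\pi_0(X^A)$, which is available since (at least for separated or co-separated $X$) there is a proper $A$-equivariant map to an affine variety. For uniqueness, suppose $\Stab_\chamb(F)$ and $\Stab'_\chamb(F)$ both satisfy (i) and (ii), and let $S$ be their difference. By (ii), $S$ is supported on $\Att_\chamb^{\leq}(F)$; by (i), it vanishes on $F\times X^{\geq F}$, which contains a neighborhood of the diagonal $\Delta$. Hence $S$ is holomorphic along $\Delta$ — so it is an honest section of $(\Sh{L}_{A,F})^{\triangledown}\boxtimes\Sh{L}_X$ — and its support lies in $\bigcup_{F'<F}F\times\overline{\Att_\chamb(F')}$. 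Now argue by downward induction on the order: if $F'$ is maximal among components on which $S$ restricts nontrivially, then by (ii) and the inductive hypothesis this restriction is pushed forward from $F\times F'$, hence of the form $s\cdot[\Att_\chamb(F')]$ for a section $s$ over $\Ell_T(F\times F')$ of a line bundle which, by the attractive condition \eqref{defining equation attractive line bundle} and the duality twist $\triangledown$, admits no nonzero global section. Thus $s=0$, and iterating gives $S=0$.

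\textbf{Existence.} I would construct $\Stab_\chamb(F)$ by induction on $F$. The class $[\Att_\chamb(F)]$ is defined a priori only over the open locus $F\times X^{\geq F}$, where any stable envelope must restrict to it by (i); the first step is to extend it to a global section of $(\Sh{L}_{A,F})^{\triangledown}\boxtimes\Sh{L}_X(\infty\Delta)$, which is unobstructed thanks to \eqref{defining equation attractive line bundle}. Such an extension is highly non-unique and in general violates (ii) by leaking onto the smaller strata $\overline{\Att_\chamb(F')}$ with $F'<F$. One then removes the leakage, descending through the poset: for each $F'<F$ subtract $s_{F'}\cdot\Stab_\chamb(F')$, where $s_{F'}$ is the unique section over $\Ell_T(F\times F')$ solving the interpolation equation that cancels the restriction of the current class to $F\times F'$; once again \eqref{defining equation attractive line bundle} guarantees that this interpolation problem has a one-dimensional solution space, so the correction exists, is unique, and is holomorphic away from $\Delta$. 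Since each $\Stab_\chamb(F')$ already has its only pole along $\Delta$, so does the corrected class, and after exhausting all $F'<F$ it satisfies both axioms.

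\textbf{The abelian reduction and the main obstacle.} The genuinely nontrivial input is exactly the Riemann--Roch statement used twice above (unobstructedness of the extension and one-dimensionality of the interpolation space): in equivariant cohomology or $K$-theory the analogue has polynomial or Laurent-polynomial solutions and is automatic, whereas in elliptic cohomology it really uses the attractive condition and the twist $\triangledown$, and fails otherwise. For bow varieties I would verify it through the GIT presentation $X=M\git^{\chi}G$ with $G=\prod_{\Xb}\GL(W_\Xb)$: abelianizing $G$ to its maximal torus makes the fixed-point combinatorics factor through products of $\mathbb{P}^{1}$-type pieces, whose stable envelopes are written out explicitly in terms of the $\Sh{U}(V,z)$-bundles recalled above, and a symmetrization recovers the bow case. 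This is precisely the route of \cite{BR}, which in the body of the paper is simply cited. Accordingly, I expect uniqueness to be elementary, and the holomorphy/one-dimensionality step inside the existence argument — controlling the spaces of global sections of the relevant elliptic line bundles — to be the main obstacle.
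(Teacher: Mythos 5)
The first thing to say is that the paper does not prove this statement at all: it is quoted verbatim from \cite{okounkov2020inductiveI}, and the paper's only work in this vicinity is to verify the hypotheses for bow varieties --- that $\Sh{L}_X$ as defined in \eqref{good line bundle bow variety} is attractive and non-degenerate (Proposition \ref{prop: line bundle is attractive}) --- so that Okounkov's theorem applies and yields Theorem \ref{thm: stab exists}. So there is no in-paper proof to compare against; what you have written is a sketch of Okounkov's own argument (uniqueness by killing leading coefficients along the attracting order, existence by extending $[\Att_\chamb(F)]$ and correcting by multiples of stable envelopes of smaller components, with the degree condition \eqref{defining equation attractive line bundle} supplying the one-dimensionality of each interpolation space). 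That outline is the right shape, and your identification of the elliptic Riemann--Roch step as the real content is accurate; the abelianization route you mention for bow varieties is likewise the strategy of \cite{aganagic2016elliptic} and \cite{BR}.

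There is, however, one concrete confusion you should fix. In this theorem $\Delta$ is \emph{not} the diagonal of $F\times X$: as the paper states explicitly after Theorem \ref{thm: definition elliptic stab}, it is the resonant locus $\Delta\subset\Ell_T(\pt)$, a divisor in the base living in the K\"ahler and $\hbar$ directions, and $\Sh{L}_X(\infty\Delta)$ means sections with arbitrary poles along that divisor of the base. Consequently your step ``$S$ vanishes on $F\times X^{\geq F}$, which contains a neighborhood of the diagonal $\Delta$, hence $S$ is holomorphic along $\Delta$'' is not meaningful: support conditions on $X$ say nothing about poles in the K\"ahler parameters, and the difference of two stable envelopes may genuinely have poles on the resonant locus. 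The uniqueness argument does not need holomorphy there; what it needs is that, away from $\Delta$ (which is a proper closed subset by non-degeneracy), the restriction of the relevant line bundle to the $\Ell_A(\pt)$-fibers over $F\times F'$ is a degree-zero but nontrivial line bundle on an abelian variety, hence has no nonzero sections, and a section vanishing on a dense open set vanishes identically. With that correction your sketch matches the cited argument.
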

 The notation $\infty\Delta$ stands for a singular behavior of the stable envelope on the ``resonant locus'' $\Delta\subset \Ell_T(\pt)$. Informally, sections of $\Sh{G}(\infty\Delta)$ are sections of $\Sh{G}$ with arbitrary singularities on $\Delta$. For the precise definition and the geometric interpretation of $\Delta$, we refer to \cite[
\S2.3]{okounkov2020inductiveI} and \cite[\S5.2]{BR}. For stable envelopes of bow  varieties, which are the main focus of the paper, $\Delta$ will be a divisor of $\Ell_T(\pt)$.

Consider now the canonical maps
\[
\begin{tikzcd}
    \Ell_T(F) & \Ell_T(X^A\times X)\arrow[l, swap, "p_A"]\arrow[r, "p"] & \Ell_T(X).
\end{tikzcd}
\]
Since $\Stab_{\chamb}(F)$ proper over $X$, it induces a morphism via convolution (cf. \cite[\S4.7]{BR})
\begin{equation*}
    \label{elliptic stable enveliopes map}
    \Stab_{\chamb}(F):(p_A)_{\ast}(\Sh{L}_{A,F})\to p_{\ast}\Sh{L}_X(\infty \Delta) \qquad \gamma\mapsto p_{\oast}\left(\Stab_{\chamb}(F)\otimes (p_A)^{\oast}(\gamma)\right)
\end{equation*}
which is commonly denoted in the same way.

\begin{remark}
\label{rem: full att set}
    The space $\Att_{\chamb}^{\leq}(F)$ is a singular closed subspace of $F\times X$. It contains the so-called full attracting set $\Attfull{C}(F)$, which is also $T$-invariant and is defined as the set of pairs $(f,x)\in F\times X$ connected via a chain of closures of attracting orbits. It is shown in \cite[\S2.5]{okounkov2020inductiveI} that the stable envelopes are actually supported on $\Attfull{C}(F)$.
\end{remark}

\subsection{Stable envelopes of bow varieties}

Let $X$ be a bow variety with $m$ NS5 branes (ordered from left to right), $n$ D5 branes, and the torus $\Tt=\At\times \Cs_{\hbar}$ acting on it. We introduce the K\"ahler torus $Z=(\Cs)^{m}$ and let $(z_1,\dots z_m)$ be a set of coordinates spanning $Z$. We also set $T=\Tt\times Z=\At\times \Cs_{\hbar}\times Z$. It may be beneficial to think about the subtorus $\At\times Z$ as the ambient space of two tuples of parameters $(a_1, \dots a_n)$ and $(z_1,\dots z_m)$ that are attached to the similarly named five-branes.

Set $\Base:=\Ell_{\Tt\times Z}(\pt)$ and let $\eta_k$ be one (any) of the bundles in between the NS5 branes $\Zb_k$ and $\Zb_{k+1}$. The construction will eventually be independent of such choice. Consider the line bundle on $\Ell_{\Tt\times Z}(X)$ given by
\begin{align}
\label{eq: ubund}
\Sh{U}:=\bigotimes_{k=1}^{m-1} \Sh{U}\left(\eta_k,\frac{z_k}{z_{k+1}}\hbar^{\ell(\Zb_{k+1})-\ch(\Zb_k)}\right)
\end{align}
where $\Sh{U}(\eta, x)$ is the line bundle defined in \eqref{eq: U-bundle} and the integers $\ell(\Zb_{k})$, $\ch(\Zb_k)$ were defined in \S\ref{sec: brane diagrams}. Recalling the definition of the polarization $\alpha$, see \S\ref{sec: taut bundles and tangent}, we further define 
    \begin{equation}
    \label{good line bundle bow variety}
    \Sh{L}_X:=\Theta(\alpha)\otimes \Sh{U}.
\end{equation}

\begin{proposition}[{\cite[\S5.5]{BR}}]
\label{prop: line bundle is attractive}
    The line bundle $\Sh{L}_X$ is attractive for the action of $\At$ on $X$. 
    Moreover, the following statements hold:
    \begin{enumerate}
        \item Assume that $X_1\cong X_2$ are Hanany-Witten isomorphic bow varieties and let $F_1\subset (X_1)^{\At}$ and $F_2\subset (X_2)^{\At}$ be isomorphic fixed components.
        Then we have an isomorphism 
        \[(\Sh{L}_{A,F_1})^{\triangledown}\boxtimes \Sh{L}_{X_1}(\infty \Delta)\cong (\Sh{L}_{A,{F_2}})^{\triangledown}\boxtimes \Sh{L}_{X_2}(\infty \Delta).
        \]
        \item The line bundle $\Sh{L}_X$ is non-degenerate, i.e. the complement of resonant locus $\Delta$ in $\Ell_{ \Cs_{\hbar}\times Z}(\pt)$ is open and dense. Furthermore, $\Delta$ is contained in the complement of the union of codimension-one abelian varieties of the form $\{z_i/z_j\hbar^{\alpha_{ij}}=1\}$ for certain $\alpha_{ij}\in \ZZ$.
    \end{enumerate}
\end{proposition}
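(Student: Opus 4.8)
The plan is to verify the three assertions in turn, all by reduction to explicit computations with the tautological bundles and the combinatorics of brane diagrams. For the main claim — that $\Sh{L}_X = \Theta(\alpha)\otimes\Sh{U}$ is attractive for the $\At$-action — one must check the defining equation \eqref{defining equation attractive line bundle}, namely $\deg_A(\Ell_T(i_F)^*\Sh{L}_X) = \deg_A(\Theta(N^-_{F/X}))$ for every fixed component $F\subset X^{\At}$. Since the fixed points are isolated (Theorem \ref{thm: fixed points bow}), each $F$ is a point $f$ indexed by a BCT, and both sides are genuine classes in $\Ell_{\At\times\Cs_\hbar\times Z}(\pt)$. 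First I would restrict $\alpha$ to $f$ using the fact that the tautological bundles $\tb_\Xb$ restrict to explicit sums of one-dimensional characters read off from the butterfly/tie diagram of $f$; this gives $\Theta(\alpha)|_f$ as a product of theta functions in the $a_i$ and $\hbar$. The polarization property \eqref{eq: alpha is pol}, $T_fX = \alpha|_f + \hbar(\alpha|_f)^\vee$, together with the decomposition $T_fX = N_f^- + N_f^+$, controls the $\At$-degree of $\Theta(\alpha)|_f$ up to the contribution of the Kähler variables, which is exactly the discrepancy that $\Sh{U}$ is designed to absorb: the degree of $\Sh{U}|_f$ in the $z_k$ directions is dictated by \eqref{eq: quasiperiod delta} and the exponents $\ell(\Zb_{k+1})-\ch(\Zb_k)$. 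Matching these two computations is the heart of the argument; this is essentially the content of \cite[\S5.5]{BR} and I would follow that computation, organizing it so that the $\At$-degree on each side is expressed as a sum over the boxes of the BCT.

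For part (1), the Hanany-Witten invariance of the bundle $(\Sh{L}_{A,F})^\triangledown\boxtimes\Sh{L}_X(\infty\Delta)$, I would use that HW transition changes the brane diagram by the local surgery of \S\ref{subsec: HW iso} while fixing the charge vectors $r,c$, and that the induced isomorphism of bow varieties is equivariant only along a fixed automorphism of $\Tt$ (cf. the Remark after \eqref{eq: class alpha for general bow variety}). The class $\alpha$ is not preserved on the nose, but the difference between $\alpha_\D$ and $\alpha_{\D'}$ under the surgery is a class pulled back from a point — a shift by some power of $\hbar$ — and a parallel shift occurs in the Kähler exponents appearing in $\Sh{U}$; one checks that these combine with the twist of $N^-_{F/X}$ so that the total bundle $(\Sh{L}_{A,F})^\triangledown\boxtimes\Sh{L}_X(\infty\Delta)$ is literally unchanged. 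Concretely I would track how $\ell(\Zb_k)$ and $\ch(\Zb_k)$ transform under a single HW move, observe that $\ch$ is invariant while $\ell$ shifts by $\pm1$ across the moved pair, and verify the bookkeeping cancels. Since any diagram reaches a separated or co-separated representative by a sequence of such moves, this suffices.

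For part (2), non-degeneracy, I would analyze when $\Sh{U} = \bigotimes_k \Sh{U}(\eta_k, z_k z_{k+1}^{-1}\hbar^{\ell(\Zb_{k+1})-\ch(\Zb_k)})$ can fail to have the required generic behavior, which by the structure of the $\Sh{U}$-bundle \eqref{eq: U-bundle} and its section \eqref{eq: delta function} is governed by the vanishing of theta functions $\vartheta(t\cdot z_k z_{k+1}^{-1}\hbar^{\alpha})$ where $t$ runs over Chern roots of $\eta_k$ restricted to fixed points. Each such vanishing cuts out, inside $\Ell_{\Cs_\hbar\times Z}(\pt)$, a translate of a codimension-one abelian subvariety of the form $\{z_i/z_j\,\hbar^{\alpha_{ij}}=1\}$ for an integer $\alpha_{ij}$ determined by the fixed-point weights; their union is the resonant locus $\Delta$, whose complement is therefore open and dense, and which by construction avoids no component not of this shape. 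The main obstacle, and the step requiring the most care, is the first one: correctly computing $\deg_A$ of both sides of \eqref{defining equation attractive line bundle} in terms of the BCT data and confirming that the seemingly ad hoc exponents $\ell(\Zb_{k+1})-\ch(\Zb_k)$ in the definition of $\Sh{U}$ are exactly what is needed — here I would lean directly on the computation in \cite[\S5.5]{BR} rather than reproduce it from scratch.
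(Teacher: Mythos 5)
The paper does not actually prove this proposition: it is imported verbatim from \cite[\S5.5]{BR}, and your proposal likewise ends by deferring the central computation to that reference, so at the level of strategy you and the paper agree. Two substantive points in your sketch deserve correction, however.

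First, in the main claim you describe the K\"ahler factor $\Sh{U}$ as ``absorbing'' a discrepancy in the attractiveness condition. It cannot: the condition \eqref{defining equation attractive line bundle} only constrains $\deg_A$, the degree along $\Cochar(\At)$, and each factor $\Sh{U}(\eta_k, z_k z_{k+1}^{-1}\hbar^{\bullet})=\Theta\bigl((\eta_k-1^{\oplus\rk \eta_k})(z-1)\bigr)$ has $\deg_A$ equal to zero, because the quadratic terms in the $a$-variables coming from $\vartheta(t_i z)$ and $\vartheta(t_i)$ cancel. The attractiveness is carried entirely by $\Theta(\alpha)$: since $\hbar$ and $z$ are invisible to $\deg_A$, the polarization identity $T_fX=\alpha|_f+\hbar(\alpha|_f)^{\vee}$ together with $N_f^{+}=\hbar (N_f^{-})^{\vee}$ gives $\deg_A\Theta(\alpha|_f)=\tfrac{1}{2}\deg_A\Theta(T_fX)=\deg_A\Theta(N_f^{-})$ directly. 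The role of $\Sh{U}$ is to fix the quasi-periods in $z$ (needed for part (2) and for the degree condition in the K\"ahler direction used elsewhere), not to balance the $\At$-degree.

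Second, and more seriously, your argument for part (2) establishes the wrong containment. You identify $\Delta$ \emph{with} a union of translates of abelian subvarieties of the shape $\{z_i/z_j\hbar^{\alpha_{ij}}=1\}$, whereas the proposition asserts that $\Delta$ is contained in the \emph{complement} of the union of certain such divisors, i.e.\ that $\Delta$ \emph{avoids} them. This is not a cosmetic distinction: the clause exists precisely so that the NS5 specialization $\psi^{*}$ of Theorem \ref{main theorem proof NS5 resolution for stabs}, whose image is exactly a divisor $\{z'/z''\hbar^{\pm\w'}=1\}$, lands off the resonance locus and the pullback of the stable envelope is well defined. A correct argument must compute which translates actually occur in $\Delta$ (they are shifted by powers of $q$ and by exponents of $\hbar$ determined by the fixed-point weights of $\eta_k$) and check that the specific unshifted divisors $\{z_i/z_j\hbar^{\alpha_{ij}}=1\}$ relevant to the resolutions are not among them. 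As written, your sketch would not yield the statement being claimed.
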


The following result is a direct consequence of Okounkov's general theory and the previous proposition. 

\begin{theorem}[{\cite[Thm. 5.10]{BR}}]
\label{thm: stab exists}
Fix an arbitrary subtorus $A\subseteq \At$, a chamber $\chamb$ for the $A$-action, and a connected component $F\subset X^{A}$. Then the stable envelope
\[
\Stab_{\chamb}(F)\in \Gamma( (\Sh{L}_{A,F})^{ \triangledown}\boxtimes \Sh{L}_X(\infty\Delta))
\]
exists and is unique. Furthermore, The elliptic stable envelopes of Hanany-Witten equivalent bow varieties match. 
\end{theorem}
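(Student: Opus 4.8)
The plan is to derive the statement from Okounkov's general existence-and-uniqueness theorem, Theorem \ref{thm: definition elliptic stab}, feeding it the geometric input specific to bow varieties that is collected in Proposition \ref{prop: line bundle is attractive}. The first step is to make sure the framework of \S\ref{sec: attracting loci} is available. By the Hanany-Witten isomorphisms of \S\ref{subsec: HW iso}, any bow variety is isomorphic to a separated or co-separated one, for which the class $\alpha$ of \eqref{eq: class alpha for general bow variety} and hence the line bundle $\Sh{L}_X=\Theta(\alpha)\otimes\Sh{U}$ are defined; moreover in that case the projective morphism $\pi\colon X(\D)\to X_0(\D)$ of \S\ref{sec:Affinization} endows $\pi_0(X^A)$ with the generalized Bruhat order needed to form the subvarieties \eqref{eq: spaces in the definition of stable envelopes}. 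Since $X$ is smooth, this places us squarely in the situation of Theorem \ref{thm: definition elliptic stab}, provided we exhibit an attractive, non-degenerate line bundle.

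The second step is to verify the hypotheses of Theorem \ref{thm: definition elliptic stab} for the chosen data $(A,\chamb,\Sh{L}_X)$. Proposition \ref{prop: line bundle is attractive} asserts that $\Sh{L}_X$ is attractive for the full torus $\At$; to pass to a subtorus $A\subseteq\At$ I would invoke the general principle that an attractive line bundle for $\At$ restricts to one for any subtorus, which follows because the $A$-weights of $N^-_{F/X}$ refine the $\At$-weights and the defining identity \eqref{defining equation attractive line bundle} for $\deg_A$ is additive in weights. Part (2) of Proposition \ref{prop: line bundle is attractive} gives non-degeneracy, so the resonant locus $\Delta\subset\Ell_{\Cs_{\hbar}\times Z}(\pt)$ is a proper (explicitly bounded) subvariety and the ambient space of sections $\Gamma\big((\Sh{L}_{A,F})^{\triangledown}\boxtimes\Sh{L}_X(\infty\Delta)\big)$ is the correct one. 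Theorem \ref{thm: definition elliptic stab} then produces a unique section $\Stab_{\chamb}(F)$ satisfying the diagonal and support axioms.

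For the Hanany-Witten matching, let $X_1\cong X_2$ be HW-equivalent with corresponding fixed components $F_1\leftrightarrow F_2$ under an isomorphism $\iota$. Although $\iota$ is equivariant only along an automorphism of $\Tt$, cf. \cite[\S3.3]{rimanyi2020bow}, that automorphism carries chambers to chambers and identifies the attractors $\Att_{\chamb}(F_1)\leftrightarrow\Att_{\chamb}(F_2)$ along with the other loci of \eqref{eq: spaces in the definition of stable envelopes}. Part (1) of Proposition \ref{prop: line bundle is attractive} supplies a compatible isomorphism $(\Sh{L}_{A,F_1})^{\triangledown}\boxtimes\Sh{L}_{X_1}(\infty\Delta)\cong(\Sh{L}_{A,F_2})^{\triangledown}\boxtimes\Sh{L}_{X_2}(\infty\Delta)$, so transporting $\Stab_{\chamb}(F_1)$ through these identifications yields a section of the second bundle still satisfying axioms (i) and (ii) for $X_2$; by the uniqueness just established it must coincide with $\Stab_{\chamb}(F_2)$.

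I expect the delicate point to be the descent of attractiveness and non-degeneracy from the full torus $\At$ to an arbitrary subtorus $A$, together with the bookkeeping of how $\chamb$ and the fixed loci for $A$ relate to those for $\At$. All of this, however, is precisely what the inductive formalism of \cite{okounkov2020inductiveI} and Proposition \ref{prop: line bundle is attractive} are designed to handle, so the proof ultimately amounts to assembling these inputs rather than to any new geometric argument.
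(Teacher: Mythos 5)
Your proposal matches the paper's own treatment, which states that the theorem is a direct consequence of Okounkov's general existence-and-uniqueness theorem (Theorem \ref{thm: definition elliptic stab}) together with Proposition \ref{prop: line bundle is attractive}, with the Hanany--Witten matching following from part (1) of that proposition and uniqueness. Your elaboration of the subtorus restriction and the HW transport argument is consistent with how the cited reference \cite{BR} assembles these ingredients, so no further comment is needed.
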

\subsection{Matrix stable envelope}

Let $X$ be a bow variety with $m$ NS5 branes and $n$ D5 branes. We now focus on the action of the torus $\At$ acting on all $n$ D5 branes. The tangent weights for the $\At$ action on $X$ are of the form $a_i/a_j$, with $1 \leq i,j \leq n$ \cite[\S4]{rimanyi2020bow}. 
Hence, a permutation of the $a_i$ variables determines a chamber. By slight abuse of notation we will write 
$\lbrace a_{\sigma(1)}< \dots <a_{\sigma(n)}\rbrace$ for the chamber it determines, in which weights of the form $a_{\sigma(i)}/a_{\sigma(j)}$ for $i<j$ are repelling. The chamber
$\lbrace a_{1}< \dots <a_{n}\rbrace$ associated to the identity permutation will be called the {\em standard chamber} for the action of $\At$.

As discussed in \S\ref{sec:fixedpoints}, the fixed locus $X^{\At}$ is finite. Given an arbitrary chamber $\chamb$ for the $\At$-action, we define the matrix of  stable envelope restrictions $S_{\chamb}$ by setting\footnote{This matrix differ by a transposition and a normalization from the one considered in \cite{BR}.}
\begin{equation}
    \label{stable envelope restriction}
   (S_{\chamb})_{ij}:=  (S_{\chamb})_{ij}:=\frac{\Stab_{\chamb}(f_i)|_{f_j}}{\Stab_{\chamb}(f_j)|_{f_j}}
\end{equation}

If the fixed points are ordered consistently with the partial order defined in \S\ref{sec: attracting loci}, namely if $f_i\leq f_j$ implies $i\leq j$, then $(S_{\chamb})_{ij}$ is an upper triangular matrix whose elements are sections of some bundle on the abelian variety
\[
 E_{\Tt\times Z}=E^{n}\times E^{m}\times E_{\hbar},
\]
and hence can be seen as functions in the variables $(a,z,\hbar)$ with prescribed quasi-periods.

\subsection{Duality of stable envelopes}
As explained in \cite[\S3.7]{aganagic2016elliptic}, the inverse of the matrix stable envelope can be characterized geometrically. In view of applications of this characterization to mirror symmetry of vertex functions, it is convenient to introduce a slightly different normalization for the stable envelopes. First, given an arbitrary bow variety $X$, we define the following line bundles on $\Ell_{\Tt\times Z}(X)$:
\begin{align*}
    \Sh{U}^{\opp}&:=\bigotimes_{k=1}^{m-1} \Sh{U}\left(\eta_k,\frac{z_k}{z_{k+1}}\hbar^{-\ell(\Zb_{k+1})+\ch(\Zb_k)}\right)
    \\
    \Sh{L}^{\opp}_X&:=\Theta(\alpha_{\opp})\otimes \Sh{U}^{\opp} 
\end{align*}
Here, $\alpha_{\opp}=\hbar \alpha^\vee$ is the dual of the class $\alpha$ defined in \eqref{eq: class alpha for general bow variety}. 
We leave as an exercise the check that Theorems \ref{prop: line bundle is attractive} and \ref{thm: stab exists} hold after replacing the line bundle $\Sh{L}_X$ with $\Sh{L}_X^{\opp}$. Hence there is a well defined stable envelope map 
\[
\Stab^{\opp}_{\chamb}(F)\in \Gamma( \Sh{L}^{\opp, \triangledown}_{A,F}\boxtimes \Sh{L}^{\opp}_X)_{mer}
\]
that is invariant under Hanany-Witten transition. In agreement with the general logic of elliptic stable envelopes, this map differs from $\Stab_{\chamb}(F)$ by a shift of the K\"ahler parameters. The advantage of this alternative definition is that, setting 
\begin{equation*}
    \label{stable envelope opp restriction}(S^{\opp}_{\chamb})_{ij}:=\frac{\Stab^{\opp}_{\chamb}(f_i)\Big|_{f_j}}{\Stab^{\opp}_{\chamb}(f_j)\Big|_{f_j}}
\end{equation*}
we obtain:
\begin{proposition}
\label{prop: duality stable envelopes}
    Let $\mathfrak{C}^{\opp}$ be the chamber opposite to $\mathfrak{C}$. Then 
    \[
    \left(S_{\chamb}(a,z,\hbar)\right)^{-1}=\left(S^{\opp}_{\chamb_{\opp}}(a,z^{-1},\hbar)\right)^T. 
    \]
    Here $(-)^T$ stands for matrix transposition.
\end{proposition}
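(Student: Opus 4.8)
The plan is to deduce Proposition \ref{prop: duality stable envelopes} from the abstract duality of elliptic stable envelopes established in \cite{aganagic2016elliptic}, combined with a careful bookkeeping of the two normalizations $\Sh{L}_X$ and $\Sh{L}^{\opp}_X$. First I would recall the general statement: for a symplectic resolution with a chosen polarization, the matrix of restrictions of stable envelopes for a chamber $\chamb$ and its opposite $\chamb_{\opp}$ are related, after dualizing the K\"ahler parameters, by $\left(S_{\chamb}\right)^{-1} = \left(S'_{\chamb_{\opp}}\right)^T$, where $S'$ is built from the stable envelope attached to the \emph{dual} line bundle $\Sh{L}_X^{\triangledown} = \Sh{L}_X^{-1}\otimes \Theta(TX)$ and the opposite attracting directions. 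The essential point is then to identify this dual/opposite normalization with the one packaged in $\Sh{L}^{\opp}_X = \Theta(\alpha_{\opp})\otimes \Sh{U}^{\opp}$.

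The key steps, in order, would be: (1) State precisely the abstract duality of \cite[\S3.7]{aganagic2016elliptic} in the normalization used here, taking care of the fact that passing to the opposite chamber swaps $N^-_{F/X}$ with $N^+_{F/X}$ and hence replaces $\Theta(\alpha)$ with $\Theta(\hbar\alpha^\vee) = \Theta(\alpha_{\opp})$ up to the factor $\Theta(TX)$ that is absorbed by the $\triangledown$ duality (using the identity $TX = T^{1/2}X + \hbar (T^{1/2}X)^\vee$ and that $\alpha$ is a polarization up to a $\hbar$-weight, cf. \S\ref{sec: taut bundles and tangent}). (2) Check that the $\Sh{U}$-part transforms correctly: inverting the K\"ahler parameters $z_k \mapsto z_k^{-1}$ sends $\Sh{U}\!\left(\eta_k, \tfrac{z_k}{z_{k+1}}\hbar^{\ell(\Zb_{k+1})-\ch(\Zb_k)}\right)$ to the factor appearing in $\Sh{U}^{\opp}$, since $\Sh{U}(V,z)$ is built from $\delta(t_i,z)$ which is (anti)symmetric under $z\mapsto z^{-1}$ up to the quasiperiod relations \eqref{eq: quasiperiod delta} — so the exponent of $\hbar$ flips sign exactly as required. (3) Account for the diagonal normalization factors $\Stab_{\chamb}(f_j)|_{f_j}$: the ratio defining $(S_{\chamb})_{ij}$ divides these out, and one checks the self-restriction of the opposite stable envelope at a fixed point is, up to the same normalization, the product of the opposite tangent weights, so the diagonal entries of $S_{\chamb}$ and $S^{\opp}_{\chamb_{\opp}}$ are reciprocal in the matched variables. (4) Assemble these identifications into the claimed equality, verifying the transpose arises from the exchange of the roles of source and target fixed points in the convolution pairing.

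The main obstacle I anticipate is step (2) together with the careful tracking of quasi-periodicity: the stable envelope restrictions are not functions but sections of nontrivial line bundles on the abelian variety $E_{\Tt\times Z}$, so the substitution $z \mapsto z^{-1}$ is only meaningful after trivializing via the Chern character, and one must verify that the shift of K\"ahler parameters hidden in the definition of $\Sh{L}^{\opp}_X$ versus $\Sh{L}_X$ is \emph{exactly} the one produced by the abstract duality — neither more nor less. Concretely, this amounts to matching the $\hbar$-exponents $\ell(\Zb_{k+1}) - \ch(\Zb_k)$ against their negatives and confirming there is no residual $\hbar$-power or sign discrepancy; I expect this to follow from the explicit quasi-period formulas \eqref{eq: quasi-period theta function} and \eqref{eq: quasiperiod delta} but it requires disciplined combinatorial accounting of the brane charges. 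Once this normalization match is in hand, the rest is a formal consequence of Okounkov's characterization (Theorem \ref{thm: definition elliptic stab}) and the functoriality of convolution.
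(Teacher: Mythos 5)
Your proposal follows essentially the same route as the paper: the paper likewise derives the identity from a previously established duality (it cites \cite[Prop. 5.15]{BR}, which gives $\left(S_{\chamb}(a,z,\hbar)\right)^{-1}=\left(S_{\chamb_{\opp}}(a,z^{-1}\hbar^{r},\hbar)\right)^T$ for separated $X$) and then matches the residual K\"ahler shift $\hbar^{r}$ against the definition of $\Sh{L}^{\opp}_X$ using uniqueness of stable envelopes and the properties of the $\Sh{U}$-bundles --- exactly your steps (1)--(2). The one step you omit is the initial reduction, via Hanany-Witten invariance of both $S_{\chamb}$ and $S^{\opp}_{\chamb_{\opp}}$, to the case where $X$ is separated: this matters because your step (1) relies on $\alpha$ differing from a polarization by a constant class, which is only guaranteed for separated or co-separated diagrams, and the explicit $\hbar$-shift you need to match is likewise only pinned down in that setting. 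With that reduction added, your plan coincides with the paper's argument.
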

\begin{proof}
    By Hanany-Witten invariance of both $S_{\chamb}$ and $S^{\opp}_{\chamb_{\opp}}$, it suffices to assume that the bow variety $X$ is separated. Then by \cite[Prop. 5.15]{BR} (and taking into account the different conventions), we have 
    \[
    \left(S_{\chamb}(a,z,\hbar)\right)^{-1}=\left(S_{\chamb_{\opp}}(a,z^{-1}\hbar^{r},\hbar)\right)^T. 
    \]
    Therefore, it suffices to check that $S^{\opp}_{\chamb_{\opp}}(a,z^{-1},\hbar)=S_{\chamb_{\opp}}(a,z^{-1}\hbar^{r},\hbar)$ for any chamber $\chamb$. This follows from uniqueness of stable envelopes, the definitions of $\Sh{L}_X$ and $\Sh{L}^{\opp}_X$, and the fifth statement of \cite[Lemma 4.1]{BR}.
\end{proof}


\subsection{Mirror symmetry for stable envelopes}

Consider an arbitrary bow variety $X$ with $m$ NS5 branes and $n$ D5 branes and its dual variety $X^!$. In this section, the choice of the standard chamber is always understood, and hence the symbol $\chamb$ is dropped from the notation. For instance, we will write $\Stab^{X}(f)$ as a shortcut for the stable envelope $\Stab^{X}_{\chamb}(f)$ of $X$ with standard chamber $\chamb=\lbrace a_1<\dots<a_n\rbrace $.

For a fixed point $f\in X^{\At}$, let the sign $\varepsilon_X(f)$ be defined as $(-1)$ to the power of
\[
\sum_{i=1}^n\sum_{j=1}^m\sum_{\substack{k>i\\ l>j}} b_{ij}b_{kl},
\]
where $b_{ij}$ are the entries of the BCT table of $f$ introduced in \S\ref{sec:fixedpoints}. Mirror symmetry for stable envelopes of bow varieties, which is the main result of \cite{BR}, states that:
\begin{theorem}
\label{thm: mirror symmetry stabs}
    Mirror symmetry for stable envelopes of bow varieties holds, i.e.
    \begin{equation}
    \label{3d mirror symmetry formula}
    \frac{\Stab^{X}(f)|_g}{\Stab^{X}(g)|_g}(a,z,\hbar)=\varepsilon_X(f)\varepsilon_X(g)\frac{\Stab^{X^!}(g^!)|_{f^!}}{\Stab^{X^!}(f^!)|_{f^!}}(z,a,\hbar^{-1})
    \end{equation}
    for any bow variety $X$ and fixed points $f,g\in X^{\At}$.
\end{theorem}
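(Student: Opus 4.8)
The plan is to prove the identity \eqref{3d mirror symmetry formula} by induction on the total brane weight $\sum_{\Ab}\w(\Ab)+\sum_{\Zb}\w(\Zb)$ of $\D$, paralleling the inductive scheme that organizes the rest of the paper. The first reduction is to the separated case: since elliptic stable envelopes are invariant under Hanany-Witten transition (Theorem \ref{thm: stab exists}) and HW transition is compatible with the combinatorial duality $\D\leftrightarrow\D^!$, one may assume $\D$ is separated, in which case $\D^!$ is co-separated and both sides of \eqref{3d mirror symmetry formula} admit explicit descriptions in terms of tautological classes.

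For the base case all branes have weight $1$, so $X\cong T^*\Fl$ is the cotangent bundle of a full flag variety, and $X^!$ is of the same type. Here \eqref{3d mirror symmetry formula} is the mirror self-duality of elliptic stable envelopes of $T^*\Fl$. I would establish it directly: write the two restriction matrices via the explicit elliptic weight functions (off-shell Bethe vectors) whose entries are products of theta functions $\vartheta(\cdot)$ and of the $\delta$-factors coming from $\Sh{U}$; the identity then becomes the symmetry of these weight functions under exchanging the equivariant variables $a$ with the K\"ahler variables $z$ together with $\hbar\mapsto\hbar^{-1}$, and the sign $\varepsilon_X(f)\varepsilon_X(g)$ accounts for the reordering of the theta factors together with the duality twist $\triangledown$, which contributes $\Theta(TX)$ whose quasi-periods introduce signs via \eqref{eq: quasi-period theta function}.

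For the inductive step I would use the two geometric operations that decrease brane weights: the closed embedding $j:X\hookrightarrow\wt X$ splitting a D5 brane (\S\ref{sec: d5 resolutions}) and the Lagrangian correspondence $\ol X\leftarrow L\to X$ splitting an NS5 brane (\S\ref{sec: ns5 resolutions}), both of which are compatible with stable envelopes by \cite{BR}. The key point is that under the mirror map, splitting a D5 brane of $\D$ corresponds to splitting an NS5 brane of $\D^!$, so the two reductions are interchanged by duality; hence, assuming \eqref{3d mirror symmetry formula} for the smaller bow varieties $\wt X$ (resp. $\ol X$ and $L$), one transports it through the pullback/pushforward formulas for stable envelopes along $j$ and the correspondence, tracking the K\"ahler shifts $\hbar^{\ell(\Zb)-\ch(\Zb)}$ of \eqref{eq: ubund}, the change of polarization between $\alpha_\D$ and $\alpha_{\wt\D}$, and the controlled behaviour of $\varepsilon$ under adding a brane. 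A more structural alternative is to set $M_{gf}$ equal to the right-hand side of \eqref{3d mirror symmetry formula} and to verify that, as a function on $\Base$, it satisfies the triangularity and normalization axioms that characterize $S_\chamb$ in Theorem \ref{thm: stab exists}; this needs the fact that $f\mapsto f^!$ reverses the generalized Bruhat order on fixed points (from \cite{rimanyi2020bow}) and that $M$ has exactly the quasi-periodicity imposed by $(\Sh{L}_{A,F})^\triangledown\boxtimes\Sh{L}_X$.

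The main obstacle, in either route, is the bookkeeping of line bundles, shifts, and signs. One must show that under $a\leftrightarrow z$, $\hbar\mapsto\hbar^{-1}$ the attractive line bundle $\Sh{L}_X=\Theta(\alpha_\D)\otimes\Sh{U}$ of $X$ is exchanged with $\Sh{L}_{X^!}$, so that the equivariant quasi-periods of one become the K\"ahler quasi-periods of the other; this reduces to a precise comparison of the tangent spaces $T_fX$ and $T_{f^!}X^!$ at mirror fixed points, which agree after the swap only up to a ``constant'' correction (a monomial in $\hbar$ and the K\"ahler variables coming from $\alpha_\D$ versus $\alpha_{\D^!}$, cf. \eqref{eq: class alpha for general bow variety}). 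It is exactly this correction, fed through the quasi-period formula \eqref{eq: quasi-period theta function}, that produces the sign $\varepsilon_X(f)\varepsilon_X(g)$, and getting its combinatorics to match the BCT expression in \S\ref{sec:fixedpoints} is the delicate part of the argument.
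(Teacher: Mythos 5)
The paper does not actually prove Theorem \ref{thm: mirror symmetry stabs}: it is imported wholesale from \cite{BR}, where it is the main result. So the only fair comparison is with the strategy of \cite{BR} as the present paper summarizes it, and on that score your plan is essentially the right one --- reduce to the separated case by Hanany--Witten invariance, take cotangent bundles of complete flag varieties (all weights $1$) as the base case, and induct using the D5 embeddings $j\colon X\hookrightarrow \wt X$ and the NS5 correspondences $\ol X\leftarrow L\to X$, which are interchanged under duality. This is exactly the architecture that \cite{BR} uses and that \S\ref{sec: mirsym} of this paper mirrors for vertex functions.

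Two concrete problems remain. First, your induction parameter is wrong: a D5 or NS5 resolution splits a brane of weight $\w$ into branes of weights $\w'$ and $\w''$ with $\w=\w'+\w''$, so the total brane weight $\sum_{\Ab}\w(\Ab)+\sum_{\Zb}\w(\Zb)$ is \emph{preserved}, not decreased, and the induction as stated never terminates. The quantity that actually drops by one under each resolution is $\sum_{\Ab}(\w(\Ab)-1)+\sum_{\Zb}(\w(\Zb)-1)$ (equivalently, the defect from having all weights equal to $1$); this is precisely the parameter the present paper uses in its own inductions ($k(X)$ and $l(X)$ in \S\ref{sec: NS5 and vertex}). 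Second, everything you flag as ``the delicate part'' --- the base case for $T^*\Fl$ via weight functions, the matching of $\Sh{L}_X$ with $\Sh{L}_{X^!}$ under $a\leftrightarrow z$, $\hbar\mapsto\hbar^{-1}$, and the derivation of the sign $\varepsilon_X(f)\varepsilon_X(g)$ from the quasi-period bookkeeping --- is where all the mathematical content of the theorem lives, and none of it is carried out. In particular, your ``structural alternative'' of checking the stable-envelope axioms for the right-hand side directly is known to be hard precisely because the triangularity and holomorphicity axioms for $X$ are statements in the $a$-variables, while the right-hand side is only controlled a priori in the $z$-variables; asserting that this works is not a proof. As submitted, this is a correct outline of the known strategy rather than a proof.
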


By the definition of \eqref{eq: ubund}, it follows that $\Stab$ depends only on the ratios $Q_i=z_i/z_{i+1}$ and $u_i=a_i/a_{i+1}$. Therefore, whenever convenient, we will think of the function $\Stab(f)|_g$ as a function in the variables $(u,Q,\hbar)$ and denote is by 
\[
\Stab(f)|_g(u, Q, \hbar).
\]
To compare mirror symmetry of stable envelopes to mirror symmetry of vertex functions, we need a particular normalization of stable envelopes. To this end, set 
\begin{equation}\label{eq: def of msstab}
    \MSstab_{gf}(u, Q,\hbar):=\left(\frac{\det (\alpha)|_{f}}{\det(N^-_{f})}\right)^{1/2}\frac{\Stab(f)|_{g}}{\Stab(g)|_{g}}(u, Q,\hbar)\left(\frac{\det (\alpha)|_{g}}{\det(N^-_{g})}\right)^{-1/2}
\end{equation}
and consider the change of variable map $\mirmap$ given by 
\[
q^{!} \mapsto q\qquad \hbar^{!} \mapsto \frac{1}{\hbar q} \qquad (q^{!})^{-\w(\Zb_{i}^{!})} Q_{i}^{!}\mapsto u_{i}\qquad u_{i}^{!} \mapsto  q^{\w(\Zb_{i})} Q_{i}.
\]

\begin{lemma}
   \label{lemma: mirror symmetry stabs rephrasing} 
   Theorem \eqref{thm: mirror symmetry stabs} is equivalent to 
   \[
   \mirmap \left( \MSstab^{X^{!}}_{f^{!}g^{!}}\right)= \MSstab^X_{gf}.
   \]
   \begin{proof}
       The proof follows from Theorem \ref{thm: mirror symmetry stabs}, formula \eqref{eq: def of msstab}, and the quasi-period of the stable envelopes under $Q$-shifts. To compute the latter, it is useful to notice that $\kappa(\Stab(g^{!})|_{f^!}/\Stab(f^{!})|_{f^!})$ has the same quasi-periods as
       \[
       \mirmap^{-1}\left(\Theta(\alpha^!|_{f^!}-N^{-}_{f^!})\otimes \Theta(\alpha^!|_{g^!}-N^{-}_{g^!})^{-1}\otimes \Sh{U}_{f^!}\otimes \Sh{U}_{g^{!}}^{-1}\right)
       \]
       (by definition) and also as
       \[
       \Theta(\alpha|_{g}-N^{-}_{g})\otimes \Theta(\alpha|_{f}-N^{-}_{f})\otimes \Sh{U}_{g}\otimes \Sh{U}_{f}^{-1}
       \]
       (by Theorem \ref{thm: mirror symmetry stabs}). Details are left to the reader.
   \end{proof}
\end{lemma}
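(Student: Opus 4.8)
The plan is to unwind the definition of $\MSstab$ given in \eqref{eq: def of msstab}, rewrite both sides of the claimed identity in terms of the bare stable envelope ratios, and check that the two square-root determinant prefactors transform correctly under $\mirmap$. Concretely, write $s_{gf} := \Stab(f)|_g/\Stab(g)|_g$ for the bow variety $X$ and $s^!_{g^!f^!}$ for its dual, so that
\[
\MSstab^X_{gf} = \left(\frac{\det(\alpha)|_f}{\det(N^-_f)}\right)^{1/2} s_{gf}\left(\frac{\det(\alpha)|_g}{\det(N^-_g)}\right)^{-1/2},
\]
and analogously for $X^!$. By Theorem \ref{thm: mirror symmetry stabs} (in the form with the signs $\varepsilon_X$), $\mirmap$ sends $s^!_{g^!f^!}(z,a,\hbar^{-1})$ — after the parameter identification — to $\varepsilon_X(f)\varepsilon_X(g)\, s_{gf}(u,Q,\hbar)$, up to the precise dictionary between the arguments $(u,Q,\hbar)$ and $(u^!,Q^!,\hbar^!)$ which is exactly what $\mirmap$ encodes; here one must be slightly careful that $\mirmap$ swaps the roles of the $u$ and $Q$ variables and inverts $\hbar$ appropriately, matching the $(a,z,\hbar)\leftrightarrow(z,a,\hbar^{-1})$ symmetry in \eqref{3d mirror symmetry formula}.

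Next I would handle the determinant prefactors. The key point is that $\mirmap^{-1}$ must send the combination $\det(\alpha^!|_{f^!})/\det(N^-_{f^!})$ (a monomial in $u^!,Q^!,\hbar^!$, up to sign, since these are one-dimensional $\Tt^!\times Z^!$-weights) to $\det(\alpha|_f)/\det(N^-_f)$ up to a sign, and likewise for $g$. This is essentially the statement that the quasi-periods of the two sides of \eqref{3d mirror symmetry formula} agree, which is already implicitly used in \cite{BR} to prove Theorem \ref{thm: mirror symmetry stabs}; I would extract it from there, or re-derive it from the explicit description of $N^-_f$ and of the class $\alpha$ in \eqref{eq: class alpha for general bow variety} in terms of the BCT entries $b_{ij}$ of $f$. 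The signs $\varepsilon_X(f)\varepsilon_X(g)$ appearing in \eqref{3d mirror symmetry formula} should precisely cancel the sign ambiguities in taking square roots of the determinant ratios — this is the reason the particular sign $\varepsilon_X$ was defined via $\sum b_{ij}b_{kl}$, and checking this cancellation is a finite combinatorial computation with BCTs.

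Putting these together: apply $\mirmap$ to $\MSstab^{X^!}_{f^!g^!}$, substitute the definition, use Theorem \ref{thm: mirror symmetry stabs} to replace the bare ratio, and use the monomial transformation of the determinant factors together with the sign cancellation to land on $\MSstab^X_{gf}$. The converse implication is immediate since $\mirmap$ is invertible and the argument is symmetric in $X$ and $X^!$. The main obstacle I anticipate is bookkeeping rather than conceptual: tracking the $\hbar$-shifts and $\mathcal{U}$-bundle twists so that the quasi-periodicity match in the hint of Lemma \ref{lemma: mirror symmetry stabs rephrasing} becomes an honest equality of monomial prefactors, and verifying that the square-root branches are pinned down consistently by the $\varepsilon_X$ signs. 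This is exactly the computation the authors defer with ``details are left to the reader,'' and doing it carefully requires keeping the normalization conventions of \cite{BR} (which differ from those here by a transpose and a normalization, as flagged in the footnote to \eqref{stable envelope restriction}) straight throughout.
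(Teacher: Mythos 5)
Your proposal follows essentially the same route as the paper's own (very terse) proof: unwind \eqref{eq: def of msstab}, apply Theorem \ref{thm: mirror symmetry stabs} to the bare restriction ratios, and use the quasi-periods of the stable envelopes (equivalently, of the line bundles $\Theta(\alpha|_{f}-N^{-}_{f})\otimes\Sh{U}_f$ and their duals) to absorb the $q$-shifts built into $\mirmap$ and to match the square-root determinant prefactors, with the signs $\varepsilon_X(f)\varepsilon_X(g)$ fixing the branches. The bookkeeping you flag is exactly the content the authors leave to the reader, and your plan for carrying it out is sound.
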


\section{Quasimaps and vertex functions}\label{sec: quasimaps}

In this section, we will define and study vertex functions of bow varieties. We will calculate them explicitly, determine their invariance under Hanany-Witten transition, and study various analytic properties.

\subsection{Quasimaps to bow varieties}

Let $M$ be a smooth affine variety equipped with the action of a reductive group $G$. For a fixed character $\chi:G\to \Cs$, let $X=M\git^{\chi} G$ be the associated GIT quotient. The prototypical examples in this article are bow varieties. We denote the associated semistable (resp. unstable) locus by $M^{\chi\semis}$ (resp. $M^{\chi\un})$.

Fix a nonsingular complex projective curve $C$ and consider the mapping stack
\[
\text{Map}(C, M/G)
\]
The moduli space of quasimaps from $C$ to $M\git^{\chi}G$ is by definition the open substack
\[
\qm(C,M\git^{\chi}G)\subset \text{Map}(C, M/G)
\]
consisting of those maps $f: C\to M/G$ such that the preimage $f^{-1}(M^{\chi\un}/G)$ is zero dimensional. In other words, $f$ must land in the semistable locus $M^{\chi\semis}/G$ for all but finitely many points. Under some mild assumptions on $M$ and $G$, $\qm(C,M\git^{\chi}G)$ is a Deligne-Mumford stack \cite{qm}. 

We remark that the moduli space $\qm(C,M\git^{\chi}G)$ depends on the stack quotient $M/G$ and not only on the GIT quotient $M\git^{\chi}G$. In particular, it depends on the GIT presentation of the quotient. Since in this article we are only interested in the case $C=\bbP^1$, we set
\[
\qm(M\git^{\chi}G):=\qm(\bbP^1, M\git^{\chi}G).
\]

If $M\git^{\chi}G$ is a bow variety, there is a natural action of the torus $\Tq:=\Tt\times \Cs_q$ on $\qm(M\git^{\chi}G)$, induced by the action of the torus $\Tt$ on the target space and the maximal torus $\Cs_q\subset \text{Aut}(\bbP^1)$ on the source curve $\bbP^1$.

\subsection{Obstruction theory and virtual structure}

From now on, we assume that $X=M\git^{\chi} G$ is a bow variety, and we will use the notation from \S\ref{sec: bow}. The moduli space $\qm(X)$ fits in a natural diagram 
\begin{equation}
    \label{eq: diagram universal evaluation}
    \begin{tikzcd}
    \bbP^1\times \qm(X) \arrow[r, "u"] \arrow[d, "pr"] & \FX\\
    \qm(X)
\end{tikzcd}
\end{equation}
where the map $u$ is the universal evaluation. The perfect obstruction theory is then given by the complex 
\begin{equation}
    \label{eq: POT}
    \EE=\left(R^\bullet pr_* u^*\Tt_{\FX}\right)^\vee,
\end{equation}
where the $G$-equivariant perfect complex
\begin{equation}
    \label{eq: perfect complex stack quotient}
    \Tt_{\FX}=\left[\Sh{O}_{\MM}\otimes \Lie(G) \xrightarrow{d\alpha }T_{\MM}\xrightarrow{d\mu} \Sh{O}_{\MM}\otimes \NN \right]\Big|_{{M}}
\end{equation}
is the tangent complex of the stack $\FX=[M/G]$.  
Here, $\alpha: G\to \text{Aut}(\MM)$ is the canonical map.

Explicitly, a quasimap to $X$ consists of the data $(\Sh{W},s)$, where $\Sh{W}=(\Sh{W}_{\Xb})_{\Xb}$, $\Sh{W}_{\Xb}$ is a rank $d_{\Xb}$ bundle on $\bbP^1$ for each D3 brane $\Xb$, and $s$ is a global section of the bundle $\Sh{M}$ over $\bbP^1$ obtained by replacing the vector spaces $W_{\Xb}$ in \eqref{eq: MM} with the bundles $\Sh{W}_{\Xb}$. By definition, $s$ is required to satisfy the relations of the map $\mu$ from \S\ref{sec:def of bow variety} fiberwise. In other words, letting $\Sh{N}$ be the $\bbP^1$-bundle obtained by replacing the vector spaces $W_{\Xb}$ in \eqref{eq: NN} with the bundles $\Sh{W}_{\Xb}$, the map $\mu$ induces a vector bundle morphism $\Sh{M}\to \Sh{N}$ and $s$ is required to be in its kernel.

For any D3 brane $\Xb$ in the brane diagram we have a tautological bundle $\tb_{\Xb}$ on $\FX$. Pulled back along the inclusion $X\subset \FX$, these restrict to the tautological bundles defined in \S\ref{sec:def of bow variety}. Accordingly, the universal evaluation $u$ induces, via pullback, tautological bundles 
\[
\Xi_{\Xb}=u^*\tb_{\Xb}
\]
on $\qm(X)$ satisfying $\Xi_{\Xb}|_{\bbP^1\times \lbrace(\Sh{W},s)\rbrace}=\Sh{W}_{\Xb}$.

We further consider the tautological bundles $\UM$ and $\UN$ on $\bbP^1\times \qm(X)$ such that 
\[
\UM|_{\bbP^1\times \lbrace(\Sh{W},s)\rbrace}=\Sh{M}\qquad \UN|_{\bbP^1\times \lbrace(\Sh{W},s)\rbrace}=\Sh{N}
\]

From \eqref{eq: POT} it follows that the virtual tangent space at the point $(\Sh{W},s)$ is given by 
\begin{equation}\label{eq: qm tvir}
T_{\vir}|_{(\Sh{W},s)}=H^\bullet (\Sh{M}) -\hbar H^\bullet (\Sh{N}) -H^\bullet(\oplus_{\Xb}\End(\Sh{W}_{\Xb})).
\end{equation}

\subsection{Vertex functions for bow varieties}

Let $X=M\git^{\chi} G$ be a bow variety and fix $p \in (\mathbb{P}^{1})^{\Cs_q}$. We denote by $\qm_{\ns p}(X)$ the moduli space of quasimaps from $\mathbb{P}^{1}$ to $X$ nonsingular at $p$. By definition, this is defined via the pullback diagram 
\[
\begin{tikzcd}
    \qm_{\ns p}(X) \arrow[r, hookrightarrow] \arrow[d, "\ev_p"]&\qm(X)\arrow[d, "\ev_p"] \\
    X=M^{\chi\semis}/G\arrow[r, hookrightarrow]& M/G
\end{tikzcd}
\]
Consider now the commutative diagram
\[
\begin{tikzcd}
    \qm_{\ns p}(X) \arrow{r}{\ev_{p}} & X \\
    \qm_{\ns p}(X)^{\mathbb{C}^{\times}_{q}} \arrow[hookrightarrow]{u} \arrow{ur}
\end{tikzcd}
\]
where the horizontal map is given by evaluation at $p$. We will abuse notation and denote the diagonal arrow also by $\operatorname{ev}_{p}$. It is known to be proper \cite{Okounkov_lectures} and hence the pushforward $\operatorname{ev}_{p,*}$ is well-defined in localized $K$-theory. For bow varieties, we could also define the pushforward by localization with respect to the (larger) torus $\Tq$ using Proposition \ref{prop: fixedqm} below.

The degree of a quasimap $(\Sh{W},s)$ is defined to be $(\deg \Sh{W}_{\Xb_{k}})_{1\leq k\leq m-1} \in \mathbb{Z}^{m-1}$ where $m$ is the number of NS5 branes and $\Xb_k$ is one of the D3 branes between the NS5 branes $\Zb_{k}$ and $\Zb_{k+1}$. It does not depend on the choice of $\Xb_{k}$. This gives a decomposition
\[
\qm_{\ns p}(X) = \bigsqcup_{d\in \mathbb{Z}^{m-1}} \qm^{d}_{\ns p}(X),
\]

The perfect obstruction theory \eqref{eq: POT} gives rise to a virtual structure sheaf $\mathcal{O}_{\operatorname{vir}}^{d}$ on $\qm^{d}(X)$ and hence on the open substack $\qm^{d}_{\ns p}(X)$. To find tractable structures in $K$-theoretic curve counts, it is important to twist the virtual structure sheaf by a certain line bundle. We refer to \cite{Okounkov_lectures} for a detailed discussion of this and for several examples of ``rigidity" which require the symmetrization. 

Let $\pol$ be a polarization of $X$. We assume that $\pol$ can be expressed in terms of the tautological bundles $\tb_{\Xb}$ and we fix such a representative. This induces a $K$-theory class $\qmpol$ on $\bbP^{1} \times \qm_{\ns p}(X)$.

By definition,
\begin{equation}\label{symvss}
\vrs^{d}:=\mathcal{O}_{\operatorname{vir}}^{d} \otimes \left(\Can_{\operatorname{vir}} \frac{\det \qmpol|_{p}}{\det 
\qmpol|_{p'}} \right)^{1/2}
\end{equation}
where 
\[
\Can_{\operatorname{vir}}:=(\det T_{\vir})^{-1} \otimes  \det \left(\operatorname{ev}_{p}^{*} T X\right)
\]
is the (normalized) virtual canonical bundle on $\qm_{\ns p}^{d}(X)$ and $\{p,p'\}=(\bbP^{1})^{\Cs_{q}}$. 

We introduce formal variables $Q_{i}$ for $1 \leq i \leq m-1$. For $d \in \mathbb{Z}^{m-1}$, we will use the multidegree notation $Q^{d}:=\prod_{i=1}^{m-1} Q_{i}^{d_i}$. It will also be convenient for us later on to use variables $z_i$ for $1 \leq i \leq m$ such that $Q_{i}=\frac{z_i}{z_{i+1}}$.

\begin{definition}
The vertex function of $X$ is the formal power series
\[
\ver^{X}:=\sum_{d} \ev_{p,*}\left(\vrs^{d} \right) Q^{d} \in K_{\Tt\times \mathbb{C}^{\times}_{q}}(X)_{loc}[[Q]].
\]
where the sum runs over the cone of degrees $d$ such that $\qm_{\ns p}^{d}(X)$ is nonempty.
\end{definition}
Notice that the choice of normalizations in \eqref{symvss} ensures that $V^X(Q)=1+ \mathcal{O}(Q)$.

\begin{remark}
    More generally, one could consider vertex functions with descendant insertions, namely the power series defined by
    \[
    \ver^{X, \tau}:=\sum_{d} \ev_{p,*}\left(\vrs^{d}\otimes (\ev_{p'})^*(\tau) \right) Q^{d} \in K_{\Tt\times \mathbb{C}^{\times}_{q}}(X)_{loc}[[Q]],
    \]
    where 
    \[
    \begin{tikzcd}
         \FX & \qm_{\ns p}(X) \arrow[l, swap,  "\ev_{p'}"]\arrow{r}{\ev_{p}} & X 
    \end{tikzcd}
    \]
    and $\tau$ is a polynomial in tautological bundles of $\FX$. In this paper we study mirror symmetry only for the trivial insertion $\tau=1$, so we drop it from the notation. Understanding mirror symmetry for vertex functions with nontrivial insertion is, nevertheless, a very interesting topic. 
\end{remark}

\subsection{Dependence on polarization}

As shown in \cite[\S8.3]{Okounkov_lectures}, vertex functions for different polarizations are equal after a shift of $Q$ by a power $q$. For 3d mirror symmetry of vertex functions, we will make particular choices of polarizations. For now, we will leave the choice open.

\subsection{Localization formula}

Let $X$ be a separated or co-separated bow variety with brane diagram
\[
\underset{\Zb_{1}}{\fs}d_1 \underset{\Zb_{2}}{\fs} \ldots \underset{\Zb_{m}}{\fs} d_{m} \underset{\Ab_{1}}{\bs} \ldots \underset{\Ab_{n-1}}{\bs} d_{m+n-1} \underset{\Ab_{n}}{\bs} \quad \text{or} \quad \underset{\Ab_{1}}{\bs}d_1 \underset{\Ab_{2}}{\bs} \ldots \underset{\Ab_{n}}{\bs} d_{n} \underset{\Zb_{1}}{\fs} \ldots \underset{\Zb_{m-1}}{\fs} d_{m+n-1} \underset{\Zb_{m}}{\fs}
\] 
Following our running convention, we label the D3 branes $\Xb_i$ and, accordingly, the tautological vector bundles $\xi_i:=\xi_{\Xb_i}$ from left to right. Notice that $\rk(\xi_i)=d_i$.

Fix $f \in X^{\Tt}$. By Theorem \ref{thm: fixed points bow}, $f$ is determined by its butterfly diagram $B^{f}=(B_0^f, B_1^f, \col)$. Let $W_{\Xb}$ be the vector space with basis given by the vertices in $\col^{-1}(\Xb)$. Viewing the arrows of the butterfly as the components of linear maps, the butterfly diagram precisely encodes the quiver description of a distinguished representative of the fixed point $f$. There is a $\Tt$-weight associated to each vertex of the butterfly diagram, which we will denote by $w(a)$, such that
\[
\tb_{i}|_{f}=\sum_{\substack{a \in B^{f}_{0} \\ \col(a)=i}} w(a).
\]

\begin{definition}\label{rpp}
    A stable reverse plane partition over $B^{f}$ is a function $\pi: B^{f}_{0} \to \mathbb{N}$ such that 
    \begin{enumerate}
        \item $a \to b \in B^{f}_{1} \implies \pi(a) \leq \pi(b)$
        \item If either of the two branes adjacent to $\col(a)$ is a D5 brane, then $\pi(a)=0$.
    \end{enumerate}
    Let $\rpp(f)$ denote the set of stable reverse plane partitions over $B^{f}$.
\end{definition}

The degree of a stable reverse plane partition $\pi$ is the vector 
\[
\deg(\pi):=\left(\sum_{\substack{a\in B^{f}_{0} \\ \col(a)=\Zb_{i}^{+}}} \pi(a)\right)_{1 \leq i \leq m-1} \in \mathbb{Z}^{m-1}.
\]
In the previous line, we only take the index $i$ up to $m-1$ because $\col(a)>m-1 \implies \pi(a)=0$ by Definition \ref{rpp}. For $d\in \mathbb{Z}^{m-1}$, let $\rpp^{d}(f)\subset \rpp(f)$ be the subset of reverse plane partitions of degree $d$.

\begin{proposition}\label{prop: fixedqm}
    Let $\qm_{f}=\ev_{p}^{-1}(f)$. There are canonical bijections
    \[
\qm_{\ns p}^{d}(X)^{\Tq}=\bigsqcup_{f \in X^{\Tt}} \left(\qm_{f}^{d}\right)^{\Tq},
    \]
    and
    \[
\left(\qm_{f}^{d}\right)^{\Tq}= \rpp^{d}(f).
    \]
\end{proposition}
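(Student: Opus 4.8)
The plan is to analyze $\Cs_q$-fixed quasimaps directly. A $\Cs_q$-fixed quasimap $(\Sh{W},s)$ to $X$ consists of $\Cs_q$-equivariant bundles $\Sh{W}_{\Xb}$ on $\bbP^1$ together with a $\Cs_q$-invariant section $s$ of $\Sh{M}$ satisfying the moment map relations. Since every $\Cs_q$-equivariant vector bundle on $\bbP^1$ splits as a direct sum of equivariant line bundles $\Sh{O}(k)\otimes(\text{character})$, each $\Sh{W}_{\Xb}$ decomposes into $\Cs_q$-weight spaces, and the evaluation map at $p$ sends $(\Sh{W},s)$ to its fiber at $p$, which must be the $\Tt$-fixed point $f$. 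This gives the first bijection: a $\Tq$-fixed quasimap nonsingular at $p$ lives in exactly one fiber $\qm_f^d$, according to its image under $\ev_p$, and each of these is itself $\Tq$-invariant. The content is therefore entirely in identifying $(\qm_f^d)^{\Tq}$ with $\rpp^d(f)$.

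To establish the second bijection, I would argue as follows. Writing $\Sh{W}_{\Xb}=\bigoplus_\alpha \Sh{L}_\alpha$ with $\Sh{L}_\alpha$ a $\Cs_q\times\Tt$-equivariant line bundle on $\bbP^1$, the requirement that the fiber at $p$ equals $f$ matches the summands of $\Sh{W}_{\Xb}|_p$ with the vertices $a\in B^f_0$ of color $\Xb$: each vertex $a$ corresponds to one line bundle summand $\Sh{L}_a\cong\Sh{O}(\pi(a))\otimes w(a)\otimes(\Cs_q\text{-weight})$, where the $\Tt$-weight at $p$ is precisely $w(a)$. The $\Cs_q$-weight is pinned down by the normalization at $p$, and nonsingularity at $p$ together with stability forces the twist $\pi(a)=\deg\Sh{L}_a\geq 0$. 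So a $\Tq$-fixed point is recorded by a function $\pi:B^f_0\to\NN$. The conditions defining $\rpp(f)$ are then forced: the components of the section $s$ are $\Cs_q$-equivariant morphisms between these line bundles, and such a morphism $\Sh{L}_a\to\Sh{L}_b$ (corresponding to an arrow $a\to b$ in $B^f_1$) is nonzero precisely when it exists as a global section, i.e.\ $\pi(a)\leq\pi(b)$ — but at a $\Tq$-fixed point the arrow must be nonzero exactly when it is nonzero in the butterfly representative of $f$ (this is how $f$ is realized), yielding condition (1); and the components of $s$ associated to D5 branes (the maps $a_\Ab, b_\Ab$ and the maps into/out of $\C_\Ab$), being $\Tt$-equivariant maps to or from the one-dimensional spaces $\C_\Ab$ with their standard weight, can only be $\Cs_q$-equivariantly nonzero when the relevant twist vanishes; combined with the S1/S2 stability conditions this forces $\pi(a)=0$ whenever a brane adjacent to $\col(a)$ is a D5 brane, which is condition (2). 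Conversely, given $\pi\in\rpp^d(f)$ one reconstructs the equivariant bundles and the section, the moment map relations holding because they hold for $f$ itself and the construction is a deformation of $f$ only in the $\Cs_q$-direction. The degree bookkeeping is immediate: $\deg\Sh{W}_{\Xb_i}=\sum_{\col(a)=\Xb_i}\pi(a)$, and choosing $\Xb_i=\Zb_i^+$ gives $\deg(\pi)=d$.

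The main obstacle I anticipate is the careful verification that at a $\Tq$-fixed quasimap the \emph{pattern} of nonzero maps among the line bundle summands is exactly the pattern prescribed by the butterfly diagram of $f$ — i.e.\ that no ``extra'' maps can turn on and none can turn off. This requires combining the $\Cs_q$-equivariance constraint (a map between summands of different twists is forced to vanish unless the twists are compatible, and even then its nonvanishing is rigid under the torus action) with the stability conditions (S1), (S2) and the moment-map relations, to rule out degenerate configurations. One must also handle the two-way (NS5) parts of the section, where the relevant maps $C_\Zb, D_\Zb$ go between tautological bundles and the twist can be nonzero, so that the inequality $\pi(a)\leq\pi(b)$ is the genuine constraint rather than an equality. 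This is exactly the type of analysis carried out for Nakajima quiver varieties (e.g.\ in \cite{okounkov2017enumerative}); here it must be adapted to the bow quiver with its handsaw-type D5 data, but the structure of the argument is parallel.
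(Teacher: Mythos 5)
Your strategy coincides with the paper's: both prove the first bijection by noting that a $\Cs_q$-fixed quasimap nonsingular at $p$ contracts the affine chart around $p$ to a single point of $X$, and both identify $(\qm_{f}^{d})^{\Tq}$ with combinatorial data read off the butterfly of $f$. The paper carries out the second step in the language of framed flags of quiver subrepresentations ending at $f$ (quoting \cite[\S 7.2]{Okounkov_lectures} for the classification of $\Cs_q$-fixed quasimaps), while you use the equivalent decomposition of the $\Sh{W}_{\Xb}$ into $\Cs_q\times\Tt$-equivariant line bundles; these are the same analysis in two dialects.

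The one place your sketch has a real hole --- and you flag it yourself --- is the rigidity of the pattern of nonzero maps among the summands. The paper closes this with a fact you leave implicit: for a fixed point $f$ of a bow variety, the nonzero $\Tt$-weight spaces of every $\tb_{\Xb}|_{f}$ are \emph{one-dimensional}, i.e.\ the weights $w(a)$ of vertices of a given color are pairwise distinct. This is what makes your matching of line-bundle summands with vertices of $B^{f}_{0}$ canonical in the first place (otherwise $\pi$ is not well defined), and it also disposes of the ``extra maps turning on or off'' worry: a $\Tt$-equivariant map between prescribed one-dimensional weight spaces is rigid, so after imposing $\Tt$-equivariance the only residual datum of a point of $(\qm_{f}^{d})^{\Tq}$ is the tuple of degrees, which is exactly the function $\pi$. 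Finally, for condition (2) of Definition~\ref{rpp} the paper does not run your weight count on the maps into $\C_{\Ab}$ together with (S1)/(S2); it simply invokes the ``framed'' condition on the flag of subrepresentations (equivalently, the constancy of the induced map to the affine handsaw variety), which yields the vanishing of $\pi$ on D5-adjacent colors directly. Your route to (2) can likely be made to work, but as written it is the second spot where the argument is asserted rather than proved.
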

\begin{proof}
  A $\Cs_{q}$-fixed quasimap in $\qm^{d}_{\ns p}(X)$ must map the $\mathbb{A}^{1}$ chart around $p$ to a single point of $X$. If the quasimap is also $\Tt$-fixed, then the chart must map to a fixed point $f \in X^{\Tt}$. This proves the first statement.

It follows from the argument given in \cite[\S7.2]{Okounkov_lectures} that $\Cs_{q}$ fixed quasimaps in $\qm^{d}_{f}$ are in bijection with ``framed" flags of quiver sub-representations ending in the quiver representation for $f$. This quiver representation and in particular the vector spaces $W_{\Xb}$ are encoded by the butterfly diagram, as we recalled above.

If the quasimap is additionally $\Tt$-fixed, then all of the vector spaces appearing in this flag split into weight spaces for $\Tt$, and the inclusions in the flag are $\Tt$-equivariant. From the definition of the butterfly diagram, see \cite{rimanyi2020bow}, it follows that the nonzero $\Tt$-weight spaces of $\tb_{\Xb}|_{f}$ are all one-dimensional for any $\Xb$. Thus the $\Tt$-weight spaces of each $W_{\Xb}$ are also one-dimensional. So a quasimap in $(\qm^{d}_{f})^{\Tq}$ is uniquely determined up to isomorphism by specifying the positions in the flag at which each weight space first appears. These positions are encoded precisely by a reverse plane partition. Because the flag of quiver subrepresentations must be ``framed", see \cite{Okounkov_lectures}, the reverse plane partitions must be stable in the sense of Definition \ref{rpp}.

Conversely, any stable reverse plane partition over $B^{f}$ give rise to a framed flag of quiver subrepresentations ending in $f$ and thus to a $\Tq$-fixed quasimap.
  
\end{proof}

Recall that, for any all $d\in \ZZ$, the finite $q$-Pochhammer symbol $(x)_d$ is defined as $(x)_d:=\Phi(x)/\Phi(q^d x)$.
\begin{proposition}
\label{prop: vertex formula}
The restriction of the vertex function of $X$ to $f\in X^{\Tt}$ is 
\begin{multline*}
\ver^{X}_{f}(Q):= \ver^{X}(Q)|_{f}  \\
=\sum_{\pi \in \rpp(f)} \left(-q \hbar \right)^{\frac{N(\pi)}{2}} q^{\frac{\deg \qmpol_{\pi}}{2}} Q^{\deg(\pi)} \left(\prod_{\substack{a,b \in B^{f}_{0} \\ \col(b)-\col(a)=1}} \frac{\left(\hbar^{-1} \frac{w(b)}{w(a)}\right)_{\pi(b)-\pi(a)}}{\left(q \frac{w(b)}{w(a)}\right)_{\pi(b)-\pi(a)}} \right)\left(\prod_{\substack{a, b \in B^{f}_{0} \\ \col(a)=\col(b)}}\frac{\left( q \frac{w(b)}{w(a)}\right)_{\pi(b)-\pi(a)}}{\left( \hbar^{-1} \frac{w(b)}{w(a)}\right)_{\pi(b)-\pi(a)}}\right),
\end{multline*}
where
\[
N(\pi)=\sum_{\substack{a, b \in B^{f}_{0}\\ \col(b)-\col(a)=1}}(\pi(b)-\pi(a)).
\]
\end{proposition}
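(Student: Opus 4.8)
The plan is to combine the $\Tq$-fixed point description from Proposition \ref{prop: fixedqm} with the $K$-theoretic virtual localization theorem applied to the proper map $\ev_p$. By Proposition \ref{prop: fixedqm}, the fixed components of $\qm^d_{\ns p}(X)$ lying over $f \in X^{\Tt}$ are indexed by stable reverse plane partitions $\pi \in \rpp^d(f)$, and each such fixed point is isolated (rigid), since the quiver representation attached to the butterfly diagram has one-dimensional weight spaces. Hence localization reduces $\ev_{p,*}(\vrs^d)|_f$ to a sum over $\pi$ of the contribution $\frac{1}{\wedge^{\bullet}(\text{moving part of } T_{\vir}|_\pi)^\vee}$, twisted by the symmetrizing square root and the degree-counting prefactors coming from \eqref{symvss}. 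The bulk of the proof is the explicit computation of the moving part of the virtual tangent space $T_{\vir}|_\pi$ as a $\Tq$-representation.

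First I would write down $T_{\vir}|_{(\Sh W, s)}$ using \eqref{eq: qm tvir}: it is $H^\bullet(\Sh M) - \hbar H^\bullet(\Sh N) - H^\bullet(\oplus_\Xb \End(\Sh W_\Xb))$. At a $\Tq$-fixed quasimap, each tautological bundle $\Sh W_\Xb$ decomposes as a sum of line bundles $\mathcal{O}(\pi(a))\otimes w(a)$ over vertices $a$ with $\col(a)=\Xb$, where the twist records the reverse plane partition value and $w(a)$ is the $\Tt$-weight. Plugging this into the three summands of $T_{\vir}$, every contributing term is of the form $\Hom$ between two such line bundles, i.e. $H^\bullet(\mathbb{P}^1, \mathcal{O}(\pi(b)-\pi(a)))\otimes \frac{w(b)}{w(a)}\cdot(\text{possible }\hbar)$. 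The key bookkeeping point is that the pairs $(a,b)$ that actually appear are governed by the arrows of the brane quiver: $\MM$ contributes $\Hom(\tb_{\Ab^+},\tb_{\Ab^-})$, $\Hom(\C_\Ab,\tb_{\Ab^-})$, $\Hom(\tb_{\Zb^-},\tb_{\Zb^+})$ pieces (with $\hbar$-weights as in \eqref{eq: MM}), while $\hbar H^\bullet(\Sh N)$ and the $\End$ term subtract off the $\Hom(\tb,\tb)$ and related pieces. After cancellation — crucially using the stability condition that $\pi(a)=0$ whenever $\col(a)$ is adjacent to a D5 brane, which kills the framing bundles $\C_\Ab$ and many terms — the surviving moving weights organize into exactly two families: $(a,b)$ with $\col(b)-\col(a)=1$ (adjacent D3 branes, contributing $\hbar^{-1}\frac{w(b)}{w(a)}$-type ratios in the numerator and $q\frac{w(b)}{w(a)}$ in the denominator) and $(a,b)$ with $\col(a)=\col(b)$ (same D3 brane, contributing the reciprocal). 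Then I would invoke the standard identity: for a line-bundle summand $\mathcal{O}(k)\otimes x$ on $\mathbb{P}^1$ with the standard $\Cs_q$-action, the $K$-theoretic Euler class of $H^\bullet$ of it, divided appropriately, yields the finite $q$-Pochhammer ratio $(x)_k = \Phi(x)/\Phi(q^k x)$ — this is where the $(\cdot)_{\pi(b)-\pi(a)}$ symbols in the formula come from, and it is exactly the computation carried out for quiver varieties in \cite[\S7.4]{Okounkov_lectures}.

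Finally I would account for the prefactors. The symmetrization factor $\left(\Can_{\operatorname{vir}}\frac{\det\qmpol|_p}{\det\qmpol|_{p'}}\right)^{1/2}$ in \eqref{symvss} produces, at the fixed point $\pi$, the monomial $(-q\hbar)^{N(\pi)/2} q^{\deg\qmpol_\pi/2}$: the $(-q\hbar)^{N(\pi)/2}$ arises from the square root of the virtual canonical bundle restricted to the moving part — each adjacent-brane pair $(a,b)$ with $\col(b)-\col(a)=1$ and $\pi(b)>\pi(a)$ contributes a factor whose determinant is a power of $q\hbar$, summing to $N(\pi)=\sum(\pi(b)-\pi(a))$ over such pairs — and the $q^{\deg\qmpol_\pi/2}$ is precisely $\left(\frac{\det\qmpol|_p}{\det\qmpol|_{p'}}\right)^{1/2}$ evaluated on the degree-$\deg(\pi)$ component. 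The Novikov factor $Q^{\deg(\pi)}$ is immediate from the definition of the vertex and the definition of $\deg(\pi)$. Assembling these pieces over all $\pi \in \rpp(f)$ (and noting $\rpp(f)=\bigsqcup_d \rpp^d(f)$) yields the stated formula. The main obstacle is the cancellation in the virtual tangent space: one must carefully track all the $\hbar$-twisted $\Hom$ terms coming from $\MM$, $\NN$, and the $\End(\Sh W_\Xb)$ correction across the four local brane configurations (\ttt{\bs-\bs}, \ttt{\fs-\fs}, \ttt{\fs-\bs}, \ttt{\bs-\fs}) of \S\ref{sec:def of bow variety}, and verify that after cancellation only the two clean families of ratios survive — the $\mu$-relations and the stability conditions are what make this work, but the bookkeeping is delicate and is the heart of the argument. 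I would also need to check that the localization contribution has no spurious fixed loci of positive dimension, which again follows from the one-dimensionality of the weight spaces established in the proof of Proposition \ref{prop: fixedqm}.
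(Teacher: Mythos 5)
Your proposal is correct and follows exactly the route the paper takes: the paper's own proof is a one-line citation of the virtual localization theorem combined with Proposition \ref{prop: fixedqm}, the polarization identity \eqref{eq: alpha is pol}, and the virtual tangent space formula \eqref{eq: qm tvir}, which are precisely the ingredients you assemble. Your expansion of the line-bundle decomposition at a fixed quasimap, the resulting $q$-Pochhammer ratios, and the origin of the $(-q\hbar)^{N(\pi)/2}q^{\deg\qmpol_\pi/2}$ prefactor from the symmetrization in \eqref{symvss} supplies the bookkeeping the paper leaves implicit.
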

\begin{proof}
    This follows from the virtual localization theorem by combining Proposition \ref{prop: fixedqm}, \eqref{eq: alpha is pol}, and \eqref{eq: qm tvir}.
\end{proof}

From the definition of a stable reverse plane partition, it is immediate that the formula for $\ver^{X}_{f}$ can be written such that the products run only over vertices colored by D3 branes adjacent to an NS5 brane.

    For an arbitrary (not necessarily separated or co-separated), one can in principle apply localization. However, we do not know a direct characterization of torus fixed quasimaps. Instead, we prefer to study vertex functions of an arbitrary bow variety by first using Hanany-Witten transition to write it as a separated or co-separated bow variety and then applying the above formulas. This is justified by the following result.

\begin{proposition}[{\cite[Prop. 4.6.1]{qm}}]\label{prop: HW vertex}
    The moduli stack $\qm_{\ns p}(X)$ and its perfect obstruction theory depend only on the pair $(X,\FX)$.
\end{proposition}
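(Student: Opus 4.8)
The final statement to prove is Proposition \ref{prop: HW vertex}: the moduli stack $\qm_{\ns p}(X)$ and its perfect obstruction theory depend only on the pair $(X, \FX)$, not on the chosen GIT presentation $M \git^\chi G$.

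\medskip

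The plan is to trace through the definitions and observe that every ingredient in the construction of $\qm_{\ns p}(X)$ is built intrinsically out of the map of stacks $X \hookrightarrow \FX$. First I would recall that, by definition, $\qm(C, M\git^\chi G) \subset \operatorname{Map}(C, M/G)$ is the open substack of maps $f \colon C \to M/G = \FX$ whose preimage $f^{-1}(M^{\chi\un}/G)$ is zero-dimensional; equivalently, $f$ factors through $X = M^{\chi\semis}/G$ away from finitely many points of $C$. The key point is that the open substack $X = M^{\chi\semis}/G \subset \FX = M/G$ is precisely the datum ``$(X, \FX)$''. Hence the condition ``$f^{-1}(\FX \setminus X)$ is finite'' is phrased entirely in terms of the pair $(X,\FX)$ and the curve $C$, and so is $\operatorname{Map}(C, \FX)$ itself. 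Specializing to $C = \bbP^1$ gives $\qm(X)$, and then the nonsingularity condition at $p$ is imposed by the fiber product
\[
\begin{tikzcd}
    \qm_{\ns p}(X) \arrow[r, hookrightarrow] \arrow[d, "\ev_p"]&\qm(X)\arrow[d, "\ev_p"] \\
    X \arrow[r, hookrightarrow]& \FX
\end{tikzcd}
\]
where $\ev_p$ is evaluation of $f$ at $p \in \bbP^1$; again this only uses $X \hookrightarrow \FX$. This establishes the statement for the moduli stack itself.

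\medskip

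For the perfect obstruction theory, I would appeal to the formula \eqref{eq: POT}: $\EE = (R^\bullet pr_* u^* \Tt_\FX)^\vee$, where $u \colon \bbP^1 \times \qm(X) \to \FX$ is the universal evaluation, $pr$ is the projection to $\qm(X)$, and $\Tt_\FX$ is the tangent complex of $\FX$. Every object here is manifestly a construction on $\FX$ and the universal map to it: $\Tt_\FX$ is the cotangent/tangent complex of the stack $\FX$, an intrinsic invariant; $u$ is the tautological map recording that a point of $\qm(X)$ is a map $\bbP^1 \to \FX$; and $pr_*$ is pushforward along $\bbP^1$. Since none of these references the presentation $M/G$ except through $\FX$, the obstruction theory depends only on $(X, \FX)$. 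More precisely, one should check that for two GIT presentations $M_1 \git^{\chi_1} G_1 \cong M_2 \git^{\chi_2} G_2 = X$ inducing the same stack $\FX$ (i.e. a fixed open immersion $X \hookrightarrow \FX$), the complexes $[\Sh{O}_{\MM_i} \otimes \Lie(G_i) \to T_{\MM_i} \to \Sh{O}_{\MM_i} \otimes \NN_i]|_{M_i}$ represent the \emph{same} object $\Tt_\FX$ in the derived category of $\FX$ — this is just the statement that the tangent complex of an Artin stack is presentation-independent.

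\medskip

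I expect the main (and essentially only) obstacle to be a matter of bookkeeping rather than mathematical depth: one must be careful about what ``depends only on $(X, \FX)$'' means, since the pair should be understood as the open immersion $X \hookrightarrow \FX$ together with the identification of $X$ with the given bow variety, and one must verify that the virtual structure sheaf $\Oo_{\vir}^d$ (hence $\vrs^d$ after the symmetrizing twist, once a tautological polarization is fixed) is likewise determined, since the twist in \eqref{symvss} is built from $\det T_{\vir}$, $\det \ev_p^* TX$, and $\qmpol$ — all of which are pulled back from $\FX$. In fact, as the proposition is quoted from \cite[Prop. 4.6.1]{qm}, the cleanest route is simply to cite that reference and add the one-line remark that the symmetrized sheaf $\vrs^d$ inherits the same independence because its defining twist is expressed through tautological bundles on $\FX$ and the intrinsic virtual class. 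No substantive computation is required.
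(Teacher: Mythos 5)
Your proposal is correct: the paper gives no proof of its own here, simply citing \cite[Prop.\ 4.6.1]{qm}, and your unpacking — that both the stability condition defining $\qm_{\ns p}(X)\subset\operatorname{Map}(\bbP^1,\FX)$ and the complex $\EE=(R^\bullet pr_*u^*\Tt_{\FX})^\vee$ reference the presentation $M/G$ only through the open immersion $X\hookrightarrow\FX$ and the presentation-independent tangent complex $\Tt_{\FX}$ — is exactly the argument behind that citation. Your closing remark about the symmetrized sheaf $\vrs^d$ is fine but goes beyond the stated proposition; the paper isolates the polarization-dependent twist (and the resulting $q^{\beta}$ shift) in the subsequent proposition rather than here.
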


As a result, we have the following.

\begin{proposition}
    Let $X$ be a bow variety and let $X'$ be the bow variety obtained by swapping a D5 brane and an NS5 brane using Hanany-Witten transition. Then for any choices of polarizations, there exists $\beta \in \mathbb{Z}^{m-1}$ such that 
    \[
\ver^{X}(Q)=\ver^{X'}(Q q^{\beta}).
    \]
\end{proposition}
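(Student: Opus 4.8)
The key input is Proposition \ref{prop: HW vertex}: the moduli stack $\qm_{\ns p}(X)$ together with its perfect obstruction theory \eqref{eq: POT} depends only on the pair $(X, \FX)$, not on the chosen GIT presentation. Since Hanany-Witten transition induces an isomorphism of bow varieties $X \xrightarrow{\sim} X'$ (Nakajima-Takayama, \cite[Prop. 8.1]{Nakajima_Takayama}) which moreover lifts to an isomorphism of the ambient stacks $\FX \xrightarrow{\sim} \FX'$, we obtain a canonical identification $\qm_{\ns p}(X) \cong \qm_{\ns p}(X')$ intertwining the evaluation maps $\ev_p$, the virtual structure sheaves $\mathcal{O}^d_{\vir}$, and the $\Tq$-actions. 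The only subtlety is that this identification need not respect the decomposition by degree on the nose, and it need not intertwine the chosen polarizations; this is exactly what produces the shift $Q \mapsto Q q^\beta$.

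\textbf{Step 1: Matching of degrees.} First I would check how the degree vector $d \in \ZZ^{m-1}$ transforms under the Hanany-Witten isomorphism. The degree of a quasimap $(\Sh{W}, s)$ is $(\deg \Sh{W}_{\Xb_k})_k$ where $\Xb_k$ lies between $\Zb_k$ and $\Zb_{k+1}$; a Hanany-Witten move $\ttt{$d_1$\fs$d_2$\bs$d_3$} \leftrightarrow \ttt{$d_1$\bs$d_1+d_3-d_2+1$\fs$d_3$}$ changes the dimension of exactly one D3 brane and possibly permutes which D3 brane sits between two given NS5 branes. A direct bookkeeping argument shows that the induced map on degrees is of the form $d \mapsto d$ (the number and ordering of NS5 branes is unaffected by the move as far as the degree label is concerned, since the degree is read off on D3 branes between \emph{NS5} branes and the move only relocates the swapped \emph{D5} brane). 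So in fact the degree decomposition \emph{is} preserved, and the sum $\sum_d$ can be carried across; the variable $Q^d$ is unchanged.

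\textbf{Step 2: Comparison of symmetrized virtual structure sheaves.} The remaining discrepancy is in the symmetrization \eqref{symvss}: the factor $\mathcal{O}^d_{\vir} \otimes (\Can_{\vir} \det\qmpol|_p / \det\qmpol|_{p'})^{1/2}$ depends on the chosen polarization representative $\qmpol = $ (pullback of $\mathrm{pol}$). Under the identification $\qm^d_{\ns p}(X) \cong \qm^d_{\ns p}(X')$, the classes $\mathcal{O}^d_{\vir}$ and $\Can_{\vir}$ match (both are intrinsic to the obstruction theory, by Proposition \ref{prop: HW vertex}), but the two polarization classes differ. By \eqref{eq: alpha is pol}, any two polarizations $\mathrm{pol}, \mathrm{pol}'$ of the \emph{same} variety $X \cong X'$ satisfy $\mathrm{pol} - \mathrm{pol}' = \hbar(\mathrm{pol}-\mathrm{pol}')^\vee$ in $K_\Tt(X)$, hence $\mathrm{pol} + \mathrm{pol}' = $ (a multiple of $\hbar$-invariant classes), and one deduces that $\det\mathrm{pol}$ and $\det\mathrm{pol}'$ differ by a line bundle $\Ll$ pulled back from a character of $\Tt$ together with powers of the $z_i$'s. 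Concretely, $\det\qmpol|_p \otimes (\det\qmpol'|_p)^{-1}$ restricted to a fixed quasimap of degree $d$ evaluates to $q^{\langle \beta, d\rangle}$ times a $q$-independent factor, for a fixed $\beta \in \ZZ^{m-1}$ determined by how the polarization class is built from the tautological bundles $\tb_{\Xb_k}$ whose degrees enter $d$. This is precisely the content of \cite[\S8.3]{Okounkov_lectures} applied in our setting (change-of-polarization shifts $Q$ by a power of $q$); the same mechanism accounts simultaneously for the genuine change of polarization and for any reshuffling of tautological bundles forced by the Hanany-Witten move.

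\textbf{Step 3: Assembling.} Combining Steps 1 and 2: term by term in the $d$-expansion, the coefficient of $Q^d$ in $\ver^{X}(Q)$ equals the coefficient of $Q^d$ in $\ver^{X'}(Q)$ multiplied by $q^{\langle\beta,d\rangle}$, which is exactly the statement $\ver^{X}(Q) = \ver^{X'}(Q q^\beta)$. The main obstacle, and the place requiring genuine care rather than formal invocation, is \textbf{Step 2}: one must pin down the integer vector $\beta$ by an explicit comparison of the two polarization classes and verify that the $q$-weight of $\det\qmpol|_p/\det\qmpol|_{p'}$ on a degree-$d$ fixed locus is indeed linear in $d$ with coefficient $\beta$. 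This is the same kind of computation underlying the polarization-independence statement in \cite{Okounkov_lectures}, and it goes through because the polarization is, by hypothesis, a polynomial expression in the $\tb_{\Xb}$, so its determinant is a monomial in the $\det\tb_{\Xb_k}$ whose first Chern classes are the degree coordinates.
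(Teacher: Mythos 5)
Your proposal is correct and follows essentially the same route as the paper: invoke Proposition \ref{prop: HW vertex} to identify the quasimap stacks, evaluation maps, and virtual structure sheaves across the Hanany-Witten isomorphism, and then attribute the $q^{\beta}$ shift entirely to the change of polarization in the symmetrizing twist \eqref{symvss}, via the mechanism of \cite[\S8.3]{Okounkov_lectures}. (One small slip in Step 2: from $TX=\pol+\hbar\pol^{\vee}=\pol'+\hbar(\pol')^{\vee}$ one gets $\pol-\pol'=-\hbar(\pol-\pol')^{\vee}$, with a minus sign, but this does not affect the conclusion that the determinants differ by a controlled line bundle.)
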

\begin{proof}
    Since Hanany-Witten transition induces isomorphisms of both bow varieties and stack quotients, the previous proposition implies that it identifies the virtual structure sheaves. Vertex functions are defined using $\vrs$, which differs from $\mathcal{O}_{\text{vir}}$ by a twist depending on a choice of polarization. Depending on the choices of polarization for $X$ and $X'$, the twist may contribute differently, leading to the $q^{\beta}$ shift.
\end{proof}

For a certain choice of polarization, we will make this shift explicit below.

\begin{remark}
   The formula of Proposition \ref{prop: vertex formula} is almost identical to the formula for cotangent bundles of partial flag varieties viewed as quiver varieties, see \cite{KorZeit}. Given a type $A$ quiver variety $X$, this uniformity can only be seen by realizing $X$ as a bow variety, applying Hanany-Witten transition to obtain a separated or co-separated bow variety, and using Proposition \ref{prop: vertex formula}. Using the usual localization formula for quiver varieties, written for example in \cite{dinkinsD5Vertex}, leads to less uniform formulas.
\end{remark}

\subsection{Vertex functions and Hanany-Witten transition}

Recall the class $\alpha $ from $\eqref{eq: class alpha for general bow variety}$. For the rest of this subsection we fix the polarization $\alpha+\beta$ from $\eqref{eq: alpha is pol}$. Since the polarization $\qmpol$ appears in \eqref{symvss} in a ratio, only the topologically nontrivial part will contribute. Thus to compare the vertex functions before and after Hanany-Witten transition, we need to understand how $\alpha$ changes.

Consider the following Hanany-Witten transition from left to right:
\[
\cdots \ttt{$d_1$\fs $d_2$\bs $d_3$}\cdots  \longrightarrow  \cdots \ttt{$d_1$\bs $d_2'$\fs $d_3$} \cdots
\] 

\begin{proposition}[\cite{BR}]
Let $X^{\text{before}}$ and $X^{\text{after}}$ be the two bow varieties above. Then 
    \[
\alpha^{\text{after}}-\alpha^{\text{before}}=\Hom(\tb_{1}^{\text{before}},\mathbb{C}_{\Ab})-\hbar \Hom(\mathbb{C}_{\Ab},\tb_{2}^{\text{after}}) \in K_{\Tt}(X^{\text{before}})
    \]
\end{proposition}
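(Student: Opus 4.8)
The plan is to compute the polarization class $\alpha$ directly from its definition \eqref{eq: class alpha for general bow variety} for the two brane diagrams $X^{\text{before}}$ and $X^{\text{after}}$, track which summands change, and match the difference against the Hanany-Witten isomorphism. The only part of the brane diagram affected by the local surgery $\ttt{$d_1$\fs $d_2$\bs $d_3$} \to \ttt{$d_1$\bs $d_2'$\fs $d_3$}$ is the triple of branes being permuted and the D3 brane multiplicity $d_2 \rightsquigarrow d_2' = d_1 + d_3 - d_2 + 1$; every other term in the sums over D5 branes, NS5 branes, and tautological bundles in \eqref{eq: class alpha for general bow variety} is untouched. So I would isolate, in each of the two expressions for $\alpha$, exactly the terms involving the middle D3 brane and the moving 5-brane, and form their difference. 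First I would write $\tb_2^{\text{before}}$ for the tautological bundle of the D3 brane flanked by $\fs$ on the left and $\bs$ (the moving D5 brane $\Ab$) on the right, and $\tb_2^{\text{after}}$ for the new D3 brane flanked by $\bs$ ($\Ab$ again) on the left and $\fs$ on the right; note their ranks differ by $d_2 - d_2' = 2d_2 - d_1 - d_3 - 1$.

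The key step is a careful bookkeeping of the relevant summands. In $X^{\text{before}}$, the D5 brane $\Ab$ sits between $\tb_1$ on its left and $\tb_2^{\text{before}}$ on its right, so it contributes (inside $\hbar(\cdots)^\vee$) the terms $\Hom(\tb_2^{\text{before}}, \tb_1) \oplus \Hom(\C_\Ab, \tb_1)$ from the D5 sum, an NS5 brane between $\tb_2^{\text{before}}$ and $\tb_3$ contributes $\Hom(\tb_2^{\text{before}}, \tb_3)$, and the D3 sum contributes $-\Hom(\tb_2^{\text{before}}, \tb_2^{\text{before}})$. In $X^{\text{after}}$, $\Ab$ now sits between $\tb_1$ on its left and $\tb_2^{\text{after}}$ on its right, so it contributes $\Hom(\tb_2^{\text{after}}, \tb_1) \oplus \Hom(\C_\Ab, \tb_1)$; the NS5 brane now sits between $\tb_1$ and $\tb_2^{\text{after}}$, contributing $\Hom(\tb_1, \tb_2^{\text{after}})$; and the D3 sum contributes $-\Hom(\tb_2^{\text{after}}, \tb_2^{\text{after}})$. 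I would then use the Hanany-Witten isomorphism to identify, in $K$-theory, $\tb_2^{\text{after}}$ with the class $\tb_1 + \tb_3 - \tb_2^{\text{before}} + \C_\Ab$ (this is the $K$-theoretic shadow of the surgery on multiplicities, $d_2' = d_1 + d_3 - d_2 + 1$, together with the defining complex on the D3 brane), substitute, and collect terms. After the dust settles, all the $\tb_1, \tb_2, \tb_3$-only terms should cancel in pairs, leaving precisely the claimed $\Hom(\tb_1^{\text{before}}, \C_\Ab) - \hbar\,\Hom(\C_\Ab, \tb_2^{\text{after}})$ after applying the dual-and-$\hbar$-twist $\hbar(-)^\vee$ to whatever survives.

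The main obstacle, as usual with Hanany-Witten computations, is that the isomorphism $X^{\text{before}} \cong X^{\text{after}}$ is \emph{not} $\Tt$-equivariant but only equivariant along a nontrivial automorphism of $\Tt$ (the shift by powers of $\hbar$ on the D5 characters, cf. the remark after \S\ref{sec: taut bundles and tangent} and \cite[\S3.3]{rimanyi2020bow}). So the identity $\tb_2^{\text{after}} = \tb_1 + \tb_3 - \tb_2^{\text{before}} + \C_\Ab$ in $K_\Tt$ is only correct after the appropriate character twist, and one must be scrupulous about the $\hbar$-weights attached to each tautological bundle and to $\C_\Ab$ on each side — getting a single $\hbar$ wrong produces exactly the wrong answer. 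I expect the cleanest route is to first establish the $K$-theoretic relation among the tautological bundles by pulling back the $\mu^{-1}(0)$ relations on the D3 brane in question (the \ttt{\fs-\bs} case of the moment map, $-D_{\Xb^-}C_{\Xb^-} - B_{\Xb^+} = 0$, before, versus the \ttt{\bs-\fs} case after), which fixes the $\hbar$-weights unambiguously, and only then perform the substitution and cancellation. Once the relation is pinned down correctly, the remaining computation is a routine cancellation of $\Hom$ classes. I would also remark that this is precisely \cite{BR}, so one could alternatively just cite it; I include the computation only for completeness and to fix signs and $\hbar$-powers in our conventions.
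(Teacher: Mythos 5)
The paper does not actually prove this proposition: it is imported verbatim from \cite{BR}, so there is no internal argument to compare against. Judged on its own terms, your strategy --- expand $\alpha$ from \eqref{eq: class alpha for general bow variety} locally around the moved branes, substitute the $K$-theoretic Hanany--Witten relation $\tb_{\mathrm{new}}+\tb_{\mathrm{old}}=\tb_{\mathrm{left}}+\tb_{\mathrm{right}}+(\text{char})\cdot\C_{\Ab}$ for the middle tautological bundle, and cancel --- is the right one, and the cancellation pattern does work: after substitution, every $\Hom$ built purely from tautological bundles cancels in pairs and only two terms involving $\C_{\Ab}$ survive. However, your bookkeeping of adjacencies is internally inconsistent. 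You define $\tb_2^{\text{before}}$ as the D3 brane with the NS5 brane on its left and $\Ab$ on its right (i.e.\ the middle, $d_2$-dimensional brane), but then assert that $\Ab$ has $\tb_2^{\text{before}}$ on its \emph{right}, and accordingly list the D5 contribution as $\Hom(\tb_2^{\text{before}},\tb_1)\oplus\Hom(\C_\Ab,\tb_1)$. In the diagram before the transition, $\Ab^-$ is the middle brane and $\Ab^+$ is the $d_3$-brane, so the contribution is $\Hom(\tb_{d_3},\tb_2^{\text{before}})\oplus\Hom(\C_\Ab,\tb_2^{\text{before}})$; with your assignment the terms that are supposed to cancel do not line up.

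The more serious gap is the one you flag but do not close. The entire content of the formula is the asymmetric placement of $\hbar$: the surviving terms come out of the naive cancellation as $\hbar\bigl(\Hom(\C,\tb_1^{\text{before}})-\Hom(\tb_2^{\text{after}},\C)\bigr)^\vee$ for whatever equivariant line $\C$ appears in the tautological relation, and turning this into $\Hom(\tb_1^{\text{before}},\C_\Ab)-\hbar\Hom(\C_\Ab,\tb_2^{\text{after}})$ requires knowing (i) the precise character with which $\C_\Ab$ enters the relation $\tb_2^{\text{after}}+\tb_2^{\text{before}}=\tb_1+\tb_3+(\cdot)\C_\Ab$, and (ii) the $\hbar^{\pm1}$ shift of $a_\Ab$ along which the Hanany--Witten isomorphism is equivariant, since $\alpha^{\text{after}}$ must be transported to $K_{\Tt}(X^{\text{before}})$ through that twist. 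Without pinning these down, the computation only determines the answer up to multiplying each of the two surviving terms by an undetermined power of $\hbar$, which is to say it does not determine the stated formula at all (and, e.g., would not suffice to derive the asymmetric consequence in the subsequent Proposition on $\det\alpha$). Your suggested remedy --- extracting the weights from the moment-map components in the two adjacency cases --- is plausible, but it is the step that actually needs to be carried out, not deferred.
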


It is shown in \cite{BR} that the $K$-theory class of $\tb_{2}^{\text{after}}$ is equal to that of $\tb_{1}^{\text{before}}$ up to the addition of a topologically trivial virtual bundle. As a consequence, we have the following.
\begin{proposition}\label{detalpha}
The $K$-theory class of
    \[
\det(\alpha^{\text{after}}) \otimes \det(\alpha^{\text{before}})^{-1} \otimes \det(\tb_{1}^{\text{before}})^{-2}
    \]
    is equal to that of a topologically trivial line bundle.
\end{proposition}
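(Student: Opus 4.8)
The plan is to apply the determinant map to the $K$-theory identity of the preceding proposition and then discard everything that is topologically trivial. Since $\det$ is a homomorphism from the additive group $K_{\Tt}(X^{\text{before}})$ to the multiplicative group of line bundles, the identity $\alpha^{\text{after}}-\alpha^{\text{before}}=\Hom(\tb_{1}^{\text{before}},\C_{\Ab})-\hbar\Hom(\C_{\Ab},\tb_{2}^{\text{after}})$ becomes
\[
\det(\alpha^{\text{after}})\otimes\det(\alpha^{\text{before}})^{-1}=\det\!\big((\tb_{1}^{\text{before}})^{\vee}\otimes\C_{\Ab}\big)\otimes\det\!\big(\hbar\otimes\C_{\Ab}^{\vee}\otimes\tb_{2}^{\text{after}}\big)^{-1}.
\]
Using $\det(V\otimes L)=\det(V)\otimes L^{\otimes\rk V}$ for a line bundle $L$, the right-hand side equals $\det(\tb_{1}^{\text{before}})^{-1}\otimes\det(\tb_{2}^{\text{after}})^{-1}$ tensored with powers of the characters $\C_{\Ab}$ and $\hbar$, the exponents being read off from the ranks $d_{1}$ and $d_{2}'=d_{1}+d_{3}-d_{2}+1$. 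As $\C_{\Ab}$ and $\hbar$ are pulled back from a point, these powers are topologically trivial line bundles and may be dropped; thus $\det(\alpha^{\text{after}})\otimes\det(\alpha^{\text{before}})^{-1}$ agrees with $\det(\tb_{1}^{\text{before}})^{-1}\otimes\det(\tb_{2}^{\text{after}})^{-1}$ modulo topologically trivial line bundles.

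The second step is to feed in the result of \cite{BR}, already recalled above, that $[\tb_{2}^{\text{after}}]-[\tb_{1}^{\text{before}}]$ is a topologically trivial virtual bundle; applying $\det$ shows that $\det(\tb_{2}^{\text{after}})$ and $\det(\tb_{1}^{\text{before}})$ agree modulo topologically trivial line bundles. Combining this with the previous paragraph, $\det(\alpha^{\text{after}})\otimes\det(\alpha^{\text{before}})^{-1}$ differs from a power of $\det(\tb_{1}^{\text{before}})$ by a topologically trivial line bundle, and unwinding the conventions of \eqref{eq: class alpha for general bow variety} identifies this power with the one appearing in the statement.

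The argument is essentially a short computation; the point that requires care is the bookkeeping of conventions, namely the overall $(-)^{\vee}$ and the $\hbar$-twist in \eqref{eq: class alpha for general bow variety}, the orientation of the Hanany-Witten transition (so that the tautological bundles of $X^{\text{before}}$ and $X^{\text{after}}$ enter in the correct slots), and the fact that \cite{BR} controls only the combination of the two tautological contributions to $\alpha^{\text{after}}-\alpha^{\text{before}}$ and not the individual determinants, so these must be grouped together before topological triviality is invoked. It is also worth recording that ``topologically trivial'' is meant after forgetting the $\Tt$-equivariant structure; this is precisely what licenses discarding the torus characters $\C_{\Ab}$ and $\hbar$, and it explains why the proposition asserts the existence of a topologically trivial line bundle rather than a genuinely trivial one.
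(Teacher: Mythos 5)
Your write-up follows the same route the paper intends: the paper gives no actual proof of this proposition, stating only the difference formula $\alpha^{\text{after}}-\alpha^{\text{before}}=\Hom(\tb_{1}^{\text{before}},\C_{\Ab})-\hbar\Hom(\C_{\Ab},\tb_{2}^{\text{after}})$ and the fact from \cite{BR} that $\tb_{2}^{\text{after}}$ and $\tb_{1}^{\text{before}}$ agree up to a topologically trivial virtual bundle, followed by ``as a consequence.'' Applying $\det$, using $\det(V\otimes L)=\det(V)\otimes L^{\otimes \rk V}$, discarding the characters $\C_{\Ab}$ and $\hbar$ (pulled back from a point), and then replacing $\det(\tb_{2}^{\text{after}})$ by $\det(\tb_{1}^{\text{before}})$ is exactly the intended argument, and you execute each of these steps.

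The one step that does not close is your final sentence, and it is the only step that actually requires care. Taking the displayed difference formula at face value, $\Hom(\tb_{1}^{\text{before}},\C_{\Ab})=(\tb_{1}^{\text{before}})^{\vee}\otimes\C_{\Ab}$ contributes $\det(\tb_{1}^{\text{before}})^{-1}$ and the subtracted term $\hbar\Hom(\C_{\Ab},\tb_{2}^{\text{after}})$ contributes $\det(\tb_{2}^{\text{after}})^{-1}$, so your computation correctly gives
\[
\det(\alpha^{\text{after}})\otimes\det(\alpha^{\text{before}})^{-1}\equiv\det(\tb_{1}^{\text{before}})^{-2}
\]
modulo topologically trivial line bundles. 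But then the class that is topologically trivial is $\det(\alpha^{\text{after}})\otimes\det(\alpha^{\text{before}})^{-1}\otimes\det(\tb_{1}^{\text{before}})^{+2}$, whereas the proposition asserts triviality of the class with exponent $-2$; the two assertions differ by $\det(\tb_{1}^{\text{before}})^{4}$, which is not topologically trivial in general (if it were, the subsequent $q$-shift in Proposition \ref{prop: vertex and HW} would vanish). So ``unwinding the conventions identifies this power with the one appearing in the statement'' is an assertion rather than a verification, and as written your argument proves the statement with the opposite exponent. You need to either locate the sign you believe is hidden in the conventions --- the most plausible culprit being whether the quoted difference formula is stated for $\alpha$ itself or for the class inside the overall $(-)^{\vee}$ of \eqref{eq: class alpha for general bow variety}, since dualizing flips both $\Hom$'s and hence both determinant contributions --- or record explicitly that the printed exponent and the printed difference formula are inconsistent with each other. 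Note that the downstream application (the shift $Q_{k}\mapsto Q_{k}q$) cannot be used to settle the sign, because it only sees $\deg\det\qmpol$ up to the choice of which fixed point of $\bbP^{1}$ plays the role of $p$; the sign has to be pinned down at this step.
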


\begin{proposition}\label{prop: vertex and HW}
Assume that \eqref{eq: alpha is pol} is the chosen polarization for all vertex functions. If Hanany-Witten transition moves a D5 brane $\Ab$ from the right to the left of an NS5 brane $\Zb_{k+1}$ then
    \[
\ver^{X^{\text{after}}}(Q)=\ver^{X^{\text{before}}}(Q)\big|_{Q_{k}=Q_{k} q}.
    \]
    \end{proposition}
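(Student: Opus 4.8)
The plan is to combine the two preparatory propositions with the precise bookkeeping of how the K\"ahler variable $Q_k$ enters the symmetrized virtual structure sheaf. First I would recall from Proposition \ref{prop: HW vertex} that the moduli stack $\qm_{\ns p}(X)$, together with its perfect obstruction theory and hence the virtual structure sheaf $\mathcal{O}_{\text{vir}}^d$, is canonically identified on the two sides of the Hanany-Witten move, since the pair $(X,\FX)$ is preserved. So the only discrepancy between $\ver^{X^{\text{before}}}$ and $\ver^{X^{\text{after}}}$ comes from (a) the twist by $(\det \qmpol|_p / \det \qmpol|_{p'})^{1/2}$ in \eqref{symvss}, which depends on the chosen polarization $\alpha + \beta$, and (b) a possible reindexing of the degree $d$, i.e. of the monomial $Q^d$, under the identification of quasimap spaces.

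Next I would analyze point (b): the degree of a quasimap $(\Sh{W},s)$ is $(\deg \Sh{W}_{\Xb_k})_k$, recorded via the tautological bundle $\xi_k$ sitting between $\Zb_k$ and $\Zb_{k+1}$. Under the HW move described — moving the D5 brane $\Ab$ from the right to the left of $\Zb_{k+1}$ — the D3 bundle between $\Zb_k$ and $\Zb_{k+1}$ changes from $\xi_1^{\text{before}}$ to $\xi_2^{\text{after}}$, and by the result of \cite{BR} quoted just before Proposition \ref{detalpha}, these differ in $K$-theory only by a topologically trivial virtual bundle. In particular their degrees as bundles on $\mathbb{P}^1 \times \qm_{\ns p}(X)$ agree, so the identification of quasimap spaces is degree-preserving and the monomial $Q^d$ is unchanged by the reindexing. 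Hence the entire shift must come from the polarization twist.

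Then for point (a) I would compute the $Q$-dependence of the twisting factor. The polarization class $\qmpol$ on $\mathbb{P}^1 \times \qm_{\ns p}(X)$ is obtained from $\alpha = \alpha_{\D}$ by replacing tautological bundles $\tb_{\Xb}$ with $\Xi_{\Xb} = u^*\tb_{\Xb}$; the ratio $\det\qmpol|_p / \det\qmpol|_{p'}$ only sees the topologically nontrivial part of $\qmpol$, i.e. the degrees of the $\Xi_{\Xb}$, which are recorded by the variables $z_1,\dots,z_m$ (with $Q_i = z_i/z_{i+1}$) and by $q$. By Proposition \ref{detalpha}, on the two sides we have $\det(\alpha^{\text{after}}) \otimes \det(\alpha^{\text{before}})^{-1}$ differing from $\det(\tb_1^{\text{before}})^{2}$ by a topologically trivial line bundle; translating this into the universal setting, the square-root twist in \eqref{symvss} changes by a factor whose topologically nontrivial part is $\det(\Xi_1^{\text{before}})|_p / \det(\Xi_1^{\text{before}})|_{p'}$ — a single power of $q$ per unit of $\deg \Xi_1^{\text{before}}$, which is exactly $d_k$, the $k$-th component of the degree. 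Since the generating series weights degree $d$ by $Q^d = \prod Q_i^{d_i}$, absorbing this extra $q^{d_k}$ into the $k$-th variable is precisely the substitution $Q_k \mapsto Q_k q$, giving $\ver^{X^{\text{after}}}(Q) = \ver^{X^{\text{before}}}(Q)\big|_{Q_k = Q_k q}$.

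The main obstacle I expect is the careful sign and power-of-$q$ bookkeeping in the square-root twist: one must check that the factor $\det(\tb_1^{\text{before}})^{-2}$ in Proposition \ref{detalpha} combines with the $1/2$ exponent in \eqref{symvss} and with the two fixed points $p, p'$ to produce exactly one power of $q$ (and no power of $\hbar$ or sign) per unit of degree $d_k$, and in particular that the ambiguity ``up to a topologically trivial line bundle'' in Proposition \ref{detalpha} genuinely contributes nothing to the $Q$-shift — it only affects the (degree-zero) overall normalization, which is fixed by $\ver^X(Q) = 1 + \mathcal{O}(Q)$. The only other point requiring care is confirming that the direction of the HW move (D5 from right to left past $\Zb_{k+1}$) picks out the index $k$ rather than $k+1$; this is a direct consequence of which D3 multiplicity changes, namely $d_2$ between $\Zb_k$ and $\Zb_{k+1}$.
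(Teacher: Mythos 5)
Your proposal is correct and follows essentially the same route as the paper, whose proof is a one-line appeal to the choice of polarization, Proposition \ref{detalpha}, and \eqref{symvss}; you have simply spelled out the three ingredients (identification of quasimap spaces and obstruction theories via Proposition \ref{prop: HW vertex}, degree-preservation of that identification, and the change in the square-root twist coming from $\det(\tb_1^{\text{before}})^{2}$). The bookkeeping you flag as the main obstacle — that the topologically trivial ambiguity only affects the degree-zero normalization, and that the exponent $2$ cancels against the $1/2$ to yield one power of $q$ per unit of $d_k$ — is exactly what the paper leaves implicit.
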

    \begin{proof}
        This is immediate from our choice of polarization, Proposition \ref{detalpha}, and \eqref{symvss}.
    \end{proof}

Applying the previous propsition repeatedly, we obtain the following.

\begin{corollary}\label{corcosepV}
    Let $\D$ be a separated brane diagram with $n$ D5 branes. Let $\D'$ be the co-separated brane diagram Hanany-Witten equivalent to $\D$. Let $X$ and $X'$ be the corresponding bow varieties. Assume that \eqref{eq: alpha is pol} is the chosen polarization for all vertex functions. Then
    \[
V^{X'}=V^{X}|_{Q_{k}=Q_{k} q^{n}}.
    \]
    where the shift is applied to all the variables $Q_{k}$.
\end{corollary}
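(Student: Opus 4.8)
The plan is to deduce the statement by iterating Proposition \ref{prop: vertex and HW}. Write the separated diagram as $\D = \Zb_1 \cdots \Zb_m\, \Ab_1 \cdots \Ab_n$ (all NS5 branes first), and its co-separated Hanany-Witten partner — which is unique within the HW class by \S\ref{subsec: HW iso} — as $\D' = \Ab_1 \cdots \Ab_n\, \Zb_1 \cdots \Zb_m$. Choose any sequence of elementary Hanany-Witten moves connecting $\D$ to $\D'$, each swapping one D5 brane with an NS5 brane immediately to its left. Since such moves never change the relative order of the D5 branes among themselves, nor of the NS5 branes among themselves, for every pair $(\Ab_i,\Zb_j)$ there is exactly one move in the sequence in which $\Ab_i$ crosses $\Zb_j$ from right to left; there are $mn$ moves in total.

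Next I would track the accumulated $Q$-shift. By Proposition \ref{prop: vertex and HW}, a move in which a D5 brane crosses $\Zb_{k+1}$ from right to left substitutes $Q_k \mapsto Q_k q$ for $1 \le k \le m-1$, whereas a move crossing the leftmost brane $\Zb_1$ contributes no shift (there is no variable $Q_0$). Thus a single D5 brane, in travelling from the far right of $\D$ to the far left of $\D'$, crosses each of $\Zb_2, \dots, \Zb_m$ exactly once and so contributes exactly one factor of $q$ to each of $Q_1, \dots, Q_{m-1}$. Summing over the $n$ D5 branes, every variable $Q_k$ picks up precisely $n$ factors of $q$, and composing all the substitutions furnished by Proposition \ref{prop: vertex and HW} gives $V^{X'}(Q) = V^{X}(Q)\big|_{Q_k = Q_k q^{n}}$ for all $1 \le k \le m-1$. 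In particular the total shift depends only on $\D$ and $\D'$ and not on the chosen sequence of moves.

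The step that needs genuine justification — and the one I expect to be the main obstacle — is that this iteration is legitimate even though the intermediate brane diagrams in the chain are in general neither separated nor co-separated, so that the class $\alpha$ of \eqref{eq: class alpha for general bow variety} is not literally a polarization for them and Proposition \ref{prop: vertex and HW} is not directly applicable as stated. The point is that the $Q$-shift in Proposition \ref{prop: vertex and HW} arises entirely through the ratio $\det \qmpol|_p / \det \qmpol|_{p'}$ appearing in \eqref{symvss}, and the only relevant input is how $\det(\alpha)$ transforms under a single Hanany-Witten move, which is Proposition \ref{detalpha} and holds for an \emph{arbitrary} bow variety. So I would fix the $\alpha$-representative of $\qmpol$ on every diagram in the chain, invoke Proposition \ref{prop: HW vertex} to identify the virtual structure sheaves across each move, and observe that the only change in $\vrs$ across an elementary move is the topologically trivial factor that Proposition \ref{detalpha} identifies with the substitution $Q_k \mapsto Q_k q$. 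Iterating over the whole chain then yields the corollary.
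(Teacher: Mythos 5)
Your proposal is correct and follows essentially the same route as the paper, whose entire proof of this corollary is the single sentence "Applying the previous proposition repeatedly, we obtain the following" — i.e., iterating Proposition \ref{prop: vertex and HW} and counting that each of the $n$ D5 branes crosses each of $\Zb_2,\dots,\Zb_m$ exactly once, contributing one factor of $q$ to each $Q_k$. Your additional care about the intermediate (non-separated, non-co-separated) diagrams, resolved via Propositions \ref{prop: HW vertex} and \ref{detalpha}, is a detail the paper leaves implicit but is handled correctly.
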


\subsection{Normalization for mirror symmetry}\label{sec: mirsym vertex normalization}
For 3d mirror symmetry, we need to make a specific choice of polarization and normalization of vertex functions. Let $X$ be a bow variety corresponding to a brane diagram $\D$. There are two sides of mirror symmetry: the side of $X$ and the side of $X^{!}$. When mirror symmetry is considered, $\ver^{X}(Q)$ will denote the vertex function of $X$ for the polarization $\hbar(\alpha+\beta)^{\vee}$, while $\ver^{X^{!}}(Q^{!})$ will denote the vertex function of $X^{!}$ for the polarization $\alpha^{!}+\beta^{!}$.

If we change which we are considering the ``dual" side, then we must also change polarization.

With these choices understood, we let
\begin{equation}\label{eq: def of MSver}
\begin{split}
\MSver^{X}(Q)&:=\ver^{X}(Q)\big|_{Q_{i}=Q_{i} (-\hbar^{1/2})^{\w(\Zb_{i})-\w(\Zb_{i+1})}} \\
\MSver^{X^{!}}(Q^{!})&:=\ver^{X^{!}}(Q^{!})\big|_{Q^{!}_{i}=Q^{!}_{i} (-(\hbar^{!})^{1/2})^{\w(\Zb^{!}_{i})-\w(\Zb^{!}_{i+1})}}.
\end{split}
\end{equation}

\subsection{Weight one D5 branes: holomorphicity of vertex functions}\label{sub: vholo}
From Proposition \ref{prop: vertex formula}, one can see that vertex functions may have poles in the equivariant parameters $a_1,\ldots, a_n$ of the torus $\At$. Nevertheless, sometimes a vertex function will be pole-free in a neighborhood of a certain point in $a$. We now describe one such situation.

Assume that $\D$ is a separated or co-separated brane diagram such that $\w(\Ab_{i})=1$ for all $i$. Let $X$ be the corresponding bow variety. Such bow varieties are isomorphic to cotangent bundles of (type $A$) partial flag varieties. A torus fixed point $f\in X^{\At}$ is determined by the data of a function $\sigma_{f}:\{1,\ldots,n\}\to \{1,\ldots,m\}$ such that tie attached to $\Ab_{i}$ is also attached to $\Zb_{\sigma_{f}(i)}$. 

Consider the partial ordering of equivariant parameters by the rule $a_{i}< a_{j}$ whenever $i \in \sigma_{f}^{-1}(k)$ and $j \in \sigma_{f}^{-1}(l)$ for some $k$ and $l$ such that $k <l$. We denote this ordering by $<_{f}$. Let $\chamb_{f}$ be any generic chamber inducing this ordering\footnote{Here, generic means that each $\frac{a_i}{a_j}$ is either attracting or repelling. This is a stronger assumption than the usual condition on chambers.}. In particular, if $i<_{f} j$, then $\frac{a_i}{a_j}$ is repelling with respect to $\chamb_{f}$. Unless $X$ is the cotangent bundle of a complete flag variety, there are many choices of such a $\chamb_{f}$, but several of our results in this paper are independent of the choice.

Let $\lim_{a \to 0_{\chamb_{f}}}$ denote the limit of sending the repelling directions for $\chamb_{f}$ to $0$. In other words, if $\sigma$ is a cocharacter in $\chamb_{f}$, then $\lim_{a \to 0_{\chamb_{f}}}F(a)=\lim_{t \to 0} (F \circ \sigma)(t)$.
Let $0_{\chamb_{f}}$ denote the point given by $w=0$ whenever $w$ is a repelling weight for $\chamb_{f}$.

\begin{proposition}\label{Vholomorphic}
The vertex function $\ver^{X}_{f}$ defines a holomorphic function of $a$ in a neighborhood of $0_{\chamb_{f}}$ for any choice of $\chamb_{f}$.
\end{proposition}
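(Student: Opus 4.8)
The plan is to analyze the combinatorial formula of Proposition~\ref{prop: vertex formula} term-by-term and show that, under the assumption $\w(\Ab_i) = 1$ for all $i$, each summand over $\pi \in \rpp(f)$ is manifestly holomorphic near $0_{\chamb_f}$, and moreover that the sum converges to a holomorphic function there. First I would fix a choice of $\chamb_f$ and recall that a fixed point $f$ is encoded by $\sigma_f\colon\{1,\dots,n\}\to\{1,\dots,m\}$, so that the $\Tt$-weights $w(a)$ of the butterfly vertices are, up to powers of $\hbar$, of the form $a_i$ for the vertex carrying the ``$i$-th strand''. Because every D5 brane has weight one, each D3 bundle $\xi_{\Xb}$ restricted to $f$ is a sum of \emph{distinct} such weights, and the reverse plane partition values $\pi(a)$ are constant along each strand (condition (2) of Definition~\ref{rpp} forces $\pi$ to vanish near any D5 brane, and between two NS5 branes the strand carries a single well-defined value). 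Thus the degree $d = \deg(\pi)$ is recorded by an $(m-1)$-tuple of non-negative integers attached to the strands, and the ratios $w(b)/w(a)$ appearing in the $q$-Pochhammer factors are, up to powers of $\hbar$, ratios $a_i/a_j$ where $i$ and $j$ are strands passing through adjacent D3 branes.

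The key step is the pole analysis. A potential pole of the term indexed by $\pi$ can only come from a denominator factor $(q\, w(b)/w(a))_{k}$ with $k = \pi(b)-\pi(a) > 0$, i.e. from a vanishing of $1 - q^j\, w(b)/w(a)$ for some $0 \le j \le k-1$. I would argue that at the point $0_{\chamb_f}$ this can only happen when $w(b)/w(a)$ degenerates to a ratio $a_i/a_j$ with $i \ge_f j$ (an attracting or trivial direction), because the repelling ratios are precisely those sent to zero, and $1 - q^j\cdot 0 = 1 \ne 0$. So I must rule out the attracting/diagonal case: here I would use the structure of the vertex formula together with the matching numerator factors. Specifically, the second product in Proposition~\ref{prop: vertex formula} (over pairs with $\col(a)=\col(b)$) contributes factors $(q\, w(b)/w(a))_{\pi(b)-\pi(a)}$ in the \emph{numerator}, and the combinatorics of reverse plane partitions over the butterfly of a cotangent bundle of a flag variety is exactly arranged so that every dangerous denominator factor is cancelled by such a numerator factor — this is the standard ``hook-content'' cancellation familiar from the $K$-theoretic $I$-function / vertex of $T^*\mathrm{Fl}$, cf.~\cite{KorZeit}. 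The only weights $w(b)/w(a)$ that do not get cancelled are the repelling ones, which as noted pose no problem at $0_{\chamb_f}$.

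Having established that each summand is holomorphic near $0_{\chamb_f}$, the last step is convergence: the vertex function is a power series in $Q$ with coefficients that are now rational in $a$ but, by the above, regular at $0_{\chamb_f}$; one checks (as in \cite{Okounkov_lectures}) that after specializing $a$ to a small neighborhood of $0_{\chamb_f}$ the series in $Q$ still converges in a polydisc, uniformly on compact subsets of the $a$-neighborhood, so the limit is holomorphic. I expect the main obstacle to be making the cancellation argument clean and choice-independent: one has to check that the butterfly diagram of a weight-one-D5 bow variety really does force the denominators to pair off with numerators for \emph{every} admissible $\pi$, and that this holds uniformly over all valid $\chamb_f$. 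A convenient way to organize this is to note that for such $X$ the vertex of Proposition~\ref{prop: vertex formula} literally coincides (after Hanany-Witten reduction to the separated diagram) with the quiver-variety vertex of $T^*\mathrm{Fl}$ computed in \cite{KorZeit}, for which this regularity is already implicit; I would then just cite that identification and spell out which ratios survive as the repelling ones, concluding holomorphicity at $0_{\chamb_f}$.
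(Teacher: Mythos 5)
Your overall strategy (a term-by-term analysis of the localization formula) is viable, but the step you yourself identify as the key one --- the ``hook-content cancellation'' between dangerous denominator factors and matching numerator factors --- is asserted rather than proven, and I do not believe it is the mechanism actually at work. In the weight-one-D5 formula (Proposition \ref{prop:flagvertex}) every $q$-Pochhammer appearing in a denominator involves a ratio $a_i/a_j$ which, at $0_{\chamb_f}$, tends either to $0$ (repelling) or to $\infty$ (attracting); in neither case is there a resonance that needs cancelling, because for generic $q,\hbar$ each individual ratio $(\alpha x)_d/(\beta x)_d$ extends holomorphically both to $x=0$ and to $x=\infty$ (write $(\beta/y)_d=y^{-d}\prod_{l}(y-q^{l}\beta)$ to see regularity at $y=0$, the denominator being nonzero there). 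So no pairing of numerators against denominators is required, and conversely your appeal to a ``standard'' cancellation for $T^{*}\Fl$, with the details deferred to \cite{KorZeit}, leaves the central claim of your argument unestablished. To repair it you would need to carry out the elementary Pochhammer computation just sketched for each of the three blocks of Proposition \ref{prop:flagvertex}, including the case $\sigma_f(i)=\sigma_f(j)$ where, depending on the choice of $\chamb_f$, the ratio $a_i/a_j$ may be attracting rather than repelling --- a case your argument does not address, even though the proposition is claimed for every admissible $\chamb_f$.

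For comparison, the paper's proof is a one-liner and avoids all of this: the chamber $\chamb_f$ is rigged precisely so that $f$ is a \emph{minimal} fixed point for the induced order, and holomorphicity near $0_{\chamb_f}$ is then the degenerate case of the pole-subtraction theorem (Theorem 5 of \cite{aganagic2016elliptic}), in which the sum over fixed points below $f$ is empty. That route also makes your final convergence discussion unnecessary, since the statement concerns the $Q$-coefficients of $\ver^X_f$ as rational functions of $a$, each of which is a finite sum of localization terms.
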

\begin{proof}
    The point $f$ is a minimal fixed point with respect to the order induced by $\chamb_{f}$. So the result follows as a trivial case of Theorem 5 from \cite{aganagic2016elliptic}.
\end{proof}

\subsection{Weight one D5 branes: refined formula, limits, and convergence}\label{sec: flag vertex results}

Suppose that $\D$ is a separated brane diagram, with bow variety $X$, such that $\w(\Ab_{i})=1$. In this case, we can rewrite Proposition \ref{prop: vertex formula} more efficiently.

Let $f \in X^{\At}$ and let $\sigma_{f}$ be as in \S\ref{sub: vholo}. A stable reverse plane partition over the butterfly diagram $B^{f}$ is the disjoint union $\pi=\bigsqcup_{i=1}^{n} \pi^{i}$ where $\pi^{i}$ is a stable reverse plane partition over the butterfly corresponding to the D5 brane $\Ab_{i}$. Since these butterflies are chains, the data is simply $\pi^{i}=(\pi^{i}_{\sigma_{f}(i)} \geq \pi^{i}_{\sigma_{f}(i)+1} \geq \ldots \geq \pi^{i}_{m} = 0)$.

Define $\theta^{i}_{k}:=\pi^{i}_{k}-\pi^{i}_{k+1}$. Notice that $\theta^{i}_{k}=0$ if $k \geq m$. Then summing over reverse plane partitions $\pi$ is equivalent to summing over all nonnegative $\theta^{i}_{j}$ for $1 \leq i \leq n$ and $\sigma_{f}(i) \leq j \leq m-1$. Let $M_{f}$ denote the set of all such $\theta^{i}_{j}$.

\begin{proposition}\label{prop:flagvertex}
    The vertex function is
    \begin{align*}
        \ver^{X}_{f}(Q)&=\sum_{\theta \in M_{f}}  \left( \prod_{i=1}^{n} \prod_{k=\sigma_{f}(i)}^{m-1} \left( \prod_{l=\sigma_{f}(i)}^{k} (-q \hbar^{1/2})^{-\w(\Zb_{l})-\w(\Zb_{l+1})} Q_{l}\right)^{\theta^{i}_{k}} \right)
     \left(    \prod_{i=1}^{n} \prod_{k =\sigma_{f}(i)}^{m-1}\frac{\left( \hbar^{-1}\right)_{\theta^{i}_{k}}}{\left( q\right)_{\theta^{i}_{k}}} (q \hbar)^{\theta^{i}_{k}}\right)
        \\ &\left( \prod_{\substack{1 \leq i,j \leq n\\ \sigma_{f}(i)<\sigma_{f}(j)}}  \prod_{k = \sigma_{f}(j)}^{m}\frac{\left(q^{\sum_{l \geq k} (\theta^{i}_{l}-\theta^{j}_{l})} \hbar^{-1} \frac{a_{i}}{a_{j}}\right)_{\theta^{i}_{k-1}}}{\left(q^{\sum_{l \geq k} (\theta^{i}_{l}-\theta^{j}_{l})} q \frac{a_{i}}{a_{j}}\right)_{\theta^{i}_{k-1}}} 
      \frac{\left(q^{-\theta^{j}_{k}+\sum_{l > k}( \theta^{i}_{l}-\theta^{j}_{l})} q \hbar \frac{a_{i}}{a_{j}}\right)_{\theta^{i}_{k}}}{\left(q^{-\theta^{j}_{k}+\sum_{l > k} (\theta^{i}_{l}-\theta^{j}_{l})} \frac{a_{i}}{a_{j}}\right)_{\theta^{i}_{k}}} (q\hbar)^{\theta^{i}_{k-1}} \right) \\
        & \left( \prod_{\substack{1 \leq i<j \leq n \\ \sigma_{f}(i)=\sigma_{f}(j)}} \prod_{k = \sigma_{f}(j)}^{m-1}\frac{\left(q^{\sum_{l \geq k} (\theta^{i}_{l}-\theta^{j}_{l})} \hbar^{-1} \frac{a_{i}}{a_{j}}\right)_{\theta^{j}_{k}}}{\left(q^{\sum_{l \geq k} (\theta^{i}_{l}-\theta^{j}_{l})} q \frac{a_{i}}{a_{j}}\right)_{\theta^{j}_{k}}} 
        \frac{\left(q^{-\theta^{j}_{k}+\sum_{l \geq k} (\theta^{i}_{l}-\theta^{j}_{l})} q \hbar \frac{a_{i}}{a_{j}}\right)_{\theta^{i}_{k}}}{\left(q^{-\theta^{j}_{k}+\sum_{l \geq k} (\theta^{i}_{l}-\theta^{j}_{l})} \frac{a_{i}}{a_{j}}\right)_{\theta^{i}_{k}}} (q \hbar )^{\theta^{j}_{k}}\right) .
    \end{align*}
\end{proposition}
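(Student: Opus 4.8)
The proposition is a purely combinatorial reformulation of Proposition \ref{prop: vertex formula} in the special case $\w(\Ab_i)=1$, so the plan is to specialize that localization formula and reindex; the only geometric input needed is the shape of the butterfly diagram $B^f$ and the $\Tt$-weights of its vertices. Since $\D$ is separated and every D5 brane has weight one, $B^f$ decomposes as a disjoint union of $n$ chains $B^{f,i}$, one per D5 brane $\Ab_i$, where $B^{f,i}$ has exactly one vertex of each color $\Xb_{\sigma_f(i)},\ldots,\Xb_m$, linearly ordered. Hence a stable reverse plane partition is precisely a tuple $(\pi^i_{\sigma_f(i)}\ge\cdots\ge\pi^i_m=0)_{i=1}^n$, and passing to finite differences $\theta^i_k=\pi^i_k-\pi^i_{k+1}$ identifies $\rpp(f)$ with $M_f$. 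First I would record the weights: the vertex of $B^{f,i}$ of color $\Xb_k$ has weight $a_i\hbar^{c_k(i)}$ for an explicit integer $c_k(i)$ read off from the butterfly, so the ratio of two vertex weights within one chain is a power of $\hbar$, while the ratio of the color-$\Xb_k$ vertices of $B^{f,i}$ and $B^{f,j}$ equals $\tfrac{a_i}{a_j}$ times a power of $\hbar$ whose exponent depends only on the ordering of $\sigma_f(i)$ and $\sigma_f(j)$.

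\textbf{Reorganizing the products.} Substituting these weights into Proposition \ref{prop: vertex formula}, the product over pairs with $\col(b)-\col(a)=1$ splits into pairs contained in one chain $B^{f,i}$ — which, after the remark following Proposition \ref{prop: vertex formula} restricting to colors adjacent to an NS5 brane, contribute the diagonal factors $\tfrac{(\hbar^{-1})_{\theta^i_k}}{(q)_{\theta^i_k}}$ — and pairs straddling two chains $B^{f,i},B^{f,j}$, contributing the off-diagonal factors. Symmetrically, the product over pairs with $\col(a)=\col(b)$ contributes reciprocals, and again only cross-chain pairs survive. These cross-chain contributions are grouped by the three cases $\sigma_f(i)<\sigma_f(j)$, $\sigma_f(i)=\sigma_f(j)$, $\sigma_f(i)>\sigma_f(j)$; the third is reduced to the first by swapping $i\leftrightarrow j$ and applying the reflection identity for $q$-Pochhammer symbols, which is why the statement lists only two blocks. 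At this point every surviving factor is a single symbol $(x)_{\pi(b)-\pi(a)}$ whose index is a partial sum of $\theta$'s.

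\textbf{Splitting Pochhammers and matching prefactors.} Next I would apply $(x)_{M+N}=(x)_M(q^Mx)_N$ repeatedly to break each $(x)_{\pi(b)-\pi(a)}$ into a product over $k$ of factors $(q^{\sum_{l\ge k}(\theta^i_l-\theta^j_l)}x)_{\theta^i_k}$ (or with $\theta^i_{k-1}$ in place of $\theta^i_k$), which produces exactly the $q$-shifts $q^{\sum_{l\ge k}(\theta^i_l-\theta^j_l)}$ and $q^{-\theta^j_k+\sum_{l>k}(\theta^i_l-\theta^j_l)}$ in the statement. It then remains to match the scalar prefactors: writing $\pi^i_k=\sum_{l\ge k}\theta^i_l$ turns $Q^{\deg(\pi)}$ into $\prod_i\prod_k\big(\prod_{l=\sigma_f(i)}^k Q_l\big)^{\theta^i_k}$; the factor $(-q\hbar)^{N(\pi)/2}$ with $N(\pi)=\sum_{i,k}\theta^i_k$ (only within-chain adjacent pairs contribute) gets distributed as the $(q\hbar)^{\theta^i_k}$, $(q\hbar)^{\theta^i_{k-1}}$, $(q\hbar)^{\theta^j_k}$ weights attached to each block; and the factor $q^{\deg\qmpol_\pi/2}$ for the chosen polarization $\hbar(\alpha+\beta)^\vee$ recombines with these into the $(-q\hbar^{1/2})^{-\w(\Zb_l)-\w(\Zb_{l+1})}$ normalization on each $Q_l$. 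Collecting the three bookkeeping computations yields the stated formula.

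\textbf{Main obstacle.} The conceptual content is light; the difficulty is entirely in the bookkeeping. The delicate point is computing the integers $c_k(i)$ in the vertex weights and then tracking, through the Pochhammer splitting, precisely which partial sum of $\theta$'s lands in each $q$-exponent — in particular keeping $\sum_{l\ge k}$ distinct from $\sum_{l>k}$, and $\theta^i_{k-1}$ distinct from $\theta^i_k$, in the $\sigma_f(i)<\sigma_f(j)$ block. A secondary nuisance is verifying that the half-integer powers of $\hbar$ and the signs from $(-q\hbar)^{N(\pi)/2}$, $q^{\deg\qmpol_\pi/2}$, and the polarization twist recombine into the clean normalization claimed, together with checking the degenerate endpoints ($k=m$ with $\theta^i_m=0$, and $\sigma_f(i)=\sigma_f(j)$ where both chains start at the same color).
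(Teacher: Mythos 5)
Your proposal is correct and follows essentially the same route as the paper: specialize Proposition \ref{prop: vertex formula} using the decomposition of the butterfly into $n$ chains, reindex reverse plane partitions by the differences $\theta^i_k$, and regroup the Pochhammer symbols by pairs of D5 branes according to whether $\sigma_f(i)<\sigma_f(j)$ or $\sigma_f(i)=\sigma_f(j)$, using the reflection and addition identities for $(x)_d$ to convert the (possibly negative) indices $\pi(b)-\pi(a)$ into the stated positive-index factors. The paper organizes the bookkeeping by multiplying one factor from each of the two products ($E_kG_k$ and $F_{k+1}H_k$) before converting to $\theta$ variables, but this is only a cosmetic difference from your plan, and you correctly identify all the delicate points (the $\hbar$-exponents in the vertex weights, the $\sum_{l\ge k}$ versus $\sum_{l>k}$ distinction, and the prefactor normalization).
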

\begin{proof}
Fix two D5 branes $\Ab_{i}$ and $\Ab_{j}$ such that $\sigma_{f}(i)<\sigma_{f}(j)$ and consider all the terms in Proposition \ref{prop: vertex formula} involving the corresponding equivariant parameters $a_i$ and $a_j$. The first product contributes the terms
\[
\left(\prod_{k=\sigma_{f}(j)}^{m} \underbrace{\frac{\left(\frac{a_{j}}{a_{i}} \right)_{\pi^{j}_{k}-\pi^{i}_{k-1}}}{\left(q \hbar \frac{a_{j}}{a_{i}} \right)_{\pi^{j}_{k}-\pi^{i}_{k-1}}}}_{E_{k}:=} \right) \left(\prod_{k=\sigma_{f}(j)+1}^{m} \underbrace{\frac{\left(\frac{a_{i}}{a_{j}} \right)_{\pi^{i}_{k}-\pi^{j}_{k-1}}}{\left(q \hbar \frac{a_{i}}{a_{j}} \right)_{\pi^{i}_{k}-\pi^{j}_{k-1}}}}_{F_{k}:=} \right),
\]
while the second product contributes
\[
\left(\prod_{k=\sigma_{f}(j)}^{m} \underbrace{\frac{\left( q \frac{a_{i}}{a_{j}} \right)_{\pi^{i}_{k}-\pi^{j}_{k}}}{\left( \hbar^{-1} \frac{a_{i}}{a_{j}} \right)_{\pi^{i}_{k}-\pi^{j}_{k}}}}_{G_{k}:=} \underbrace{\frac{\left( q \frac{a_{j}}{a_{i}} \right)_{\pi^{j}_{k}-\pi^{i}_{k}}}{\left( \hbar^{-1} \frac{a_{j}}{a_{i}} \right)_{\pi^{j}_{k}-\pi^{i}_{k}}}}_{H_{k}:=} \right) .
\]

Then
\begin{align*}
E_{k} G_{k}&=\frac{\left(\frac{a_{j}}{a_{i}} \right)_{\pi^{j}_{k}-\pi^{i}_{k-1}}}{\left(q \hbar \frac{a_{j}}{a_{i}} \right)_{\pi^{j}_{k}-\pi^{i}_{k-1}}} \frac{\left( q \frac{a_{i}}{a_{j}} \right)_{\pi^{i}_{k}-\pi^{j}_{k}}}{\left( \hbar^{-1} \frac{a_{i}}{a_{j}} \right)_{\pi^{i}_{k}-\pi^{j}_{k}}} \\
&= (q\hbar)^{\pi^{i}_{k-1}-\pi^{j}_{k}}\frac{\left(q^{\pi^{i}_{k}-\pi^{j}_{k}}\hbar^{-1}\frac{a_i}{a_j} \right)_{\pi^{i}_{k-1}-\pi^{i}_{k}}}{\left(q^{\pi^{i}_{k}-\pi^{j}_{k}}q\frac{a_i}{a_j} \right)_{\pi^{i}_{k-1}-\pi^{i}_{k}} } \\
&=
(q\hbar)^{\theta^{i}_{k-1}+\sum_{l \geq k}(\theta^{i}_{l}-\theta^{j}_{l})}\frac{\left( q^{\sum_{l \geq k}(\theta^{i}_{l}-\theta^{j}_{l})} \hbar^{-1} \frac{a_{i}}{a_{j}} \right)_{\theta^{i}_{k-1}}}{\left( q^{\sum_{l \geq k}(\theta^{i}_{l}-\theta^{j}_{l})} \hbar^{-1} \frac{a_{i}}{a_{j}} \right)_{\theta^{i}_{k-1}}}
\end{align*}
for $\sigma_{f}(j)\leq k \leq m$. Similarly,
\[
F_{k+1} H_{k}=(q \hbar)^{-\sum_{l \geq k}(\theta^{i}_{l}-\theta^{j}_{l})}\frac{\left(q^{-\theta^{j}_{k}+\sum_{l > k}(\theta^{i}_{l}-\theta^{j}_{l})} q \hbar \frac{a_{i}}{a_{j}} \right)_{\theta^{j}_{k}}}{\left(q^{-\theta^{j}_{k}+\sum_{l > k}(\theta^{i}_{l}-\theta^{j}_{l})}  \frac{a_{i}}{a_{j}} \right)_{\theta^{j}_{k}}}
\]
for $\sigma_{f}(j)\leq k \leq m$. Together, $E_{k} G_{k} F_{k+1} H_{k}$ precisely accounts for the terms in the second row of the formula in the present proposition.

A similar computation involving two different D5 branes $\Ab_{i}$ and $\Ab_{j}$ such that $\sigma_{f}(i)=\sigma_{f}(j)$ provides the terms in the third row of the proposition.

Finally, the terms in the first row are provided by the K\"ahler terms and the contributions between $\Ab_{i}$ and $\Ab_{j}$ where $i=j$.
\end{proof}

\begin{remark}\label{rmk: bispectral}
    When, additionally, $\w(\Zb_{j})=1$ for all $j$ and $\sigma_{f}(i)=i$ for all $i$, Proposition \ref{prop:flagvertex} is exactly the formula for the Macdonald function from (1.11) of \cite{NSmac} after the change of variables $\hbar=t^{-1}$ and $\prod_{l=\sigma_{f}(i)}^{k} (-q \hbar^{1/2})^{-\w(\Zb_{l})-\w(\Zb_{l+1})} Q_{l}=x_{l+1}/x_{l}$.
\end{remark}

From the previous formula, it is easy to take limits in the equivariant parameters.

\begin{proposition}
For any $f \in X^{\At}$ and for any choice of $\chamb_{f}$, we have
    \begin{multline*}
\flaglim^{X}(Q):=\lim_{a \to 0_{\chamb_{f}}} \ver^{X}_{f}(Q) \\
=\prod_{i=1}^{m-1} \prod_{j=1}^{\w(\Zb_{i})} \prod_{k=i}^{m-1}\frac{\Phi\left( \hbar^{-1} (q\hbar)^{j}\prod_{i \leq l \leq k} (q\hbar)^{\w(\Zb_{l+1})} (-q\hbar^{1/2})^{-\w(\Zb_{l})-\w(\Zb_{l+1})} Q_{l} \right)}{\Phi\left( (q\hbar)^{j}\prod_{i \leq l \leq k} (q\hbar)^{\w(\Zb_{l+1})} (-q\hbar^{1/2})^{-\w(\Zb_{l})-\w(\Zb_{l+1})} Q_{l} \right)}.
    \end{multline*}
    In particular, the limit is independent of both $f$ and $\chamb_{f}$.
\end{proposition}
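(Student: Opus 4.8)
The strategy is to take the limit $a \to 0_{\chamb_f}$ term by term in the refined formula of Proposition \ref{prop:flagvertex}. The key observation is that in the chamber $\chamb_f$, each ratio $\frac{a_i}{a_j}$ with $i <_f j$ (equivalently $\sigma_f(i) < \sigma_f(j)$, or $\sigma_f(i)=\sigma_f(j)$ with $i<j$) is a repelling weight, hence tends to $0$ in this limit. Looking at the second and third rows of the formula, every $q$-Pochhammer symbol appearing there has argument of the form $q^{(\cdots)}\hbar^{\pm 1}\frac{a_i}{a_j}$ with $\sigma_f(i)\le\sigma_f(j)$, so in the limit each such symbol $(\,q^{(\cdots)}\hbar^{\pm1}\tfrac{a_i}{a_j}\,)_N$ becomes $(0)_N = 1$ (using $(x)_d = \Phi(x)/\Phi(q^d x)$ and $\Phi(0)=1$). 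Thus the entire second and third rows collapse to the monomial factors $(q\hbar)^{\theta^i_{k-1}}$ and $(q\hbar)^{\theta^j_k}$ that were carried along.

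**Main steps.** First, I would verify carefully that after this collapse, the surviving sum over $\theta \in M_f$ factors completely as a product of independent geometric-type sums, one for each pair $(i,k)$ with $1\le i\le n$, $\sigma_f(i)\le k\le m-1$, over $\theta^i_k \ge 0$. Concretely, combining the first-row Kähler monomial, the residual monomials $(q\hbar)^{\theta^i_k}$ from rows two and three, and the diagonal factor $\frac{(\hbar^{-1})_{\theta^i_k}}{(q)_{\theta^i_k}}(q\hbar)^{\theta^i_k}$ from the $i=j$ part, each such sum is a $q$-hypergeometric series of $_1\phi_0$ type. Second, I would apply the $q$-binomial theorem $\sum_{d\ge 0}\frac{(\hbar^{-1})_d}{(q)_d} y^d = \frac{\Phi(\hbar^{-1}y)}{\Phi(y)}$ to each factor, where $y$ is the appropriate monomial in $Q$ and $\hbar,q$. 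The bookkeeping step is to check that, as $k$ runs from $\sigma_f(i)$ to $m-1$, the product over these factors can be reindexed — since $\sigma_f(i)$ depends on $f$ only through which NS5 brane $\Ab_i$'s tie hits, and since there are exactly $\w(\Zb_i)$ D5 branes whose tie hits $\Zb_i$ — the product $\prod_{i=1}^{n}\prod_{k=\sigma_f(i)}^{m-1}(\cdots)$ regroups into $\prod_{i=1}^{m-1}\prod_{j=1}^{\w(\Zb_i)}\prod_{k=i}^{m-1}(\cdots)$, matching the claimed formula. Finally, tracking the powers of $q\hbar$ carefully reproduces the prefactor $(q\hbar)^j$ inside the arguments of $\Phi$ in the statement.

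**Main obstacle.** The genuinely delicate point is justifying the interchange of the limit $a\to 0_{\chamb_f}$ with the infinite sum over $\theta\in M_f$. This requires knowing that the series converges uniformly in a punctured neighborhood of $0_{\chamb_f}$; this is exactly the content of Proposition \ref{Vholomorphic}, which guarantees $\ver^X_f$ is holomorphic at $0_{\chamb_f}$, so the limit is simply evaluation at that point and can be computed by the naive termwise procedure. The second point requiring care is the combinatorial reindexing: one must confirm that the monomial $\prod_{l=\sigma_f(i)}^{k}(q\hbar)^{\w(\Zb_{l+1})}(-q\hbar^{1/2})^{-\w(\Zb_l)-\w(\Zb_{l+1})}Q_l$ appearing as the base $y$ of the $q$-binomial series depends on $i$ only through $\sigma_f(i)$, and that as $\sigma_f(i)$ takes the value $i_0$ for precisely $\w(\Zb_{i_0})$ indices $i$, the triple product telescopes into the stated form with the extra index $j$ ranging over $1\le j\le\w(\Zb_i)$ accounting for the $(q\hbar)^j$ shift. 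Once these two points are in place, the independence of the answer from $f$ and from $\chamb_f$ is manifest, since the final expression contains neither $\sigma_f$ nor any equivariant parameter.
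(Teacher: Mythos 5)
Your proposal is correct and follows essentially the same route as the paper's proof: take the limit termwise in the refined formula of Proposition \ref{prop:flagvertex} (all Pochhammer symbols in $a_i/a_j$ tend to $1$, leaving only the $(q\hbar)$ monomials), factor the sum into independent one-variable series, apply the $q$-binomial theorem, and reindex the resulting product over $(i,k)$ into the stated triple product. The paper does not spell out the interchange of limit and sum, whereas you correctly flag Proposition \ref{Vholomorphic} as the justification; otherwise the arguments coincide.
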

\begin{proof}
We make the computation for the choice of $\chamb_{f}$ such that $\frac{a_i}{a_j}$ is repelling if $\sigma_{f}(i)<\sigma_{f}(j)$ or if $\sigma_{f}(i)=\sigma_{f}(j)$ and $i<j$. For other choices of $\chamb_{f}$, the computation is similar. We compute
     \begin{align*}
       \lim_{a \to 0_{\chamb_f}} \ver^{X}_{f}(Q)&=\sum_{\theta \in M_{f}}  \left( \prod_{i=1}^{n} \prod_{k=\sigma_{f}(i)}^{m-1} \left( \prod_{l=\sigma_{f}(i)}^{k} (-q \hbar^{1/2})^{-\w(\Zb_{l})-\w(\Zb_{l+1})} Q_{l}\right)^{\theta^{i}_{k}} \right)
     \\
        & \left(    \prod_{i=1}^{n} \prod_{k =\sigma_{f}(i)}^{m-1}\frac{\left( \hbar^{-1}\right)_{\theta^{i}_{k}}}{\left( q\right)_{\theta^{i}_{k}}} (q \hbar)^{\theta^{i}_{k}}\right)\left( \prod_{\substack{1 \leq i,j \leq n\\ \sigma_{f}(i)<\sigma_{f}(j)}}  \prod_{k = \sigma_{f}(j)}^{m}
     (q\hbar)^{\theta^{i}_{k-1}} \right) \left( \prod_{\substack{1 \leq i<j \leq n \\ \sigma_{f}(i)=\sigma_{f}(j)}} \prod_{k = \sigma_{f}(j)}^{m-1} (q \hbar )^{\theta^{j}_{k}}\right). 
    \end{align*}

    Simplifying, we get
      \begin{align*}
       \lim_{a \to 0_{\chamb_f}} \ver^{X}_{f}(Q)&=\sum_{\theta \in M_{f}}  \left( \prod_{i=1}^{n} \prod_{k=\sigma_{f}(i)}^{m-1} \left( \prod_{l=\sigma_{f}(i)}^{k} (-q \hbar^{1/2})^{-\w(\Zb_{l})-\w(\Zb_{l+1})} Q_{l}\right)^{\theta^{i}_{k}} \right)
     \\
        & \left(    \prod_{i=1}^{n} \prod_{k =\sigma_{f}(i)}^{m-1}\frac{\left( \hbar^{-1}\right)_{\theta^{i}_{k}}}{\left( q\right)_{\theta^{i}_{k}}} (q \hbar)^{\theta^{i}_{k}}\right)
        \left( \prod_{i=1}^{n} \prod_{k=\sigma_{f}(i)}^{m-1} (q \hbar )^{(c_{i}+c_{i,k})\theta^{i}_{k}}\right) \\
        &=\sum_{\theta \in M_{f}}  \prod_{i=1}^{n} \prod_{k=\sigma_{f}(i)}^{m-1}   \frac{\left( \hbar^{-1}\right)_{\theta^{i}_{k}}}{\left( q\right)_{\theta^{i}_{k}}} (q \hbar)^{\theta^{i}_{k}(c_{i}+c_{i,k}+1)} \left( \prod_{l=\sigma_{f}(i)}^{k} (-q \hbar^{1/2})^{-\w(\Zb_{l})-\w(\Zb_{l+1})} Q_{l}\right)^{\theta^{i}_{k}}  \\
        &=\prod_{i=1}^{n} \prod_{k=\sigma_{f}(i)}^{m-1} \sum_{\theta^{i}_{k}=0}^{\infty} \frac{\left( \hbar^{-1}\right)_{\theta^{i}_{k}}}{\left( q\right)_{\theta^{i}_{k}}}\left(  (q \hbar)^{c_{i}+c_{i,k}+1} \prod_{l=\sigma_{f}(i)}^{k} (-q \hbar^{1/2})^{-\w(\Zb_{l})-\w(\Zb_{l+1})} Q_{l}\right)^{\theta^{i}_{k}}   \\
        &=\prod_{i=1}^{n} \prod_{k=\sigma_{f}(i)}^{m-1}  \frac{\Phi\left(\hbar^{-1} (q\hbar)^{c_i+c_{i,k}+1} \prod_{\sigma_{f}(i) \leq l \leq k}(-q\hbar^{1/2})^{-\w(\Zb_{l})-\w(\Zb_{l+1})} Q_{l} \right)}{\Phi\left( (q\hbar)^{c_i+c_{i,k}+1} \prod_{\sigma_{f}(i) \leq l \leq k}(-q\hbar^{1/2})^{-\w(\Zb_{l})-\w(\Zb_{l+1})} Q_{l} \right)}
    \end{align*}
    where $c_{i}=|\{j \, \mid \, j<i, \sigma_{f}(i)=\sigma_{f}(j)\}|$ and $c_{i,k}=|\{j \, \mid \, k+1 \geq \sigma_{f}(j)>\sigma_{f}(i)  \}|$. In the last equality, we used the $q$-binomial theorem. This equals
    \[
\prod_{i=1}^{m-1} \prod_{j=1}^{\w(\Zb_{i})} \prod_{k=i}^{m-1}\frac{\Phi\left( \hbar^{-1} (q\hbar)^{j}\prod_{i \leq l \leq k} (q\hbar)^{\w(\Zb_{l+1})}(-q\hbar^{1/2})^{-\w(\Zb_{l})-\w(\Zb_{l+1})} Q_{l} \right)}{\Phi\left( (q\hbar)^{j}\prod_{i \leq l \leq k}  (q\hbar)^{\w(\Zb_{l+1})} (-q\hbar^{1/2})^{-\w(\Zb_{l})-\w(\Zb_{l+1})} Q_{l} \right)},
    \]
    which is clearly independent of $f$. 
\end{proof}

By a change of variable, we obtain the following. 

\begin{proposition}\label{prop: msflaglim}
For any $f \in X^{\At}$ and for any choice of $\chamb_{f}$, we have
    \[
\MSflaglim^{X}(Q):=\lim_{a \to 0_{\chamb_f}} \MSver^{X}(Q^{-1})=\prod_{i=1}^{m-1} \prod_{j=1}^{\w(\Zb_{i})} \prod_{k=i}^{m-1}\frac{\Phi\left(\hbar^{-1} (q\hbar)^{j}\prod_{i \leq l \leq k} q^{-\w(\Zb_{l})}  Q_{l}^{-1}\right)}{\Phi\left( (q\hbar)^{j}\prod_{i \leq l \leq k} q^{-\w(\Zb_{l})} Q_{l}^{-1} \right)}.
    \]
    
\end{proposition}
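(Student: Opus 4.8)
The plan is to unwind the definition of $\MSver^{X}$ in \eqref{eq: def of MSver} and reduce the statement, by a substitution in the Kähler variables only, to the formula for $\flaglim^{X}$ established in the preceding proposition. Concretely, by \eqref{eq: def of MSver} the series $\MSver^{X}(Q^{-1})$ is obtained from $\ver^{X}$ (for the polarization $\hbar(\alpha+\beta)^{\vee}$ fixed in \S\ref{sec: mirsym vertex normalization}, which is also the polarization underlying Proposition \ref{prop:flagvertex} and the preceding proposition) by performing $Q_{i}\mapsto Q_{i}^{-1}(-\hbar^{1/2})^{\w(\Zb_{i})-\w(\Zb_{i+1})}$. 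This substitution affects only the Kähler parameters, hence commutes with the limit $a\to 0_{\chamb_{f}}$; so $\MSflaglim^{X}(Q)$ equals the right-hand side of the preceding proposition after this same substitution, and in particular its independence of $f$ and of $\chamb_{f}$ is inherited for free.

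It then remains to plug the substitution into the product formula and simplify the arguments of $\Phi$. The only factors touched are those of the form $(q\hbar)^{\w(\Zb_{l+1})}(-q\hbar^{1/2})^{-\w(\Zb_{l})-\w(\Zb_{l+1})}Q_{l}$ occurring inside the $\Phi$'s, which after substitution become
\begin{equation*}
(q\hbar)^{\w(\Zb_{l+1})}(-q\hbar^{1/2})^{-\w(\Zb_{l})-\w(\Zb_{l+1})}(-\hbar^{1/2})^{\w(\Zb_{l})-\w(\Zb_{l+1})}Q_{l}^{-1}.
\end{equation*}
A short exponent count shows the total power of $\hbar$ is $\w(\Zb_{l+1})-\tfrac12(\w(\Zb_{l})+\w(\Zb_{l+1}))+\tfrac12(\w(\Zb_{l})-\w(\Zb_{l+1}))=0$, the sign collapses to $+1$ since the exponent of $-1$ is $-2\w(\Zb_{l+1})$, and the power of $q$ is $\w(\Zb_{l+1})-\w(\Zb_{l})-\w(\Zb_{l+1})=-\w(\Zb_{l})$; hence the factor simplifies to $q^{-\w(\Zb_{l})}Q_{l}^{-1}$. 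The remaining factors $(q\hbar)^{j}$ and the overall $\hbar^{-1}$ in the numerator of the $\flaglim^{X}$ formula do not involve $Q$ and are unchanged, so this yields exactly the asserted expression for $\MSflaglim^{X}(Q)$.

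I do not expect a genuine obstacle here; the one point demanding care is bookkeeping consistency of the polarization. If for some reason the polarization used in \eqref{eq: def of MSver} were not literally the one underlying Proposition \ref{prop:flagvertex}, one would have to insert the power-of-$q$ shift in $Q$ coming from the dependence-on-polarization discussion and check that it does not alter the final answer; under the conventions of \S\ref{sec: mirsym vertex normalization} this shift is absent, and everything reduces to the elementary exponent manipulation above.
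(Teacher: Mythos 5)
Your proposal is correct and is essentially the paper's own argument: the paper derives this proposition from the preceding limit formula for $\flaglim^{X}$ simply "by a change of variable," namely the substitution $Q_{l}\mapsto Q_{l}^{-1}(-\hbar^{1/2})^{\w(\Zb_{l})-\w(\Zb_{l+1})}$ from \eqref{eq: def of MSver}, which commutes with the limit in $a$ since it touches only the K\"ahler parameters. Your exponent bookkeeping (total $\hbar$-power $0$, sign $(-1)^{-2\w(\Zb_{l+1})}=1$, $q$-power $-\w(\Zb_{l})$) is exactly the verification the paper leaves implicit, and your caveat about polarization is consistent with the conventions fixed in \S\ref{sec: mirsym vertex normalization}.
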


\subsection{Weight one D5 and NS5 branes: analytic continuation}

Now assume furthermore that $X$ has $m$ D5 branes and $m$ NS5 branes, all of weight one. The bow variety $X$ is isomorphic to the cotangent bundle of the complete flag variety in $\mathbb{C}^{m}$. By Remark \ref{rmk: bispectral}, we can apply the results of \cite{NSmac} to $\ver^{X}$. We need the following.

\begin{proposition}[\cite{NSmac} Theorem 1.6]\label{prop:fullflagpoles}
    For any $f \in X^{\At}$, $\ver^{X}_{f}(Q)$ can be analytically continued to a meromorphic function of $Q$ with simple poles at the divisors $\hbar q^{i-j} \prod_{i \leq l \leq j} Q_{l}=q^{-d}$ for $1 \leq i \leq m-1$ and $i \leq j \leq m-1$ and $d\geq 0$.
\end{proposition}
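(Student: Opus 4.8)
The strategy is to reduce the statement to the single fixed point $f_{0}\in X^{\At}$ whose associated permutation $\sigma_{f_{0}}$ is the identity, and then to quote \cite{NSmac}.

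First I would record a symmetry of the formula in Proposition \ref{prop:flagvertex}. For the complete flag variety every fixed point $f$ is labelled by a \emph{bijection} $\sigma_{f}\colon\{1,\dots,m\}\to\{1,\dots,m\}$, so the terms indexed by pairs with $\sigma_{f}(i)=\sigma_{f}(j)$ never occur and the last product in Proposition \ref{prop:flagvertex} is empty. Relabelling the summation variables $\theta^{i}_{k}$ by $i\mapsto\sigma_{f}^{-1}(i)$, together with the accompanying permutation $a_{i}\mapsto a_{\sigma_{f}^{-1}(i)}$ of equivariant parameters, carries the formula for $\ver^{X}_{f}$ into the formula for $\ver^{X}_{f_{0}}$ term by term: the index ranges $\sigma_{f}(i)\le k\le m-1$ become $i\le k\le m-1$, the conditions $\sigma_{f}(i)<\sigma_{f}(j)$ become $i<j$, and the products $\prod_{l=\sigma_{f}(i)}^{k}$ become $\prod_{l=i}^{k}$. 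Crucially this relabelling is the identity on the Kähler variables $Q_{1},\dots,Q_{m-1}$. Since the divisors $\hbar q^{i-j}\prod_{i\le l\le j}Q_{l}=q^{-d}$ involve only $Q$ and not the $a_{i}$, it therefore suffices to establish the analytic continuation and pole description for $f_{0}$: the permutation of the $a_{i}$ is a global isomorphism on the $a$-torus that fixes $Q$, so it transports a meromorphic continuation of $\ver^{X}_{f_{0}}(Q)$ with simple poles along those divisors to one of $\ver^{X}_{f}(Q)$ with simple poles along the same divisors.

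Second, for $f_{0}$ I would invoke Remark \ref{rmk: bispectral}: after the substitution $\hbar=t^{-1}$ and the identification of each factor $(-q\hbar^{1/2})^{-\w(\Zb_{l})-\w(\Zb_{l+1})}Q_{l}=(-q\hbar^{1/2})^{-2}Q_{l}$ with a ratio $x_{l+1}/x_{l}$, the series $\ver^{X}_{f_{0}}(Q)$ is exactly the Macdonald (non-stationary Ruijsenaars) function whose defining series is $(1.11)$ of \cite{NSmac}. Theorem $1.6$ of \cite{NSmac} states that this function admits a meromorphic continuation in the $x$-variables whose only singularities are simple poles along an explicit family of divisors; pulling that family back through the (telescoping) change of variables of Remark \ref{rmk: bispectral} produces precisely $\hbar q^{i-j}\prod_{i\le l\le j}Q_{l}=q^{-d}$ for $1\le i\le m-1$, $i\le j\le m-1$, $d\ge 0$, which is the claim for $f_{0}$.

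The only real work here is bookkeeping. One must verify that the reduction of Proposition \ref{prop:flagvertex} to the case $\sigma_{f}=\mathrm{id}$ is genuinely term by term, and, more delicately, that the list of pole divisors in \cite[Thm.~1.6]{NSmac}, phrased in the $x_{i}$ and the Macdonald parameters $(q,t)$, matches the list above after the substitution $\prod_{l=i}^{k}(-q\hbar^{1/2})^{-2}Q_{l}\leftrightarrow x_{k+1}/x_{i}$ and $\hbar\leftrightarrow t^{-1}$, with the index $d\ge 0$ corresponding to the $q^{-d}$-translates forced by the $q$-Pochhammer denominators. No analytic input beyond \cite{NSmac} is required; the only genuinely new ingredient is the elementary permutation argument passing from $f_{0}$ to an arbitrary fixed point, and the mild check that the vertex-function formula of Proposition \ref{prop:flagvertex} carries no hidden dependence on a choice of chamber (it does not — the chamber enters only in the holomorphicity and limit statements of \S\ref{sub: vholo}–\S\ref{sec: flag vertex results}).
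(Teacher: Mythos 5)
Your proposal is correct and matches the paper's (essentially unstated) argument: the paper presents this proposition as a direct import of \cite{NSmac}, Theorem 1.6, justified by the identification of Remark \ref{rmk: bispectral}. Your permutation reduction from an arbitrary fixed point to the one with $\sigma_{f}=\mathrm{id}$ --- which is Proposition \ref{prop: permute vertex} specialized to the complete flag variety, and which acts only on the $a$-variables while leaving $Q$ untouched --- is exactly the bookkeeping the paper leaves implicit in extending the remark (stated only for $\sigma_{f}(i)=i$) to all fixed points $f$.
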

Equivalently, the function $\flaglim^{X}(Q)^{-1} \ver^{X}_{f}(Q)$ is an entire function of $Q$, and the same is true for $\MSflaglim^{X}(Q)^{-1} \MSver^{X}_{f}(Q)$.

\subsection{Permutation of D5 branes}

Let $\D$ be a separated or co-separated brane diagram with $n$ D5 branes. For $\sigma  \in S_{n}$, let $\D':=\sigma(\D)$ be the brane diagram obtained by permuting the D5 branes according to $\sigma$. By this, we mean that $\D'$ has D5 branes $\Ab'_{1},\ldots,\Ab'_{n}$ such that $\w(\Ab_{i}')=\w(\Ab_{\sigma^{-1}(i)})$. For a simple reflection, see the diagram below.

\begin{equation*}
\begin{tikzpicture}[baseline=(current  bounding  box.center), scale=.35]
\draw[thick] (0,1)--(1,1) node [above] {$d_1$} -- (2,1);
\draw[thick,blue] (3,0) node [below] {$\Ab_i$}--(2,2);
\draw[thick] (3,1) -- (4,1) node [above] {$d_2$}--(5,1);
\draw[thick,blue] (6,0)  node [below] {$\Ab_{i+1}$} -- (5,2);
\draw[thick] (6,1)--(7,1) node [above] {$d_3$}--(8,1);
\draw[ultra thick, <->] (10,1)--(11.5,1)  -- (13,1);
\draw[thick] (15,1)--(16,1) node [above] {$d_1$} -- (17,1);
\draw[thick,blue] (18,0) node [below] {$\Ab_{i+1}$} --(17,2);
\draw[thick] (18,1)--(19,1) node [above] {$d'_2$}--(20,1);
\draw[thick,blue] (21,0) node [below] {$\Ab_i$} --(20,2);
\draw[thick] (21,1)--(22,1) node [above] {$d_3$}--(23,1);
\end{tikzpicture}
\qquad\quad \text{for } d_2+d'_2=d_1+d_3.
\end{equation*}

This induces an action of the symmetric group $S_{n}$ on the set of all bow varieties with $n$ D5 branes. This action can be lifted to an $S_{n}$ action on the set of all torus fixed points of such bow varieties by considering the tie diagram description of fixed points as in Figure \ref{fig: sym group action fixed points}.

\begin{figure}
\centering
\begin{tikzpicture}[scale=.3]
\draw [thick] (0,1) --(8,1);  
\draw [thick, blue] (3,0) -- (2,2);
\draw[thick, blue] (6,0)--(5,2);
\draw [dashed](2,2) to [out=120,in=0] (0,3) ;
\draw [dashed](2,2) to [out=120,in=0] (0,3.25) node [left] {$B$} ;
\draw [dashed](2,2) to [out=120,in=0] (0,3.4) ;
\draw [dashed](6,0) to [out=-60,in=180] (8,-1)  ;
\draw [dashed](6,0) to [out=-60,in=180] (8,-1.25)  node [right] {$C$} ;
\draw [dashed](6,0) to [out=-60,in=180] (8,-1.4);
\draw [dashed](3,0) to [out=-60,in=180] (8,-2.6);
\draw [dashed](3,0) to [out=-60,in=180] (8,-2.8) node [right]{$D$};
\draw [dashed](3,0) to [out=-60,in=180] (8,-3) ;
\draw [dashed](5,2) to [out=120,in=0] (0,4.6) ;
\draw [dashed](5,2) to [out=120,in=0] (0,4.8) node [left]{$A$};
\draw [dashed](5,2) to [out=120,in=0] (0,5);
\draw[ultra thick, <->] (10.5,1)--(12.5,1) node[above]{$\sigma_{i,i+1}$} -- (14.5,1);
\draw[ultra thick, <->] (10.5,1)--(12.5,1) node[below]{action} -- (14.5,1);
\begin{scope}[xshift=17cm]
\draw [thick] (0,1) --(8,1);  
\draw [thick, blue] (3,0) -- (2,2);
\draw[thick, blue] (6,0)--(5,2);
\draw [dashed](5,2) to [out=120,in=0] (0,3) ;
\draw [dashed](5,2) to [out=120,in=0] (0,3.25) node [left] {$B$} ;
\draw [dashed](5,2) to [out=120,in=0] (0,3.4) ;
\draw [dashed](3,0) to [out=-60,in=180] (8,-1)  ;
\draw [dashed](3,0) to [out=-60,in=180] (8,-1.25)  node [right] {$C$} ;
\draw [dashed](3,0) to [out=-60,in=180] (8,-1.4);
\draw [dashed](6,0) to [out=-60,in=180] (8,-2.6);
\draw [dashed](6,0) to [out=-60,in=180] (8,-2.8) node [right]{$D$};
\draw [dashed](6,0) to [out=-60,in=180] (8,-3) ;
\draw [dashed](2,2) to [out=120,in=0] (0,4.6) ;
\draw [dashed](2,2) to [out=120,in=0] (0,4.8) node [left]{$A$};
\draw [dashed](2,2) to [out=120,in=0] (0,5);
\end{scope}
\end{tikzpicture}.
\caption{}
\label{fig: sym group action fixed points}
\end{figure}

For $\sigma \in S_{n}$, we define the action on the set of equivariant parameters by
\[
\sigma \cdot a_{i} =a_{\sigma^{-1}(i)}.
\]

\begin{proposition}\label{prop: permute vertex}
With the notation as above, we have
    \[
\ver^{X}_{f}(a,Q)=\ver_{\sigma \cdot f}^{\sigma \cdot X}(\sigma \cdot a,Q).
\]
\end{proposition}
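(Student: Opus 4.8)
The plan is to reduce to a simple transposition and then compare the localization formula of Proposition~\ref{prop: vertex formula} on the two sides term by term. Since $S_n$ is generated by the transpositions $s_1,\dots,s_{n-1}$, and all the constructions in the statement are functorial in $\sigma$ (the $S_n$-actions on separated/co-separated brane diagrams, on their torus fixed points, and on the equivariant parameters $a$ are compatible with composition), it suffices to prove the identity for $\sigma=s_i$; writing a general $\sigma$ as a word in the $s_j$ and iterating then gives the statement. Note that both sides are defined: $\D$ has a fixed point by the standing assumption, hence so does $\sigma\cdot\D$ (namely $\sigma\cdot f$), so the dimension vector of every intermediate brane diagram stays non-negative.

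So fix $\sigma=s_i$. Then $\D'=s_i\cdot\D$ is obtained from $\D$ by the local, purely D5 move that replaces the fragment around $\Ab_i,\Ab_{i+1}$ having D3 dimensions $d_1,d_2,d_3$ by the fragment with dimensions $d_1,d_2',d_3$, where $d_2'=d_1+d_3-d_2$; this interchanges $\w(\Ab_i)\leftrightarrow\w(\Ab_{i+1})$ and $\ch(\Ab_i)\leftrightarrow\ch(\Ab_{i+1})$ and preserves separatedness (resp.\ co-separatedness), so Proposition~\ref{prop: vertex formula} applies on both sides. The core of the proof is to describe how the butterfly diagram $B^f$ of $f$ over $\D$ is related to the butterfly diagram $B^{f'}$ of $f'=s_i\cdot f$ over $\D'$. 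I would show that the tie-diagram surgery of Figure~\ref{fig: sym group action fixed points}---which interchanges the two ties passing above the crossing of $\Ab_i,\Ab_{i+1}$ and, independently, the two passing below---induces a canonical bijection $\phi\colon B^f_0\to B^{f'}_0$ with the following properties: (i) $\phi$ preserves colors on every D3 brane except the one squeezed between $\Ab_i$ and $\Ab_{i+1}$; (ii) $\phi$ carries arrows between such vertices to arrows; and (iii) $w(\phi(a))\big|_{a\mapsto s_i\cdot a}=w(a)$ for every such vertex $a$. The point is that one needs control only away from the $\Ab_i\Ab_{i+1}$-brane: that D3 brane is adjacent to a D5 brane, so every reverse plane partition in $\rpp(f)$ and in $\rpp(f')$ vanishes on its vertices by condition~(2) of Definition~\ref{rpp}.

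Granting $\phi$, the rest is routine. First, $\phi$ restricts to a bijection $\rpp(f)\to\rpp(f')$: the two conditions of Definition~\ref{rpp} transport through $\phi$ by (i)--(ii), using that the surgery changes ``adjacent to a D5 brane'' only at the $\Ab_i\Ab_{i+1}$-brane, where $\pi$ vanishes on both sides anyway. Second, fixing the polarization $\alpha$ of \eqref{eq: class alpha for general bow variety} on each side, the proof of Proposition~\ref{prop: vertex formula} shows that each summand of $\ver^X_f$ is built only from $B^f$, $\pi$, and the weights $w$; moreover the K\"ahler monomial $Q^{\deg\pi}$, the integer $N(\pi)$, the polarization exponent $\deg\qmpol_\pi$ (a direct check shows this depends only on the part of the butterfly colored by D3 branes adjacent to an NS5 brane, since $\pi$ vanishes elsewhere), and every Pochhammer ratio indexed by a pair $a,b$ with $\col(b)-\col(a)=1$ or $\col(a)=\col(b)$ all involve only vertices on which $\pi$ is nonzero. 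Hence applying $\phi$ and relabeling $a\mapsto s_i\cdot a$ matches these contributions exactly by (i)--(iii), while the factors touching the $\Ab_i\Ab_{i+1}$-brane are trivial on both sides. This yields $\ver^X_f(a,Q)=\ver^{X'}_{f'}(s_i\cdot a,Q)$ and completes the reduction.

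The step I expect to be the main obstacle is constructing $\phi$ and verifying property~(iii): this requires unwinding the combinatorial definition of the butterfly diagram and of its vertex weights in a neighborhood of two adjacent D5 branes, and checking that the surgery of Figure~\ref{fig: sym group action fixed points} performs exactly the expected bookkeeping on the $\hbar$-exponents and the D5-indices labeling those weights. A useful warm-up is the weight-one D5 case, where $f$ is recorded by the function $\sigma_f\colon\{1,\dots,n\}\to\{1,\dots,m\}$, the butterflies are chains, $s_i\cdot f$ is recorded by $\sigma_f\circ s_i$, and the asserted identity is essentially immediate from the refined formula of Proposition~\ref{prop:flagvertex} after relabeling the summation indices $\theta^i\leftrightarrow\theta^{i+1}$ together with the parameters $a_i\leftrightarrow a_{i+1}$.
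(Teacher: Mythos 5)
Your proof is correct and follows essentially the same route as the paper: both arguments rest on the observation that stable reverse plane partitions vanish on vertices colored by D3 branes adjacent to a D5 brane, so the localization formula of Proposition \ref{prop: vertex formula} only sees the part of the butterfly diagram that is canonically identified between $f$ and $\sigma\cdot f$ with $\sigma$-compatible weights; your reduction to simple transpositions is an extra (harmless) step that the paper avoids by identifying the ``left halves'' of the two butterflies for general $\sigma$ at once. One small imprecision: since $d_2'=d_1+d_3-d_2\neq d_2$ in general, there is no color-preserving bijection of the \emph{full} vertex sets $B^f_0\to B^{s_i\cdot f}_0$; what you actually get (and all you use) is an isomorphism of the subquivers spanned by vertices whose color is adjacent to two NS5 branes, exactly as in the paper's proof.
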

\begin{proof}
    For definiteness, suppose that $\D$ is separated. While the butterfly diagrams for $f$ and $\sigma\cdot f$ differ, the ``left halves" of the two butterfly diagrams, i.e. the maximal subquivers of $B^{f}$ and $B^{\sigma \cdot f}$ whose vertices $b$ are such that $\col(b)$ is adjacent to two NS5 branes, are canonically isomorphic. Moreover, under this isomorphism, the weights of the vertices are compatible under $\sigma$. Now the statement follows from Proposition \ref{prop: vertex formula}. 
\end{proof}

\begin{remark}
    It is shown in \cite{rimanyi2020bow} that a bow variety is Hanany-Witten equivalent to a quiver variety if and only if the charges of D5 branes are weakly decreasing. By combining this with Propositions \ref{prop: HW vertex} and \ref{prop: permute vertex}, we see that the vertex function of any bow variety is equal to the vertex function of a quiver variety, up to permutation of the equivariant variables. 
\end{remark}

\section{D5 resolutions}\label{sec: d5 resolutions}

In this section, we study D5 resolutions of bow varieties. These are certain closed embeddings of bow varieties, constructed locally on the brane diagram, which reduce the weight of a chosen D5 brane. They are the geometric manifestation of the fusion procedure for $R$-matrices. In addition, they are responsible for the so-called resonance of Bethe equations studied in \cite{KorZeit2}. 

A crucial property proven in \cite{BR} is that stable envelopes are compatible with D5 resolutions. We review the constructions and results of \cite{BR}. Then we prove in Theorem \ref{thm: d5 vertex} that vertex functions are also compatible with D5 resolutions.

\subsection{D5 resolutions of bow varieties}

Let $\D$ be a separated or co-separated brane diagram.
Let $\wt \D$ be the brane diagram obtained by replacing a single D5 brane $\Ab$ of weight $\w=\w(\Ab)\geq 2$ in $\D$ by a pair of consecutive D5 branes $\Ab'$ and $\Ab''$ of weights $\w'=\w(\Ab')\geq 1$ and $\w''=\w(\Ab'')\geq 1$ such that $\w=\w'+\w''$. Pictorially, the diagram $\wt \D$ is obtained via the local surgery
\begin{center}
\begin{tikzpicture}[scale=.7]
\draw[thick] (-1,0) -- (1,0);
\draw[thick, blue] (0.2,-0.5) -- (-0.2,0.5);
\node[label={\small $\w'+\w''$}] at (-0.2,0.5) {};

\draw[stealth-stealth, thick] (2,0) -- (3,0);

\draw[thick] (4,0) -- (7,0);
\draw[thick, blue] (5.2,-0.5) -- (4.8,0.5);
\draw[thick, blue] (6.2,-0.5) -- (5.8,0.5);
\node[label={\small $\w'$}] at (4.8,0.5) {};
\node[label={\small $\w''$}] at (5.8,0.5) {};

\end{tikzpicture}
\end{center}

We call $\wt \D$ a D5 resolution of the brane diagram $\D$ and the branes $\Ab'$ and $\Ab''$ resolving branes. Notice that if $\D$ is separated (resp. co-separated), then $\wt \D$ is also separated (resp. co-separated).

Let now $\wt X$ and $X$ be the bow varieties associated to $\wt \D$ and $\D$. They are open subsets of the stacks $ \wt{\FX}=\wt{M}/\wt{G}$ and $\FX=M/G$, respectively. We say that $\wt X$ is a D5 resolution of the brane $\Ab$ in the bow variety $X$. 

Let $\Tt=\At\times \Cs_{\hbar}$ (resp. $\wt \Tt= \wt \At \times \Cs_{\hbar}$) be the torus acting on $X$ (resp. $\wt X$). We define a homomorphism $\varphi: \Tt\to \wt \Tt$ to be the identity on most components, except 
\begin{equation}
\label{group homomorphism A-resolution}
    \Cs_{\Ab} \times \Cs_{\hbar}  \to  \Cs_{\Ab'} \times  \Cs_{\Ab''} \times \Cs_{\hbar} \qquad (a,\hbar)\mapsto \begin{cases}
        (a\hbar^{-\w''}, a, \hbar ) & \text{if $\D$ is separated}\\
        (a, a\hbar^{\w'}, \hbar ) & \text{if $\D$ is co-separated.}
    \end{cases}
\end{equation}
The following theorem sums up the main results in {\cite[\S5]{BR}}.
\begin{theorem}
\label{theorem: embedding resolution D5 branes}
    Let $\D$ be a separated bow diagram and let $\wt\D$ be a D5 resolution of some brane $\Ab$. Associated to these data, there exists a regular closed embedding $j: \FX\hookrightarrow \wt{\FX}$ induced by a pair of morphisms 
    \[
    M\to \wt{M}\qquad G\to \wt{G}.
    \]
    Moreover, the following statements hold:
    \begin{itemize}
        \item The map $j$ is equivariant along $\varphi:\Tt\to \wt \Tt$.
        \item The map $j$ preserves the semistable loci, and hence induces a regular embedding $j: X\hookrightarrow \wt{X}$, which with slight abuse of notation we denote in the same way.
        \item The morphisms $j$ are pulled back from a regular embedding $HS\hookrightarrow\wt{HS}$. In other words, there exist diagrams all of whose squares are Cartesian.
        \[
        \begin{tikzcd}
    X\arrow[r, hookrightarrow, "j"]\arrow[d, "\pi"] & \widetilde X \arrow[d, "\widetilde \pi "]\\
    X_0\arrow[r, hookrightarrow]\arrow[d, "\rho"] & \widetilde X_0 \arrow[d, "\tilde 
    \rho"]\\
    HS\arrow[r, hookrightarrow] & \widetilde{HS}
\end{tikzcd}
\qquad 
\begin{tikzcd}
    \FX \arrow[r, hookrightarrow, "j"]\arrow[d, "\pi"] & \wt \FX \arrow[d, "\widetilde \pi "]\\
    X_0\arrow[r, hookrightarrow]\arrow[d, "\rho"] & \widetilde X_0 \arrow[d, "\tilde 
    \rho"]\\
    HS\arrow[r, hookrightarrow] & \widetilde{HS}
\end{tikzcd}
        \]
    \end{itemize}
\end{theorem}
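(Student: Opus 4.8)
\textbf{Proof proposal for Theorem \ref{theorem: embedding resolution D5 branes}.}

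The plan is to construct the maps $M \to \wt M$ and $G \to \wt G$ explicitly at the level of the quiver/brane data, verify the required equivariance, stability, and Cartesian-ness properties by direct inspection, and then invoke the affinization machinery of \S\ref{sec:Affinization} for the handsaw comparison. First I would describe the group map: since $\wt \D$ differs from $\D$ only by splitting the D5 brane $\Ab$ into $\Ab'$ and $\Ab''$, the D3 branes of $\wt \D$ are those of $\D$ together with one new D3 brane $\Xb_0$ sitting between $\Ab'$ and $\Ab''$; one assigns $\dim W_{\Xb_0}$ so that the charges match (concretely $d_{\Xb_0} = d_{\Ab^-} - \w''$ in the separated case), which forces the value of $\varphi$ on the $\Cs_\Ab$-factor recorded in \eqref{group homomorphism A-resolution}. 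The map $G \to \wt G$ is then the obvious inclusion $\prod_{\Xb} \GL(W_\Xb) \hookrightarrow \prod_{\wt \Xb} \GL(W_{\wt\Xb})$ that is the identity on old factors and, on the new $\GL(W_{\Xb_0})$ factor, sends $g_{\Xb}$ for the appropriate neighboring $\Xb$ into $\GL(W_{\Xb_0})$ via the splitting $W_{\Ab^-} = W_{\Xb_0} \oplus (\text{line})$ dictated by $\w = \w' + \w''$; the corresponding map on $\MM$ takes the three-way part $\MM_\Ab$ to $\MM_{\Ab'} \oplus \MM_{\Ab''} \oplus \NN_{\Xb_0}$ by decomposing the maps $(A_\Ab, b_\Ab, a_\Ab, B_\Ab, B'_\Ab)$ block-wise according to the splitting, and is the identity on all other summands of $\MM$. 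One then checks that this sends $\mu^{-1}(0)$ into $\wt\mu^{-1}(0)$ — this is a component-by-component verification using the explicit formulas for $\mu$ from \S\ref{sec:def of bow variety}, and the $\NN_{\Xb_0}$-component of $\wt\mu$ vanishes precisely because of the moment-map relation on $\NN_\Ab$ for the original diagram.

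Next I would verify the three bulleted claims. Equivariance along $\varphi$ is immediate once the map on $\MM$ is written down, because the $\hbar$-weighting and the $\Cs_\Ab$-scaling act on the block decomposition exactly as prescribed by \eqref{group homomorphism A-resolution} — the shift $\hbar^{-\w''}$ (resp. $\hbar^{\w'}$) is exactly the $\hbar$-weight of the block of $\Hom(W_{\Ab^+}, W_{\Ab^-})$ or $\Hom(\C_\Ab, W_{\Ab^-})$ that gets identified with a map into/out of the new D3 space. For the preservation of semistable loci, I would check directly that the stability conditions (S1)--(S2) for $\wt\D$ pull back to (S1)--(S2) for $\D$: a $B'_\Ab$-invariant (resp. $B_\Ab$-invariant) subspace for the original data splits compatibly, and the new conditions at $\Ab'$ and $\Ab''$ are equivalent to the single condition at $\Ab$ together with automatic conditions coming from the block structure of the identity maps. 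This also shows no new strictly semistable points are created, so $j$ descends to a map of GIT quotients which is a closed embedding because it is so at the level of the affine varieties $M \hookrightarrow \wt M$ (a linear-algebraic closed condition) and the $G$-action is free with closed orbits.

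For the last bullet — that $j$ is pulled back from a regular embedding $HS \hookrightarrow \wt{HS}$ and that all squares in the two displayed diagrams are Cartesian — the strategy is to run the same block-decomposition construction on the handsaw data: the map $\MM_\Ab \to \MM_{\Ab'} \oplus \MM_{\Ab''} \oplus \NN_{\Xb_0}$ restricts to a map of the handsaw moment-map loci $M_{HS} \to \wt M_{HS}$, and by \cite[Prop.~2.9, Cor.~2.21]{takayama_2016} these quotients are honest orbit spaces, so one gets $HS(\D) \hookrightarrow HS(\wt\D)$; it is a closed (indeed regular) embedding for the same linear-algebra reason. The Cartesian-ness of each square is then checked fibrewise: the preimage in $\wt X$ (resp. $\wt\FX$, $\wt X_0$) of a point of the handsaw base $HS(\D)$ consists exactly of the two-way data $(C_\Zb, D_\Zb)$ satisfying the remaining relations, and the new D3 brane $\Xb_0$ sits between two D5 branes so it contributes no two-way part and no new relation involving the two-way data — hence the fibre of $\wt\pi$ over the $HS(\D)$-locus is literally the fibre of $\pi$, which is the definition of the squares being Cartesian. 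I expect the main obstacle to be the bookkeeping in the stability-condition check (the second bullet): verifying that (S1)--(S2) for the resolved diagram really is \emph{equivalent}, not merely implied, by the original conditions requires carefully tracking how invariant subspaces interact with the forced splitting $W_{\Ab^-} = W_{\Xb_0} \oplus \C$, and getting the direction of the implications right in both the separated and co-separated cases. Everything else is a formal consequence of the explicit linear-algebraic construction.
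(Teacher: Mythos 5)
The paper does not actually prove this statement: it is imported wholesale from \cite[\S5]{BR} (the text introducing it says it ``sums up the main results'' of that reference), so there is no in-paper argument to compare yours against. Judged on its own terms, your proposal has a genuine gap at its core, namely in the definition of the morphism $M\to\wt M$, and everything downstream (equivariance, stability, Cartesian-ness) depends on that definition.

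You propose to define the new three-way data at $\Ab'$ and $\Ab''$ by ``decomposing $(A_{\Ab},b_{\Ab},a_{\Ab},B_{\Ab},B'_{\Ab})$ block-wise according to the splitting $W_{\Ab^-}=W_{\Xb_0}\oplus(\text{line})$.'' No such splitting is available. First, the dimensions do not match: $\dim W_{\Ab^-}-\dim W_{\Xb_0}=\w'$, not $1$ (and your formula $d_{\Xb_0}=d_{\Ab^-}-\w''$ is inconsistent with the paper's labelling, in which the left resolving brane $\Ab'$ has weight $\w'$, so $d_{\Xb_0}=d_{\Ab^-}-\w'$). Second, and more seriously, there is no canonical direct-sum decomposition of $W_{\Ab^-}$ defined on all of $M$: producing one is essentially the content of the Nakajima--Takayama normal form for the three-way part, which is available only on the stable locus and only up to the $G$-action, whereas the theorem asserts an honest morphism $M\to\wt M$ of affine varieties (hence defined on unstable points and strictly, not gauge-, equivariant). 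What is actually needed — and what \cite{BR} does — is to \emph{build} the new D3 space, e.g.\ as $W_{\Xb_0}\cong W_{\Ab^+}\oplus\C^{\w''}$, and to define the new maps by explicit formulas in which the extra summand is sent into $W_{\Ab^-}$ via the iterated compositions $a_{\Ab},B_{\Ab}a_{\Ab},\dots,B_{\Ab}^{\w''-1}a_{\Ab}$, with the new framing vector at $\Ab'$ given by $B_{\Ab}^{\w''}a_{\Ab}$; the shift $\hbar^{-\w''}$ in \eqref{group homomorphism A-resolution} is precisely the $\hbar$-weight of that composition, not something read off a pre-existing block structure. Once the new endomorphisms of $W_{\Xb_0}$ are written down this way, the vanishing of the $\NN_{\Xb_0}$-component of $\wt\mu$ and the equivalence of the stability conditions become genuine computations using $B_{\Ab}A_{\Ab}-A_{\Ab}B'_{\Ab}+a_{\Ab}b_{\Ab}=0$; your assertion that the new relation holds ``precisely because of'' the old one cannot even be tested before the new data is specified. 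The remaining items (the fibrewise Cartesian-ness over the handsaw varieties, the group homomorphism being the identity on old factors) are indeed the kind of checks you describe, but they all presuppose a well-defined map, which the block-decomposition does not supply.
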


\subsection{Fixed points and D5 resolutions}

Let $j: X\hookrightarrow \wt X$ be a D5 resolution of the brane $\Ab$ in the bow variety $X$. Set $\w=\w(\Ab)$. We consider both varieties as $\Tt=\At\times \Cs_{\hbar}$ varieties via the map $\varphi: \Tt\to \tilde{\Tt}$. 

For a given fixed point $f\in X^{\At}$, let $F\subset \wt X^{\At}$ be the unique fixed component containing $j(f)$.

\begin{lemma}[{\cite{BR}}]
    If $X$ is separated, then the fixed component $F$ is isomorphic to the bow variety 
    \ttt{\fs 1\fs 2\fs \dots \fs$\w-1$\fs $\w$\bs$\w''$\bs}. If, instead, $X$ is co-separated, then $F$ is isomorphic to the bow variety \ttt{\bs $\w'$\bs $\w$\fs $\w-1$\fs \dots  \fs2\fs1\fs}.
\end{lemma}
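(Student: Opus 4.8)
The plan is to describe the fixed locus $\wt X^{\At}$ near $j(f)$ explicitly from the GIT presentation of $\wt X$ and to recognise the component $F$ from a purely local computation. First I would record, from Theorem \ref{theorem: embedding resolution D5 branes} and \eqref{group homomorphism A-resolution}, that $\varphi$ realises $\At$ as a codimension-one subtorus of $\wt\At$: writing $a_{\Ab'},a_{\Ab''}$ for the coordinates on the two new factors $\Cs_{\Ab'},\Cs_{\Ab''}$, the image $\varphi(\At)$ is cut out by the single relation $a_{\Ab'}=a_{\Ab''}\hbar^{-\w''}$ when $\D$ is separated, and $a_{\Ab'}\hbar^{-\w'}=a_{\Ab''}$ when $\D$ is co-separated. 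Hence the one-dimensional quotient torus $\Cs_s:=\wt\At/\varphi(\At)$, together with $\Cs_{\hbar}$, acts on $\wt X^{\At}$, so each component — and in particular $F$ — is a $(\Cs_s\times\Cs_{\hbar})$-variety, which in the end is what pins down its brane diagram. A useful guiding remark is that the claimed answer does not depend on $f$: since $\w=\w(\Ab)$ is determined by $\D$, and $j$ is assembled from the local surgery at $\Ab$ and, by the Cartesian diagrams of Theorem \ref{theorem: embedding resolution D5 branes}, is pulled back from the handsaw level where the surgery is strictly local, the piece of data responsible for $F$ should be a universal local model depending only on the local picture at $\Ab$ and the splitting $\w=\w'+\w''$, not on the global diagram or on the fixed point.

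The identification of $F$ then proceeds by unwinding the GIT data of $\wt X$ at $\At$-fixed points lying near $j(f)$. Such a point is represented, up to the $\wt G$-action, by quiver data in $\wt M$ in which the linear maps attached to the branes of $\wt\D$ away from the resolving pair $\Ab',\Ab''$ are forced, by $\At$-invariance, to reproduce the configuration of $f$ — those directions carry $\At$-weights that stay nontrivial even after the extra gauge freedom is used up — while the maps supported at $\Ab',\Ab''$ and the intervening D3 branes are free to vary. I would check that this ``active'' data (the maps to and from the framing lines $\C_{\Ab'},\C_{\Ab''}$ and the weight spaces of the tautological bundles on the D3 branes squeezed between the NS5 part of $\D$ and $\Ab',\Ab''$), modulo the residual gauge group and subject to the fiberwise equations $\mu=0$ and the stability conditions (S1),(S2), is exactly the GIT presentation of a bow variety, and — as for Nakajima quiver varieties (cf.\ \cite{Nakajima_Takayama, rimanyi2020bow}) — that the frozen factors contribute only isolated fixed points, so that $F$ is this bow variety on the nose. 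Its brane diagram is then read off from the ranks of the relevant tautological bundles: because $\Ab$ has weight $\w$, a chain of $\w$ one-dimensional $\Cs_{\hbar}$-weight spaces is created on passing through $\Ab$, which produces the ladder of D3-multiplicities $1,2,\dots,\w$ together with a string of $\w$ weight-one NS5 branes; splitting $\Ab$ into $(\Ab',\Ab'')$ cuts this $\w$-dimensional space along the $\Cs_s$-grading into pieces of sizes $\w'=\w-\w''$ and $\w''$, which is exactly a pair of D5 branes of those weights. This gives $\ttt{\fs 1\fs 2\fs \dots \fs$\w-1$\fs $\w$\bs$\w''$\bs}$ in the separated case and its reflection $\ttt{\bs$\w'$\bs$\w$\fs$\w-1$\fs \dots \fs2\fs1\fs}$ in the co-separated case, the reversal coming from the opposite exponent of $\hbar$ in $\varphi$. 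A quick consistency check: each candidate diagram has $r=(1,\dots,1)$ and D5 charges $\{\w',\w''\}$, hence $\binom{\w}{\w'}$ torus fixed points, matching the number of $\wt\At$-fixed points of $\wt X$ produced from the BCT of $f$ by splitting its $\Ab$-column of $\w$ ones into two columns of $\w'$ and $\w''$ ones (Theorem \ref{thm: fixed points bow}).

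The remaining point is to upgrade the bijection of GIT data into an isomorphism of varieties, i.e.\ to verify that the semistable locus of $\wt X$ along the frozen configuration matches that of the candidate bow variety — so that $F$ is the whole component and not a proper closed subvariety. This is cleanest on the handsaw model, where by the third bullet of Theorem \ref{theorem: embedding resolution D5 branes} the embedding $HS\hookrightarrow\wt{HS}$ and everything above it are Cartesian, so that (S1),(S2) can be compared locally. I expect the main obstacle to be precisely the bookkeeping in the second paragraph — correctly tracking the $\Cs_{\hbar}$- and $\Cs_s$-weight multiplicities of the tautological bundles $\tb_{\Xb}|_{f}$ under the refined torus $\wt\At=\varphi(\At)\times\Cs_s$ and translating them into the D3-multiplicities $1,2,\dots,\w,\w''$ of the claimed diagram; by comparison, the separated/co-separated dichotomy and the asymmetry of \eqref{group homomorphism A-resolution} are merely a matter of careful bookkeeping, and may moreover be reduced to one another using the invariance of all the constructions under Hanany-Witten transition (\S\ref{subsec: HW iso}).
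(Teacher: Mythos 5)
The paper offers no argument for this lemma --- it is imported verbatim from \cite{BR} --- so there is no in-paper proof to match you against; the comparison has to be with the source. Your route (realize $\varphi(\At)$ as a codimension-one subtorus of $\wt\At$, freeze the quiver data of $j(f)$ in all directions carrying nontrivial $\At$-weight, observe that the residual degrees of freedom are supported only at the resolving pair $\Ab',\Ab''$ and the D3 branes they bound, and recognize that residual data, modulo the leftover gauge group and subject to $\mu=0$ and (S1)--(S2), as the GIT presentation of a small bow variety whose diagram is read off from the $\Cs_{\hbar}$- and residual-torus weight multiplicities) is exactly the analysis carried out in \cite{BR}, and your $\binom{\w}{\w'}$ fixed-point count is a correct consistency check, as is the reduction of the co-separated case to the separated one. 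The caveat is that, as written, the decisive step is announced rather than executed: the content of the lemma is precisely the verification that the active weight spaces of the tautological bundles have dimensions $1,2,\dots,\w,\w''$ (i.e.\ that the relevant $\hbar$-weight filtration on the $\w$-dimensional space attached to $\Ab$ has one-dimensional graded pieces, which is where the string of $\w$ weight-one NS5 branes comes from) and that the induced stability condition on the frozen slice is the one cut out by the character \eqref{eq:character} of the candidate diagram. You correctly identify these as the remaining bookkeeping and your scaffolding (locality via the handsaw Cartesian squares, the role of the quotient torus $\wt\At/\varphi(\At)$) is sound, so I would call this a faithful reconstruction of the intended proof in outline; but the fixed-point count only constrains the answer and does not substitute for the weight computation, so the isomorphism itself is not yet established by what you have written.
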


Consider the residual action of $\wt\At$ on $F$. We call the fixed points $\tilde f\in F^{\wt \At}$ resolutions of $f$. We remark that none of them coincides with $j(f)$. However, there is a distinguished representative. If $X$ is separated, we denote by $\tilde f_\sharp\in F^{\wt A}$ the unique fixed point whose tie diagram has no crossings. If 
$X$ is co-separated, we denote by $\tilde f_\sharp\in F^{\wt A}$ the unique fixed point whose tie diagram has $\w'\w''$ crossings, as illustrated in Figure \ref{fig: diagrams distinguished resolving fixed points D5 case}.

 \begin{figure}
 \centering
\begin{tikzpicture}[scale=.3]
\draw [thick] (0,1)node[left]{$f=$}  --(7,1);
\draw [thick, blue] (4,0) node [right]{$a$} -- (3,2);
\draw [dashed](3,2) to [out=120,in=0] (0,4.6) ;
\draw [dashed](3,2) to [out=120,in=0] (0,4.8) ;
\draw [dashed](3,2) to [out=120,in=0] (0,5) node [left]{$\w'+\w'' \Big\lbrace$};
\draw [dashed](3,2) to [out=120,in=0] (0,5.2);
\draw [dashed](3,2) to [out=120,in=0] (0,5.4);
\draw[ultra thick, <->] (9.5,1)--(12.5,1) node[above]{separated} node[below]{D5 res.} -- (15.5,1);
\begin{scope}[xshift=22cm]
\draw [thick] (0,1) node[left]{$\tilde{f}_\sharp=$} --(8,1);  
\draw [thick, blue] (3,0) node [right]{$a'$} -- (2,2);
\draw[thick, blue] (6,0) node [right]{$a''$} --(5,2);
\draw [dashed](2,2) to [out=120,in=0] (0,3) ;
\draw [dashed](2,2) to [out=120,in=0] (0,3.2) node [left] {$\w' \lbrace$} ;
\draw [dashed](2,2) to [out=120,in=0] (0,3.4) ;
\draw [dashed](5,2) to [out=120,in=0] (0,5.6) node [left]{$\w'' \lbrace$};
\draw [dashed](5,2) to [out=120,in=0] (0,5.8);
\end{scope}

\draw [thick] (0,-4) node[left]{$f=$} --(7,-4);
\draw [thick, blue] (4,-5) node [left]{$a$} -- (3,-3);
\draw [dashed](4,0-5) to [out=-60,in=180] (7,-7.6) ;
\draw [dashed](4,-5) to [out=-60,in=180] (7,-7.8) ;
\draw [dashed](4,-5) to [out=-60,in=180] (7,-8) node [right]{$\Big\rbrace \w'+\w''$};
\draw [dashed](4,-5) to [out=-60,in=180] (7,-8.2);
\draw [dashed](4,-5) to [out=-60,in=180] (7,-8.4);
\draw[ultra thick, <->] (9.5,-4)--(12.5,-4) node[above]{co-separated} node[below]{D5 res.} -- (15.5,-4);

\begin{scope}[xshift=22cm]
\draw [thick] (0,-4) node[left]{$\tilde{f}_\sharp=$} --(8,-4);  
\draw [thick, blue] (3,-5) node [left]{$a'$} -- (2,-3);
\draw[thick, blue] (6,-5) node [left]{$a''$} --(5,-3);
\draw [dashed](3,-5) to [out=-60,in=180] (9,-7) ;
\draw [dashed](3,-5) to [out=-60,in=180] (9,-7.2) node [right] {$\rbrace \w'$} ;
\draw [dashed](3,-5) to [out=-60,in=180] (9,-7.4) ;
\draw [dashed](6,-5) to [out=-60,in=180] (9,-9.6) node [right]{$\rbrace \w''$};
\draw [dashed](6,-5) to [out=-60,in=180] (9,-9.8);
\end{scope}
\end{tikzpicture}
\caption{}
\label{fig: diagrams distinguished resolving fixed points D5 case}
 \end{figure}
 
\begin{remark}
    With respect to the standard chamber, the fixed point $\tilde f_{\sharp}$ is maximal among the D5 resolutions of $f$.
\end{remark}

Given a bow variety $X$, we denote by $K_{\Tt}(X)^{\taut}\subset K_{\Tt}(X)$ the sub-ring generated by the tautological bundles from \S\ref{sec:def of bow variety}.
\begin{lemma}[\cite{BR}]
\label{lemma: restrction Chern roots in A resolution}
    Let $\mathcal \tb$ be a tautological bundle of $\widetilde X$. Then $j^* \tb\big|_f= \tb\big|_{\tilde f_\sharp}$ as representations of $\Tt$. As a consequence, the diagram
    \[
    \begin{tikzcd}
        K_{ \widetilde \Tt}(\widetilde X)^{\taut}\arrow[d, "j^*\circ \varphi^*"] \arrow[r] &K_{ \widetilde \Tt}(\tilde f_\sharp)\arrow[d, "\varphi^*"]\\
        K_{\Tt}(X)^{\taut} \arrow[r] &K_{\Tt}(f)\\
    \end{tikzcd}
    \]
    associated with fixed point restriction at $\tilde f_\sharp$ and $f$ and the change of group map $\varphi:  \Tt\to \widetilde \Tt$ is commutative.
\end{lemma}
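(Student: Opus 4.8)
The plan is to reduce everything to a finite combinatorial comparison of butterfly diagrams. Since $K_{\wt\Tt}(\wt X)^{\taut}$ is generated as a ring by the tautological bundles $\tb_\Xb$ attached to the D3 branes $\Xb$ of $\wt\D$, and since $j^*\circ\varphi^*$, fixed point restriction, and $\varphi^*$ are ring homomorphisms, it suffices to check the identity $j^*\tb_\Xb|_f=\tb_\Xb|_{\tilde f_\sharp}$ of $\Tt$-representations for each individual D3 brane $\Xb$; the commutativity of the square is then automatic. Note that both $j(f)$ and $\tilde f_\sharp$ lie in the fixed component $F\subset\wt X^{\At}$ containing $j(f)$, and that $j(f)$ is fixed by $\varphi(\Tt)$ (being the image of the $\Tt$-fixed point $f$ under the $\varphi$-equivariant map $j$), so both sides make sense as $\Tt$-representations, with $\tb_\Xb|_{\tilde f_\sharp}$ read via $\varphi^{*}$ from its natural $\wt\Tt$-weights.

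I would first dispose of the D3 branes $\Xb$ that occur in both $\D$ and $\wt\D$, i.e.\ all of them except the new brane, call it $\Xb_{\mathrm{new}}$, created between the resolving branes $\Ab'$ and $\Ab''$. By Theorem \ref{theorem: embedding resolution D5 branes}, $j$ is induced by morphisms $M\to\wt M$ and $G\to\wt G$ that respect the tautological data indexed by these common D3 branes, so $j^*\tb_\Xb$ is identified with $\tb_\Xb$ on $X$ and hence $j^*\tb_\Xb|_f=\tb_\Xb|_f=\sum_{\col(a)=\Xb}w(a)$ is the $\Tt$-weight sum read from the butterfly diagram $B^{f}$ via Theorem \ref{thm: fixed points bow}. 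On the other side, $\tb_\Xb|_{\tilde f_\sharp}=\sum_{\col(a)=\Xb}w(a)$ is read from $B^{\tilde f_\sharp}$. The remaining task is therefore to identify $B^{\tilde f_\sharp}$ in terms of $B^{f}$ and to check that the weights agree after $\varphi^{*}$. The defining property of $\tilde f_\sharp$---no crossings in the separated case, $\w'\w''$ crossings in the co-separated case---determines exactly how the vertices and ties of $B^{f}$ lying over $\Ab$ are split between $\Ab'$ and $\Ab''$, and a direct inspection using the explicit formula \eqref{group homomorphism A-resolution} for $\varphi$ shows that the $\wt\Tt$-weights attached to the vertices of $B^{\tilde f_\sharp}$ pull back under $\varphi^{*}$ to precisely the $\Tt$-weights attached to $B^{f}$.

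The essential case is the new D3 brane $\Xb_{\mathrm{new}}$, and I expect this to be the main obstacle. Here $j^{*}\tb_{\Xb_{\mathrm{new}}}$ is not pulled back from a tautological bundle of $X$ in an obvious way; rather, by the construction of the morphism $M\to\wt M$ in \cite{BR}, the fibre $W_{\Xb_{\mathrm{new}}}$ over a point of $j(X)$ is a distinguished subspace (or quotient) of the vector space attached to an adjacent D3 brane of $\D$, cut out by the image (or kernel) of the relevant tautological map $A_{\Ab}$, $B_{\Ab}$, $a_{\Ab}$ or $b_{\Ab}$. Restricting the distinguished quiver representative of $f$ encoded by $B^{f}$ along $M\to\wt M$ one computes the $\Tt$-character of this subspace directly, while on the other side $\tb_{\Xb_{\mathrm{new}}}|_{\tilde f_\sharp}$ is the sum of the weights of the $d_{\Xb_{\mathrm{new}}}$ vertices of $B^{\tilde f_\sharp}$ of colour $\Xb_{\mathrm{new}}$. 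Matching these two requires unwinding the construction of $j$ together with the butterfly--diagram dictionary and keeping careful track of the powers of $\hbar$ introduced by $\varphi$ in \eqref{group homomorphism A-resolution}, which are calibrated precisely so that the two characters agree. Once all D3 branes are treated, the commutativity of the square follows formally from the reduction in the first paragraph, since $j^{*}\circ\varphi^{*}$ and the composite $\varphi^{*}\circ(\text{restriction at }\tilde f_\sharp)$ are ring homomorphisms agreeing on the generators $\tb_\Xb$.
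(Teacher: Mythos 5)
Your overall architecture is sound and matches the route the paper actually cites: reduce to the tautological generators (every map in the square is a ring homomorphism), split the D3 branes into those common to $\D$ and $\wt\D$ and the new one between $\Ab'$ and $\Ab''$, and compare weights through the butterfly/tie-diagram dictionary. The gap is that the comparison itself is never carried out. For the common branes you write that ``a direct inspection\ldots shows that the $\wt\Tt$-weights attached to the vertices of $B^{\tilde f_\sharp}$ pull back under $\varphi^{*}$ to precisely the $\Tt$-weights attached to $B^{f}$,'' and for the new brane that the powers of $\hbar$ in \eqref{group homomorphism A-resolution} ``are calibrated precisely so that the two characters agree.'' Those two sentences \emph{are} the lemma; asserting them is not proving them. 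Note in particular that even for a common brane $\Xb$ the two sides are computed from butterfly diagrams over different brane diagrams, with weights written in different equivariant parameters ($a_{\Ab}$ versus $a_{\Ab'},a_{\Ab''}$), so there is a genuine identity to verify --- exactly the fixed-point-restriction combinatorics that the paper outsources to \cite[\S4.4]{rimanyi2020bow}. Without either reproducing that computation (at least for one representative configuration of ties at $\Ab$, in both the separated and co-separated cases) or explicitly citing it, your text is a plan for a proof rather than a proof.

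You also miss the geometric shortcut the paper records, which avoids the combinatorics entirely and which I would encourage you to adopt: one checks that $j(f)$, which is fixed by $\varphi(\Tt)$, lies in the attracting set of $\tilde f_\sharp$ for the $\Cs$-action on $\wt X$ obtained by composing $\Cs_{\hbar}\hookrightarrow \Tt$ with $\varphi$. The $\Tt$-character of a $\wt\Tt$-equivariant bundle at a $\varphi(\Tt)$-fixed point is unchanged upon passing to the limit of the attracting flow, so the characters of every tautological bundle at $j(f)$ and at $\tilde f_\sharp$ agree simultaneously, with no case analysis on $\Xb$ and no bookkeeping of $\hbar$-powers. This argument also explains, rather than merely asserts, why the twist in \eqref{group homomorphism A-resolution} is ``calibrated precisely'': it is forced by the requirement that $j(f)$ sit in the equivariant geometry attached to $\tilde f_\sharp$.
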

\begin{proof}
    The statement follows from the combinatorics of the fixed point restrictions developed in \cite[\S4.4]{rimanyi2020bow}. Alternatively, it can be checked that $j(f)$ lies in the attracting set of $\tilde f_{\sharp}$ for the action of $\Cs_{\hbar}$ on $\wt X$ induced by the composition $\Cs_{\hbar}\hookrightarrow\Tt\xrightarrow{\varphi}\wt\Tt$. This implies the statement.
\end{proof}

\subsection{D5 resolutions of vertex functions}

In this section, we relate the moduli spaces of quasimaps to $\wt X$ and $X$. Fix an arbitrary separated or co-separated bow variety $X$ and consider the chain of maps
\[
\qm(X)\xrightarrow{\ev_p}  \FX \xrightarrow{\pi} X_0\xrightarrow{\rho} HS.
\]
Restricting to non-singular quasimaps, we get morphisms 
\[
\qm_{\ns p}(X)\xrightarrow{\ev_p}  X \xrightarrow{\pi} X_0\xrightarrow{\rho} HS.
\]
The embedding $j:\FX\hookrightarrow \wt \FX$ induces a morphism
\begin{equation}
    \label{eq: D5 map quasimap spaces}
    J: \qm_{}(X)\to \qm_{}(\wt X).
\end{equation}
Tautologically, $J$ preserves the connected components so it is given by a tower of morphisms 
\[
J_d: \qm^{d}(X)\to \qm^{d}(\wt X).
\]
Furthermore, the morphism $J$ fits in the following commutative diagram
\begin{equation}
\label{eq: diagram D5 quasimaps}
    \begin{tikzcd}
        \qm(X)\arrow[r, "{\ev_p}"]\arrow[d, swap, "J"]  &  \FX\arrow[d, swap, "j", hookrightarrow] \arrow[r] & X_0\arrow[r]\arrow[d, hookrightarrow]  & {HS}\arrow[d, hookrightarrow, "j_0"]\\
        \qm(\wt X)\arrow[r, "{\ev_p}"]  &  \wt\FX\arrow[r] & \wt X_0\arrow[r]  & \wt{HS}
    \end{tikzcd}
\end{equation}
Recall that Theorem \ref{theorem: embedding resolution D5 branes} states that the two rightmost squares in this diagram are Cartesian.
\begin{lemma}
\label{lemma: Quasimap cartesian}
    The leftmost square in diagram \eqref{eq: diagram D5 quasimaps} above is also Cartesian. 
\end{lemma}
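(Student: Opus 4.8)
The plan is to verify the universal property of the fiber product directly, using the fact that the two rightmost squares in \eqref{eq: diagram D5 quasimaps} are Cartesian (Theorem \ref{theorem: embedding resolution D5 branes}) and that quasimap moduli spaces are open substacks of mapping stacks into $\FX$, $\wt\FX$. Concretely, I would show that the natural map
\[
\qm(X) \longrightarrow \qm(\wt X) \times_{\wt\FX} \FX
\]
induced by $J$ and $\ev_p$ is an isomorphism. A map $T \to \qm(\wt X)\times_{\wt\FX}\FX$ is the data of a quasimap $g: \bbP^1\times T \to \wt\FX$ together with a factorization of $\ev_p\circ g: T \to \wt\FX$ through $j: \FX \hookrightarrow \wt\FX$. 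Since $\FX \hookrightarrow \wt\FX$ is a closed immersion and $\bbP^1$ is proper and geometrically connected, I would argue that this forces the \emph{entire} family $g$ to factor through $j$: the locus in $\bbP^1\times T$ where $g$ lands in $j(\FX)$ is closed, and it meets every fiber $\bbP^1\times\{t\}$ (it contains $p\times\{t\}$), so by properness and connectedness of $\bbP^1$ — more precisely, because $j$ being a closed immersion means $g^{-1}(j(\FX))$ is a closed subscheme of $\bbP^1\times T$ whose projection to $T$ is surjective and whose fibers are closed subschemes of $\bbP^1$ containing a section — one shows the scheme-theoretic preimage is all of $\bbP^1\times T$. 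Here the key geometric input is the intuition stated after Theorem \ref{thm: intro d5 vertex}: any map from $\bbP^1$ to $\wt X$ joining two points of $j(X)$ lies in $j(X)$, which at the level of stacks becomes a rigidity statement.

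The cleanest route, I expect, is to pull back the Cartesian squares one at a time. Because $j_0: HS \hookrightarrow \wt{HS}$ is a closed immersion and the composites $\qm(X) \to X_0 \to HS$, $\qm(\wt X) \to \wt X_0 \to \wt{HS}$ are defined using only the handsaw data — which is literally the same for $X$ and $\wt X$ after the D5 resolution, since the three-way parts of $\Ab'$, $\Ab''$ assemble to that of $\Ab$ at the level of the handsaw — I would first establish that $\qm(X) = \qm(\wt X)\times_{\wt{HS}} HS$, or rather that $J$ is compatible with the handsaw projections in a way that reduces the claim to a statement about $\bbP^1$-families over $HS$. Then, since both $\FX \to X_0 \to HS$ and $\wt\FX \to \wt X_0 \to \wt{HS}$ realize the upper squares as Cartesian, a diagram chase with the pasting law for Cartesian squares gives that the leftmost square is Cartesian provided the outer rectangle (from $\qm(X)$ down to $HS$, and across) is.

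The main obstacle is the one flagged in the introduction: quasimaps to $\wt X$ are maps to the \emph{stack} $\wt\FX$, not honest maps to $\wt X$, so the naive ``a $\bbP^1$ joining two points of $j(X)$ stays in $j(X)$'' argument does not literally apply — one must work with the Artin stack $\wt\FX = \wt M/\wt G$ and track that the condition ``$\ev_p\circ g$ factors through $j$'' genuinely propagates across all of $\bbP^1\times T$, including at the finitely many points where $g$ leaves the semistable locus. I expect this to come down to the explicit description of $j$ as induced by the pair of morphisms $M \to \wt M$, $G \to \wt G$ of Theorem \ref{theorem: embedding resolution D5 branes}: a quasimap to $\wt X$ in the image of $J$ is a tuple $(\wt{\Sh{W}}, \wt s)$ whose underlying bundles and sections, after the explicit identifications coming from $M\to\wt M$, arise from data on $X$; and the factorization condition at $p$ together with the $\mu$-equations fiberwise (which are $\bbP^1$-flat conditions) forces this globally. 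Since $M \to \wt M$ and $G \to \wt G$ are pulled back from $HS \to \wt{HS}$, this is ultimately a statement about the handsaw, where the corresponding map on quasimap spaces is a base change, and there are no stability subtleties. So the real work is bookkeeping: matching tautological bundles and the bundle $\UM$, $\UN$ on both sides under $J$, and confirming that the closed condition defining $j$ inside $\wt\FX$ pulls back to a closed condition on $\qm(X)$-families that is already saturated.
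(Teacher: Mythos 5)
Your overall strategy --- reduce the Cartesianness of the leftmost square to the Cartesian squares over the affine quotients supplied by Theorem \ref{theorem: embedding resolution D5 branes} --- is the same as the paper's, but the one step that actually makes the argument work is missing, and the substitute you offer in its place is false. You claim that because $g^{-1}(j(\FX))$ is a closed subscheme of $\bbP^1\times T$ whose fibers over $T$ contain the section $p\times T$, it must be all of $\bbP^1\times T$. This does not follow: $p\times T$ itself is such a closed subscheme, and a closed immersion into an Artin stack carries no rigidity of this kind on its own. The mechanism that does the work, and which is essentially the entire content of the paper's proof, is affineness of the target of the affinization map: the composition $\bbP^1\times T\to\wt\FX\to\wt X_0$ lands in an affine variety, hence is constant along the proper connected $\bbP^1$-fibers and factors through the projection to $T$; therefore it factors through the closed subvariety $X_0\subset\wt X_0$ if and only if its restriction to $p\times T$ does. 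Combined with the Cartesianness of the central square (i.e. $\FX=\wt\FX\times_{\wt X_0}X_0$), this converts the factorization condition at the single point $p$ into the factorization of the whole family through $\FX$, which is exactly the universal property of the fiber product you are trying to verify.

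Your second and third paragraphs circle around this ingredient --- you correctly note that everything is pulled back from the closed immersion $HS\hookrightarrow\wt{HS}$ of affine varieties, and you quote the right intuition about $\bbP^1$'s joining points of $j(X)$ --- but you never state why the condition at $p$ propagates: ``the $\mu$-equations fiberwise \dots forces this globally'' is not an argument, and the proposed reduction to the handsaw still requires knowing that $\bbP^1\times T\to\wt{HS}$ is constant on $\bbP^1$-fibers. Once you insert the observation that a map from $\bbP^1$ to an affine scheme is constant, your outline closes up and coincides with the paper's short proof; the bookkeeping with tautological bundles and $\UM$, $\UN$ that you anticipate is not needed for this lemma (that comparison belongs to Lemma \ref{lemma: pullback along j_0}).
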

\begin{proof}
Since the central square in the diagram above is Cartesian, a morphism $\bbP^1\to \wt \FX$ factors through $\FX\hookrightarrow \wt \FX$ if and only if the composition $\bbP^1\to \wt \FX\to \wt X_0$ factors through $X_0\hookrightarrow \wt X_0$. But the latter varieties are affine so this holds if and only if the evaluation of $\bbP^1\to \wt \FX\to \wt X_0$ at $p$ belongs to $X_0$. Therefore, the statement follows from the definition of the moduli space of quasimaps.

\end{proof}

By considering the outer frame of the diagram above we obtain the Cartesian diagram 
\begin{equation*}
    \begin{tikzcd}
        \qm(X)\arrow[d]\arrow[r, swap, "J"] & \qm(\wt X)\arrow[d]\\
        {HS}\arrow[r,  hookrightarrow, "j_0"]  & \wt{HS}
    \end{tikzcd}
\end{equation*}
where the vertical maps do not depend on the choice of $p$. The morphism $j_0$ is regular, so it induces a refined pullback 
\[
j_0^!: K_{\wt \Tt}(\qm(\wt X))\to  K_{\Tt}(\qm( X)).
\]

\begin{lemma}
\label{lemma: pullback along j_0}
    We have 
    \[
    j_0^! \nvrs^{\qm(\wt X)} = \nvrs^{\qm(X)}.
    \]
\end{lemma}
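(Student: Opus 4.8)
The plan is to deduce Lemma~\ref{lemma: pullback along j_0} from the compatibility of the perfect obstruction theories of $\qm(X)$ and $\qm(\wt X)$ with the regular embedding $j_0$, and then to invoke the functoriality of virtual structure sheaves under refined pullback. First I would record that the outer square
\[
\begin{tikzcd}
\qm(X)\arrow[d,"\varpi"]\arrow[r,"J"] & \qm(\wt X)\arrow[d,"\wt\varpi"]\\
HS\arrow[r,hookrightarrow,"j_0"] & \wt{HS}
\end{tikzcd}
\]
is Cartesian (by Lemma~\ref{lemma: Quasimap cartesian} together with the Cartesian squares of Theorem~\ref{theorem: embedding resolution D5 branes}) and that $j_0$ is a regular embedding, so $j_0^!$ is defined; write $\mathcal N_0 := N_{HS/\wt{HS}}$ for its finite-rank locally free normal bundle, $\varpi_\FX\colon\FX\to HS$ for the affine morphism $\rho\circ p$ of \eqref{maps from bow to affine bow and handsaw}, and $\varpi,\wt\varpi$ for the two vertical maps above.

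The heart of the argument is to show that $\EE^{\qm(X)}$ and $J^*\EE^{\qm(\wt X)}$ differ, as perfect obstruction theories, precisely by $\varpi^*\mathcal N_0^\vee$, i.e. by the obstruction theory of a regular embedding. For this I would start from the fact (Theorem~\ref{theorem: embedding resolution D5 branes}) that $j\colon\FX\hookrightarrow\wt\FX$ is a regular embedding whose normal bundle, because the squares there are Cartesian, is $\varpi_\FX^*\mathcal N_0$. Dualizing the cotangent triangle of $j$ gives a distinguished triangle $\varpi_\FX^*\mathcal N_0[-1]\to\Tt_\FX\to j^*\Tt_{\wt\FX}$. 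Pulling this back along the universal evaluation $u_X$, using $u_{\wt X}\circ(\mathrm{id}_{\bbP^1}\times J)=j\circ u_X$ and flat base change along the smooth projections $pr$, and then applying $R^\bullet pr_*$, produces a distinguished triangle relating $R^\bullet pr_*\,u_X^*\varpi_\FX^*\mathcal N_0[-1]$, $(\EE^{\qm(X)})^\vee$ and $J^*(\EE^{\qm(\wt X)})^\vee$. The key simplification is that $HS$ is affine, so every quasimap $\bbP^1\to\FX$ is constant after composing with $\varpi_\FX$; hence $\varpi_\FX\circ u_X=\varpi\circ pr$, so $u_X^*\varpi_\FX^*\mathcal N_0=pr^*\varpi^*\mathcal N_0$, and $R^\bullet pr_*\,u_X^*\varpi_\FX^*\mathcal N_0=\varpi^*\mathcal N_0$ by the projection formula and $H^\bullet(\bbP^1,\mathcal O)=\mathbb C$. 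Dualizing back identifies the relative obstruction theory of $J$ with $\varpi^*\mathcal N_0^\vee$.

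With this in hand I would conclude via the functoriality of $K$-theoretic virtual pullbacks (as in \cite{qm}, the $K$-theoretic analogue of Manolache-type functoriality of virtual pullbacks): since $j_0$ is a regular embedding, the outer square is Cartesian, and the obstruction theories are compatible in the sense just established, $j_0^!$ of the virtual structure sheaf of $\qm(\wt X)$ equals the virtual structure sheaf of $\qm(X)$. Finally, the symmetrizing twist entering $\nvrs$ is built from $T_{\vir}$, $\ev^{*}TX$ and the tautological classes, all of which are compatible with $J^{*}$ by the same computation, so the identity holds for $\nvrs$ and not merely for $\mathcal O_{\vir}$.

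I expect the main obstacle to be the second step: upgrading the comparison of obstruction theories to an honest \emph{morphism} of obstruction theories (not just a $K$-theory identity), correctly bookkeeping relative-versus-absolute obstruction theories and the shifts in the cotangent triangle, and carefully using the constancy of maps $\bbP^1\to HS$ to collapse $R^\bullet pr_*$ of the pulled-back normal bundle. Establishing that the normal bundle of $j$ is genuinely \emph{pulled back} from $HS$ (rather than merely that its $K$-class is) also requires using the Cartesian squares of Theorem~\ref{theorem: embedding resolution D5 branes} with some care.
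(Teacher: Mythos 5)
Your proposal follows essentially the same route as the paper: identify the relative obstruction theory of $J$ with the pullback of $N_{HS/\wt{HS}}$ from the affine base (using the Cartesian squares of Theorem \ref{theorem: embedding resolution D5 branes}, the factorization of $\rho\circ\pi\circ u$ through $pr$, and the vanishing of higher pushforwards of a bundle pulled back from $\qm(X)$), and then invoke functoriality of $K$-theoretic virtual pullbacks for the Cartesian outer square over the regular embedding $j_0$. The only cosmetic difference is that your closing paragraph about the symmetrizing twist is not needed here, since $\nvrs$ denotes the unsymmetrized virtual structure sheaf and the paper defers the twist comparison to the proof of Theorem \ref{thm: d5 vertex}.
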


\begin{proof}
    The lemma will follow from a standard comparison of the obstruction theories defined in \eqref{eq: POT}. Consider the Cartesian square
    \[
    \begin{tikzcd}
       \FX\arrow[d, " \rho\circ \pi"]\arrow[r, hookrightarrow, "j"] &\wt \FX \arrow[d, "\wt \rho\circ \wt \pi "]\\
        HS\arrow[r, hookrightarrow] & \wt{HS}
    \end{tikzcd}
    \]
    and the following distinguished triangle on $\FX$:
    \[
    \Tt_{\FX}\to  j^*\Tt_{\tilde\FX}\to \Tt_{\wt \FX/\FX}.
    \]
    By Theorem \ref{theorem: embedding resolution D5 branes}, the embedding $\FX\to \wt\FX$ is regular and $\Tt_{\wt\FX/\FX}$ is quasi-isomorphic to the normal bundle $(\rho \circ \pi)^* N_{\wt{HS}/HS}$. Recall now the maps in \eqref{eq: diagram universal evaluation}. Applying $u^{*}$ followed by $ R^\bullet pr_*$ we get a triangle
    \begin{equation*}
        \begin{tikzcd}
            R^\bullet pr_* u^* \Tt_{\FX}\arrow[r] &  R^\bullet pr_* u^* j^*\Tt_{\wt \FX}   \arrow[r] & R^\bullet pr_* u^* (\rho \circ \pi)^* N_{\wt{HS}/HS}.
        \end{tikzcd}
    \end{equation*}
    Since $ HS$ is affine, we have a factorization
    \[
    \begin{tikzcd}
    \bbP^1\times \qm(X) \arrow[r, "u"] \arrow[d, "pr"] & \FX \arrow[r, "\rho \circ \pi"] & HS\\
    \qm(X)\arrow[urr, swap, "q"]
    \end{tikzcd}
    \]
    So we can rewrite the triangle above as follows:
    \begin{equation*}
        \begin{tikzcd}
            R^\bullet pr_* u^* \Tt_{\FX}\arrow[r] &  R^\bullet pr_* u^* j^* \Tt_{\wt \FX}   \arrow[r] & R^\bullet pr_*  pr^* \left(q^*N_{\wt{HS}/HS}\right)
        \end{tikzcd}
    \end{equation*}
    Applying the projection formula and proper base change
    , it follows that $R^{i} pr_*  pr^* \left(q^*N_{\wt{HS}/HS}\right)=0$ for all $i>0$ and $q^*N_{\wt{HS}/HS}\to  pr_*  pr^* \left(q^*N_{\wt{HS}/HS}\right)$ is an isomorphism. 
    Moreover, from the diagram
    \[
    \begin{tikzcd}
        \bbP^1\times \qm(X)\arrow[d, "u"]\arrow[r, "id\times J"] & \bbP^1\times  \qm(\wt X)\arrow[d, "\tilde{u}"]\\
    \FX\arrow[r,  hookrightarrow, "j"]  & \wt \FX
    \end{tikzcd}
    \]
we deduce that 
    \begin{equation}
        \label{eq: intermediate base change}
        R^\bullet pr_* u^* j^*\Tt_{\wt \FX}=R^\bullet pr_* (id \times J)^*\tilde u^* \Tt_{\wt \FX}
    \end{equation}
    and by derived flat base change
    we conclude that \eqref{eq: intermediate base change} is quasi-isomorphic to 
    \[
    J^* R^\bullet \wt{pr}_* \tilde u^*\Tt_{\wt \FX}.
    \] 
    Therefore, we obtain a morphism of triangles 
    \begin{equation}
    \label{eq: compatibility POT}
        \begin{tikzcd}
            R^\bullet pr_* u^* \Tt_{\FX}\arrow[r] &  J^* R^\bullet \wt{pr}_* \tilde u^*\Tt_{\wt \FX}  \arrow[r] & q^*N_{\wt{HS}/HS}\\
            \Tt_{\qm(X)}\arrow[u]\arrow[r] & J^*\Tt_{\qm(\wt X)}\arrow[u]\arrow[r] &  \Tt_{\qm(\wt X)/ \qm( X)}\arrow[u]
        \end{tikzcd}
    \end{equation}
    Commutativity of the squares follows from functoriality of the tangent complex. As a result, the obstruction theories for $\qm(X)$ and $\qm(\wt X)$ are compatible over $j_0$. The statement then follows from \cite[Prop. 4]{lee_quantumktheory} (cf. \cite[Prop. 5.10]{BF97} for the analogous result in cohomology).
\end{proof}
\begin{remark}
    The statement of the previous lemma can be seen as the quasimap version of the comparison statement for the virtual fundamental classes (or structure sheaves) of the moduli space of genus zero stable maps to $\bbP^n$ and some divisor $X\subset \bbP^n$, see \cite[\S8.5]{manolache_working} for an overview. Notice also that the proof fails in higher genus because the higher derived pushforwards may not vanish.
\end{remark}

We now restrict ourselves to the substack of non-singular maps. Then we have a diagram 
\begin{equation}
\label{eq: diagram D5 quasimaps ns}
    \begin{tikzcd}
        \qm_{\ns p}(X)\arrow[r, "{\ev_p}"]\arrow[d, swap, "J"]  &  X\arrow[d, swap, "j", hookrightarrow] \arrow[r]  & {HS}\arrow[d, hookrightarrow, "j_0"]\\
        \qm_{\ns p}(\wt X)\arrow[r, "{\wt{\ev}_p}"]  &  \wt X\arrow[r]  & \wt{HS}
    \end{tikzcd}
\end{equation}
whose squares are are Cartesian. Hence, we get two a priori different virtual pullbacks 
\begin{align*}
    j_0^!&: K_{\wt \Tt}(\qm_{\ns p}(\wt X))\to  K_{\Tt}(\qm_{\ns p}( X))\\
    j^!&: K_{\wt \Tt}(\qm_{\ns p}(\wt X))\to  K_{\Tt}(\qm_{\ns p}( X)).
\end{align*}
However, the morphisms $ HS\hookrightarrow \wt{HS}$ and $\wt X\to \wt{HS}$ are transversal, so we get
\begin{lemma}
    These two maps are the same, i.e. $j_0^!=j^!$.
\end{lemma}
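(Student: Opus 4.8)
The plan is to deduce $j_0^! = j^!$ from the transversality of $\wt X\to\wt{HS}$ with $j_0$, together with the standard compatibility of refined Gysin homomorphisms with transversal base change. First I would unwind the two definitions in terms of the Cartesian squares of \eqref{eq: diagram D5 quasimaps ns}. By the leftmost square, $j^!$ is the refined Gysin homomorphism attached to the regular embedding $j : X \hookrightarrow \wt X$ (whose existence and regularity are part of Theorem \ref{theorem: embedding resolution D5 branes}) and to the morphism $\wt{\ev}_p : \qm_{\ns p}(\wt X)\to \wt X$. By the outer square, $j_0^!$ is the refined Gysin homomorphism attached to the regular embedding $j_0 : HS \hookrightarrow \wt{HS}$ and to the composite $\qm_{\ns p}(\wt X)\xrightarrow{\wt{\ev}_p}\wt X\to\wt{HS}$. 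So the claim is exactly that these two recipes produce the same map $K_{\wt\Tt}(\qm_{\ns p}(\wt X))\to K_\Tt(\qm_{\ns p}(X))$.

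Second, I would record the transversality input. By Theorem \ref{theorem: embedding resolution D5 branes} the square with corners $X,\wt X,HS,\wt{HS}$ is Cartesian and both $j$ and $j_0$ are regular closed embeddings; moreover $X$ and $\wt X$ are smooth bow varieties. Since $X = \wt X\times_{\wt{HS}} HS$ is cut out in $\wt X$ by the pullbacks of local equations for $j_0$, every component of $X$ has codimension at most $\operatorname{codim}(HS,\wt{HS})$ in $\wt X$, and since $\dim X=\dim\wt X-\operatorname{codim}(HS,\wt{HS})$ (a dimension count for the two bow varieties, or equivalently regularity of $j$ of the right codimension) this codimension is exactly $\operatorname{codim}(HS,\wt{HS})$. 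As $\wt X$ is smooth, hence Cohen–Macaulay, this forces $\wt X\to\wt{HS}$ to be Tor-independent of $j_0$, i.e. transversal to it, and $N_{X/\wt X}$ is canonically the restriction to $X$ of the pullback along $\wt X\to\wt{HS}$ of $N_{HS/\wt{HS}}$.

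Finally, I would invoke the compatibility of refined Gysin maps with transversal base change: if $i$ is a regular embedding, $g$ is transversal to $i$, and $h$ is any further morphism to the source of $g$, then the refined Gysin map of $i$ along $g\circ h$ coincides with the refined Gysin map along $h$ of the regular embedding obtained from $i$ by pullback along $g$. Applied with $i=j_0$, $g=(\wt X\to\wt{HS})$ and $h=\wt{\ev}_p$, this is precisely $j_0^! = j^!$. The proof of this compatibility is the usual deformation-to-the-normal-cone computation: transversality forces the intermediate normal cone of $\wt X$ inside $\wt{HS}$ to be the full vector bundle $N_{HS/\wt{HS}}|_{\wt X}\cong N_{X/\wt X}$, so the two specialization-to-the-normal-cone maps agree; this argument is formal and carries over verbatim from Chow groups (Fulton, \emph{Intersection Theory}, §6) to the $\Tt$-equivariant $K$-theoretic virtual pullback formalism (cf.\ \cite{manolache_working, lee_quantumktheory, BF97}). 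The only genuinely nonformal point is the transversality/Tor-independence check of the second step; once that is in place, the equality of the two refined pullbacks is an immediate consequence of the construction of the Gysin maps.
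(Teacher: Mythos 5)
Your proof is correct and follows the same route as the paper: the paper simply asserts that $HS\hookrightarrow \wt{HS}$ and $\wt X\to\wt{HS}$ are transversal and concludes $j_0^!=j^!$ from the standard compatibility of refined pullbacks with transversal base change. The only difference is that you supply a justification for the transversality (fiber product of the expected dimension over a Cohen--Macaulay, in fact smooth, source gives Tor-independence), a point the paper leaves implicit; this is consistent with Theorem \ref{theorem: embedding resolution D5 branes}, which identifies $N_{X/\wt X}$ with the pullback of $N_{HS/\wt{HS}}$.
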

Combining this result with Lemma \ref{lemma: pullback along j_0} we deduce the following:
\begin{corollary}
\label{cor: D5 vrs}
    We have 
    \[
    j^! \nvrs^{\qm_{\ns p}(\wt X)} = \nvrs^{\qm_{\ns p}(X)}.
    \]
\end{corollary}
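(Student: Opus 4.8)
The proof is an immediate consequence of the two preceding lemmas (Lemma~\ref{lemma: pullback along j_0} and the Lemma asserting $j_0^!=j^!$), so I will only spell out how the identity of Lemma~\ref{lemma: pullback along j_0}, which concerns the full quasimap spaces $\qm(X)$ and $\qm(\wt X)$, restricts to the open substacks of quasimaps non-singular at $p$.

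First I would observe that the outer square of diagram~\eqref{eq: diagram D5 quasimaps ns}, which exhibits $\qm_{\ns p}(X)=\qm_{\ns p}(\wt X)\times_{\wt{HS}}HS$, is the base change along the open immersion $\qm_{\ns p}(\wt X)\hookrightarrow \qm(\wt X)$ of the outer square of \eqref{eq: diagram D5 quasimaps} (Cartesian by Lemma~\ref{lemma: Quasimap cartesian}), which exhibits $\qm(X)=\qm(\wt X)\times_{\wt{HS}}HS$; equivalently, $J^{-1}(\qm_{\ns p}(\wt X))=\qm_{\ns p}(X)$, since $J$ factors through $j(\FX)$ and $j$ preserves semistable loci (Theorem~\ref{theorem: embedding resolution D5 branes}). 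Now $\nvrs^{\qm_{\ns p}(X)}$ and $\nvrs^{\qm_{\ns p}(\wt X)}$ are by construction the restrictions of $\nvrs^{\qm(X)}$ and $\nvrs^{\qm(\wt X)}$ to these open substacks, and the refined Gysin pullback $j_0^!$ along the regular immersion $j_0$ commutes with flat (in particular open) pullback; restricting the identity of Lemma~\ref{lemma: pullback along j_0} therefore gives $j_0^!\,\nvrs^{\qm_{\ns p}(\wt X)}=\nvrs^{\qm_{\ns p}(X)}$.

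Finally I would invoke the Lemma immediately preceding the corollary, which asserts $j_0^!=j^!$ on the non-singular loci — valid because the maps $HS\hookrightarrow \wt{HS}$ and $\wt X\to \wt{HS}$ are transversal, so the virtual pullbacks attached to the two Cartesian squares in \eqref{eq: diagram D5 quasimaps ns} coincide. Substituting $j^!$ for $j_0^!$ in the displayed identity yields the corollary. I do not anticipate any genuine obstacle: the substantive comparison of obstruction theories has already been carried out in Lemma~\ref{lemma: pullback along j_0}, and what remains is only the routine compatibility of refined pullbacks with open restriction together with the already-established equality $j_0^!=j^!$.
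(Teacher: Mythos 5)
Your argument is correct and follows exactly the paper's route: the paper likewise deduces the corollary by combining Lemma~\ref{lemma: pullback along j_0} with the lemma $j_0^!=j^!$, leaving implicit the routine compatibility of the refined pullback with restriction to the open non-singular locus, which you have simply spelled out. No issues.
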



We can now state the main result of this section.

\begin{theorem}\label{thm: d5 vertex}
    We have\footnote{We use the symbol $j^*$ to denote the K-theoretic pullback $K_{\Tt}(\wt X)\to K_{\Tt}(X)$ along the regular embedding $j$. This should not be confused with $j^!$, which also depends on $j$ but relates the K-theory of the quasimap spaces.}
    \[
     j^* \varphi^* V^{\wt X}= V^{X}.
    \]
\end{theorem}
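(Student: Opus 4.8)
The plan is to deduce the identity from base change along the Cartesian diagram of quasimap spaces \eqref{eq: diagram D5 quasimaps ns}, using the compatibility of virtual structure sheaves of Corollary \ref{cor: D5 vrs}. Throughout, $V^{\wt X}$ and $V^{X}$ are taken with respect to the polarizations $\alpha_{\wt\D}+\wt\beta$ and $\alpha_{\D}+\beta$ of \eqref{eq: class alpha for general bow variety}; these are compatible under $j^{*}\varphi^{*}$ up to a topologically trivial class, which is exactly what makes $j$ preserve the attractive line bundle $\Theta(\alpha)\otimes\Sh{U}$, cf. \cite[\S6.6]{BR}, and can also be read off directly from \eqref{group homomorphism A-resolution} and Lemma \ref{lemma: restrction Chern roots in A resolution}. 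A D5 resolution does not touch the NS5 branes, so the integer $m$, the degree lattice $\mathbb{Z}^{m-1}$, and the variables $Q$ are unchanged; hence it suffices to prove, for every degree $d$, the equality $j^{*}\varphi^{*}\bigl(\wt{\ev}_{p,*}(\wt{\vrs}^{\,d})\bigr)=\ev_{p,*}(\vrs^{d})$ in $K_{\Tt}(X)_{loc}$, and then multiply by $Q^{d}$ and sum.

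The first step is base change. The square \eqref{eq: diagram D5 quasimaps ns} is Cartesian and $\varphi$-equivariant, $j$ is a regular closed embedding of smooth varieties, and, since the right-hand vertical maps factor through the transversal pair $HS\hookrightarrow\wt{HS}$ and $\wt X\to\wt{HS}$, the square is Tor-independent. Passing to $\Cs_{q}$-fixed loci — on which $\wt{\ev}_{p}$ is proper — again gives a Cartesian, Tor-independent square, and the $\Cs_{q}$-moving parts of the two virtual tangent complexes coincide by the triangle \eqref{eq: compatibility POT}, because the extra term $q^{*}N_{\wt{HS}/HS}$ carries trivial $\Cs_{q}$-weight. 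Therefore the refined pullback along $J$ commutes with the proper pushforward defining $\ev_{p,*}$, and combining this with compatibility of pushforward with the change of group $\varphi^{*}$ we obtain $j^{*}\varphi^{*}\circ\wt{\ev}_{p,*}=\ev_{p,*}\circ j^{!}$, where $j^{!}$ is the pullback of Corollary \ref{cor: D5 vrs}. It remains to check $j^{!}\,\wt{\vrs}^{\,d}=\vrs^{d}$.

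Decompose $\vrs^{d}=\nvrs^{d}\otimes\bigl(\Can_{\vir}\,\det\qmpol|_{p}/\det\qmpol|_{p'}\bigr)^{1/2}$, where $\nvrs^{d}$ is the (non-symmetrized) virtual structure sheaf of Corollary \ref{cor: D5 vrs}, so that $j^{!}\,\wt{\nvrs}^{\,d}=\nvrs^{d}$. For the symmetrizing twist I would argue as follows. The triangle \eqref{eq: compatibility POT} gives $J^{*}T^{\wt X}_{\vir}=T^{X}_{\vir}+q^{*}N_{\wt{HS}/HS}$ in $K$-theory, while the Cartesian squares of Theorem \ref{theorem: embedding resolution D5 branes} identify the normal bundle of $j:X\hookrightarrow\wt X$ with $(\rho\pi)^{*}N_{\wt{HS}/HS}$, so that $\ev_{p}^{*}j^{*}T\wt X=\ev_{p}^{*}TX+q^{*}N_{\wt{HS}/HS}$ (using $q|_{\qm_{\ns p}(X)}=(\rho\pi)\circ\ev_{p}$); taking determinants, the two copies of $q^{*}N_{\wt{HS}/HS}$ cancel and $J^{*}\Can_{\vir}^{\wt X}=\Can_{\vir}^{X}$. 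Next, $(\mathrm{id}_{\bbP^{1}}\times J)^{*}\wt\qmpol$ is the class built from the bundles $u^{*}j^{*}\wt\tb_{\wt\Xb}$, and by the chosen compatibility of polarizations it equals $\qmpol$ up to a topologically trivial summand, which cancels in the ratio $\det\qmpol|_{p}/\det\qmpol|_{p'}$; hence $J^{*}\bigl(\det\wt\qmpol|_{p}/\det\wt\qmpol|_{p'}\bigr)=\det\qmpol|_{p}/\det\qmpol|_{p'}$. All classes under the square roots are thus pulled back on the nose, so the pullback of the square root is the square root of the pullback, and $j^{!}\,\wt{\vrs}^{\,d}=\vrs^{d}$. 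This proves the theorem.

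The technical heart of the argument is this last step: keeping track of $N_{\wt{HS}/HS}$ through $\Can_{\vir}$ so that its two occurrences cancel rather than producing a spurious $q^{\beta}$-shift of the K\"ahler parameters, and verifying that the polarization class $\alpha$ is preserved by $j^{*}\varphi^{*}$ up to topologically trivial classes even though $\wt X$ carries one more D3 brane than $X$. Granted these compatibilities, the theorem is a formal consequence of base change together with Corollary \ref{cor: D5 vrs}.
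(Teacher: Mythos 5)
Your proof is correct and follows essentially the same route as the paper: reduce to the identity $j^{!}\,\wt{\vrs}=\vrs$ by comparing the obstruction theories via the triangle \eqref{eq: compatibility POT}, check that the symmetrizing twist matches because the two occurrences of $N_{\wt{HS}/HS}$ cancel in the normalized $\Can_{\vir}$ (the paper handles this in a footnote), and conclude by commuting the virtual pullback with the proper pushforward in the Cartesian square \eqref{eq: diagram D5 quasimaps ns}. The extra detail you supply on Tor-independence, the $\Cs_q$-fixed loci, and the compatibility of the polarization classes is a more explicit rendering of steps the paper treats briefly, not a different argument.
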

\begin{proof}
To remove clutter, we drop the specialization map $\varphi^*$ from the notation. Equivalently, we view the vertex function $V^{\wt X}$ as a class in $\Tt$-equivariant $K$-theory of $\wt X$ rather than in $\wt \Tt$ equivariant $K$-theory. The top triangle in diagram \eqref{eq: compatibility POT} implies that\footnote{Notice that the last equality follows from our normalization of the virtual canonical bunldes, which we twist by the anticanonical bundle of the target. Without the latter, the two sides would differ by $q^* \Can^{\wt{HS}/HS}=(\ev_p)^*\Can^{\wt{X}/X}$.}
\begin{align*}
         &\frac{\det \qmpol_{\wt X}|_{p}}{\det 
         \qmpol_{\wt X}|_{p'}}=\frac{\det \qmpol_X|_{p}}{\det\qmpol_X|_{p'}} 
         \\
         &J^*\Can_{\vir}^{\qm(\wt X)}= \Can_{\vir}^{\qm(X)}.
\end{align*}
Therefore, the same logic of the proof of Corollary \ref{cor: D5 vrs} applied to the symmetrized virtual structure sheaves (cf. \eqref{symvss}) implies that 
\[
    j^! \vrs^{\qm_{\ns p}(\wt X)} = \vrs^{\qm_{\ns p}(X)}.
\]
But virtual pullbacks commute with pushforward in Cartesian squares, so applying this to the leftmost square in diagram \eqref{eq: diagram D5 quasimaps ns} we get
\begin{align*}
    j^*  V^{\wt X} &=j^* (\wt{\ev}_p)_* \left(\vrs^{\qm_{\ns p}(\wt X)} \right)
    \\
    &= (\ev_p)_*j^! \left(\vrs^{\qm_{\ns p}(\wt X)} \right)
    \\
    &= (\ev_p)_*\left( \vrs^{\qm_{\ns p}(X)} \right)
    \\
    &= V^{ X},
\end{align*}
as claimed.
\end{proof}

Combining the previous statement with Lemma \ref{lemma: restrction Chern roots in A resolution}, we deduce the last result of this section.
\begin{corollary}\label{cor:D5resvertex}
    Let $f \in X^{\At}$ and $\tilde{f}_{\sharp} \in \tilde{X}^{\tilde{\At}}$ be its distinguished resolution. Then
    \[
    \ver^{X}_{f}(Q)=\varphi^{*}\ver^{\tilde{X}}_{\tilde{f}_{\sharp}}(Q).
    \]
\end{corollary}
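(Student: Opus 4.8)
The plan is to deduce this statement by combining Theorem~\ref{thm: d5 vertex}, which asserts $\ver^{X}=j^{*}\varphi^{*}\ver^{\wt X}$ in $K_{\Tt\times\Cs_{q}}(X)_{loc}[[Q]]$, with the compatibility of tautological fixed-point restrictions recorded in Lemma~\ref{lemma: restrction Chern roots in A resolution}. The whole argument is a fixed-point restriction of the former, followed by a rewriting of the right-hand side in terms of $\tilde f_{\sharp}$ using the latter.

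Concretely, I would proceed in two steps. First, write $i_{f}\colon\{f\}\hookrightarrow X$ and $i_{j(f)}\colon\{j(f)\}\hookrightarrow\wt X$ for the inclusions of the corresponding $\Tt$-fixed points (recall $j(f)$ is $\varphi(\Tt)$-fixed); since $j\circ i_{f}=i_{j(f)}$ we have $i_{f}^{*}\circ j^{*}=i_{j(f)}^{*}$, so restricting the identity of Theorem~\ref{thm: d5 vertex} to $f$ gives
\[
\ver^{X}_{f}(Q)=\bigl(\varphi^{*}\ver^{\wt X}\bigr)\big|_{j(f)}.
\]
Second, I would identify $(\varphi^{*}\ver^{\wt X})|_{j(f)}$ with $\varphi^{*}\bigl(\ver^{\wt X}_{\tilde f_{\sharp}}(Q)\bigr)$. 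This is exactly the commutativity of the square in Lemma~\ref{lemma: restrction Chern roots in A resolution}, i.e.\ $i_{j(f)}^{*}\circ\varphi^{*}=\varphi^{*}\circ i_{\tilde f_{\sharp}}^{*}$, applied to the class $\ver^{\wt X}$; together with the displayed equation this yields the claim. As stated, Lemma~\ref{lemma: restrction Chern roots in A resolution} is phrased for the tautological subring $K^{\taut}$, so the point requiring care is the extension to the vertex function itself.

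I expect this extension to be the only real obstacle. One clean way around it is the alternative argument in the proof of Lemma~\ref{lemma: restrction Chern roots in A resolution}: the point $j(f)$ lies in the attracting set of $\tilde f_{\sharp}$ for a one-parameter $\Cs_{\hbar}$-action on $\wt X$ induced through $\varphi$, and for an arbitrary equivariant $K$-theory class the restriction to a point and to the limit of that point under such a flow agree after the change of group $\varphi^{*}$; applying this coefficient-by-coefficient to the $Q$-expansion of $\ver^{\wt X}$ (each coefficient being an honest element of $K_{\Tt\times\Cs_{q}}(\wt X)_{loc}$) gives the needed identity with no use of the specific shape of $\ver^{\wt X}$. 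Alternatively, one could make a hands-on check by comparing the localization formula of Proposition~\ref{prop: vertex formula} for the butterfly diagrams $B^{f}$ and $B^{\tilde f_{\sharp}}$, using their explicit relationship from \cite{rimanyi2020bow, BR}, but I would prefer the first route. Once this is in place, the remaining bookkeeping — that the K\"ahler variables $Q=(Q_{1},\dots,Q_{m-1})$ are unchanged by a D5 resolution, and that $\Cs_{q}$ is carried along trivially via $\varphi\times\mathrm{id}$ — is routine.
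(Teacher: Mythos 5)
Your argument is exactly the paper's: the corollary is deduced by restricting the identity of Theorem \ref{thm: d5 vertex} to the fixed point $f$ and then invoking the commutative square of Lemma \ref{lemma: restrction Chern roots in A resolution} to trade the restriction at $j(f)$ for the one at $\tilde f_{\sharp}$, which is precisely the paper's one-line proof. The only caveat is that your general principle (``restriction to a point and to its limit under the flow agree for an \emph{arbitrary} equivariant class'') is false as literally stated --- compare $\mathcal{O}(1)$ on $\mathbb{P}^1$ restricted at $0$ versus $\infty$ --- and moreover $j(f)$ is itself $\varphi(\Tt)$-fixed so it does not flow; what actually makes the extension beyond $K^{\taut}$ work is that $j(f)$ and $\tilde f_{\sharp}$ lie in the same connected component of the relevant fixed locus of $\wt X$ (equivalently, one checks the identity on the tautological classes entering the localization formula), which is the content of the paper's own remark in the proof of that lemma.
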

\begin{remark}
For the special case when $X$ is a cobalanced bow variety, i.e. a Nakajima quiver variety, the maximal D5 resolution $\wt{X}$ is the cotangent bundle to a partial flag variety, which is also a Nakajima quiver variety. In this case, Corollary \ref{cor:D5resvertex} was proven earlier by the second author in \cite{dinkinsD5Vertex} using combinatorial properties of the localization formula. Similar techniques could also be applied here, in which case the combinatorics is much simpler than in \cite{dinkinsD5Vertex} for two reasons. First, the localization formula for separated (or co-separated) bow varieties is more uniform than the localization formula for quiver varieties. Second, in this paper we are able to study the situation where a single D5 brane is resolved, whereas the situation in \cite{dinkinsD5Vertex} must consider the composition of many D5 resolutions to avoid leaving the setting of quiver varieties. This is one of the many places where studying bow varieties is both easier and more general than quiver varieties.
\end{remark}

\section{NS5 resolutions}\label{sec: ns5 resolutions}

In this section, we study the mirror dual of the constructions of \S\ref{sec: d5 resolutions}. To that end, we review the NS5 resolutions constructed in \cite{BR}. NS5 resolutions provide a way of reducing the weight of a chosen NS5 brane. A bow variety and an NS5 resolution of it fit into a certain diagram which is constructed locally on the brane diagram. The geometry of NS5 resolutions is more complicated than that of D5 resolutions.

\subsection{NS5 resolutions of bow varieties}

Let $\D$ be a separated or co-separated brane diagram.
Let $\overline \D$ be the brane diagram obtained by replacing a single NS5 brane $\Zb$ of weight $\w=\w(\Zb)\geq 2$ in $\D$ by a pair of consecutive NS5 branes $\Zb'$ and $\Zb''$ of weights $\w'=\w(\Zb')\geq 1$ and $\w''=\w(\Zb'')\geq 1$ such that $\w=\w'+\w''$.
We call $\overline \D$ an NS5 resolution of the brane diagram $\D$ and the branes $\Zb'$ and $\Zb''$ resolving branes. Notice that if $\D$ is separated (resp. co-separated), then $\overline \D$ is also separated (resp. co-separated).

Let now $\overline X$ and $X$ be the bow varieties associated with $\overline \D$ and $\D$, respectively. We say that $\overline X$ is an NS5 resolution of the bow variety $X$. The same torus $\Tt$ acts on both of them. In this section, we review the construction of a distinguished Lagrangian correspondence 
\begin{equation}
    \label{eq: NS5 correspondence}
    \overline X\leftarrow L\rightarrow X.
\end{equation}
As mentioned in the introduction, in the event that the bow variety $X$ is the cotangent bundle of a partial flag variety $X=T^*\Fl$, then $\ol X$ is the cotangent bundle of a ``finer'' flag variety with an additional step, and the Lagrangian correspondence above is the canonical correspondence 
\begin{equation*}
    T^*\Fl\leftarrow L\to T^*\ol \Fl
\end{equation*}
induced by the pullback diagram
\[
\begin{tikzcd}
    L \arrow[r]\arrow[d] & T^* \Fl\arrow[d]\\
    \ol \Fl \arrow[r, "f"] & \Fl
\end{tikzcd}
\]
where $f$ is just the map forgetting the extra step in the flag.
More generally, the space $L$ and the morphisms above can be defined by modifying
the quiver data giving rise to $X$. For simplicity, assume that $X$ is separated. Then to pass from $X$ to $\overline X$ one replaces the two way part of the diagram of $X$ attached to the brane $\Zb$ as follows:
\[
\begin{tikzcd}
    W_{\Zb_-}  \arrow[r, bend right, swap, "D"]& W_{\Zb_+} \arrow[l, bend right, swap, "C",  "\circ" marking]
\end{tikzcd}
\quad \Longrightarrow \quad 
\begin{tikzcd}
    W_{\Zb_-}=W_{\Zb'_-}  \arrow[r, bend right, swap, "D'"]&  W_{\Zb'_+}=W_{\Zb''_-} \arrow[r, bend right, swap, "D''"]\arrow[l, bend right, swap, "C'",  "\circ" marking] & W_{\Zb''_+} =W_{\Zb_+} \arrow[l, bend right, swap, "C''",  "\circ" marking]
\end{tikzcd}.
\]
The circles in the arrows indicate the rescaling action by $\hbar$. Similarly, the space $L$ is defined in terms of $X$ as follows
\begin{equation*}
\begin{tikzcd}
    W_{\Zb_-}  \arrow[r, bend right, swap, "D"]& W_{\Zb_+} \arrow[l, bend right, swap, "C",  "\circ" marking]
\end{tikzcd}
\quad \Longrightarrow \quad 
\begin{tikzcd}
    W_{\Zb_-}=W_{\Zb'_-}  \arrow[rr, bend right, swap, "D"]&  W_{\Zb'_+}=W_{\Zb''_-} \arrow[l, bend right, swap, "C'"] & W_{\Zb''_+} =W_{\Zb_+}   \arrow[l, bend right, swap, "C''",  "\circ" marking]
\end{tikzcd}.
\end{equation*}

The maps in diagram \eqref{eq: NS5 correspondence} are then easily described in terms of these diagrams. For instance, the map $L\to X$ is induced by the assignment $C=C'\circ C''$. The case when $X$ is co-separated is analogous.\footnote{In the co-separated case the roles of the maps $C$ and $D$ are swapped to take into account the stability condition.} We refer to \cite[\S7]{BR} for further details.

\begin{proposition}[\cite{BR}]
\label{proposition NS5 resolution bow varieites}
     The morphisms 
     \begin{equation*}
         \begin{tikzcd}
       \overline X & L \arrow[l, swap, "j"] \arrow[r, "p"] & X.
       \end{tikzcd}
     \end{equation*}
     are well defined and $\Tt$-equivariant. Moreover, they enjoy the following properties:
     \begin{itemize}
         \item The map $j$ is a closed immersion and $p$ is proper. 
         \item If $X$ is separated (resp. co-separated) then the fibers of $p$ are isomorphic to the Grassmannian $\Gr(\w', \w'+\w'')$ (resp. $\Gr(\w'', \w'+\w'')$). 
         \item  There exists a commutative diagram 
        \[
        \begin{tikzcd}
            \overline X \arrow[d, swap,  "\bar\pi"]& L\arrow[d] \arrow[l, swap, "j"] \arrow[r, "p"] & X \arrow[dl, "\pi"] \\
            \overline X_0 &  X_0 \arrow[l] & 
        \end{tikzcd}
        \]
        Moreover, the left square is Cartesian.
     \end{itemize}
\end{proposition}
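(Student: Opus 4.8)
The plan is to verify each of the claimed properties by unwinding the quiver descriptions of $X$, $\overline X$, and $L$ presented above, reducing everything to standard facts about GIT quotients and Grassmannian bundles. First I would treat the separated case in detail, since the co-separated case follows by the symmetric argument (swapping the roles of $C$ and $D$) or, alternatively, by invoking Hanany--Witten invariance to reduce to the separated situation. For well-definedness and $\Tt$-equivariance of $j$ and $p$: the maps are induced at the level of the affine spaces $\MM$ by the assignments described (e.g. $C = C' \circ C''$ for $p$, and forgetting the intermediate factorization data for $j$), and one must check (i) that these assignments send $\mu^{-1}(0)$ to $\mu^{-1}(0)$ — a direct computation using the explicit moment map components from \S\ref{sec:def of bow variety}, where the factorization $C = C'C''$ makes the relations around $\Zb$ match those around $\Zb'$ and $\Zb''$; (ii) that they respect the stability conditions (S1), (S2), which only involve the D5 data and are therefore unaffected; and (iii) that they intertwine the $G$ and $\overline G$ actions appropriately (the gauge group of $L$ sits between those of $X$ and $\overline X$). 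Equivariance for $\Tt = \At \times \Cs_\hbar$ is immediate from the fact that the $\hbar$-markings are preserved by the assignments, as indicated by the circles in the diagrams.

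For the statement that $j$ is a closed immersion: the space $L$ is cut out inside (a bundle over) $\overline X$ by the condition that the composite $D = D' D''$ factors in the prescribed way — more precisely, $L$ is the locus in $\overline X$ where a certain tautological map is zero (the "extra" component of $D'$ or the vanishing forced by using a single $D$ instead of the pair $D', D''$), which is a closed condition. For properness of $p$: since $X$ and $\overline X$ both carry the projective morphisms $\pi$, $\overline\pi$ to affine varieties (using separatedness, cf. \S\ref{sec:Affinization}), and the diagram with $X_0$ is claimed to commute with the left square Cartesian, properness of $p$ reduces to properness of $L \to X_0$, which factors as $L \to \overline X \xrightarrow{\overline\pi} \overline X_0 \leftarrow X_0$; I would instead argue directly that the fibers of $p$ are the Grassmannians $\Gr(\w', \w'+\w'')$ and that $p$ is a Zariski-locally-trivial Grassmannian bundle, which gives properness for free. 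The fiber computation is the heart of the matter: given a point of $X$, i.e. fixed maps $C, D$ (plus the rest of the data), a preimage under $p$ amounts to a choice of intermediate space structure $W_{\Zb'_+}$ together with a factorization $C = C' C''$ compatible with the moment map and stability; parametrizing such factorizations (with the $D$-data rigidified by $D = D$ unfactored) yields exactly the Grassmannian of $\w'$-dimensional subspaces of the $\w$-dimensional space $W_{\Zb_+}/\mathrm{im}(\text{incoming data})$ or its analog — one checks the dimension count $\w = \w' + \w''$ matches. This is the step I expect to be the main obstacle: carefully identifying which subspace the Grassmannian parametrizes and verifying that the stability conditions carve out precisely a full Grassmannian (not a proper subvariety) uniformly over $X$, so that $p$ is genuinely a fiber bundle.

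Finally, for the commutative diagram with $\overline X_0$ and $X_0$ and the Cartesianness of the left square: the morphism $\overline X_0 \leftarrow X_0$ (or its opposite direction, depending on orientation) is induced on affine quotients by the same assignment $C = C'C''$, which descends to $G$-invariants; commutativity is then formal from commutativity at the level of $\MM$. For the left square $L \to \overline X$, $L \to X_0$, $\overline X \to \overline X_0$, $X_0 \to \overline X_0$ being Cartesian: this says $L = \overline X \times_{\overline X_0} X_0$, i.e. a point of $\overline X$ lies in the image of $j$ precisely when its image in $\overline X_0$ comes from $X_0$. This is the analog of the corresponding statement in Theorem \ref{theorem: embedding resolution D5 branes} for D5 resolutions, and I would prove it the same way: the affine quotients only see the handsaw-type data, the extra factorization of $D$ is invisible there, and the closed condition defining $j$ is pulled back from the affine base. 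I would cite \cite[\S7]{BR} for the detailed verification of these identifications, as the excerpt already indicates that is where the construction is carried out.
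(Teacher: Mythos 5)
First, note that the paper does not prove this statement at all: it is imported verbatim from \cite{BR} (the construction and all verifications live in \cite[\S 7]{BR}), so there is no in-paper argument to compare yours against. Judged on its own terms, your sketch identifies the right overall strategy --- unwind the quiver presentations, check the moment-map relations under $C=C'\circ C''$, and deduce properness of $p$ from its Grassmannian-bundle structure --- but it leaves the two load-bearing steps unestablished. The first is the fiber identification, which you yourself flag as ``the main obstacle'': you offer only a guess about which subspace the Grassmannian parametrizes and do not verify that the fiber is a \emph{full} Grassmannian uniformly over $X$; this is exactly the content of the second bullet and of Lemma \ref{lemma points in lagrangian variety Z resolution}, not a detail one can defer. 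The second is your claim that stability is ``unaffected'' because (S1)--(S2) only involve D5 data. This conflates the open conditions (S1)--(S2) cutting out $M$ with GIT $\chi$-semistability: the resolution inserts a new D3 brane whose left neighbour is the NS5 brane $\Zb'$, so it contributes a new factor $\det(g)$ to the character \eqref{eq:character}, and the resulting semistability condition at the intermediate vertex is precisely what rigidifies the factorizations $C=C'\circ C''$ into a Grassmannian with a free gauge action rather than a larger, singular parameter space. Asserting that stability passes through untouched is the step that would fail.

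Two smaller points. Reducing the co-separated case to the separated one via Hanany--Witten transition is not available: HW moves are not $\Tt$-equivariant and relocate the resolved NS5 brane relative to the D5 branes, which is why the construction instead swaps the roles of $C$ and $D$ (as the paper's footnote indicates); your alternative ``symmetric argument'' is the correct route. And your characterization of $j(L)$ as the vanishing locus of a tautological map is a guess; the clean statement, which you do reach at the end, is $L=\ol X\times_{\ol X_0}X_0$, but then closedness of $j$ rests on $X_0\to\ol X_0$ being a closed immersion of affine quotients, an invariant-theoretic fact that also needs to be proved rather than waved through.
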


\subsection{The equivariant geometry of NS5 resolutions}\label{sec: equivariant geometry of ns5}

In Proposition \ref{proposition NS5 resolution bow varieites}, we identified the fibers of the map $p:L \to X$ with Grassmannians. We begin this section by refining our analysis of these fibers. For a given separated bow variety $X$ and NS5 resolution $\overline X$ such that $\w=\w'+\w''$, we set $Y:=\ttt{\fs$\w'$\fs$\w'+ \w''$\bs\dots \bs 3\bs2\bs1\bs}$. If instead $X$ is co-separated, we set $Y:=\ttt{\bs1\bs2\bs3\bs\dots\bs$\w'+\w''${\fs}$\w''${\fs}}$.

\begin{lemma} $ $
\label{lemma points in lagrangian variety Z resolution}
\begin{enumerate}
    \item Let $f\in X^{\At}$. The $\Cs_{\hbar}$-fixed locus $Y^{\hbar}$ of the bow variety $Y$ fits in the following pullback diagram:
    \begin{equation*}
        \begin{tikzcd}
            Y^{\hbar}\arrow[r, hookrightarrow]\arrow[d] & L\arrow[d, "p"]\\
            \lbrace f\rbrace \arrow[r, hookrightarrow] &X
        \end{tikzcd}
    \end{equation*}
    \item Any fixed point $f\in X^{\At}$ admits exactly $\binom{\w'+\w''}{\w''}$ resolutions, i.e. fixed points $\ol f \in \overline X^{\At}\cap L$ such that $p(\ol{f})=f$.
    \item For any fixed point $f\in X^{\At}$, we have $T\overline X|_{Y^{\hbar}}-TL|_{Y^{\hbar}}=TY|_{Y^{\hbar}}-TY^{\hbar}$ in $K_{\Tt}(Y^{\hbar})$.
\end{enumerate}
\end{lemma}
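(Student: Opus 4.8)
The plan is to prove the three statements of Lemma~\ref{lemma points in lagrangian variety Z resolution} together, exploiting the local nature of the NS5 resolution construction. All three assertions concern only the piece of the brane diagram attached to the resolved NS5 brane $\Zb$, since the maps $j$ and $p$ modify only the two-way part at $\Zb$ and leave the rest of the quiver data untouched. Concretely, I would first reduce to the ``minimal'' bow variety $Y$ by the following observation: fixing $f \in X^{\At}$ rigidifies all the tautological data of $X$ except for the vector spaces $W_{\Zb_-}$, $W_{\Zb_+}$ and the maps $C, D$ adjacent to $\Zb$. The fiber $p^{-1}(f)$ is then exactly the moduli of ways to factor the fixed map $C: W_{\Zb_+} \to W_{\Zb_-}$ as $C' \circ C''$ through an intermediate space $W_{\Zb'_+}$ of dimension $\w' + d_{\Zb_-}$ (in the separated case), subject to the stability conditions. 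This is precisely the data defining the bow variety $Y$ after the $\Cs_\hbar$-fixed locus is taken (the $\hbar$-action is what forces $C$, or its analog, to vanish on the irrelevant directions). This gives statement (1).

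For statement (2), I would combine (1) with the known fixed point count for $Y$. Since $Y = \ttt{\fs$\w'$\fs$\w'+\w''$\bs\dots\bs1\bs}$ has $\Cs_\hbar$-fixed locus whose $\At$-fixed points are counted by Theorem~\ref{thm: fixed points bow} — binary contingency tables with the appropriate charge vectors — a direct count gives $\binom{\w'+\w''}{\w''}$ (this matches the classical fact that the $\At$-fixed points in $T^*\Gr(\w', \w'+\w'')$ number $\binom{\w'+\w''}{\w''}$, which is the relevant special case since in the flag variety picture $L$ sits over $X$ as a Grassmannian bundle). So the resolutions of $f$ biject with $(Y^\hbar)^{\At} = Y^{\Tt}$, which has the claimed cardinality. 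Alternatively, one can argue combinatorially: a resolution $\ol f$ of $f$ is obtained by choosing how to distribute the $\w = \w' + \w''$ ties through $\Zb$ among the two new branes $\Zb'$ and $\Zb''$, and there are $\binom{\w' + \w''}{\w''}$ such choices.

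Statement (3) is the identity I expect to be the main obstacle, since it is the one that actually feeds into the vertex function comparison later. The strategy is a tangent space bookkeeping argument. The key point is that $Y$ and $L$ carry ``the same'' quiver data away from the resolved brane, and over the fixed point $f$ the maps $p: L \to X$ and $\bar\pi: Y \to Y_0$ (or rather $Y \to \mathrm{pt}$ in the relevant local model) realize $L|_f$ and $Y$ as the \emph{same} variety — namely the Grassmannian $\Gr(\w', \w'+\w'')$, resp.\ its cotangent-type thickening. Then $TL|_{Y^\hbar}$ restricted to the fixed locus is, up to the trivial (non-$\Zb$) directions which cancel against the corresponding directions in $T\overline X|_{Y^\hbar}$, equal to $TL|_f = TY|_{Y^\hbar}$ minus the contribution of the directions in $T\overline X$ transverse to $L$. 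But since the left square in Proposition~\ref{proposition NS5 resolution bow varieites} is Cartesian, $T\overline X|_L - TL = p^* (TX - TX_0)|_L$ on the nose, and restricting to $Y^\hbar \subset L|_f$ the pullback $p^*(TX - TX_0)|_f$ is a \emph{trivial} $\Cs_\hbar$-representation (it is pulled back from the point $f$), hence equals $TY^\hbar$ only after one checks it contributes exactly the $\hbar$-fixed directions. I would make this precise by writing out the tangent complex \eqref{eq: POT}-style K-theory class for each of $\overline X$, $L$, $Y$ in terms of the tautological bundles via \eqref{eq: class alpha for general bow variety}, restricting everything to $Y^\hbar$, and verifying the four-term identity term by term; the non-trivial cancellation happens precisely among the $\Hom$-terms attached to $\Zb', \Zb''$ in \eqref{eq: class alpha for general bow variety}. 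The expected difficulty is tracking the $\hbar$-weights correctly through the $\Cs_\hbar$-fixed locus and confirming that the ``extra'' directions in $\overline X$ versus $L$ match the ``extra'' directions in $Y$ versus $Y^\hbar$ — this is a finite but somewhat delicate weight computation, best organized by noting that both differences measure the normal bundle of a Grassmannian inside a cotangent bundle (resp.\ an affine space), which have matching $\Tt$-characters by the local model.
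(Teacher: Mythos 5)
The paper does not actually supply a proof of this lemma: it is stated bare, with the justification implicitly deferred to the explicit quiver description of $L$ and the combinatorial description of fixed points and tangent characters from \cite[\S4.3--4.5]{rimanyi2020bow} (the subsequent remarks say as much). So there is no in-paper argument to compare against; I can only assess your proposal on its merits. Your treatment of (1) and (2) is essentially the intended one: fixing $f$ freezes all quiver data except the two-way part at $\Zb$, the fiber $p^{-1}(f)$ is the space of factorizations $C=C'\circ C''$ through an intermediate space, and this is identified with $Y^{\hbar}\cong\Gr(\w',\w'+\w'')$; the count $\binom{\w'+\w''}{\w''}$ then follows either from the fixed points of the Grassmannian or from the tie-diagram redistribution argument. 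One thing you should not omit in (1): the identification is equivariant only for the \emph{twisted} $\Tt$-action on $Y^{\hbar}$ recorded in Remark \ref{remark torus action on fibers of NS5 resolutions} (the weights are $a_i\hbar^{-\gamma_i(f)}$, not $a_i$), and this twist is exactly what makes part (3) come out right, so it has to be pinned down rather than left implicit.

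In (3) there is one step that does not survive scrutiny as written: the claim that the Cartesian left square of Proposition \ref{proposition NS5 resolution bow varieites} gives $T\overline X|_L - TL = p^*(TX - TX_0)|_L$ ``on the nose.'' First, the square identifies $L$ with $\overline X\times_{\overline X_0}X_0$, so the relevant difference would be pulled back from $T\overline X_0 - TX_0$, not from $TX - TX_0$; second, $X_0$ and $\overline X_0$ are singular affine varieties, so neither difference is a well-defined $K$-theory class that one can restrict to the point $\pi(f)$ and pull back. This route cannot be made literal. However, your declared fallback --- writing the $K$-theory classes of $T\overline X$, $TL$, $TY$, $TY^{\hbar}$ in tautological bundles, restricting to $Y^{\hbar}$, and cancelling term by term --- is sound and is surely what the authors intend; the conceptual shape of the answer (the normal bundle of the Lagrangian $L$ in the symplectic $\overline X$, restricted to the fiber, is $\hbar\,(TY^{\hbar})^{\vee}$, i.e.\ the cotangent directions of $Y=T^*Y^{\hbar}$) is correct, and the codimension count $\operatorname{codim}(L\subset\overline X)=\dim\Gr(\w',\w'+\w'')$ confirms it. So: no fatal gap, but the Cartesian-square shortcut must be deleted and replaced by the explicit weight computation, with the $\hbar^{-\gamma(f)}$ twist tracked throughout.
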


\begin{remark}
\label{remark torus action on fibers of NS5 resolutions}
    The action of $\Tt$ on $Y^{\hbar}$ (and hence on $Y$ since $Y=T^*Y^{\hbar}$) induced by part (1) of Lemma \ref{lemma points in lagrangian variety Z resolution} is a twist of the standard action on $Y$ seen as a bow variety. Indeed, if $X$ is separated (resp. co-separated), then $\Tt$ acts on the $i$-th D5 brane of $Y$ with weight equal to $a_i \hbar^{-\gamma_i}$ (resp. $a_i h^{\gamma_i}(f)$), where $\gamma_i(f)$ is equal to the number of ties in the tie diagram of $f\in X^{\At}$ that are connected to the NS5 branes left of the resolved NS5 brane $\Zb$. This again follows from the explicit description of $f\in X^{\At}$ given in \cite[\S4.3]{rimanyi2020bow}.
\end{remark}

\begin{remark}
\label{remark resolution of fixed points NS5 resolutions}
By part (2) of Lemma \ref{lemma points in lagrangian variety Z resolution}, the resolutions of $f\in X^A$ are in one-to-one correspondence with fixed points in $Y$. In terms of tie diagrams, the correspondence sends the tie diagram of $\ol f\in \ol X$ to the tie diagram of $Y=\ttt{\fs$\w'$\fs$\w'+ \w''$\bs\dots \bs 3\bs2\bs1\bs}$ (resp. $Y=\ttt{\bs1\bs2\bs3\bs\dots\bs$\w'+\w''${\fs}$\w''${\fs}}$) obtained by erasing all the branes and ties not connected to $\Zb'$ or $\Zb''$. By slightly abusing notation, we will still denote the resulting fixed point in  $Y$ by $\ol f$.
\end{remark}

\begin{corollary}
    Let $f\in X^{\At}$ and $\ol f\in \ol X^{\At}$ be a resolution of $f$. Then  
    \[
    T_{\ol f}\ol X-T_fX= T_{\ol f} Y
    \]
    as virtual $\Tt$-representations.
\end{corollary}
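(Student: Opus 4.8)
The plan is to deduce this corollary directly from part (3) of Lemma \ref{lemma points in lagrangian variety Z resolution} by restricting the $K$-theory identity from the $\Cs_{\hbar}$-fixed locus $Y^{\hbar}$ to a single fixed point. Concretely, fix $f \in X^{\At}$ and a resolution $\ol f \in \ol X^{\At} \cap L$ with $p(\ol f) = f$. By Remark \ref{remark resolution of fixed points NS5 resolutions}, the resolution $\ol f$ corresponds to a fixed point of $Y$, which I will also call $\ol f$; and since $\ol f$ is isolated, $\{\ol f\}$ is a connected component of $Y^{\At}$, hence in particular of $Y^{\hbar}$ (as $\At$-fixed points of the twisted action of Remark \ref{remark torus action on fibers of NS5 resolutions} lie in $Y^{\hbar}$). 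So I would pull back the identity
\[
T\ol X|_{Y^{\hbar}} - TL|_{Y^{\hbar}} = TY|_{Y^{\hbar}} - TY^{\hbar} \in K_{\Tt}(Y^{\hbar})
\]
along the inclusion $\{\ol f\} \hookrightarrow Y^{\hbar}$.

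The left-hand side pulls back to $T_{\ol f}\ol X - T_{\ol f}L$. Using the pullback square in part (1) of Lemma \ref{lemma points in lagrangian variety Z resolution}, $Y^{\hbar} = L \times_X \{f\}$, so $T_{\ol f}L = T_{\ol f}Y^{\hbar} + T_f X$ as $\Tt$-representations (the fiber direction plus the base direction, the base being the point $f$). The right-hand side pulls back to $T_{\ol f}Y - T_{\ol f}Y^{\hbar}$. Equating and cancelling the common term $T_{\ol f}Y^{\hbar}$ gives
\[
T_{\ol f}\ol X - T_{\ol f}Y^{\hbar} - T_f X = T_{\ol f}Y - T_{\ol f}Y^{\hbar},
\]
hence $T_{\ol f}\ol X - T_f X = T_{\ol f}Y$, which is the claim. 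One should note that $Y = T^*Y^{\hbar}$ (as stated in Remark \ref{remark torus action on fibers of NS5 resolutions}), so all these tangent spaces are genuine $\Tt$-representations and the manipulations take place in $K_{\Tt}(\pt)$, the representation ring.

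The main point requiring care — and the only genuine obstacle — is the identification of $T_{\ol f}L$ inside the restricted identity, i.e. making sure the decomposition $T_{\ol f}L = T_{\ol f}Y^{\hbar} + T_f X$ is correct as $\Tt$-modules and not merely up to the twist described in Remark \ref{remark torus action on fibers of NS5 resolutions}. Since the square in part (1) of Lemma \ref{lemma points in lagrangian variety Z resolution} is genuinely Cartesian and $p$ is smooth along the fiber over $f$ (the fibers are Grassmannians by Proposition \ref{proposition NS5 resolution bow varieites}), the normal/tangent sequence splits $\Tt$-equivariantly at the fixed point, giving exactly $T_{\ol f}L = T_{\ol f}(Y^{\hbar}) \oplus T_f X$; the twist of Remark \ref{remark torus action on fibers of NS5 resolutions} is precisely what ensures $Y^{\hbar}$ carries the correct $\Tt$-action so that this splitting is $\Tt$-equivariant. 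Everything else is bookkeeping in the representation ring.
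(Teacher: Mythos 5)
Your proof is correct and is essentially the same argument as the paper's: both use part (3) of Lemma \ref{lemma points in lagrangian variety Z resolution} to rewrite $T_{\ol f}\ol X - T_{\ol f}L$ and part (1) to identify $T_{\ol f}L - T_fX$ with $T_{\ol f}Y^{\hbar}$, then cancel. (One immaterial slip: $\{\ol f\}$ is a connected component of $Y^{\At}$ but not of $Y^{\hbar}$, which is connected; you only need the inclusion $\{\ol f\}\hookrightarrow Y^{\hbar}$ to restrict the class, so nothing is affected.)
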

\label{cor: different tangents NS5}
\begin{proof}
    We compute 
    \begin{align*}
         T_{\ol f} \ol X-T_fX 
         &= T_{\ol f} \ol X - T_{\ol f}L +T_{\ol f }L -T_fX 
         \\
         &= T_{\ol f } Y  -T_{\ol f }Y^{\hbar} + T_{\ol f }L -T_fX 
         \\
         &=T_{\ol f } Y  -T_{\ol f }Y^{\hbar} +  T_{\ol f }Y^{\hbar}
         \\
         &=T_{\ol f } Y.
    \end{align*}
    In the second step we used the third point Lemma \ref{lemma points in lagrangian variety Z resolution} and in the third step the first point.
\end{proof}

Recall the definition of full attracting set from Remark \ref{rem: full att set}.

\begin{lemma}
\label{lemma supports NS5 resolutions}
Let $f\in X^{\At}$ and let $\ol f\in \overline X^{\At}$ be any resolution of the fixed point $f$. Then $\ol f'\not\in \Attfull{C}^{\overline X}(\bar f)$ whenever $\ol f'\in (\ol X\setminus L)^{\At}$.
\end{lemma}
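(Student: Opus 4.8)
The plan is to prove this by combining the description of the full attracting set on $\ol X$ with the structure of the map $p:L\to X$ and the cartesian square in Proposition \ref{proposition NS5 resolution bow varieites}. First I would recall that the full attracting set $\Attfull{\chamb}^{\ol X}(\ol f)$ is the union over all chains of pairs connected by closures of one-dimensional attracting orbits for the $\At$-action; in particular any point $\ol f'$ lying in it is connected to $\ol f$ by such a chain. Since the composite $\ol X\to\ol X_0$ is proper and the left square of Proposition \ref{proposition NS5 resolution bow varieites} is cartesian, a chain of attracting orbits on $\ol X$ starting at $\ol f\in L$ projects, under $\bar\pi$, to a chain on $\ol X_0$ that lands in the image of $X_0$; hence the chain stays in the preimage $\bar\pi^{-1}(\text{im}\, X_0)$. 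The key point will be to identify this preimage with $L$, i.e. to show $L=\bar\pi^{-1}(X_0)$ inside $\ol X$, so that the entire chain — and thus any $\ol f'$ connected to $\ol f$ — must remain inside $L$.

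The main step is therefore the identification $L = \bar\pi^{-1}(X_0\hookrightarrow \ol X_0)$, equivalently that $L$ is $\At$-invariant and closed under the closure-of-attracting-orbit relation starting from points of $L$. One way to see this: the cartesian square in Proposition \ref{proposition NS5 resolution bow varieites} gives $L = \ol X\times_{\ol X_0} X_0$, and since $X_0\hookrightarrow \ol X_0$ is a closed $\At$-equivariant embedding of affine varieties, $L$ is a closed $\At$-invariant subvariety of $\ol X$ whose intersection with any affine chart mapping to $\ol X_0$ is cut out by pullback of the ideal of $X_0$. Now if $x\in L$ and $y$ lies in the closure of the attracting orbit $\overline{\Att_\chamb(x)}$, then $\bar\pi(y)\in \overline{\Att_\chamb(\bar\pi(x))}$; but $\bar\pi(x)\in X_0$ and $X_0$ is a closed $\At$-invariant (indeed affine, so attracting-orbit-closed) subvariety of $\ol X_0$, so $\bar\pi(y)\in X_0$ and hence $y\in L$. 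Iterating along the chain, $\Attfull{\chamb}^{\ol X}(\ol f)\subseteq L$ whenever $\ol f\in L$. Consequently $\ol f'\in (\ol X\setminus L)^{\At}$ cannot lie in $\Attfull{\chamb}^{\ol X}(\ol f)$, which is exactly the claim.

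The step I expect to be the main obstacle is verifying cleanly that the closure of an attracting orbit (and more generally the full attracting set, which involves iterated chains, not a single orbit) of a point of $X_0\subset\ol X_0$ stays inside $X_0$ — i.e. that a \emph{closed} $\At$-invariant subvariety of an affine $\At$-variety is closed under the full-attracting-set relation. For affine varieties with a $\Cs$-action this is standard: if $\lim_{t\to 0}\lambda(t)\cdot v$ exists it lies in the closed invariant set containing $v$, and iterating chains preserves this; but one must make sure the relevant limits are taken with respect to cocharacters in the chamber $\chamb$ used to define $\Attfull{\chamb}^{\ol X}$, and that $\bar\pi$ is genuinely $\At$-equivariant with $X_0\hookrightarrow\ol X_0$ being $\At$-equivariant, both of which follow from Proposition \ref{proposition NS5 resolution bow varieites} and the naturality of the affinization maps in \S\ref{sec:Affinization}. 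Once that bookkeeping is in place, the argument is essentially formal, and Lemma \ref{lemma points in lagrangian variety Z resolution} (identifying the fiber of $p$ over $f$ with $Y^\hbar$) is not even needed for this particular statement, though it reassures us that the resolutions $\ol f$ of $f$ indeed all lie in $L$.
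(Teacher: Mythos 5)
Your proposal is correct and follows essentially the same route as the paper: both rest on the Cartesian square identifying $L=\bar\pi^{-1}(X_0)$ together with the affineness of $\ol X_0$, which forces any chain of complete equivariant curves starting at $\ol f\in L$ to map constantly into $X_0$ and hence to stay inside $L$. The paper phrases this more tersely as a contradiction — a connecting curve $\mathbb{P}^1\to\ol X$ would have non-constant composition to the affine $\ol X_0$ — but the underlying argument is the one you give.
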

\begin{proof}
    Assume the converse. Then there exists a equivariant curve $\mathbb{P}^1\to \ol X$ connecting $\ol f$ to $\ol f'$. But $\ol f\in L$ and $\ol f'\in \ol X\setminus L$, so the Cartesian diagram in Proposition \ref{proposition NS5 resolution bow varieites} implies that the composition $\mathbb{P}^1\to \ol X\to \ol X_0$ is non-constant, which cannot be because $\ol X_0$ is affine.
\end{proof}
\begin{corollary}\label{cor: NS5 stab restrictions}
Let $f\in X^{\At}$ and let $\ol f\in \ol X^{\At}$ be any resolution of $f$. Then $\Stab(\ol f)|_{\ol g}=0$ for all $\ol g\in (\ol X\setminus L)^A$.
\end{corollary}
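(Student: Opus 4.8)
The plan is to deduce Corollary~\ref{cor: NS5 stab restrictions} directly from Lemma~\ref{lemma supports NS5 resolutions} together with the support axiom for elliptic stable envelopes. Recall from Remark~\ref{rem: full att set} that the stable envelope $\Stab(\ol f)$ is supported on the full attracting set $\Attfull{C}^{\ol X}(\bar f)$; more precisely, the axioms in Theorem~\ref{thm: definition elliptic stab} together with the refinement of \cite[\S2.5]{okounkov2020inductiveI} tell us that the restriction of $\Stab(\ol f)$ to $\bar f \times (\ol X \setminus \Attfull{C}^{\ol X}(\bar f))$ vanishes.

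The key step is then simply to observe that if $\ol g \in (\ol X \setminus L)^{\At}$, then by Lemma~\ref{lemma supports NS5 resolutions} we have $\ol g \notin \Attfull{C}^{\ol X}(\bar f)$, so $\ol g$ lies in the open complement $\ol X \setminus \Attfull{C}^{\ol X}(\bar f)$. Restricting the support statement above to the single fixed point $\ol g$ gives $\Stab(\ol f)|_{\ol g} = 0$, which is exactly the claim. One small point to be careful about: the support axiom is phrased in terms of $\Att^{\leq}_{\chamb}(\bar f)$ rather than the full attracting set, so I would invoke the stronger statement (Remark~\ref{rem: full att set}, citing \cite{okounkov2020inductiveI}) that stable envelopes are in fact supported on $\Attfull{C}(\bar f)$, which is what makes the argument go through, since $\Att^{\leq}_{\chamb}(\bar f) \subseteq \Attfull{C}(\bar f)$ and we need vanishing off the \emph{larger} set.

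The main (minor) obstacle is bookkeeping: ensuring that the notion of "full attracting set" and the containment $\ol f' \notin \Attfull{C}^{\ol X}(\bar f)$ from Lemma~\ref{lemma supports NS5 resolutions} is literally the same object appearing in the support statement, and that fixed-point restriction of a section supported on a closed set $S$ to a fixed point not in $S$ is zero — this last fact is immediate since the restriction map factors through sections on a neighborhood of $\ol g$ disjoint from $S$. No serious calculation is needed. Concretely, the proof reads:

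\begin{proof}
    By Remark~\ref{rem: full att set}, the stable envelope $\Stab(\ol f)$ is supported on the full attracting set $\Attfull{C}^{\ol X}(\bar f)$; in particular its restriction to $\bar f \times \big(\ol X \setminus \Attfull{C}^{\ol X}(\bar f)\big)$ is zero. Let $\ol g \in (\ol X \setminus L)^{\At}$. By Lemma~\ref{lemma supports NS5 resolutions}, $\ol g \notin \Attfull{C}^{\ol X}(\bar f)$, so $\ol g$ lies in the open subset $\ol X \setminus \Attfull{C}^{\ol X}(\bar f)$. Restricting to the fixed point $\ol g$ therefore yields $\Stab(\ol f)|_{\ol g} = 0$.
\end{proof}
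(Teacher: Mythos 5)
Your proof is correct and is exactly the argument the paper leaves implicit: Corollary \ref{cor: NS5 stab restrictions} follows from Lemma \ref{lemma supports NS5 resolutions} combined with the refined support statement of Remark \ref{rem: full att set} (that $\Stab(\ol f)$ is supported on $\Attfull{C}(\ol f)$, not merely on $\Att^{\leq}_{\chamb}(\ol f)$), which is precisely what your proof invokes. One small slip in your discussion paragraph: you write the containment as $\Att^{\leq}_{\chamb}(\ol f)\subseteq \Attfull{C}(\ol f)$, whereas the paper's remark states the reverse inclusion $\Attfull{C}(\ol f)\subseteq \Att^{\leq}_{\chamb}(\ol f)$ --- and it is exactly because $\Attfull{C}(\ol f)$ is the \emph{smaller} set that the refined support statement is genuinely needed here, as your actual proof correctly uses.
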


\subsection{NS5 resolutions of stable envelopes}
\label{sub: NS5 resolutions of stable envelopes}

Fix a separated or co-separated brane diagram $\D$ and let $X=X(\D)$ be the associated bow variety. By Lemma \ref{lemma points in lagrangian variety Z resolution}, an arbitrary fixed point $f\in X^{\At}$ admits exactly $\binom{\w'+\w''}{\w'}$ possible resolutions $\ol f\in \overline X^{\At}$. Among these, we denote by $\ol f_\sharp$ the smallest one with respect to the order determined by the chamber $\chamb$. In this section, we relate the stable envelopes $\Stab^{X}_{\chamb}(f)$ and $\Stab^{\ol X}_{\chamb}(\ol f_\sharp)$ via the correspondence \eqref{eq: NS5 correspondence}.

Consider the morphism between the K\"ahler tori $\psi: Z\to \overline Z$ that is the identity on most components except
\[
    \Cs_{\Zb} \times \Cs_{\hbar}  \to  \Cs_{\Zb'} \times  \Cs_{\Zb''} \times \Cs_{\hbar} \qquad (z,\hbar)\mapsto \begin{cases}
        (z, z\hbar^{-\w'}, \hbar ) & \text{if $\D$ is separated}\\
        (z\hbar^{\w''}, z, \hbar ) & \text{if $\D$ is co-separated.}
    \end{cases}
\]
Following our convention, we denote the induced embedding $\psi: \Ell_{Z}\to \Ell_{\overline Z}$ between equivariant elliptic cohomology of a point in the same way. The next theorem relates the stable envelopes of $X$ and $\ol X$.

\begin{theorem}[{{\cite[Theorem 7.9]{BR}}}]
\label{main theorem proof NS5 resolution for stabs}

Let $f\in X^{\At}$ and let $\ol f_\sharp\in p^{-1}(f)^{\At}$ be the minimal fixed point with respect to the attracting order determined by $\chamb$. Then the following formula holds
\[
p_{\oast} \left(j^{\oast}\psi^{\oast} \Stab^{\ol X}_{\chamb}(\ol f_\sharp)\right)=\Stab_{\chamb}(f).
\]
\end{theorem}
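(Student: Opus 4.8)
The plan is to compare the two sides of the identity
\[
p_{\oast}\left(j^{\oast}\psi^{\oast}\Stab^{\ol X}_{\chamb}(\ol f_\sharp)\right) \overset{?}{=} \Stab_{\chamb}(f)
\]
by invoking the uniqueness part of Theorem \ref{thm: definition elliptic stab}: it suffices to check that the left hand side is a section of the correct line bundle on $\Ell_{\Tt\times Z}(F\times X)$, that it is holomorphic away from the resonant locus, and that it satisfies the diagonal and support axioms for the chamber $\chamb$ and the fixed point $f$. The first step is bookkeeping on line bundles. Using the Cartesian square of Proposition \ref{proposition NS5 resolution bow varieites}, Corollary \ref{cor: different tangents NS5} (which gives $T_{\ol f}\ol X - T_f X = T_{\ol f}Y$ and, more globally, the third point of Lemma \ref{lemma points in lagrangian variety Z resolution}), and the definitions of $\Sh{L}_X$, $\Sh{L}_{\ol X}$, $\psi$, I would verify that $p_{\oast}j^{\oast}\psi^{\oast}$ sends $(\Sh{L}^{\ol X}_{A,\ol f_\sharp})^{\triangledown}\boxtimes \Sh{L}_{\ol X}(\infty\Delta)$ to $(\Sh{L}^{X}_{A,f})^{\triangledown}\boxtimes \Sh{L}_X(\infty\Delta)$. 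The key point is that the shift built into $\psi$ on the K\"ahler parameters $z_{\Zb'},z_{\Zb''}$ by powers of $\hbar$ is precisely calibrated so that the $\Sh{U}$-bundle of $\ol X$ pulls back to that of $X$ after the pushforward along the Grassmannian fiber absorbs the extra factors coming from $\Theta(\alpha)$ versus $\Theta(\alpha_{\ol X})$; this is where Remark \ref{remark torus action on fibers of NS5 resolutions} and the explicit twist $a_i\mapsto a_i\hbar^{\mp\gamma_i}$ enter.

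Second, I would check the support axiom. The left hand side is, by construction, supported on $p(\Att^{\ol X}_{\chamb}(\ol f_\sharp)\cap L)$, and I need this to be contained in $\Att^{X,\leq}_{\chamb}(f)$. This follows from the fact that $p$ is $\Tt$-equivariant and proper, so it sends attracting sets into attracting sets and respects the Bruhat order; combined with Corollary \ref{cor: NS5 stab restrictions} (which kills the contributions of fixed points outside $L$), the convolution stays supported where it should. Third, the diagonal axiom: I need to show that the restriction of $p_{\oast}j^{\oast}\psi^{\oast}\Stab^{\ol X}_{\chamb}(\ol f_\sharp)$ to $F\times X^{\geq f}$ equals the class $[\Att_{\chamb}(f)]$. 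Over the locus where $f$ is minimal, $p$ restricted to $p^{-1}(\Att^X_{\chamb}(f))$ is the Grassmann bundle and $\ol f_\sharp$ is the minimal section; the pushforward of the fundamental class of the attracting set of $\ol f_\sharp$ along this fibration, together with the normalization built into the definition $\Sh{L}^{\triangledown}$ via the Thom class $\Theta(TX - \ev^* T\ol X)$, reproduces $[\Att_{\chamb}(f)]$ exactly because the fiberwise integral of $1$ over $\Gr(\w',\w'+\w'')$ against the relative Thom class is $1$. Here the choice of $\ol f_\sharp$ as the \emph{minimal} resolution is essential: a non-minimal choice would contribute extra terms and spoil the normalization.

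**The main obstacle** will be the line-bundle matching in the first step, specifically keeping track of the interplay between the $\hbar$-shifts in $\psi$ (on the K\"ahler side) and $\varphi$-type shifts on the equivariant side of Remark \ref{remark torus action on fibers of NS5 resolutions}, while simultaneously accounting for the correction coming from $\Theta(T\ol X/L)$ in the definition of $j^{\oast}$ and of $\Theta(TL/X)$ in $p_{\oast}$. A clean way to organize this is to factor the argument through the fiber $Y$: by Lemma \ref{lemma points in lagrangian variety Z resolution}(1), $Y^{\hbar}=p^{-1}(f)$ and $Y=T^*Y^{\hbar}$, so the fiberwise computation reduces to a statement about stable envelopes of $Y$ itself (essentially a cotangent bundle of a Grassmannian with a twisted torus action), where the minimal fixed point has trivial stable envelope restricted appropriately. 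Once the line bundles are matched and the two axioms verified, uniqueness closes the argument. For the general (not separated/co-separated) case one then invokes Hanany-Witten invariance of stable envelopes from Theorem \ref{thm: stab exists}, reducing to the separated case as in the proof of Proposition \ref{prop: duality stable envelopes}.
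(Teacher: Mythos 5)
The paper does not actually prove this statement: it is imported verbatim as \cite[Theorem 7.9]{BR}, so there is no internal proof to compare against. Your strategy — characterize the left-hand side by the uniqueness clause of Theorem \ref{thm: definition elliptic stab}, i.e.\ match the line bundles and verify the support and diagonal axioms — is the natural route and is, structurally, what \cite{BR} does. Your handling of the support axiom (properness and equivariance of $p$, plus Lemma \ref{lemma supports NS5 resolutions} / Corollary \ref{cor: NS5 stab restrictions} to confine the full attracting set to $L$) and the reduction of the fiberwise analysis to the auxiliary variety $Y$ are both sound, as is the final reduction to the separated case by Hanany--Witten invariance.

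There are two genuine gaps. First, you never address whether $\psi^{\oast}\Stab^{\ol X}_{\chamb}(\ol f_\sharp)$ is defined at all: the stable envelope of $\ol X$ is only a \emph{meromorphic} section, with poles along divisors of the form $\{z_i/z_j\hbar^{\alpha}=1\}$, and the image of $\psi$ is precisely such a divisor ($z'/z''\hbar^{\w'}=1$ in the separated case). The paper explicitly flags the absence of a pole along this locus as ``part of the theorem'' (see the Remark immediately following it, which points to \cite[Lemma 7.11]{BR}); without this input the left-hand side of the identity is not even a well-defined section, so this step cannot be omitted. Second, your verification of the diagonal axiom is not correct as stated: the restriction of the convolution to $(f,f)$ is computed by localization on the Grassmannian fiber $p^{-1}(f)\cong Y^{\hbar}$ and is therefore a sum over \emph{all} $\binom{\w'+\w''}{\w'}$ resolutions $\ol g$ of $f$, weighted by $\vartheta(N^-_{\ol g/Y})/\vartheta(N_{\ol g/Y^{\hbar}})$ times $\Stab^{\ol X}_{\chamb}(\ol f_\sharp)|_{\ol g}$ — exactly the right-hand side of Proposition \ref{proposition separated NS5 fusion ratios stabs} at $g=f$. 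Minimality of $\ol f_\sharp$ gives you triangularity of the matrix of restrictions, not vanishing of the off-diagonal contributions, since every other resolution $\ol g$ of $f$ satisfies $\ol g>\ol f_\sharp$ and so lies in $\ol X^{\geq \ol f_\sharp}$. The collapse of this sum to the single normalization $\Theta(N^-_{f/X})$ is a nontrivial theta-function identity that holds \emph{only} at the specific K\"ahler specialization built into $\psi$; for generic $z'/z''$ it is false. So ``the fiberwise integral of $1$ over $\Gr(\w',\w'+\w'')$ against the relative Thom class is $1$'' is not the computation that is actually required, and this is where the real work of the theorem lives.
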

 Here, the subscripts and superscripts refer to pushforward and pullback in elliptic cohomology as in \S\ref{ell functoriality}. See also \cite[\S4]{BR} for details.

\begin{remark}
   Note that the image of $\psi$ is the divisor $\{z'/z''\hbar^{ \w'}=1\}$ (or $\{z'/z''\hbar^{ -\w''}=1\}$ in the co-separated case). On the other hand, the stable envelopes $\Stab_{\chamb}(\ol f)$ are meromorphic sections with poles on $\{z_i/z_j\hbar^{\alpha}=1\}$ for certain $\alpha$. The fact that the class $\Stab^{\ol X}_{\chamb}(\ol f_\sharp)$ does not pick a pole upon pullback by $\psi^{\oast}$ is part of the theorem.
For details, see \cite[Lemma 7.11]{BR}. 
\end{remark}

\begin{remark}
     If $\mathfrak{C}$ is the standard chamber $\mathfrak{C}=\lbrace a_1<a_2<\dots<a_n\rbrace$, then the fixed point $\ol f_\sharp$ appearing in the formulas above
    can be described as in Figure \ref{fig:NS5 resolution fixed points}.
\begin{figure}
\centering
\begin{tikzpicture}[scale=.3]
\draw [thick] (0,1) node[left]{$f=$} --(7,1);  
\draw [thick, red] (3,0) node [left]{$z$} -- (4,2);
\draw [dashed](4,2) to [out=60,in=180] (7,4.6) ;
\draw [dashed](4,2) to [out=60,in=180] (7,4.8) ;
\draw [dashed](4,2) to [out=60,in=180] (7,5) node [right]{$ \Big\rbrace \w'+\w'' $};
\draw [dashed](4,2) to [out=60,in=180] (7,5.2);
\draw [dashed](4,2) to [out=60,in=180] (7,5.4);
\draw[ultra thick, <->] (9.5,1)--(12.5,1) node[above]{separated} node[below]{NS5 res.} -- (15.5,1);
\begin{scope}[xshift=22cm]
\draw [thick] (0,1) node[left]{$\ol f_\sharp=$} --(8,1);  
\draw [thick, red] (2,0) node [left]{$z'$} -- (3,2);
\draw[thick, red] (5,0) node [left]{$z''$} --(6,2);
\draw [dashed](3,2) to [out=60,in=180] (8,3.4) ;
\draw [dashed](3,2) to [out=60,in=180] (8,3.6) node [right] {$\rbrace \w'$} ;
\draw [dashed](3,2) to [out=60,in=180] (8,3.8) ;
\draw [dashed](6,2) to [out=60,in=180] (8,5.6) node [right]{$\rbrace \w''$};
\draw [dashed](6,2) to [out=60,in=180] (8,5.8);
\end{scope}

\draw [thick] (0,-4) node[left]{$f=$} --(7,-4);
\draw [thick, red] (3,-5) node [left]{$z$} -- (4,-3);
\draw [dashed](3,-5) to [out=-120,in=0] (0,-7.6) ;
\draw [dashed](3,-5) to [out=-120,in=0] (0,-7.8) ;
\draw [dashed](3,-5) to [out=-120,in=0] (0,-8)  node [left]{$\w'+\w'' \Big\lbrace$};;
\draw [dashed](3,-5) to [out=-120,in=0] (0,-8.2);
\draw [dashed](3,-5) to [out=-120,in=0] (0,-8.4);
\draw[ultra thick, <->] (9.5,-4)--(12.5,-4) node[above]{co-separated} node[below]{NS5 res.} -- (15.5,-4);

\begin{scope}[xshift=22cm]
\draw [thick] (0,-4) node[left]{$\ol f_\sharp=$} --(8,-4);  
\draw [thick, red] (2,-5) node [right]{$z'$} -- (3,-3);
\draw[thick, red] (5,-5) node [right]{$z''$} --(6,-3);
\draw [dashed](2,-5) to [out=-120,in=0] (0,-7) ;
\draw [dashed](2,-5) to [out=-120,in=0] (0,-7.2) node [left] {$\w' \lbrace$} ;
\draw [dashed](2,-5) to [out=-120,in=0] (0,-7.4) ;
\draw [dashed](5,-5) to [out=-120,in=0] (0,-9.6) node [left]{$ \w'' \lbrace$};
\draw [dashed](5,-5) to [out=-120,in=0] (0,-9.8);
\end{scope}
\end{tikzpicture}
\caption{}
\label{fig:NS5 resolution fixed points}
\end{figure}
Namely, if $X$ is separated (resp. co-separated), then the tie diagram of $\ol f_\sharp$ is obtained from the one of $f$ by making all the ties connected to the resolving branes $\Zb'$ and $\Zb''$ cross (resp. by avoiding all the crosses).

\end{remark}

Localizing at fixed points and using the localization formula for the pushforward, Theorem \ref{main theorem proof NS5 resolution for stabs} produces explicit formulas relating the fixed point restrictions of the stable envelopes of $X$ and $\overline X$. For clarity, we assume that the resolved brane $\Zb=\Zb_k$ is the $k$-th one, counting from left to right. 

\begin{proposition}[{\cite[Prop. 7.13]{BR}}]
\label{proposition separated NS5 fusion ratios stabs}
Assume that $X$ is separated. Then
\begin{multline*}
  \frac{\Stab^{X}_{\chamb}(f)\Big|_g}{\Stab^{X}_{\chamb}(g)\Big|_g}(a, z, \hbar) \\
  = \sum_{\ol g\in Y^{\At}} \frac{\vartheta(N_{\ol g/Y}^-)}{\vartheta(N_{\ol g/Y^{\hbar}})} (a\hbar^{-\gamma(g)}) 
  \frac{\Stab^{\overline X}_{\chamb}(\ol f_{\sharp})\Big|_{\ol g}}{\Stab^{\overline X}_{\chamb}(\ol g)\Big|_{\ol g}}
  (a, z_1,\dots,z'_{k}=z_k, z''_k=z_k\hbar^{-\w'}, \dots, z_m, \hbar).
\end{multline*}
Here $Y=\ttt{\fs$\w'$\fs$\w'+ \w''$\bs\dots \bs 3\bs2\bs1\bs}$ and $N_{\ol g/Y^{\hbar}}$ is the restriction to $\ol g$ of the tangent class 
$TY^{\hbar}\in K_{\Tt}(Y^{\hbar})$. The multi-index $\gamma(g)$ indicates the shift introduced in Remark \ref{remark torus action on fibers of NS5 resolutions}.
\end{proposition}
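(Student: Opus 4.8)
The plan is to obtain the formula by restricting the identity of Theorem~\ref{main theorem proof NS5 resolution for stabs} to a torus fixed point $g\in X^{\At}$ and expanding the elliptic pushforward $p_{\oast}$ by localization. Restricting $p_{\oast}\!\left(j^{\oast}\psi^{\oast}\Stab^{\ol X}_{\chamb}(\ol f_\sharp)\right)=\Stab_{\chamb}(f)$ along $\{g\}\hookrightarrow X$, the left hand side becomes a sum over the fixed points of the fiber $p^{-1}(g)$. By part~(1) of Lemma~\ref{lemma points in lagrangian variety Z resolution}, $p^{-1}(g)\cong Y^{\hbar}$, so $p^{-1}(g)^{\At}$ is identified with $Y^{\At}$, equivalently (Remark~\ref{remark resolution of fixed points NS5 resolutions}) with the set of resolutions $\ol g\in\ol X^{\At}$ of $g$; only such $\ol g$ contribute since $\Stab^{\ol X}_{\chamb}(\ol f_\sharp)$ vanishes off $L$ by Corollary~\ref{cor: NS5 stab restrictions}.

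The heart of the argument is to show that the localization weight attached to each $\ol g$ is exactly $\vartheta(N^-_{\ol g/Y})/\vartheta(N_{\ol g/Y^{\hbar}})$ evaluated at $a\hbar^{-\gamma(g)}$. The denominator is the $\vartheta$-class of the normal bundle $N_{\ol g/Y^{\hbar}}$ of $\ol g$ inside the fiber $p^{-1}(g)\cong Y^{\hbar}$, which is what the localization formula for $p_{\oast}$ produces. For the numerator, I would combine part~(3) of Lemma~\ref{lemma points in lagrangian variety Z resolution} (or Corollary~\ref{cor: different tangents NS5}), which yields $T_{\ol g}L-T_gX=N_{\ol g/Y^{\hbar}}$ and $T_{\ol g}\ol X-T_gX=T_{\ol g}Y$, with the bookkeeping of the line bundles: $\Stab^{\ol X}_{\chamb}(\ol f_\sharp)$ is a section of $(\Sh{L}_{A,\ol f_\sharp})^{\triangledown}\boxtimes\Sh{L}_{\ol X}$, the $\triangledown$-duality of \S\ref{ell functoriality} contributes the Thom class $\Theta(T\ol X)$, and the attractive normalization of $\Sh{L}_{\ol X}$ (which matches $\Theta(N^-)$ on fixed loci, cf.~\eqref{defining equation attractive line bundle}) is what turns the full $\vartheta(T_{\ol g}Y)$ into its attracting half $\vartheta(N^-_{\ol g/Y})$ after cancelling against the part pulled back from $X$. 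The twist $a\mapsto a\hbar^{-\gamma(g)}$ appearing in the argument of the $\vartheta$-factor is precisely the nonstandard $\Tt$-action on $p^{-1}(g)\cong Y^{\hbar}$ recorded in Remark~\ref{remark torus action on fibers of NS5 resolutions}, while the substitution $z'_k=z_k$, $z''_k=z_k\hbar^{-\w'}$ is the definition of $\psi^{\oast}$ in the separated case.

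Finally, dividing through by $\Stab^{X}_{\chamb}(g)|_g$ gives the stated normalized form: using the diagonal axiom to express $\Stab^{X}_{\chamb}(g)|_g$ and $\Stab^{\ol X}_{\chamb}(\ol g)|_{\ol g}$ as $\vartheta$-classes of attracting normal bundles and comparing with $T_{\ol g}\ol X-T_gX=T_{\ol g}Y$ shows that the ratio $\Stab^{X}_{\chamb}(g)|_g/\Stab^{\ol X}_{\chamb}(\ol g)|_{\ol g}$ is absorbed, up to the already identified factor, into the summand. The step I expect to be the main obstacle is this last reconciliation of normalizations: keeping careful track of the three $\Theta$-line-bundle inputs (the Thom class of $p$, the attractive bundle $\Sh{L}_{\ol X}$, and the $\triangledown$-duality) so that their combination produces exactly $\vartheta(N^-_{\ol g/Y})/\vartheta(N_{\ol g/Y^{\hbar}})$ with the correct quasi-periods and equivariant shift. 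That $\psi^{\oast}$ does not hit a pole of $\Stab^{\ol X}_{\chamb}(\ol f_\sharp)$ is already part of Theorem~\ref{main theorem proof NS5 resolution for stabs} and so requires no separate work here.
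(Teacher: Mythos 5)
Your proposal matches the paper's approach: the proposition is quoted from \cite[Prop.\ 7.13]{BR} and, as the paper states immediately before it, is obtained precisely by restricting the identity of Theorem \ref{main theorem proof NS5 resolution for stabs} to a fixed point $g$ and applying the localization formula to the elliptic pushforward $p_{\oast}$, with the fiber $p^{-1}(g)\cong Y^{\hbar}$ carrying the twisted action of Remark \ref{remark torus action on fibers of NS5 resolutions}. Your accounting of where each factor comes from (the denominator from localization along the fiber, the numerator from the tangent-space comparison of Corollary \ref{cor: different tangents NS5} together with the attractive/$\triangledown$ normalizations, and the $\psi^{\oast}$ substitution) is the same bookkeeping carried out in \cite{BR}.
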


\begin{proposition}
\label{prop: co-separated NS5 fusion ratios stabs opp}
    Assume that $X$ is co-separated. Then
\begin{multline*}
  \frac{\Stab^{\opp, X}_{\chamb}(f)\Big|_g}{\Stab^{\opp, X}_{\chamb}(g)\Big|_g}(a, z, \hbar) \\
  = \sum_{\ol g\in Y^{\At}} \frac{\vartheta(N_{\ol g/Y}^-)}{\vartheta(N_{\ol g/Y^{\hbar}})} (a\hbar^{\gamma(g)}) 
  \frac{\Stab^{\opp, \ol X}_{\chamb}(\ol f_{\sharp})\Big|_{\ol g}}{\Stab^{\opp, \ol X}_{\chamb}(\ol g)\Big|_{\ol g}}
  (a, z_1,\dots,z'_{k}=z_k, z''_k=z_k\hbar^{\w'}, \dots, z_m, \hbar).
\end{multline*}
Here $Y=\ttt{\bs1\bs2\bs3\bs\dots\bs$\w'+\w''${\fs}$\w''${\fs}}$ and $N_{\ol g/Y^{\hbar}}$ is the restriction to $\ol g$ of the tangent class
$TY^{\hbar}\in K_{\Tt}(Y^{\hbar})$. The multi-index $\gamma(g)$ indicates the shift introduced in Remark \ref{remark torus action on fibers of NS5 resolutions}.
\begin{proof}
    Let $n$ be the number of $D5$ branes of both $X$ and $\ol X$. Then charge and weights are related by $r_i=n-\w_i$ for all $i$. The proof then follows such equality, \cite[Prop. 7.14]{BR}, and the fact that, as noted in Proposition \ref{prop: duality stable envelopes}, we have
    \[
    \Stab^{\opp, X}_{\chamb}(a,z,\hbar)=\Stab^{X}_{\chamb}(a,z^{-1}\hbar^r,\hbar).
    \]
\end{proof}
\end{proposition}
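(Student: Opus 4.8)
The plan is to deduce this from the corresponding statement for the ordinary (non-$\opp$) stable envelopes in the co-separated case, which is \cite[Prop. 7.14]{BR}, the co-separated analogue of Proposition \ref{proposition separated NS5 fusion ratios stabs}, and then to pass between the two normalizations using the duality of Proposition \ref{prop: duality stable envelopes}. The only extra input needed is a combinatorial identity special to the co-separated case: since both $X$ and $\ol X$ are co-separated with the same number $n$ of D5 branes, and every D5 brane lies to the left of every NS5 brane, one has $\ch(\Zb)=(d_{\Zb^+}-d_{\Zb^-})+n$ with $d_{\Zb^-}\geq d_{\Zb^+}$, hence $r_i=n-\w_i$ for all $i$.

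Concretely, I would first record \cite[Prop. 7.14]{BR}, which writes the fixed-point ratio $\Stab^{X}_{\chamb}(f)|_g/\Stab^{X}_{\chamb}(g)|_g$ for co-separated $X$ as a sum over $\ol g\in Y^{\At}$ of restricted stable envelopes of $\ol X$, with the K\"ahler variables attached to $\Zb$ specialized along the image of $\psi$ and with the weighting factor $\vartheta(N^-_{\ol g/Y})/\vartheta(N_{\ol g/Y^{\hbar}})$ evaluated using the equivariant shift $\gamma(g)$ of Remark \ref{remark torus action on fibers of NS5 resolutions}. I would then substitute the identity $\Stab^{\opp,X}_{\chamb}(a,z,\hbar)=\Stab^{X}_{\chamb}(a,z^{-1}\hbar^{r},\hbar)$ of Proposition \ref{prop: duality stable envelopes}, and the analogous one for $\ol X$, into both sides. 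At that point the argument becomes a bookkeeping check: that the monomial $z$-substitution appearing in \cite[Prop. 7.14]{BR}, composed with $z\mapsto z^{-1}\hbar^{r}$, collapses to the specialization $z'_k=z_k,\ z''_k=z_k\hbar^{\w'}$ stated in the proposition; that the argument of the theta factor passes from $a\hbar^{-\gamma(g)}$ to $a\hbar^{\gamma(g)}$; and that the local model $Y$ acquires its co-separated shape. Here the identity $r_i=n-\w_i$ is exactly what reconciles the $\hbar$-exponents, since $\psi$ carries powers of $\hbar^{\w'},\hbar^{\w''}$ while the duality carries $\hbar^{r}$. Hanany--Witten invariance of the relevant objects (Proposition \ref{prop: line bundle is attractive} and Theorem \ref{thm: stab exists}) then guarantees that everything matches on the nose.

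I expect the main obstacle to be precisely this bookkeeping --- keeping straight the three independent $\hbar$-shifts (the $\hbar^{r}$ of the duality, the $\hbar^{\w'}$ of $\psi$, and the shift $\gamma(g)$ buried in the theta factor) and verifying they assemble into the clean formula without stray sign or exponent errors. If one preferred a self-contained argument not relying on the exact form of \cite[Prop. 7.14]{BR}, one could instead redo the proof of \cite[Thm. 7.9]{BR} directly in the co-separated setting with the line bundle $\Sh{L}^{\opp}_X$ in place of $\Sh{L}_X$: this amounts to re-examining the Lagrangian correspondence $\ol X\leftarrow L\to X$ of Proposition \ref{proposition NS5 resolution bow varieites} in the co-separated case, where the roles of the maps $C$ and $D$ are exchanged to respect the stability condition, and re-running the localization computation for the pushforward $p_{\oast}j^{\oast}\psi^{\oast}$. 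This route is longer but conceptually parallel and avoids any dependence on the precise indexing of \cite{BR}.
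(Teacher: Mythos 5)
Your proposal is correct and follows essentially the same route as the paper: the paper's proof likewise combines \cite[Prop.~7.14]{BR} with the duality identity $\Stab^{\opp,X}_{\chamb}(a,z,\hbar)=\Stab^{X}_{\chamb}(a,z^{-1}\hbar^{r},\hbar)$ from Proposition \ref{prop: duality stable envelopes} and the co-separated relation $r_i=n-\w_i$, leaving the exponent bookkeeping you describe implicit. Your expanded discussion of how the three $\hbar$-shifts reconcile is a faithful elaboration of what the paper's one-line argument is actually checking.
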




\section{NS5 resolutions and vertex functions}\label{sec: NS5 and vertex}

Analogous to \S\ref{sec: d5 resolutions}, the main result in this section is Theorem \ref{thm: ns5vertex}, which expresses the compatibility of vertex functions with respect to NS5 resolutions. Despite the similarities with \S\ref{sec: d5 resolutions}, we warn the reader that Theorem \ref{thm: ns5vertex} is considerably more subtle. While the compatibility of vertex functions and D5 resolutions is essentially a term-by-term equality of formal power series, the NS5 property involves evaluating a vertex function at a point on the boundary of the radius of convergence. Hence it is really a statement about analytic continuations of meromorphic functions defined by power series. As such, we do not know a geometric proof of this formula.

Our main tool will be certain scalar $q$-difference equations solved by the vertex functions of bow varieties when all D5 branes have weight $1$. Such bow varieties are isomorphic to the cotangent bundle of a type $A$ partial flag variety.

\subsection{Macdonald equations}
\label{subsec: Macdonald equations}

Let $X$ be a separated or co-separated bow variety with NS5 branes $\Zb_{1},\ldots,\Zb_{m}$ and D5 branes $\Ab_{1},\ldots,\Ab_{n}$ such that $\w(\Ab_{i})=1$ for all $i$. If $X$ is separated, we view it as living on the ``starting side" of mirror symmetry. If $X$ is co-separated, we view it as living on the dual, or $!$, side. This simply means that we use the polarization $\hbar(\alpha+\beta)^{\vee}$ from \S\ref{sec: mirsym vertex normalization} in the separated case and $\alpha+\beta$ in the co-separated case. Nevertheless, to avoid cluttering notation, we will omit the $!$ over all parameters for coseparated $X$. 

Let $\tb_{i}$ be the tautological bundle between the NS5 branes $\Zb_{i}$ and $\Zb_{i+1}$ for $1 \leq i \leq m-1$. These are the tautological bundles which are not necessarily topologically trivial. Let $\lb_{i}=\det \tb_{i}$.

Let
\[
\macdop_{n}=\sum_{i=1}^{n} \left(\prod_{j \neq i} \frac{\hbar q a_{i}-a_{j}}{a_{i}-a_{j}}\right) T_{a_i},
\]
where $T_{a_{i}} a_{j}=q^{\delta_{i,j}} a_{j}$. This is the first Macdonald difference operator in the variables $a_{i}$, see \cite[\S6]{mac}.

For $f \in X^{\At}$, we set
\[
\tilde{\ver}^{X}_{f}:=\exppref_{f} \Phi_{f} V^{X}_{f}.
\]
where $\delta_f$ is the function
\begin{equation}\label{exppref}
\exppref_{f}:=
    \prod_{i=1}^{m-1} \frac{\vartheta((-\hbar^{-1/2})^{\w(\Zb_{i})+\w(\Zb_{i+1})} Q_{i}^{-1} \lb_{i}|_{f})}{\vartheta((-\hbar^{-1/2})^{\w(\Zb_{i})+\w(\Zb_{i+1})} Q_{i}^{-1}) \vartheta(\lb_{i}|_{f})},
\end{equation}
cf. \eqref{eq: delta function}, and $\Phi_{f}:=\Phi\left((q-\hbar^{-1}) \mathcal{P}|_{f}\right)$ is the infinite $q$-Pochhammer symbol associated to the class

\[
\mathcal{P}:=\bigoplus_{\Zb} \Hom(\tb_{\Zb^{-}},\tb_{\Zb^{+}}) \ominus \bigoplus_{\Zb} \Hom(\tb_{\Zb^{+}},\tb_{\Zb^{+}}).
\]

\begin{theorem}[\cite{KorZeit}]\label{thm: qdeflag}
The vertex functions satisfy the $q$-difference equation
 \[
\macdop_{n} \tilde{\ver}^{X}_{f}= e^{X}(Q)\tilde{\ver}^{X}_{f},
\]
   where
    \begin{multline*}
e^{X}(Q)= \\
\begin{cases}
    \sum\limits_{i=1}^{m} \left(\sum\limits_{j=0}^{\w(\Zb_{i})-1} (\hbar q)^{j} \right) (\hbar q)^{\sum\limits_{k=1}^{i-1} \w(\Zb_{k})} \left( \prod\limits_{l=i}^{m-1} \left(-\hbar^{1/2}\right)^{\w(\Zb_{l})+\w(\Zb_{l+1})} Q_{l} \right) & \text{if $X$ is separated} \\[5 mm]
    \sum\limits_{i=1}^{m} \left(\sum\limits_{j=0}^{\w(\Zb_{i})-1} (\hbar q)^{j} \right) (\hbar q)^{\sum\limits_{k=i+1}^{m} \w(\Zb_{k})} \left( \prod\limits_{l=1}^{i-1} \left(-\hbar^{1/2}\right)^{\w(\Zb_{l})+\w(\Zb_{l+1})} Q_{l} \right) & \text{if 
    $X$ is co-separated.}
\end{cases}
    \end{multline*}
\end{theorem}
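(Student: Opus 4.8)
The plan is to recognize this statement as the bow-variety rephrasing of the main $q$-difference result of \cite{KorZeit} and to carry it over via the appropriate dictionary. First I would record that a separated (resp. co-separated) bow variety all of whose D5 branes have weight $1$ is isomorphic, after Hanany--Witten moves, to $T^{*}\Fl$ of a partial flag variety in $\C^{n}$, with the NS5 weights $\w(\Zb_{i})$ recording the jumps of the dimension vector. Under this identification, and after matching the Macdonald parameters with $q$ and $t=\hbar q$, the localization formula of Proposition~\ref{prop:flagvertex} coincides --- up to the polarization-dependent normalization \eqref{symvss} fixed in \S\ref{sec: mirsym vertex normalization} --- with the $q$-hypergeometric expression for the $K$-theoretic vertex of $T^{*}\Fl$ appearing in \cite{KorZeit}. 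The separated and co-separated cases correspond to the two admissible polarizations and carry out in parallel; alternatively, since both sides of the claimed identity are compatible with Hanany--Witten transition and the $Q$-shift of Corollary~\ref{corcosepV}, it would suffice to treat one of them.

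Next I would unwind the dressing $\tilde{\ver}^{X}_{f}=\exppref_{f}\,\Phi_{f}\,\ver^{X}_{f}$. Since $\macdop_{n}$ shifts only the equivariant variables $a_{i}\mapsto q a_{i}$, I would compute how each factor transforms under this shift: $\exppref_{f}$ depends on $a$ solely through the monomials $\lb_{i}|_{f}=\det\tb_{i}|_{f}$, so \eqref{eq: quasi-period theta function} gives its transformation explicitly; and $\Phi_{f}=\Phi((q-\hbar^{-1})\mathcal{P}|_{f})$ depends on $a$ through the virtual character $\mathcal{P}|_{f}$, so the elementary relation $\Phi(qx)=\Phi(x)/(1-x)$ reduces its transformation to a finite product of linear factors in $a,\hbar$. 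Substituting these into $\macdop_{n}\tilde{\ver}^{X}_{f}$ turns the theorem into an identity for $\ver^{X}_{f}$ alone, in which the coefficients $\prod_{j\neq i}\frac{\hbar q a_{i}-a_{j}}{a_{i}-a_{j}}$ are twisted by the explicit rational functions just produced.

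The final step --- the substance of \cite{KorZeit} --- is to verify this twisted identity directly from the explicit sum of Proposition~\ref{prop:flagvertex}. The mechanism I expect is that the shift $a_{i}\mapsto q a_{i}$ re-indexes the summation variables $\theta$ by $\pm 1$ along the $i$-th chain of the butterfly diagram, and the rational coefficients $\prod_{j\neq i}\frac{\hbar q a_{i}-a_{j}}{a_{i}-a_{j}}$ are precisely the residues arranging the terms into a telescoping-type cancellation, so that only a boundary contribution survives; this boundary term is a monomial in the $Q_{i}$ and in $\hbar q$ summed over the NS5 branes, and it should reproduce $e^{X}(Q)$ term by term. I would organize this either through the quantum/classical ($\mathrm{tRS}\leftrightarrow q\mathrm{KZ}$) duality of \cite{KorZeit}, under which $e^{X}(Q)$ is the first trigonometric Ruijsenaars Hamiltonian evaluated on the Bethe roots, or by induction on $n$ with base cases $n=1$ (where $\ver^{X}_{f}\equiv 1$) and $n=2$ (a Heine contiguous relation for $_{2}\phi_{1}$). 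I expect the real difficulty --- and the part I would import from \cite{KorZeit} rather than redo --- to be the telescoping bookkeeping together with the convention-tracking: ensuring that the signs and the powers of $q$ and $\hbar$ in the surviving boundary term reproduce exactly the stated $e^{X}(Q)$, uniformly across the separated and co-separated cases.
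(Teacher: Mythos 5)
The paper gives no proof of this theorem: it is imported directly from \cite{KorZeit} (with a pointer to \cite{tamagninonabelian} for a geometric proof in the complete-flag case), so there is no internal argument to compare yours against, and your outline --- identify the weight-one-D5 bow variety with $T^{*}\Fl$, unwind the gauge factors $\exppref_{f}$ and $\Phi_{f}$ via the quasi-periodicity of $\vartheta$ and $\Phi(qx)=\Phi(x)/(1-x)$, and defer the resulting eigenfunction identity to \cite{KorZeit} --- is consistent with how the paper treats the statement (the same transformation computations appear explicitly in Lemma~\ref{lem: phitransform} and the lemma preceding it). The one caveat is your aside that the co-separated case could be deduced from the separated one via Hanany--Witten and Corollary~\ref{corcosepV}: that corollary assumes a single fixed polarization, whereas the theorem uses $\hbar(\alpha+\beta)^{\vee}$ in the separated case and $\alpha+\beta$ in the co-separated case, so the required $Q$-shift also absorbs a change of polarization and the prefactors $\exppref_{f}$, $\Phi_{f}$ take different forms in the two cases; your primary plan of treating the two cases in parallel is the safer route.
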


Notice that $e^{X}(Q)$ is the first elementary symmetric function in a certain set of variables. Koroteev and Zeitlin \cite{KorZeit} also prove that the vertex functions satisfy higher Macdonald difference equations, though we will not need this fact. In the case of complete flag varieties, a geometric proof of Theorem \ref{thm: qdeflag} using nonabelian shift operators and wall-crossing was given by Tamagni in \cite{tamagninonabelian}.

\subsection{Transformation properties}

We retain the assumptions on $X$ from \S\ref{subsec: Macdonald equations}. Recall from \S\ref{sub: vholo} that a fixed point $f \in X^{\At}$ is encoded by a function $\sigma_{f}:\{1,\ldots,n\}\to \{1,\ldots,m\}$. The $\Tt$-character of the restriction of the tautological bundle $\tb_{i}$ to the point $f$ is given by
\[
\tb_{i}|_{f}=\begin{cases} \hbar^{i-m}\sum\limits_{\substack{j \\ \sigma_{f}(j) \leq i}} a_{j} & \text{if $X$ is separated} \\[5 mm]
\sum\limits_{\substack{j \\ \sigma_{f}(j) >i}} \hbar a_{j}  & \text{if $X$ is co-separated,}
\end{cases}
\]
where the index $j$ in the sums sums above and below run over $\{1,2,\ldots,n\}$, the set of D5 branes. So the $\Tt$-character of $\lb_{i}$ is
\begin{equation}\label{lbchar}
\lb_{i}|_{f}=\begin{cases}
    \hbar^{(i-m) \rank(\tb_{i})}\prod\limits_{\substack{j \\ \sigma_{f}(j)\leq i }} a_{j} & \text{if $X$ is separated} \\[5 mm]
   \hbar^{\rank(\tb_{i})}\prod\limits_{\substack{j \\ \sigma_{f}(j)> i }} a_{j} & \text{if $X$ is co-separated.}
\end{cases}
\end{equation}

We easily compute the following.
\begin{lemma}
The following holds:
\[
\frac{T_{a_{i}}(\exppref_{f})}{ \exppref_{f}}= \begin{cases}
    \prod\limits_{l=\sigma_{f}(i)}^{m-1} (-\hbar^{1/2})^{\w(\Zb_{l}) +\w(\Zb_{l+1})} Q_{l} & \text{if $X$ is separated} \\[5 mm]
     \prod\limits_{l=1}^{\sigma_{f}(i)-1} (-\hbar^{1/2})^{\w(\Zb_{l}) +\w(\Zb_{l+1})} Q_{l}  & \text{if $X$ is co-separated.}
\end{cases}
\]

\end{lemma}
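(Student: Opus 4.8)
The plan is to compute the ratio directly from the explicit formula \eqref{exppref} for $\exppref_f$ and the character formula \eqref{lbchar} for $\lb_i|_f$, exploiting the fact that $\vartheta$ has the simple quasi-periodicity \eqref{eq: quasi-period theta function}.

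First, recall that $\exppref_f$ is a product over $i = 1, \dots, m-1$ of factors of the shape $\delta\bigl((-\hbar^{-1/2})^{\w(\Zb_i)+\w(\Zb_{i+1})} Q_i^{-1}, \lb_i|_f\bigr)$ in the notation of \eqref{eq: delta function}. The operator $T_{a_i}$ acts on these factors only through the dependence on $a_i$; the K\"ahler parameters $Q_l$ and $\hbar$ are untouched. So the effect of $T_{a_i}$ is entirely controlled by how $T_{a_i}$ changes $\lb_l|_f$. Using \eqref{lbchar}, in the separated case $T_{a_i}$ multiplies $\lb_l|_f$ by $q$ exactly when $\sigma_f(i) \le l$, i.e. for $l = \sigma_f(i), \sigma_f(i)+1, \dots, m-1$, and leaves $\lb_l|_f$ fixed otherwise; in the co-separated case $T_{a_i}$ multiplies $\lb_l|_f$ by $q$ exactly when $l < \sigma_f(i)$, i.e. for $l = 1, \dots, \sigma_f(i)-1$.

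Next, I would apply the quasi-period \eqref{eq: quasi-period theta function} with $k=1$ to each affected factor. In the $i$-th factor of $\exppref_f$, the numerator $\vartheta\bigl((-\hbar^{-1/2})^{\w(\Zb_l)+\w(\Zb_{l+1})} Q_l^{-1} \lb_l|_f\bigr)$ and the denominator $\vartheta(\lb_l|_f)$ each pick up a factor of $\vartheta(q x)/\vartheta(x) = -q^{-1/2} x^{-1}$ under $\lb_l|_f \mapsto q\,\lb_l|_f$; in the ratio the $\vartheta$-independent prefactors $(-q^{-1/2})$ cancel and one is left with the ratio of the arguments, namely $(-\hbar^{-1/2})^{-\w(\Zb_l)-\w(\Zb_{l+1})} Q_l = (-\hbar^{1/2})^{\w(\Zb_l)+\w(\Zb_{l+1})} Q_l$ after absorbing the sign into the even total exponent. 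Taking the product over the affected range of $l$ gives exactly $\prod_{l=\sigma_f(i)}^{m-1}(-\hbar^{1/2})^{\w(\Zb_l)+\w(\Zb_{l+1})} Q_l$ in the separated case, and $\prod_{l=1}^{\sigma_f(i)-1}(-\hbar^{1/2})^{\w(\Zb_l)+\w(\Zb_{l+1})} Q_l$ in the co-separated case, as claimed.

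This is genuinely a routine computation, so there is no real obstacle; the only points requiring a little care are bookkeeping the sign $(-1)$ coming from \eqref{eq: quasi-period theta function} — which disappears because $\w(\Zb_l)+\w(\Zb_{l+1})$ and the two $\vartheta$-factors contribute it an even number of times — and correctly reading off from \eqref{lbchar} which indices $l$ are affected by $T_{a_i}$, which is dictated by whether $\sigma_f(i) \le l$ (separated) or $l < \sigma_f(i)$ (co-separated). One should also note that $\exppref_f$ contains no factor indexed by $i$ beyond $m-1$, consistent with $\tb_m$ (the framing) being topologically trivial, so no extra terms arise.
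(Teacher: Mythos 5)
Your computation is correct and is exactly the argument the paper intends: its proof of this lemma is the one-line remark that the statement "follows directly from the definition of $\exppref_{f}$, \eqref{lbchar}, and \eqref{eq: quasi-period theta function}," and you have simply written out that routine check (the identity $(-\hbar^{-1/2})^{-w}=(-\hbar^{1/2})^{w}$ holds for any integer $w$ since $(-\hbar^{-1/2})^{-1}=-\hbar^{1/2}$, so no parity of the exponent is needed). Nothing further is required.
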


\begin{proof}
    This follows directly from the definition of $\exppref_{f}$, \eqref{lbchar}, and \eqref{eq: quasi-period theta function}.
\end{proof}

A straightforward computation shows that
\[
\mathcal{P}|_{f}=
\begin{cases}
  \sum\limits_{i=1}^{m-1} \sum\limits_{\substack{j,k \\ \sigma_{f}(j)\leq i \\ \sigma_{f}(k) \leq i}} \left(\hbar \frac{a_{k}}{a_{j}}- \frac{a_{k}}{a_{j}}\right) + \sum\limits_{1 \leq i < j \leq m} \sum\limits_{\substack{k,l \\ \sigma_{f}(k)=i \\ \sigma_{f}(l)=j}} \hbar \frac{a_{l}}{a_{k}}  & \text{if $X$ is separated} \\[5mm]
\sum\limits_{\substack{j,k\\ \sigma_{f}(j)< \sigma_{f}(k)}} \frac{a_{k}}{a_{j}} & \text{if $X$ is co-separated,}
\end{cases}
\]
and
\begin{equation}\label{eq: tan flag}
TX|_{f} = \begin{cases}
   \sum\limits_{\substack{k,l \\ \sigma_{f}(k)< \sigma_{f}(l)}} \left(\hbar \frac{a_l}{a_k} +\frac{a_k}{a_l}\right)   & \text{if $X$ is separated} \\[5 mm]
    \sum\limits_{\substack{k,l \\ \sigma_{f}(k)< \sigma_{f}(l)}} \left(\frac{a_l}{a_k} +\hbar  \frac{a_k}{a_l}\right) & \text{if $X$ is co-separated.}
\end{cases}
\end{equation}

Recall the chambers $\chamb_{f}$ from \S\ref{sub: vholo}. Let 
\begin{equation}\label{eq: phichamb}
\Phi_{\chamb_{f}}=\Phi((q-\hbar^{-1})N_{f}^{-,\chamb_{f}}),
\end{equation}
where $N_{f}^{-,\chamb_{f}}$ denotes the repelling part of the normal bundle at $f$ with respect to $\chamb_{f}$. The following lemma allows us to replace $\Phi_{f}$ in the $q$-difference equations by $\Phi_{\chamb_{f}}$. The latter has the advantage of being holomorphic in a neighborhood of $0_{\chamb_{f}}$ from \S\ref{sub: vholo}.

\begin{lemma}\label{lem: phitransform}
The following holds:
    \[
\frac{T_{a_{i}}(\Phi_{f})}{\Phi_{f}}= \frac{T_{a_{i}}(\Phi_{\chamb_{f}})}{\Phi_{\chamb_{f}}}.
    \]
\end{lemma}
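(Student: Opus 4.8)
The plan is to compute both sides explicitly as $q$-shift ratios of infinite $q$-Pochhammer symbols and show that the discrepancy between $\Phi_f$ and $\Phi_{\chamb_f}$ is a product over weights $w$ that are \emph{invariant} under the shift $T_{a_i}$, so that it cancels in the ratio. Recall $\Phi_f = \Phi((q-\hbar^{-1})\mathcal P|_f)$ and $\Phi_{\chamb_f} = \Phi((q-\hbar^{-1})N_f^{-,\chamb_f})$, where $\Phi(x+y)=\Phi(x)\Phi(y)$ and, for a ratio of characters $w$, $\Phi(q-\hbar^{-1})w = \Phi(qw)/\Phi(\hbar^{-1}w)$. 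The key structural input is the polarization identity $TX = T^{1/2}X + \hbar(T^{1/2}X)^\vee$ together with the attracting/repelling decomposition $TX|_f = N_f^{+,\chamb_f} + N_f^{-,\chamb_f}$ with $N_f^{+} = \hbar (N_f^{-})^\vee$, valid since $\hbar$ acts trivially on $\At$ so the chamber decomposition is compatible with the symplectic duality. Using the explicit formula \eqref{eq: tan flag} for $TX|_f$, one has (in the separated case) $N_f^{-,\chamb_f}$ equal to the sum of those weights $a_l/a_k$ or $\hbar a_l/a_k$ among $\{\hbar a_l/a_k + a_k/a_l : \sigma_f(k)<\sigma_f(l)\}$ that are repelling for $\chamb_f$; since $\chamb_f$ refines the order $<_f$, exactly one of each conjugate pair $\{\hbar a_l/a_k,\, a_k/a_l\}$ is repelling.

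First I would write out $\mathcal P|_f$ from the displayed formula and compare it with $N_f^{-,\chamb_f}$. The claim I expect to verify is that $\mathcal P|_f$ and $N_f^{-,\chamb_f}$ differ, as virtual characters, only by a sum of weights of the form $a_k/a_j$ with $\sigma_f(j)=\sigma_f(k)$ or more generally weights that do not involve $a_i$ with the ``correct multiplicity cancellation'' — precisely, $\mathcal P|_f - N_f^{-,\chamb_f}$ should be a sum of weights $w$ such that $T_{a_i}(\Phi((q-\hbar^{-1})w)) / \Phi((q-\hbar^{-1})w) = 1$. The cleanest way to see this: both $\Phi_f$ and $\Phi_{\chamb_f}$ have the same quasi-period under each $T_{a_i}$ precisely when $\mathcal P|_f - N_f^{-,\chamb_f}$ is fixed by $T_{a_i}$ (as a formal character with the variable $a_i$ appearing), because $\Phi(q^d x)/\Phi(x)$ depends only on how $T_{a_i}$ acts on $x$ — and weights not involving $a_i$, or weights whose $a_i$-degree is zero, are literally fixed.

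Concretely, the steps are: (i) recall $T_{a_i}$ multiplies $a_i$ by $q$ and fixes all other $a_j$; (ii) for a single weight $w = c\, a_i^{\pm1} a_j^{\mp 1}$ (times a power of $\hbar$), compute $T_{a_i}\Phi((q-\hbar^{-1})w)/\Phi((q-\hbar^{-1})w)$ as a finite product of $q$-Pochhammer symbols, using $(x)_d = \Phi(x)/\Phi(q^d x)$; (iii) observe the contributions from a conjugate pair $\{w, \hbar^{-1} w^{-1}\cdot(\text{something})\}$ — more precisely the pair $\{\hbar a_l/a_k, a_k/a_l\}$ — either both lie in $\mathcal P|_f$ (up to the diagonal correction terms) or split between $\mathcal P|_f$ and the ``diagonal'' $\Hom(\tb_{\Zb^+},\tb_{\Zb^+})$ subtracted piece, and in every case the net $T_{a_i}$-shift from terms genuinely depending on $a_i$ matches that of $N_f^{-,\chamb_f}$. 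The terms in $\mathcal P|_f$ with $\sigma_f(j)=\sigma_f(k)$, $j\ne k$, pair up as $\{a_k/a_j, a_j/a_k\}$ and one checks their combined $T_{a_i}$-shift is trivial (the $(x)_d$ factors telescope), so they may be freely added or removed; similarly for the co-separated case using the second branch of each displayed formula.

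The main obstacle will be bookkeeping: matching the virtual character $\mathcal P|_f$ — which is built from $\Hom(\tb_{\Zb^-},\tb_{\Zb^+}) \ominus \Hom(\tb_{\Zb^+},\tb_{\Zb^+})$ summed over NS5 branes — against the honest sub-bundle $N_f^{-,\chamb_f}$ of $TX|_f$, and checking that all the ``extra'' weights in the difference have vanishing $T_{a_i}$-quasi-period. I would organize this by grouping weights according to which pair $(\sigma_f(j),\sigma_f(k))$ they come from: off-diagonal pairs $\sigma_f(j)<\sigma_f(k)$ contribute exactly the repelling normal weight (matching $N_f^{-,\chamb_f}$), while diagonal pairs contribute symmetric combinations that cancel in the ratio. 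Once this combinatorial identification is made, the lemma reduces to the elementary fact that $\Phi(q^d x)/\Phi(x) = \prod_{i=0}^{d-1}(1-q^i x)^{-1}$ depends only on the $T_{a_i}$-orbit of $x$, and the identity follows. I expect no analytic subtleties here since everything is a finite product of rational functions in the equivariant parameters.
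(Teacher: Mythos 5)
Your plan is correct and is essentially the paper's own argument: the paper likewise writes $\Phi_f$ out explicitly weight-by-weight, observes that the weights of $\mathcal P|_f$ not accounted for by $N_f^{-,\chamb_f}$ occur in conjugate pairs $\{w, w^{-1}\}$ (times powers of $\hbar$) whose combined $T_{a_i}$-quasi-period is $1$, and then matches the surviving factors against the transformation of $\Phi_{\chamb_f}$ using the explicit formula for $TX|_f$ and the definition of $\chamb_f$. The only caveat is that your intermediate slogan that off-diagonal pairs ``contribute exactly the repelling normal weight'' needs the hedge you already supply, since in the separated case those pairs also feed a symmetric (hence invariant) piece into the first sum of $\mathcal P|_f$.
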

\begin{proof}
First, assume that $X$ is separated. Then
    \[
\Phi_{f}=\left(\prod_{i=1}^{m-1} \prod_{\substack{j,k \\ \sigma_{f}(j)\leq i \\ \sigma_{f}(k) \leq i}}  \frac{\Phi\left(q \frac{a_{j}}{a_{k}}\right) \Phi\left(\frac{a_{j}}{a_{k}}\right) }{\Phi\left(\hbar^{-1} \frac{a_{j}}{a_{k}}\right) \Phi\left(q \hbar \frac{a_{j}}{a_{k}}\right)}\right) \left(  \prod_{1 \leq i < j \leq m} \prod_{\substack{k,l \\ \sigma_{f}(k)=i \\ \sigma_{f}(l)=j}} \frac{\Phi\left(q \frac{a_{k}}{a_{l}}\right)}{\Phi\left(\hbar^{-1} \frac{a_{k}}{a_{l}}\right)} \right).
    \]
We claim that the leftmost product does not contribute to the transformation property. To see this, first observe that the terms corresponding to $j=k$ depend only on $\hbar$ and $q$ and can thus be neglected. The remaining terms appear in pairs which take the form
    \[
X_{j,k}:=\frac{\Phi\left(q \frac{a_{j}}{a_{k}}\right) \Phi\left(\frac{a_{j}}{a_{k}}\right) }{\Phi\left(\hbar^{-1} \frac{a_{j}}{a_{k}}\right) \Phi\left(q \hbar \frac{a_{j}}{a_{k}}\right)} \frac{\Phi\left(q \frac{a_{k}}{a_{j}}\right) \Phi\left(\frac{a_{k}}{a_{j}}\right) }{\Phi\left(\hbar^{-1} \frac{a_{k}}{a_{j}}\right) \Phi\left(q \hbar \frac{a_{k}}{a_{j}}\right)}.
    \]
    Using the transformation property of $\Phi$, we compute
    \[
\frac{T_{a_{j}}(X_{j,k})}{X_{j,k}}=\frac{\left(1-\hbar^{-1} \frac{a_{j}}{a_{k}}\right) \left(1-q \hbar \frac{a_{j}}{a_{k}}\right)}{\left(1-q \frac{a_{j}}{a_{k}}\right) \left(1- \frac{a_{j}}{a_{k}}\right)} \frac{\left(1-\frac{a_{k}}{a_{j}}\right) \left(1- \frac{a_{k}}{q a_{j}}\right)}{\left(1-\frac{a_{k}}{q \hbar a_{j}}\right)\left(1-\hbar \frac{a_{k}}{a_{j}}\right)} =1.
    \]
Accounting for the rest of the terms, we see that
\[
\frac{T_{a_i}(\Phi_{f})}{\Phi_{f}}=\left(\prod_{\substack{j \\ \sigma_{f}(j)<\sigma_{f}(i)}} \frac{1-\frac{a_{j}}{a_{i}}}{1-\frac{a_{j}}{q \hbar a_{i}}} \right) \left(\prod_{\substack{j \\ \sigma_{f}(j)>\sigma_{f}(i)}} \frac{1-\hbar^{-1}\frac{a_{i}}{a_{j}}}{1-q \frac{a_{i}}{a_{j}}}  \right).
\]
The fact that this equals the transformation of $\Phi_{\chamb_{f}}$ under $T_{a_{i}}$ follows from \eqref{eq: tan flag} and the definition of $\chamb_{f}$.

Now assume that $X$ is co-separated. Then 
\[
\Phi_{f}=\prod_{\substack{j,k \\ \sigma_{f}(j)<\sigma_{f}(k)}}\frac{\Phi\left(q \frac{a_k}{a_j}\right)}{\Phi\left(\hbar^{-1} \frac{a_k}{a_j}\right)}.
\]
So
\[
\frac{T_{a_i}(\Phi_{f})}{\Phi_{f}}=\left(\prod_{\substack{j \\ \sigma_{f}(j)<\sigma_{f}(i)}} \frac{1-\hbar^{-1}\frac{a_i}{a_j}}{1-q\frac{a_i}{a_j}}\right) \left( \prod_{\substack{j \\ \sigma_{f}(j)>\sigma_{f}(i)}} \frac{1-\frac{a_k}{a_i}}{1-\frac{a_k}{q\hbar a_i}} \right),
\]
and the result again follows from \eqref{eq: tan flag} and the definition of $\chamb_{f}$.
\end{proof}



    

\subsection{NS5 property of difference equations}
Now assume that $\w(\Zb_{k})>1$ for some $k$. In addition to $X$ above, we also now consider an NS5 resolution $\ol X$ of $X$ obtained by resolving the brane $\Zb_{k}$. In particular, the bow diagram of $\ol X$ has D5 branes $\Ab_{1},\ldots,\Ab_{n}$ and NS5 branes $ \Zb_{1}, \ldots, \Zb_{k}',\Zb_{k}'', \ldots , \Zb_{m}$ such that $\w(\Zb_{k})=\w( \Zb_{k}')+\w( \Zb_{k}'')$. We write the K\"ahler parameters of $X$ as $Q_{1},\ldots,Q_{m-1}$ and the K\"ahler parameters of $\overline{X}$ as $\ol Q_{1},\ldots, \ol Q_{k-1}, \ol Q_{\text{new}}, \ol Q_{k},\ldots, \ol Q_{m-1}$. 

The $q$-difference equations for $X$ and $\ol X$ are 
\[
\macdop_{n} \tilde{\ver}^{X}_{f}= e^{X}(Q) \tilde{\ver}^{X}_{f}, \qquad \macdop_{n} \tilde{\ver}^{\ol X}_{g}= e^{\ol X}(\ol Q) \tilde{\ver}^{\ol X}_{g}
\]
for $f \in X^{\At}$ and $g \in \ol X^{\At}$. In particular, the $q$-difference operators are the same.

Let $\psi^{*}$ be the specialization
\[
(z_k',  z_k'')\mapsto \left((-\hbar^{1/2})^{-\w(\Zb_{k}'')} z_k, (-\hbar^{1/2})^{\w(\Zb_{k}')} z_k\right).
\]
and the identity on the rest of the K\"ahler parameters. In terms of $\ol Q$, we have
 \begin{align*}
     \psi^{*}: \ol Q_{k-1} &\mapsto  (-\hbar^{1/2})^{\w(\Zb_{k}'')} Q_{k-1} \\
   \ol Q_{\text{new}} &\mapsto (-\hbar^{1/2})^{-\w(\Zb_{k})} \\
   \ol Q_{k} &\mapsto (-\hbar^{1/2})^{\w(\Zb_{k}')} Q_{k},
 \end{align*}
 and $\psi^{*}(\ol Q_{l})=Q_{l}$ otherwise.
   

There is a simple relationship between the two eigenvalues after applying $\psi^{*}$. 

\begin{lemma}\label{lem: eigenvalues}
    Under the specialization $\psi^{*}$ we have $\psi^{*} e^{\ol X}(\ol Q)= e^{X}(Q)$.
\end{lemma}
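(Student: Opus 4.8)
The plan is to prove Lemma \ref{lem: eigenvalues} by a direct comparison of the explicit formulas for $e^{X}(Q)$ and $e^{\ol X}(\ol Q)$ given in Theorem \ref{thm: qdeflag}, combined with the explicit form of $\psi^{*}$ on the K\"ahler parameters. Since the lemma should hold in both the separated and co-separated cases, I would treat them separately but symmetrically; I describe the separated case, the other being entirely analogous with the roles of left and right interchanged.

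First I would set up notation. In the separated case,
\[
e^{X}(Q)=\sum_{i=1}^{m} \left(\sum_{j=0}^{\w(\Zb_{i})-1} (\hbar q)^{j}\right)(\hbar q)^{\sum_{k=1}^{i-1}\w(\Zb_{k})}\left(\prod_{l=i}^{m-1}(-\hbar^{1/2})^{\w(\Zb_{l})+\w(\Zb_{l+1})}Q_{l}\right),
\]
and $e^{\ol X}(\ol Q)$ is the same sum but over the $m+1$ NS5 branes $\Zb_1,\dots,\Zb_{k}',\Zb_{k}'',\dots,\Zb_m$ with K\"ahler parameters $\ol Q$. The key observation is that both $e^{X}$ and $e^{\ol X}$ are the first elementary symmetric function in a set of monomials: writing $P_i := (\hbar q)^{\sum_{k<i}\w(\Zb_k)}\prod_{l=i}^{m-1}(-\hbar^{1/2})^{\w(\Zb_l)+\w(\Zb_{l+1})}Q_l$, one has $e^{X}(Q)=\sum_{i}\big(\sum_{j=0}^{\w(\Zb_i)-1}(\hbar q)^j\big)P_i$, and a telescoping identity shows that $\big(\sum_{j=0}^{\w(\Zb_i)-1}(\hbar q)^j\big)P_i$ is itself a sum of $\w(\Zb_i)$ consecutive terms in a single geometric-type progression. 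So $e^X(Q)$ is literally the sum of $d_m$ (the total charge) monomials indexed by ``slots'' between consecutive tautological ranks, and similarly for $e^{\ol X}$.

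The heart of the argument is then the following claim: under $\psi^{*}$, the two ``extra'' terms coming from splitting $\Zb_k$ into $\Zb_k'$ and $\Zb_k''$ — namely the term with index $i$ pointing to $\Zb_k'$ and the term pointing to $\Zb_k''$ — collapse to exactly the single term with index $i$ pointing to the unsplit $\Zb_k$, while every other term is literally fixed by $\psi^{*}$. For the unchanged terms this is immediate, since $\psi^{*}(\ol Q_l)=Q_l$ for $l\notin\{k-1,\mathrm{new},k\}$ and the weights $\w(\Zb_l)$ for $l\neq k$ are unchanged. For the ``boundary'' terms with $i\le k-1$, one checks that the products $\prod_{l\ge i}$ pick up exactly a compensating factor: the new NS5 brane contributes a factor $(-\hbar^{1/2})^{\w(\Zb_k')+\w(\Zb_k'')}$ to the exponent of two extra theta-shift factors, but $\psi^{*}(\ol Q_{k-1}\ol Q_{\mathrm{new}}\ol Q_k) = (-\hbar^{1/2})^{\w(\Zb_k'')}Q_{k-1}\cdot(-\hbar^{1/2})^{-\w(\Zb_k)}\cdot(-\hbar^{1/2})^{\w(\Zb_k')}Q_k = (-\hbar^{1/2})^{-\w(\Zb_k'')}Q_{k-1}Q_k$ (using $\w(\Zb_k')+\w(\Zb_k'')=\w(\Zb_k)$), which exactly cancels the spurious theta factors and also restores the correct power of $\hbar q$ in $\sum_{k'<i}\w$. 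For $i\ge k+1$ the prefactor $(\hbar q)^{\sum_{k'<i}\w}$ gains $\w(\Zb_k')+\w(\Zb_k'')=\w(\Zb_k)$ as before, so it is unchanged, and the $\prod_{l\ge i}$ part involves only unchanged parameters. Finally, for the two new terms $i=k'$ and $i=k''$, I would verify by direct substitution of $\psi^{*}$ that $\big(\sum_{j=0}^{\w(\Zb_k'')-1}(\hbar q)^j\big)\psi^{*}(P_{k''}) + (\hbar q)^{\w(\Zb_k'')}\big(\sum_{j=0}^{\w(\Zb_k')-1}(\hbar q)^j\big)\psi^{*}(P_{k'}) = \big(\sum_{j=0}^{\w(\Zb_k)-1}(\hbar q)^j\big)P_k$, which is just the geometric-series splitting $\sum_{j=0}^{\w''-1}(\hbar q)^j + (\hbar q)^{\w''}\sum_{j=0}^{\w'-1}(\hbar q)^j = \sum_{j=0}^{\w'+\w''-1}(\hbar q)^j$ together with the matching of the monomial prefactors that I already verified for the boundary terms.

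The main obstacle I anticipate is purely bookkeeping: correctly tracking the exponent $\sum_{k'<i}\w(\Zb_{k'})$ and the telescoping product $\prod_{l=i}^{m-1}(-\hbar^{1/2})^{\w(\Zb_l)+\w(\Zb_{l+1})}Q_l$ through the insertion of the extra brane and the specialization $\psi^{*}$, making sure no stray power of $(-\hbar^{1/2})$ or $(\hbar q)$ is dropped. There is no conceptual difficulty — the identity is forced by the fact that $\psi^{*}$ was designed precisely to match the K\"ahler shifts of the NS5 resolution — so once the ``elementary symmetric function'' reformulation is in place the verification reduces to a finite and transparent computation, which I would present compactly rather than term by term. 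In the co-separated case the same argument applies verbatim after reversing the order of the product and replacing $\sum_{k=1}^{i-1}\w(\Zb_k)$ by $\sum_{k=i+1}^{m}\w(\Zb_k)$, using the co-separated form of $\psi^{*}$ on $(z_k',z_k'')$.
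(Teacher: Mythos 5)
Your proposal is correct and is essentially the paper's proof carried out in full: the paper's own argument is the single sentence ``this is a straightforward computation using the formulas for $e^{X}(Q)$ given in Theorem \ref{thm: qdeflag}'', and your term-by-term matching — the terms with $i\neq k$ being literally fixed by $\psi^{*}$ after the telescoping powers of $(-\hbar^{1/2})$ cancel, and the two resolved terms collapsing via the geometric-series identity $\sum_{j=0}^{\w'-1}(\hbar q)^{j}+(\hbar q)^{\w'}\sum_{j=0}^{\w''-1}(\hbar q)^{j}=\sum_{j=0}^{\w-1}(\hbar q)^{j}$ — is exactly that computation made explicit. One bookkeeping nit: since your $P_{i}$ already contains the factor $(\hbar q)^{\sum_{k'<i}\w(\Zb_{k'})}$, the extra explicit shift $(\hbar q)^{\w(\Zb_{k}'')}$ you attach to the $P_{k'}$ term is misplaced (it is $P_{k''}$ that carries the built-in shift by $\w(\Zb_{k}')$, and $\Zb_{k}'$ precedes $\Zb_{k}''$), but this indexing slip does not affect the validity of the argument.
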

\begin{proof}
    This is a straightforward computation using the formulas for $e^{X}(Q)$ given in Theorem \ref{thm: qdeflag}.
\end{proof}

Let $f \in X^{\At}$ and let $\ol f \in \ol X^{\At}$ be an NS5 resolution of $f$. We consider the gauge transformed $q$-difference operators defined by
\[
\macdop_{f}:=\macdop_{f}(Q):=\exppref_{f}^{-1} \macdop_{n} \exppref_{f}, \qquad \macdop_{\ol f}:=\macdop_{\ol f}(Q):=\exppref_{\ol f}^{-1} \macdop_{n} \exppref_{\ol f}.
\]
It is implied by Theorem \ref{thm: qdeflag} and Lemma \ref{lem: phitransform} that 
\[
\macdop_{f} \Phi_{\chamb_{f}} \ver^{X}_{f}(Q)=e^{X}(Q) \Phi_{\chamb_{f}} \ver^{X}_{f}(Q)
\]
and 
\[
\macdop_{\ol f} \Phi_{\chamb_{\ol f}} \ver^{\ol X}_{\ol f}(\ol Q)=e^{\ol X}(\ol Q) \Phi_{\chamb_{\ol f}} \ver^{\ol X}_{\ol f}(\ol Q).
\]

\begin{proposition}
Under the specialization $\psi^{*}$, we have
\[
\psi^{*} \macdop_{\ol f} = \macdop_{f}.
\]
\end{proposition}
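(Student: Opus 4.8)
The plan is to reduce the claimed operator identity $\psi^{*}\macdop_{\ol f}=\macdop_{f}$ to two elementary computations: the transformation properties of the prefactors $\exppref_{f}$, $\exppref_{\ol f}$ and the behavior of the Macdonald operator $\macdop_{n}$ under $\psi^{*}$. First, I would observe that $\macdop_{n}$ itself has \emph{no K\"ahler dependence at all} — it is an operator purely in the equivariant variables $a_i$. Consequently $\psi^{*}$ acts on $\macdop_{n}$ trivially, and
\[
\psi^{*}\macdop_{\ol f}=\psi^{*}\!\left(\exppref_{\ol f}^{-1}\right)\,\macdop_{n}\,\psi^{*}\!\left(\exppref_{\ol f}\right),
\]
so the whole statement is equivalent to the assertion that, as a $q$-difference operator in the $a_i$, conjugation by $\psi^{*}(\exppref_{\ol f})$ agrees with conjugation by $\exppref_{f}$. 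Since two gauge transformations of $\macdop_n$ by nowhere-vanishing functions agree if and only if the ratio of the gauge factors is $T_{a_i}$-invariant for every $i$, it suffices to show that
\[
\frac{T_{a_i}\!\bigl(\psi^{*}\exppref_{\ol f}\bigr)}{\psi^{*}\exppref_{\ol f}}=\frac{T_{a_i}\bigl(\exppref_{f}\bigr)}{\exppref_{f}}
\qquad\text{for all }i.
\]

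Second, I would compute both sides using the Lemma already proved in the excerpt, which gives $T_{a_i}(\exppref_{g})/\exppref_{g}$ as an explicit monomial in the $Q_l$ (a product $\prod_{l=\sigma_g(i)}^{m-1}(-\hbar^{1/2})^{\w(\Zb_l)+\w(\Zb_{l+1})}Q_l$ in the separated case, or the mirror product in the co-separated case). Applied to $g=\ol f$ in the refined diagram $\ol\D$, this yields a monomial in $\ol Q_1,\dots,\ol Q_{k-1},\ol Q_{\text{new}},\ol Q_k,\dots,\ol Q_{m-1}$ whose exponents are governed by $\sigma_{\ol f}(i)$; I then apply $\psi^{*}$ using the explicit formulas $\psi^{*}(\ol Q_{k-1})=(-\hbar^{1/2})^{\w(\Zb_k'')}Q_{k-1}$, $\psi^{*}(\ol Q_{\text{new}})=(-\hbar^{1/2})^{-\w(\Zb_k)}$, $\psi^{*}(\ol Q_k)=(-\hbar^{1/2})^{\w(\Zb_k')}Q_k$, $\psi^{*}(\ol Q_l)=Q_l$ otherwise. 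The point is a bookkeeping identity: whenever the range $\sigma_{\ol f}(i),\dots,m-1$ of the product over $\ol Q_l$ includes the ``new'' slot, the three factors $(-\hbar^{1/2})^{\w(\Zb_k'')}$, $(-\hbar^{1/2})^{-\w(\Zb_k)}$, $(-\hbar^{1/2})^{\w(\Zb_k')}$ coming from $\ol Q_{k-1},\ol Q_{\text{new}},\ol Q_k$ collapse, since $\w(\Zb_k)=\w(\Zb_k')+\w(\Zb_k'')$, to exactly the single factor $(-\hbar^{1/2})^{\w(\Zb_{k-1\,\text{or relevant}})+\w(\Zb_k)}Q_{k-1}$ appearing in the formula for $X$; and whenever the range does not reach the resolved brane, $\psi^{*}$ acts as the identity and there is nothing to check. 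This matches the right-hand side because, under the identification of fixed points described in \S\ref{sec: equivariant geometry of ns5} (a resolution $\ol f$ of $f$ corresponds to $\sigma_f$ composed with the obvious map on NS5 indices), the value $\sigma_f(i)$ determines exactly which of these two cases occurs.

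The main obstacle I anticipate is purely combinatorial/notational rather than conceptual: one must carefully align the indexing of NS5 branes of $X$ with those of $\ol X$ (the brane $\Zb_k$ of $X$ becomes $\Zb_k',\Zb_k''$, shifting all later indices by one) and check that the weight-splitting relation $\w(\Zb_k)=\w(\Zb_k')+\w(\Zb_k'')$ is used with the correct signs of $\hbar^{1/2}$, given that $\psi^{*}$ introduces half-integer powers. A convenient way to organize this is to pass to the $z_i$ variables, where $\psi^{*}$ is simply $z_k'\mapsto(-\hbar^{1/2})^{-\w(\Zb_k'')}z_k$, $z_k''\mapsto(-\hbar^{1/2})^{\w(\Zb_k')}z_k$, so that $\psi^{*}(z_k'z_k'')=(-\hbar^{1/2})^{\w(\Zb_k')-\w(\Zb_k'')}z_k^{2}$ collapses cleanly; then the statement becomes a transparent identity of monomials. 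I would also record that the same argument, with the co-separated formulas from the Lemma and the co-separated branch of $\psi^{*}$, handles the co-separated case verbatim. A final sanity check is that Lemma~\ref{lem: eigenvalues} (which states $\psi^{*}e^{\ol X}(\ol Q)=e^{X}(Q)$) is governed by the \emph{same} collapsing mechanism, so the two statements are consistent: the gauge-transformed equation $\macdop_{\ol f}\,\Phi_{\chamb_{\ol f}}\ver^{\ol X}_{\ol f}=e^{\ol X}\,\Phi_{\chamb_{\ol f}}\ver^{\ol X}_{\ol f}$ specializes, formally, to the equation for $X$ — which is precisely what will be exploited in the proof of Theorem~\ref{thm: ns5vertex} to pin down $\ver^{X}_f$ from $\ver^{\ol X}_{\ol f}$ up to leading term.
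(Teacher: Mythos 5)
Your proposal is correct and follows essentially the same route as the paper: the paper likewise writes $\macdop_{f}=\sum_{i}\bigl(\prod_{j\neq i}\frac{\hbar q a_i-a_j}{a_i-a_j}\bigr)\bigl(\prod_{l=\sigma_f(i)}^{m-1}(-\hbar^{1/2})^{\w(\Zb_l)+\w(\Zb_{l+1})}Q_l\bigr)T_{a_i}$ using the quasi-period lemma for $\exppref_{f}$, notes the analogous formula for $\macdop_{\ol f}$, and checks that the K\"ahler monomials match after applying $\psi^{*}$ via the collapsing $\w(\Zb_k)=\w(\Zb_k')+\w(\Zb_k'')$. Your bookkeeping of the three cases (tie before, at, or after the resolved brane) is exactly the "straightforward check" the paper leaves implicit.
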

\begin{proof}
    Assume $X$ is separated (the co-separated case is similar). Then by the quasi-periodicity \eqref{eq: quasiperiod delta} we get
    \[
\macdop_{f}=\sum_{i=1}^{n} \left(\prod_{j \neq i} \frac{\hbar q a_{i}-a_{j}}{a_{i}-a_{j}}  \right) \left(\prod_{l=\sigma_{f}(i)}^{m-1} (-\hbar^{1/2})^{\w^{X}(\Zb_{l})+\w^{X}(\Zb_{l+1})} Q_{l}\right)T_{a_{i}}.
\]
A similar formula holds for $\macdop_{\ol f}$. It is straightforward to check that the two expressions are equal after applying $\psi^{*}$.
\end{proof}

Applying a shift, we get a similar result for the shifted vertex $\MSver$ defined in \eqref{eq: def of MSver}. Let $\mathsf{e}^{X}(Q)=e^{X}(Q)|_{Q_{i}=(-\hbar^{1/2})^{\w(\Zb_{i})-\w(\Zb_{i+1})}Q_{i}}$ and let $\MSmacdop_{f}(Q)=\macdop_{f}(Q)|_{Q_{i}=(-\hbar^{1/2})^{\w(\Zb_{i})-\w(\Zb_{i+1})}Q_{i}}$. For later use, we split up the separated and coseparated cases.

\begin{theorem}\label{thm: sepcompareqdes}
   Assume that $X$ is separated. Let $\psi^{*}$ be the specialization 
   \[
   (z_{k}', z_{k}'')\mapsto(z_k, \hbar^{-\w(\Zb_{k}')} z_k).
   \]
 Then 
   \[
   \MSmacdop_{f}(Q^{-1}) \Phi_{\chamb_{f}} \MSver^{X}_{f}(Q^{-1})=\mathsf{e}^{X}(Q^{-1}) \Phi_{\chamb_{f}} \MSver^{X}_{f}(Q^{-1}).
   \]
   and similarly for $\ol f$. Furthermore, we have
   \[
\psi^{*} \MSmacdop_{\ol f}(\ol Q^{-1})=  \MSmacdop_{f}(Q^{-1}), \qquad \psi^{*}\mathsf{e}^{\ol X}(\ol Q^{-1})=\mathsf{e}^{X}(Q^{-1}).
   \]

\end{theorem}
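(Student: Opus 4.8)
The plan is to reduce Theorem \ref{thm: sepcompareqdes} to three essentially independent and largely computational verifications, each of which has been prepared by the lemmas immediately preceding the statement.

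First, I would establish the two $q$-difference equations for $\MSver^X_f(Q^{-1})$ (and the analogous one for $\ol f$). The starting point is Theorem \ref{thm: qdeflag} together with Lemma \ref{lem: phitransform}, which after conjugating by $\exppref_f$ gives $\macdop_f \Phi_{\chamb_f} \ver^X_f(Q) = e^X(Q)\Phi_{\chamb_f}\ver^X_f(Q)$, exactly as recorded just above the statement. Then I would apply the substitution $Q_i \mapsto (-\hbar^{1/2})^{\w(\Zb_i)-\w(\Zb_{i+1})} Q_i$ followed by $Q\mapsto Q^{-1}$; by the definitions $\MSver^X(Q)=\ver^X(Q)\big|_{Q_i = Q_i(-\hbar^{1/2})^{\w(\Zb_i)-\w(\Zb_{i+1})}}$, $\MSmacdop_f(Q)=\macdop_f(Q)\big|_{Q_i=(-\hbar^{1/2})^{\w(\Zb_i)-\w(\Zb_{i+1})}Q_i}$, and $\mathsf{e}^X(Q)=e^X(Q)\big|_{Q_i=(-\hbar^{1/2})^{\w(\Zb_i)-\w(\Zb_{i+1})}Q_i}$, this is a bookkeeping exercise: one only needs that $\Phi_{\chamb_f}$ involves no K\"ahler parameters, so it is unaffected by the $Q$-shift, and that both sides of the equation transform in the same way. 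This yields the displayed equation in the theorem and its $\ol f$-analogue.

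Second, I would verify $\psi^* \MSmacdop_{\ol f}(\ol Q^{-1}) = \MSmacdop_f(Q^{-1})$. Here the key input is the explicit formula $\macdop_f = \sum_{i=1}^n \big(\prod_{j\neq i}\tfrac{\hbar q a_i - a_j}{a_i - a_j}\big)\big(\prod_{l=\sigma_f(i)}^{m-1}(-\hbar^{1/2})^{\w(\Zb_l)+\w(\Zb_{l+1})}Q_l\big) T_{a_i}$ obtained in the Proposition above (via the quasi-periodicity \eqref{eq: quasiperiod delta} and the transformation of $\exppref_f$), together with the already established identity $\psi^*\macdop_{\ol f}=\macdop_f$. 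The only genuinely new point is to check that the stated specialization $(z_k',z_k'')\mapsto(z_k,\hbar^{-\w(\Zb_k')}z_k)$ for separated $X$ — i.e. $\ol Q_{k-1}\mapsto \hbar^{\w(\Zb_k'')}Q_{k-1}$, $\ol Q_{\text{new}}\mapsto \hbar^{-\w(\Zb_k)}$, $\ol Q_k \mapsto \hbar^{\w(\Zb_k')}Q_k$, and $\ol Q_l \mapsto Q_l$ otherwise — agrees, after the $\MSver$-normalization shift, with the $\psi^*$ defined earlier via $(z_k',z_k'')\mapsto((-\hbar^{1/2})^{-\w(\Zb_k'')}z_k,(-\hbar^{1/2})^{\w(\Zb_k')}z_k)$. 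This is because the normalization shift $Q_i\mapsto(-\hbar^{1/2})^{\w(\Zb_i)-\w(\Zb_{i+1})}Q_i$ on the $\ol X$ side introduces precisely the compensating sign factors: one checks term-by-term that the two prefactors $\prod_l (-\hbar^{1/2})^{\w(\Zb_l)+\w(\Zb_{l+1})}Q_l$ (for $X$) and its $\ol X$ counterpart become equal once $\ol Q_{\text{new}}$ has been killed and the shifts applied, since a brane $\Zb$ between $\Zb_k'$ and $\Zb_k''$ with no D5 branes around it contributes to exactly one product-of-three. The sign ambiguities from $(-\hbar^{1/2})$ versus $\hbar^{1/2}$ should be tracked carefully, and I expect this to be the most error-prone piece, though it is not conceptually hard.

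Third, $\psi^*\mathsf{e}^{\ol X}(\ol Q^{-1})=\mathsf{e}^X(Q^{-1})$ follows from Lemma \ref{lem: eigenvalues}, which gives $\psi^* e^{\ol X}(\ol Q)=e^X(Q)$, combined with the same normalization-shift bookkeeping as in the second step: applying $Q_i\mapsto(-\hbar^{1/2})^{\w(\Zb_i)-\w(\Zb_{i+1})}Q_i$ to both sides and using that $\psi^*$ intertwines the shifts on the $X$ and $\ol X$ sides (which is exactly what was checked for $\MSmacdop$). I would present these as three short verifications, deferring the routine sign and exponent arithmetic, since each reduces to an already-proven statement plus a substitution. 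The main obstacle, as noted, is simply keeping the square-root-of-$\hbar$ signs consistent across the two normalizations; everything else is formal.
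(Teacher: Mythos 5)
Your three-step reduction is exactly the paper's (implicit) proof: the paper derives the unshifted identities $\macdop_{f}\Phi_{\chamb_f}\ver^X_f = e^X\Phi_{\chamb_f}\ver^X_f$, $\psi^*\macdop_{\ol f}=\macdop_f$, and $\psi^*e^{\ol X}=e^X$, and then obtains Theorem \ref{thm: sepcompareqdes} by "applying a shift," i.e.\ precisely the substitution $Q_i\mapsto(-\hbar^{1/2})^{\w(\Zb_i)-\w(\Zb_{i+1})}Q_i$ followed by $Q\mapsto Q^{-1}$ and the corresponding conjugation of $\psi^*$. So the structure is right, and there is no conceptual gap.

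However, the one place you flagged as error-prone is indeed where you slipped: your translation of the stated specialization $(z_k',z_k'')\mapsto(z_k,\hbar^{-\w(\Zb_k')}z_k)$ into $\ol Q$-coordinates is incorrect (you copied the $\ol Q$-values of the \emph{earlier} $\psi^*$ with $(-\hbar^{1/2})$ replaced by $\hbar$). Since $\ol Q_{k-1}=z_{k-1}/z_k'$, $\ol Q_{\mathrm{new}}=z_k'/z_k''$, $\ol Q_k=z_k''/z_{k+1}$, the correct action is
\[
\ol Q_{k-1}\mapsto Q_{k-1},\qquad \ol Q_{\mathrm{new}}\mapsto \hbar^{\w(\Zb_k')},\qquad \ol Q_{k}\mapsto \hbar^{-\w(\Zb_k')}Q_k,
\]
and one then checks that applying the $\MSver$-normalization shift and the inversion $Q\mapsto Q^{-1}$ to these values reproduces exactly the earlier specialization $\ol Q_{\mathrm{new}}\mapsto(-\hbar^{1/2})^{-\w(\Zb_k)}$, $\ol Q_{k-1}\mapsto(-\hbar^{1/2})^{\w(\Zb_k'')}Q_{k-1}$, $\ol Q_k\mapsto(-\hbar^{1/2})^{\w(\Zb_k')}Q_k$ on the arguments of $\macdop_{\ol f}$ and $e^{\ol X}$; with the values as you wrote them the term-by-term comparison would fail. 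With that correction the argument goes through as you describe.
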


\begin{theorem}\label{thm: cosepcompareqdes}
    Assume that $X$ is co-separated. Let $\psi^{*}$ be the specialization 
    \[
    (z_{k}', z_{k}'')\mapsto(z_k, \hbar^{\w(\Zb_{k}')} z_k ).
    \]
  Then 
   \[
   \MSmacdop_{f}(Q) \Phi_{\chamb_{f}} \MSver^{X}_{f}(Q)=\mathsf{e}^{X}(Q) \Phi_{\chamb_{f}} \MSver^{X}_{f}(Q).
   \]
and similarly for $\ol f$. Furthermore, we have
\[
\psi^{*} \MSmacdop_{\ol f}(\ol Q)=  \MSmacdop_{f}(Q), \qquad \psi^{*}\mathsf{e}^{\ol X}(\ol Q)=\mathsf{e}^{X}(Q).
\]
\end{theorem}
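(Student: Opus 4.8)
## Proof Proposal for Theorem \ref{thm: cosepcompareqdes}

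The plan is to deduce this from the co-separated analogues of the results already established for the separated case, together with Theorem \ref{thm: qdeflag}, Lemma \ref{lem: phitransform}, Lemma \ref{lem: eigenvalues}, and the proposition immediately preceding (the statement $\psi^{*}\macdop_{\ol f} = \macdop_{f}$). In fact almost all the work has been done: the only new content relative to Theorem \ref{thm: sepcompareqdes} is bookkeeping the shift $Q_{i} \mapsto (-\hbar^{1/2})^{\w(\Zb_{i})-\w(\Zb_{i+1})} Q_{i}$ that converts $\macdop_{f}, e^{X}, \ver^{X}$ into $\MSmacdop_{f}, \mathsf{e}^{X}, \MSver^{X}$, and checking that this shift intertwines the specialization $\psi^{*}$ appropriately.

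First I would establish the $q$-difference equation for $\MSver^{X}_{f}$. Starting from Theorem \ref{thm: qdeflag}, which gives $\macdop_{n} \tilde{\ver}^{X}_{f} = e^{X}(Q) \tilde{\ver}^{X}_{f}$ with $\tilde{\ver}^{X}_{f} = \exppref_{f} \Phi_{f} \ver^{X}_{f}$, I conjugate by $\exppref_{f}$ to get $\macdop_{f} \Phi_{f} \ver^{X}_{f}(Q) = e^{X}(Q) \Phi_{f} \ver^{X}_{f}(Q)$, then invoke Lemma \ref{lem: phitransform} to replace $\Phi_{f}$ by $\Phi_{\chamb_{f}}$ (this is legitimate because $\Phi_{f}$ and $\Phi_{\chamb_{f}}$ have the same $T_{a_i}$-transformation, so both solve the conjugated equation, and the scalar ratio $\Phi_{f}/\Phi_{\chamb_{f}}$ is annihilated by $\macdop_f - e^X(Q)$ acting as a difference operator in the $a_i$). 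Applying the substitution $Q_i \mapsto (-\hbar^{1/2})^{\w(\Zb_i) - \w(\Zb_{i+1})} Q_i$ followed by $Q \mapsto Q^{-1}$... actually for the co-separated case only the shift is applied, giving precisely $\MSmacdop_{f}(Q) \Phi_{\chamb_{f}} \MSver^{X}_{f}(Q) = \mathsf{e}^{X}(Q) \Phi_{\chamb_{f}} \MSver^{X}_{f}(Q)$, since $\Phi_{\chamb_{f}}$ does not involve the K\"ahler parameters. The same argument verbatim gives the corresponding equation for $\ol f$.

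For the intertwining statements, I would start from $\psi^{*} \macdop_{\ol f} = \macdop_{f}$ (the proposition proved just above, in the co-separated form, where $\psi^*$ sends $(z_k', z_k'') \mapsto (z_k, \hbar^{\w(\Zb_k')} z_k)$ — here the $(-\hbar^{1/2})$ signs and half-powers in the definition of $\psi^*$ on $\ol Q$ conspire with the $\MSver$-shift so that in the $z$-variables the specialization is the clean one stated). The point is that conjugating the relation $\psi^{*} \macdop_{\ol f} = \macdop_{f}$ by the diagonal K\"ahler shift $Q_i \mapsto (-\hbar^{1/2})^{\w(\Zb_i)-\w(\Zb_{i+1})} Q_i$ is compatible with $\psi^{*}$ precisely because, under the reparametrization in $z$-coordinates, $\psi^{*}$ becomes the substitution $z_k'' = \hbar^{\w(\Zb_k')} z_k$ with no extra signs; this is exactly the content of restating $\psi^*$ in the $z$-variables as in the hypothesis of the theorem. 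Hence $\psi^{*} \MSmacdop_{\ol f}(\ol Q) = \MSmacdop_{f}(Q)$. The eigenvalue identity $\psi^{*} \mathsf{e}^{\ol X}(\ol Q) = \mathsf{e}^{X}(Q)$ follows the same way from Lemma \ref{lem: eigenvalues} ($\psi^{*} e^{\ol X}(\ol Q) = e^{X}(Q)$) after applying the shift, using the explicit formulas for $e^{X}$ and $e^{\ol X}$ from Theorem \ref{thm: qdeflag} in the co-separated case.

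The only genuine care needed — and the main (mild) obstacle — is tracking the half-integer powers of $-\hbar^{1/2}$ through all three layers: the definition of $\psi^{*}$ on $\ol Q$, the $\MSver$-normalization shift, and the change between $Q$-coordinates and $z$-coordinates $Q_i = z_i/z_{i+1}$. One must check that the total contribution of signs and powers at the brane $\Zb_k$ collapses to the stated clean specialization $(z_k', z_k'') \mapsto (z_k, \hbar^{\w(\Zb_k')} z_k)$, using $\w(\Zb_k) = \w(\Zb_k') + \w(\Zb_k'')$ and the fact that the weights of the neighbouring branes $\Zb_{k-1}, \Zb_{k+1}$ are unchanged by the resolution. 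Once this bookkeeping is verified (a direct computation parallel to the proof of Theorem \ref{thm: sepcompareqdes}, with the roles of left/right and the co-separated eigenvalue formula substituted), the theorem follows. I would present the proof as: ``This is proved exactly as Theorem \ref{thm: sepcompareqdes}, using the co-separated formulas of Theorem \ref{thm: qdeflag}, the co-separated case of the preceding proposition, and Lemmas \ref{lem: phitransform} and \ref{lem: eigenvalues}; we omit the routine verification of the powers of $\hbar$.''
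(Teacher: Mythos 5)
Your proposal is correct and follows essentially the same route as the paper: the paper also obtains Theorem \ref{thm: cosepcompareqdes} by combining Theorem \ref{thm: qdeflag}, Lemma \ref{lem: phitransform} (using that $\Phi_{f}/\Phi_{\chamb_{f}}$ is $T_{a_i}$-invariant so multiplication by it commutes with the difference operator), Lemma \ref{lem: eigenvalues}, and the proposition $\psi^{*}\macdop_{\ol f}=\macdop_{f}$, and then applies the K\"ahler shift $Q_{i}\mapsto(-\hbar^{1/2})^{\w(\Zb_{i})-\w(\Zb_{i+1})}Q_{i}$, with the only remaining content being the bookkeeping of powers of $-\hbar^{1/2}$ that turns the specialization into the clean form $(z_{k}',z_{k}'')\mapsto(z_{k},\hbar^{\w(\Zb_{k}')}z_{k})$, exactly as you describe.
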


\subsection{Uniqueness of solutions}

 Let $f \in X^{\At}$ and let $i_{j}$ for $1 \leq j \leq n$ be defined by the properties that $\{i_1,i_2,\ldots,i_n\}=\{1,2,\ldots,n\}$ and $a_{i_j}/a_{i_{j+1}}$ is a repelling weight for $\chamb_{f}$. Expanding $\Phi_{\chamb_{f}} \ver^{X}_{f}(Q)$ as a power series in $a_{i_{j}}/a_{i_{j+1}}$ gives an element of the ring 
 \[
 R:=\mathbb{Q}[[Q_{1},\ldots,Q_{m-1}]][[a_{i_1}/a_{i_2},\ldots,a_{i_{n-1}}/a_{i_n}]].
 \]

We need the following uniqueness result.

\begin{proposition}[\cite{NSmac} Theorem 2.1]\label{prop: uniqueness}
  A solution of the equation 
    \[
\macdop_{f}F=e^{X}(Q) F, \quad F \in R
    \]
is uniquely determined by its constant term in $a$.
\end{proposition}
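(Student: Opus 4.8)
The plan is to prove this by extracting the leading-order behaviour of the operator equation with respect to the filtration on $R$ given by total degree in the repelling ratios $a_{i_j}/a_{i_{j+1}}$, and showing that the ``diagonal part'' of $\macdop_f$ acts invertibly on the part of $R$ orthogonal to the constant term in $a$. Concretely, first I would rewrite $\macdop_f$ in the explicit form obtained in the proof of the previous proposition, namely
\[
\macdop_f = \sum_{i=1}^n \left( \prod_{j \neq i} \frac{\hbar q\, a_i - a_j}{a_i - a_j} \right) \left( \prod_{l=\sigma_f(i)}^{m-1} (-\hbar^{1/2})^{\w(\Zb_l) + \w(\Zb_{l+1})} Q_l \right) T_{a_i},
\]
so that the $Q$-dependence is entirely in explicit monomials and the $a$-dependence is in the rational prefactor and the shift $T_{a_i}$. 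Since $f$ is a minimal fixed point for $\chamb_f$ (cf. \S\ref{sub: vholo}), the weights $a_{i}/a_{j}$ with $\sigma_f(i)<\sigma_f(j)$ are repelling, so the monomial $\prod_{l=\sigma_f(i)}^{m-1}(\cdots)Q_l$ is, up to a unit in $\mathbb{Q}((\hbar,q))$, a product of $Q$'s; the key point is that this prefactor depends on $i$ only through $\sigma_f(i)$, and in particular for the index $i$ with $\sigma_f(i)=m$ it equals $1$.

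The core of the argument is then a leading-term analysis. I would set up the $\ZZ_{\geq 0}$-grading on $R$ where $Q_l$ has degree $0$ and the repelling ratios $a_{i_j}/a_{i_{j+1}}$ have degree $1$, and write $\macdop_f = \macdop_f^{(0)} + (\text{higher degree})$ where $\macdop_f^{(0)}$ is obtained by expanding each rational prefactor $\prod_{j\neq i}\frac{\hbar q a_i - a_j}{a_i - a_j}$ as a power series in the repelling ratios and keeping the degree-zero coefficient. At degree zero in $a$, each such prefactor evaluates to a Laurent monomial in $\hbar, q$, so $\macdop_f^{(0)}$ acts on the constant term $F_0 \in \mathbb{Q}[[Q]]$ of $F$ as multiplication by $\sum_{i} c_i(\hbar,q)\, Q^{\mathbf{d}(i)}$ where $c_i$ is that monomial and $Q^{\mathbf{d}(i)}$ is the explicit $Q$-monomial above. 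Comparing with $e^X(Q)$ as given in Theorem \ref{thm: qdeflag}, and using that both are power series in $Q$ with no constant term when $\sigma_f(i)\neq m$ and the $i$ with $\sigma_f(i)=m$ contributes the ``$1$'' matching the leading $1$ of $e^X(Q)$ only in the $\w=1$ normalization — this is exactly the bookkeeping already carried out in Lemma \ref{lem: phitransform} and the computation of $e^X$ — one sees that the equation $\macdop_f F = e^X(Q) F$ at the bottom of the filtration forces $F_0$ to satisfy a scalar recursion in the $Q$-variables that is solvable and whose solution is determined by its value at $Q=0$, i.e. by the single number $F|_{Q=0,a=0}$ (which is normalized to $1$).

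Given $F_0$, I would then induct on the $a$-degree. Suppose $F, F'$ are two solutions with the same constant term in $a$; then $G := F - F'$ has $G_0 = 0$, i.e. $G$ lies in the ideal generated by the repelling ratios. Write $G = \sum_{r \geq 1} G_r$ with $G_r$ homogeneous of degree $r$ in the repelling ratios (coefficients in $\mathbb{Q}[[Q]]$). The equation $(\macdop_f - e^X(Q))G = 0$ read in degree $r$ gives $(\macdop_f^{(0)} - e^X(Q))G_r = -(\text{terms involving } G_{<r})$, where on the left $\macdop_f^{(0)}$ now acts on a degree-$r$ piece: each shift $T_{a_i}$ acts on a monomial $\prod (a_{i_j}/a_{i_{j+1}})^{e_j}$ by multiplication by a power $q^{*}$ depending on $i$ and the exponents, so $\macdop_f^{(0)} - e^X(Q)$ acts diagonally on the monomial basis of the degree-$r$ part with eigenvalue $\sum_i c_i(\hbar,q) q^{n_i(e)} Q^{\mathbf d(i)} - e^X(Q)$. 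The hard part will be showing this eigenvalue is invertible in $\mathbb{Q}(\hbar,q)[[Q]]$ for every nonzero multidegree $e$: one must check that its constant term in $Q$ — namely $\sum_{i:\sigma_f(i)=m} q^{n_i(e)} - 1 = q^{n(e)} - 1$ (only the $\sigma_f(i)=m$ term survives at $Q=0$) — is a nonzero element of $\mathbb{Q}(q)$, which holds precisely because $n(e) \neq 0$ whenever $e \neq 0$: a nontrivial shift of a repelling monomial strictly changes its $q$-weight. This nonvanishing is the genuine input and is where I expect to spend the most care; modulo it, invertibility of a power series with nonzero constant term is automatic, the recursion determines each $G_r$ from $G_{<r}$ uniquely, and induction gives $G_r = 0$ for all $r$, hence $F = F'$. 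Since this is Theorem 2.1 of \cite{NSmac}, I would at minimum indicate that the argument reduces to that reference after the translation of notation via Remark \ref{rmk: bispectral}, but the self-contained version above is the structure I would write out.
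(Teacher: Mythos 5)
The paper offers no proof of this proposition --- it is a citation of \cite{NSmac} --- so the only question is whether your self-contained argument is sound. The filtration by total degree in the repelling ratios, the identification of the diagonal part $\macdop_f^{(0)}=\sum_{k=1}^n(\hbar q)^{k-1}Q^{(\sigma_f(i_k))}T_{a_{i_k}}$ (where $Q^{(s)}:=\prod_{l=s}^{m-1}(-\hbar^{1/2})^{\w(\Zb_l)+\w(\Zb_{l+1})}Q_l$), and the induction on $a$-degree are all fine, as is the diagonal action $\macdop_f^{(0)}\mu_e=\lambda_e(Q)\,\mu_e$ with $\lambda_e(Q)=\sum_k(\hbar q)^{k-1}q^{\,e_k-e_{k-1}}Q^{(\sigma_f(i_k))}$ (conventions $e_0=e_n=0$). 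The gap is exactly at the step you flag as the crux: the claim that the constant term in $Q$ of $\lambda_e-e^X$ is $q^{n(e)}-1$ with $n(e)\neq 0$ for $e\neq 0$. Only the indices $k$ with $\sigma_f(i_k)=m$ contribute at $Q=0$, and for those $k$ the exponent $n_k(e)=e_k-e_{k-1}$ can vanish for nonzero $e$: already for $e=(1,0,\dots,0)$ with $n\geq 3$ one has $n_k(e)=0$ for every $k$ with $\sigma_f(i_k)=m$, so $\lambda_e-e^X$ lies in the ideal $(Q)$ and is \emph{not} invertible in $\mathbb{Q}(\hbar,q)[[Q]]$. The slogan ``a nontrivial shift of a repelling monomial strictly changes its $q$-weight'' is false for the particular shifts $T_{a_{i_k}}$ that survive at $Q=0$, since $T_{a_i}$ only sees the exponent of $a_i$ in $\mu_e$, which can be zero even when $e\neq 0$.

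Fortunately the induction needs injectivity, not invertibility. If $G_{<r}=0$, the degree-$r$ equation is the homogeneous one, $(\macdop_f^{(0)}-e^X)G_r=0$, i.e.\ $(\lambda_e-e^X)\cdot g_e=0$ for each monomial coefficient $g_e\in\mathbb{Q}(\hbar,q)[[Q]]$; since this ring is an integral domain it suffices that $\lambda_e-e^X$ be nonzero, not a unit. That nonvanishing does hold: the monomials $Q^{(s)}$ for distinct $s$ are distinct, and within a fixed $s$ the powers $(\hbar q)^{k-1}$ are distinct, so $\sum_k(\hbar q)^{k-1}(q^{n_k(e)}-1)Q^{(\sigma_f(i_k))}=0$ forces $n_k(e)=0$ for \emph{all} $k$, hence $e=0$ because the differences $e_k-e_{k-1}$ determine $e$. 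With ``invertible'' replaced by ``nonzero element of a domain'' your induction closes and the proposition follows. (Your opening paragraph is also off: the $a$-degree-zero part of the equation is $\macdop_f^{(0)}F_0=e^X F_0$, which is automatically satisfied and constrains $F_0$ not at all, so nothing determines $F_0$ from $F|_{Q=0,a=0}$ --- but this is harmless, since the proposition takes the constant term in $a$ as given.)
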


By a change of variable, this implies an analogous uniqueness result for the $q$-difference equations defined by the operators $\MSmacdop_{f}$.

\subsection{NS5 property for vertex functions}

Now we are ready to relate vertex functions before and after an NS5 resolution. Since the co-separated case proceeds similarly, we will give detailed proofs only for the separated case and will simply state the corresponding co-separated results at the end of the section. 

Let $X$ be a separated bow variety and let $\ol X$ be an NS5 resolution obtained by resolving the brane $\Zb$ of weight $\w>1$ into $\Zb'$ and $\Zb''$ according to the decomposition of weights $\w=\w'+\w''$. Let $f \in X^{\At}$ and let $\ol f \in \ol X^{\At}$ be an NS5 resolution of $f$. Let $\psi^{*}$ be the specialization 
    \begin{align*}
        (z', z'')\mapsto  (z,
        \hbar^{-\w'} z).
    \end{align*}
    Recall the variety $Y$ from Lemma \ref{lemma points in lagrangian variety Z resolution}. Denote $Q'=z'/z''$, which we view as the single K\"ahler variable in the vertex function of $Y$. We compare the limits defined in \S\ref{sec: flag vertex results}.

\begin{lemma}\label{lem: different const NS5}
   In the setting above, assume furthermore that all D5 branes have weight $1$. Then
    \[
\psi^{*} \left( \frac{\MSflaglim^{\ol X}(\ol Q)}{\MSflaglim^{X}(Q)} \MSflaglim^{Y}(Q')^{-1} \right) = 1
    \]
\end{lemma}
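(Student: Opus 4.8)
The plan is to reduce everything to the explicit product formula for $\MSflaglim$ from Proposition \ref{prop: msflaglim} and check the identity by direct bookkeeping of the factors. First I would write out $\MSflaglim^{X}(Q)$, $\MSflaglim^{\ol X}(\ol Q)$, and $\MSflaglim^{Y}(Q')$ using that proposition: each is a product of ratios $\Phi(\hbar^{-1}\,\cdot)/\Phi(\cdot)$ indexed by a triple $(i,j,k)$ with $i$ ranging over NS5 branes, $j$ over $1\le j\le \w(\Zb_i)$, and $k$ over $i\le k\le m-1$ (with the appropriate $m$ for each variety). Because all D5 branes have weight $1$, these are precisely the bow varieties for which the refined formula and the $\MSflaglim$ computation apply, so there is no ambiguity in the chamber $\chamb_f$ and everything is genuinely a function of the $Q$'s alone.

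The key step is to track how the index sets change under the NS5 resolution. Passing from $X$ to $\ol X$ replaces the single brane $\Zb=\Zb_k$ of weight $\w=\w'+\w''$ by two branes $\Zb'_k,\Zb''_k$ of weights $\w',\w''$, and inserts one new NS5 position; correspondingly $m$ increases by $1$ and the new Kähler variable $\ol Q_{\mathrm{new}}$ appears. The plan is to split the triple-indexed product for $\ol X$ into three groups: (a) triples not involving the resolved region, which match those of $X$ verbatim under $\psi^{*}$ (since $\psi^{*}$ is the identity on those Kähler parameters); (b) triples whose $j$-index lies in $1\le j\le \w'$ versus $\w'< j\le \w'+\w''$, coming from $\Zb'_k$ and $\Zb''_k$ separately; and (c) the contribution of the extra variable $\ol Q_{\mathrm{new}}$, which under $\psi^{*}$ is sent to the fixed value $(-\hbar^{1/2})^{-\w}$ (equivalently $\hbar^{-\w'}$ after the normalizations), exactly the specialization $\psi^{*}:(z',z'')\mapsto(z,\hbar^{-\w'}z)$. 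Meanwhile $\MSflaglim^{Y}(Q')$ is the $\MSflaglim$ of the two-NS5 bow variety $Y=\ttt{\fs$\w'$\fs$\w'+\w''$\bs\dots\bs2\bs1\bs}$, so it is itself a product over $j\in\{1,\dots,\w'\}$ and $k$ in a one-element range, with argument built from $Q'$. The claim is then the purely combinatorial assertion that group (b)'s ``cross terms'' between the two resolved branes, together with the $\ol Q_{\mathrm{new}}$ factors of group (c), reassemble exactly into $\MSflaglim^{Y}(Q')$ after $\psi^{*}$, while the remaining factors of $\MSflaglim^{\ol X}$ reassemble into $\MSflaglim^{X}(Q)$; dividing gives $1$.

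Concretely I would substitute $\psi^{*}$ into the arguments. In $\MSflaglim^{\ol X}$ the product $\prod_{i\le l\le k} q^{-\w(\Zb_l)} Q_l^{-1}$ telescopes the weights; crossing the resolved region contributes $q^{-\w'}q^{-\w''}=q^{-\w}$, matching a single crossing of $\Zb$ in $\MSflaglim^{X}$, and the extra telescoping factor from $\ol Q_{\mathrm{new}}^{-1}$ under $\psi^{*}$ is absorbed because $\psi^{*}\ol Q_{\mathrm{new}}=(-\hbar^{1/2})^{-\w}$ shifts the $(q\hbar)^{j}$ prefactor by exactly the amount needed. After this substitution every $\Phi$-factor appearing in the numerator of $\psi^{*}\MSflaglim^{\ol X}$ either already appears in $\MSflaglim^{X}$ (same argument) or appears in $\MSflaglim^{Y}(Q')$, and conversely, with matching multiplicities; so the three products cancel in pairs and the ratio is $1$. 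I expect the main obstacle to be purely notational: keeping the shifted weight exponents $\w(\Zb_l)-\w(\Zb_{l+1})$ from the definition \eqref{eq: def of MSver} of $\MSver$ straight through the resolution, and verifying that the half-integer powers of $-\hbar^{1/2}$ recombine into the integer powers of $\hbar$ that appear in the statement's specialization $(z',z'')\mapsto(z,\hbar^{-\w'}z)$. Once the indexing is set up carefully this is a finite check; I would organize it by isolating, for each fixed pair $(i,k)$ straddling the resolved position, the finite product over $j$ and showing the $\ol X$ side equals the $X$ side times the corresponding $Y$ factor.
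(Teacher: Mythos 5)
Your proposal is correct and follows essentially the same route as the paper: both reduce to the explicit product formula of Proposition \ref{prop: msflaglim}, observe that the NS5 branes away from $\Zb$, $\Zb'$, $\Zb''$ contribute identically to $\MSflaglim^{\ol X}$ and $\MSflaglim^{X}$ after $\psi^{*}$, and check by direct bookkeeping that the resolved-region factors reassemble into $\psi^{*}\MSflaglim^{Y}(Q')$. The paper's version is just as terse as yours on the "straightforward computation" for the three resolved branes, so the only remaining work in either account is the finite index check you already flag.
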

\begin{proof}
    After specialization by $\psi^{*}$, it is easy to see from Proposition \ref{prop: msflaglim} that the NS5 branes other than $\Zb$, $\Zb'$, and $\Zb''$ contribute to the formulas for $\MSflaglim^{\ol X}$ and $\MSflaglim^{X}$ in exactly the same ways. It is a straightforward computation to show that contributions from these three branes gives exactly
    \[
    \prod_{j=1}^{\w(\Zb')} \frac{\Phi\left( \hbar^{-1} (q\hbar)^{j-1}\right)}{\Phi\left( (q\hbar)^{j-1}\right)}
    \]
    On the other hand,
    \[
\MSflaglim^{Y}(Q') =\prod_{j=1}^{\w(\Zb')} \frac{\Phi\left(\hbar^{-1} (q\hbar)^{j-1} q^{-\w(\Zb')} Q'^{-1}\right)}{\Phi\left( (q\hbar)^{j-1} q^{-\w(\Zb')}  Q'^{-1} \right)}.
\]
Applying $\psi^{*}$ finishes the proof.
\end{proof}

\begin{lemma}\label{compatiblechambers}
 In the setting above, assume furthermore that all D5 branes have weight $1$. Then ordering $<_{\ol f}$ refines the ordering $<_{f}$ from \S\ref{sub: vholo}. So if a function is holomorphic in a neighborhood of $0_{\chamb_{f}}$, then it is also holomorphic in a neighborhood of $0_{\chamb_{\ol f}}$.
\end{lemma}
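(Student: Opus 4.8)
The plan is to unwind the combinatorial definition of the chamber orderings $<_f$ and $<_{\ol f}$ in terms of the functions $\sigma_f \colon \{1,\dots,n\} \to \{1,\dots,m\}$ and $\sigma_{\ol f} \colon \{1,\dots,n\} \to \{1,\dots,m+1\}$, and to check directly that the partial order induced by $\ol f$ on the equivariant parameters is a refinement of the one induced by $f$. Recall that $\ol f$ is a resolution of $f$: its tie diagram is obtained from that of $f$ by splitting the NS5 brane $\Zb$ into $\Zb'$ and $\Zb''$ and redistributing the $\w$ ties attached to $\Zb$ among $\Zb'$ and $\Zb''$ according to the weight decomposition $\w = \w' + \w''$, while leaving all other ties untouched. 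Concretely, this means that $\sigma_{\ol f}$ is obtained from $\sigma_f$ by relabeling: indices $j$ with $\sigma_f(j) < k$ are sent to $\sigma_f(j)$, indices with $\sigma_f(j) > k$ are sent to $\sigma_f(j) + 1$, and the $\w$ indices $j$ with $\sigma_f(j) = k$ are split into two groups, those sent to $k$ (the $\Zb'$ position) and those sent to $k+1$ (the $\Zb''$ position).

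The key step is then the following. By definition, $a_i <_f a_j$ holds precisely when $\sigma_f(i) < \sigma_f(j)$. I claim $a_i <_f a_j \implies a_i <_{\ol f} a_j$. If $\sigma_f(i) < \sigma_f(j)$, then after the relabeling above one checks case by case — according to whether each of $\sigma_f(i), \sigma_f(j)$ is $<k$, $=k$, or $>k$ — that $\sigma_{\ol f}(i) < \sigma_{\ol f}(j)$ or $\sigma_{\ol f}(i) \le \sigma_{\ol f}(j)$ with equality impossible since $i,j$ cannot land in the same fiber while $\sigma_f(i) < \sigma_f(j)$ (the only subtlety is when $\sigma_f(i) = k$ and $\sigma_f(j) = k$, which is excluded by $\sigma_f(i) < \sigma_f(j)$; and when one of them equals $k$, the split only separates the $\Zb'$-group from the $\Zb''$-group, preserving or refining the order). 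Hence every strict inequality in $<_f$ persists (and possibly strengthens) in $<_{\ol f}$, so $<_{\ol f}$ refines $<_f$.

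For the holomorphicity statement, recall from \S\ref{sub: vholo} that $0_{\chamb_f}$ is the point where every weight repelling for $\chamb_f$ is set to $0$, and $\lim_{a \to 0_{\chamb_f}}$ sends those repelling directions to $0$ along a cocharacter in $\chamb_f$. Since $<_{\ol f}$ refines $<_f$, every weight $a_i/a_j$ that is repelling for $\chamb_f$ is also repelling for $\chamb_{\ol f}$; consequently the locus $\{0_{\chamb_{\ol f}}\}$ lies inside (the closure of) the locus cut out by the repelling coordinates of $\chamb_f$, and a function holomorphic on a neighborhood of $0_{\chamb_f}$ restricts to a function holomorphic on a neighborhood of $0_{\chamb_{\ol f}}$. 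More precisely, a neighborhood of $0_{\chamb_f}$ in the $\chamb_f$-repelling coordinates, pulled back along the coordinate projection forgetting the extra $\chamb_{\ol f}$-repelling coordinates, contains a neighborhood of $0_{\chamb_{\ol f}}$, which gives the claim.

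I do not expect a serious obstacle here: the statement is essentially bookkeeping about how tie diagrams transform under NS5 resolution, together with the elementary observation that refining a chamber order only adds repelling directions. The one place requiring a little care is making the notion ``$0_{\chamb_{\ol f}}$ lies in a neighborhood of $0_{\chamb_f}$'' precise, since the two points live in tori of different rank (the parameter $a$ is the same, but the chambers differ); the cleanest formulation is the one above in terms of the repelling coordinates and the coordinate projection, and I would phrase the proof that way rather than trying to literally compare the two base points.
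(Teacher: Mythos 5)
The paper states this lemma without proof, treating it as immediate, so there is no argument of record to compare against; your combinatorial core is the right one and is correct. The relabeling $\sigma_{\ol f}$ of $\sigma_f$ (identity below $k$, shift by one above $k$, split of the fiber $\sigma_f^{-1}(k)$ between positions $k$ and $k+1$) is weakly monotone and strictly monotone on distinct values, so every strict inequality $\sigma_f(i)<\sigma_f(j)$ persists and new ones can only appear inside $\sigma_f^{-1}(k)$; hence $<_{\ol f}$ refines $<_f$.

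Two points in your last paragraph need repair, though neither is fatal. First, the claim that ``every weight $a_i/a_j$ that is repelling for $\chamb_f$ is also repelling for $\chamb_{\ol f}$'' is false if the two chambers are chosen independently: for a pair with $\sigma_f(i)=\sigma_f(j)=k$ that gets split by the resolution, a generic $\chamb_f$ may declare $a_j/a_i$ repelling while $<_{\ol f}$ forces $a_i/a_j$ to be repelling for $\chamb_{\ol f}$. Second, the remark that the two base points ``live in tori of different rank'' is off: an NS5 resolution does not change the D5 branes, so $\At$ and the parameters $a_1,\dots,a_n$ are literally the same for $X$ and $\ol X$, and both chambers are generic chambers for the same torus, each with $\binom{n}{2}$ repelling weights. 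The clean way to finish is to observe that, because $<_{\ol f}$ refines $<_f$, any generic chamber inducing $<_{\ol f}$ also induces $<_f$ and is therefore itself an admissible choice of $\chamb_f$; since the holomorphicity statements in \S\ref{sub: vholo} hold for every admissible choice of $\chamb_f$, taking $\chamb_f:=\chamb_{\ol f}$ gives $0_{\chamb_{\ol f}}=0_{\chamb_f}$ on the nose and the conclusion follows with no need for projections or closures.
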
 

Recall $\Phi_{\chamb_{f}}$ from \eqref{eq: phichamb}. We can also view $\ol f$ as a point in $Y$ equipped with the chamber induced by $\chamb_{\ol f}$. So we define $\Phi_{\chamb_{\ol f},Y}:=\Phi\left((q-\hbar^{-1}) N^{-}_{\ol f / Y}\right)$ where the repelling directions are taken for the chamber $\chamb_{\ol f}$.

\begin{theorem}\label{thm: ns5vertex}
    Let $X$ be a separated bow variety, and let $\ol X$ be an NS5 resolution. Let $f \in X^{\At}$ and let $\ol f \in \ol X$ be an NS5 resolution of $f$. Then 
\begin{equation}\label{eq: sepNS5res}
\MSver^{X}_{f}(Q^{-1})=\Phi_{\chamb_{\ol f},Y}\psi^{*}\left( \MSflaglim^{Y}(Q')^{-1} \MSver^{\ol X}_{\ol f}(\ol Q^{-1})\right).
\end{equation}
\end{theorem}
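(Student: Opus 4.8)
The plan is to prove Theorem \ref{thm: ns5vertex} in two stages: first for the case in which all D5 branes of $X$ have weight $1$ (so that $X = T^*\Fl$ and $\ol X = T^*\ol\Fl$), and then in full generality by bootstrapping via D5 resolutions and Theorem \ref{thm: d5 vertex}.

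\textbf{Stage 1: the flag case.} Assume all D5 branes have weight $1$. The idea is to compare both sides of \eqref{eq: sepNS5res} as solutions of the same $q$-difference equation. By Theorem \ref{thm: sepcompareqdes}, $\Phi_{\chamb_f}\MSver^{X}_{f}(Q^{-1})$ solves the equation $\MSmacdop_f(Q^{-1}) F = \mathsf{e}^X(Q^{-1}) F$, and $\Phi_{\chamb_{\ol f}}\MSver^{\ol X}_{\ol f}(\ol Q^{-1})$ solves the analogous equation for $\ol X$. Under the specialization $\psi^*$, Theorem \ref{thm: sepcompareqdes} shows that $\psi^*\MSmacdop_{\ol f} = \MSmacdop_f$ and $\psi^*\mathsf{e}^{\ol X} = \mathsf{e}^X$, so $\psi^*\big(\Phi_{\chamb_{\ol f}}\MSver^{\ol X}_{\ol f}(\ol Q^{-1})\big)$ — once we know it is well-defined — will again solve the $X$-equation. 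I would multiply by $\MSflaglim^Y(Q')^{-1}$ before specializing, and use Lemma \ref{lem: different const NS5} to check that the extra factor $\psi^*\big(\MSflaglim^Y(Q')^{-1}\big)$ contributes trivially to the leading term, while Lemma \ref{compatiblechambers} guarantees that replacing $\chamb_f$ by $\chamb_{\ol f}$ preserves holomorphicity near the origin. Then by Proposition \ref{prop: uniqueness} both sides of \eqref{eq: sepNS5res} are the unique solution in $R$ with a prescribed constant term in $a$, so it remains only to match the constant terms. The constant term of the left side is $\Phi_{\chamb_f}|_{a\to 0}$ evaluated appropriately, and the constant term of the right side, using $\Phi_{\chamb_{\ol f}} = \Phi_{\chamb_f}\cdot\Phi_{\chamb_{\ol f},Y}$ (which follows from the decomposition $N^-_{\ol f/\ol X} = N^-_{f/X} + N^-_{\ol f/Y}$ implied by Corollary \ref{cor: different tangents NS5}), is seen to agree after cancelling the $\Phi_{\chamb_{\ol f},Y}$ factor against the one explicitly written in \eqref{eq: sepNS5res}.

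\textbf{The main obstacle.} The genuinely subtle point, as the authors warn, is justifying that $\psi^*$ is well-defined on the right-hand side: the power series $\MSver^{\ol X}_{\ol f}(\ol Q^{-1})$ has $\psi^*$ lying on the boundary of its radius of convergence, and in fact the series diverges there, with the analytic continuation having a genuine pole. The resolution is that $\MSflaglim^Y(Q')^{-1}$ has a zero of exactly the right order to cancel this pole. To make this rigorous I would argue inductively on $\w' + \w''$, invoking Proposition \ref{prop:fullflagpoles} (from \cite{NSmac}) to control the location and order of the poles of $\MSver^{\ol X}_{\ol f}$ in the $\ol Q$ variables, and checking that the only pole encountered along the one-parameter family defining $\psi^*$ is simple and is precisely cancelled by the zero of $\MSflaglim^Y(Q')^{-1}$ coming from the $\Phi$ in the denominator of Proposition \ref{prop: msflaglim}. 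This is where the hypotheses of \S\ref{sec: flag vertex results}, especially the explicit product formula for $\MSflaglim^Y$ and the analytic continuation statement, do the real work. Once well-definedness is established, the $q$-difference equation comparison closes the argument cleanly.

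\textbf{Stage 2: the general case.} For an arbitrary separated bow variety $X$ with a distinguished NS5 brane $\Zb$ of weight $\geq 2$, I would apply iterated D5 resolutions to embed $X \hookrightarrow \wt X$ where $\wt X$ is a cotangent bundle of a partial flag variety (all D5 branes of weight $1$), compatibly with the NS5 resolution, so that we have a commuting square of D5 resolutions relating $X, \ol X, \wt X, \wt{\ol X}$. By Theorem \ref{thm: d5 vertex} (equivalently Corollary \ref{cor:D5resvertex}), $\MSver^X_f = \varphi^* \MSver^{\wt X}_{\wt f_\sharp}$ and $\MSver^{\ol X}_{\ol f} = \varphi^* \MSver^{\wt{\ol X}}_{\wt{\ol f}_\sharp}$, with the same specialization $\varphi^*$ of equivariant parameters on both sides. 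Since the $\psi^*$ specialization acts only on K\"ahler parameters and commutes with $\varphi^*$, applying Stage 1 to $\wt X$ and then pulling back along $j^*\varphi^*$ yields \eqref{eq: sepNS5res} for $X$; here I must check that the factors $\MSflaglim^Y(Q')^{-1}$ and $\Phi_{\chamb_{\ol f},Y}$ are unaffected by the D5 resolution, which follows because $Y$ depends only on the weights $\w', \w''$ of the resolved NS5 brane and Lemma \ref{lemma: restrction Chern roots in A resolution} ensures the relevant Chern root restrictions match. The co-separated statement (Theorem \ref{thm: cosepNS5vertex}) is proved by the identical argument using Theorem \ref{thm: cosepcompareqdes} in place of Theorem \ref{thm: sepcompareqdes}.
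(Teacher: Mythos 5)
Your two-stage architecture --- first the case where every D5 brane has weight one, handled via the Macdonald $q$-difference equations, the uniqueness statement of Proposition \ref{prop: uniqueness}, and matching of leading terms using Lemmas \ref{lem: different const NS5} and \ref{compatiblechambers}; then the general case by pulling back along D5 resolutions using Theorem \ref{thm: d5 vertex} and the commutation of $\psi^{*}$ with $\varphi^{*}$ --- is exactly the paper's proof, and your Stage 2 and the leading-term matching in Stage 1 are sound.

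The one genuine gap is in how you justify that $\psi^{*}$ is well-defined. You propose to induct on $\w'+\w''$ and to invoke Proposition \ref{prop:fullflagpoles} to control the poles of $\MSver^{\ol X}_{\ol f}$ in the $\ol Q$ variables. But Proposition \ref{prop:fullflagpoles} is a statement about the cotangent bundle of the \emph{complete} flag variety (all NS5 branes of weight one); it says nothing directly about a general $\ol X$ whose remaining NS5 branes may still have weight $\geq 2$, and induction on $\w'+\w''$ never touches those other branes, so it cannot reduce you to the complete-flag case. The paper instead inducts on the total excess weight $k(\ol X)=\sum_i(\w(\Zb_i)-1)$ and, crucially, strengthens the inductive statement: it proves simultaneously that \eqref{eq: sepNS5res} holds \emph{and} that the poles of $\MSver^{X}_{f}$ in $Q$ coincide with those of $\MSflaglim^{X}$. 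The second clause, applied to the pair $(\ol X, Z)$ for a further NS5 resolution $Z$ of $\ol X$, is precisely what guarantees at the next level up that the singularity of $\MSver^{\ol X}_{\ol f}$ met by $\psi^{*}$ is a simple pole cancelled by the simple zero of $(\MSflaglim^{\ol X})^{-1}$, so that the substitution is legitimate. Without strengthening the inductive hypothesis in this way (or an equivalent device), your well-definedness argument only closes in the base case $k(\ol X)=0$.
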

The well-definedness of $\psi^{*}$, a nontrivial fact, will be justified in the course of the proof.
\begin{proof}

In the proof below, we will omit writing the argument $(Q)$ or $(Q)^{-1}$. For a bow variety $X$ with NS5 branes $\Zb_{i}$, let $k(X):=\sum_{i} (\w(\Zb_{i})-1)$. Note that $k(X)=k(\ol X)+1$ for any NS5 resolution $\ol X$ of $X$.

\textbf{Step 1:} First, we will prove the statement assuming that all D5 branes have charge 1. Assume this for $X$ (hence also for $\ol X$). Actually, in this setting, we will prove a stronger statement: we will prove that for $X$ and $\ol X$, both \eqref{eq: sepNS5res} holds \emph{and} the poles of $\MSver^{X}_{f}$ in $Q$ coincide with the poles of $\MSflaglim^{X}$. We will prove this by induction on $k(\ol X)$.

By Corollary \ref{cor: different tangents NS5} and Corollary \ref{lem: different const NS5}, the desired formula is equivalent to 
\begin{equation}\label{eq: ns5vertex}
\Phi_{\chamb_{f}} \MSflaglim^{X}(Q)^{-1} \MSver^{X}_{f}(Q^{-1}) =\Phi_{\chamb_{\ol f}}  \psi^{*} \left(   \MSflaglim^{\ol X}(\ol Q)^{-1} \MSver^{\ol X}_{\ol f}(\ol Q^{-1})\right)
\end{equation}
where $\chamb_{f}$ and $\chamb_{\ol f}$ are as in \S\ref{sub: vholo}.

As the base case, suppose that $k(\ol X)=0$. In this case, all NS5 branes and D5 branes have weight 1 and $\ol X$ is isomorphic to the cotangent bundle of a complete flag variety. By Proposition \ref{prop:fullflagpoles}, the poles of $\MSver^{\ol X}_{\ol f}$ coincide with those of $\MSflaglim^{\ol X}$. Hence, the evaluation
\[
\psi^{*}\left((\MSflaglim^{\ol X})^{-1} \MSver^{\ol X}_{\ol f} \right)
\]
is well-defined, i.e. the simple pole in the second term is canceled by the simple zero in the first.

By Proposition \ref{Vholomorphic} and Lemma \ref{compatiblechambers}, the functions $\MSver^{X}_{f}$, $\MSver^{\ol X}_{\ol f}$, $\Phi_{\chamb_{\ol f}}$, and  $\Phi_{\chamb_{\ol f}}$ are all holomorphic in a neighborhood of the point $0_{\chamb_{\ol f}}$. By Theorem \ref{thm: sepcompareqdes}, both sides of \eqref{eq: ns5vertex} satisfy the same $q$-difference equation. By Proposition \ref{prop: uniqueness}, it is sufficient to show that the leading terms agree. Specifically, we must show that $\lim_{a\to 0_{\chamb_{\ol f}}}$ of the two sides agree. By definition of $\MSflaglim^{X}$ and $\MSflaglim^{\ol X}$ and since $\lim_{a \to 0_{\chamb_{\ol f}}} \Phi_{\chamb_{\ol f}} = \lim_{a \to 0_{\chamb_{\ol f}}} \Phi_{\chamb_{f}}=1$, this holds.

So \eqref{eq: sepNS5res} holds in this case. Furthermore, from the equality
\[
\MSver^{X}_{f}=\frac{\Phi_{\chamb_{\ol f}}}{\Phi_{\chamb_{f}}} \psi^{*}\left(\frac{\MSflaglim^{X}}{\MSflaglim^{\ol X}} \MSver^{\ol X}_{\ol f} \right)
\]
it is immediate that the poles of $\MSver^{X}_{f}$ in $Q$ coincide with those of $\MSflaglim^{X}$. This proves the base case.

Now suppose the statement to be proven holds for $X$ and $\ol X$ whenever $k(\ol X)< k_{0}$ for some $k_{0} >0$. 

Let $X$ and $\ol X$ be such that $k(\ol X)=k_{0}$. Let $Z$ be an NS5 resolution of $\ol X$. Since $k(Z)=k(X)-1<k_{0}$, the inductive hypothesis implies that the statement holds for the pair $\ol X$ and $Z$. In particular, the poles of $\MSver^{\ol X}_{\ol f}$ coincide with those of $\MSflaglim^{\ol X}$. Hence the substitution $\psi^{*}\left((\MSflaglim^{\ol X})^{-1} \MSver^{\ol X}_{\ol f}\right)$ is well-defined. Now, repeating the exact same argument using Macdonald equations as for the base case shows that \eqref{eq: sepNS5res} holds for $X$ and $\ol X$. It is also immediate that the poles of $\MSver^{X}_{f}$ in $Q$ are as desired.

\textbf{Step 2:} Now we apply the D5 property for vertex functions to prove the theorem for arbitrary bow varieties. We proceed by induction on $l(X):=\sum_{i} (\w(\Ab_{i})-1)$. In Step 1, we proved the theorem when $l(X)=0$, which serve as the base cases.

Let $X$ be arbitrary. Let $Z:=\wt{X}$ be a D5 resolution of $X$. The NS5 branes of $X$ and $Z$ are canonically identified. So let $\ol X$ and $\ol Z$ be NS5 resolutions of $X$ and $Z$ obtained by resolving the same NS5 brane in the same way. It is automatic that $\ol Z$ is a D5 resolution of $\ol X$. Let $f \in X^{\At}$. Let $g \in \ol X$ be an NS5 resolution of $f$. We also have the distinguished D5 resolutions $h:=\tilde{f}_{\sharp}\in Z$ and $k:= \tilde{g}_{\sharp} \in \ol Z$.

The NS5 variable specialization $\psi^{*}$ is the same for the pairs $X, \ol X$ and $Z, \ol Z$. The D5 specialization $\varphi^{*}$ is the same for the pairs $X, Z$ and $\ol X, \ol Z$. We have $\varphi^{*}\MSver^{Z}_{h}=\MSver^{X}_{f}$ and $\varphi^{*}\MSver^{\ol Z}_{k}=\MSver^{\ol X}_{g}$. We also have $\MSver^{Z}_{h}=\Phi_{\chamb_{k},Y} \psi^{*}\left((\MSflaglim^{Y})^{-1} \MSver^{\ol Z}_{k} \right)$ by induction, since $l(Z)=l(X)-1$.

Note also that the space $Y$ relevant for the pair $X, \ol X$ is the same as the space $Y$ for the pair $Z, \ol Z$. Thus $\Phi_{\chamb_{g},Y}=\Phi_{\chamb_k,Y}$.

So
\begin{align*}
\psi^{*}\left( \Phi_{\chamb_{g},Y} (\MSflaglim^{Y})^{-1} \MSver^{\ol X}_{g} \right) &=\psi^{*}\left(\Phi_{\chamb_{k},Y} (\MSflaglim^{Y})^{-1} \varphi^{*} \MSver^{\ol Z}_{k} \right)\\
&=\varphi^{*} \psi^{*}\left( \Phi_{\chamb_{k},Y} (\MSflaglim^{Y})^{-1} \MSver^{\ol Z}_{k}\right)\\
&= \varphi^{*} \MSver^{Z}_{h}\\
&=\MSver^{X}_{f}.
\end{align*}
This concludes the proof.
\end{proof}

When $X$ is co-separated, identical arguments, replacing Theorem \ref{thm: sepcompareqdes} with Theorem \ref{thm: cosepcompareqdes}, give the following.

\begin{theorem}\label{thm: cosepNS5vertex}
     Let $X$ be a co-separated bow variety, and let $\ol X$ be an NS5 resolution. Let $f \in X^{\At}$ and let $\ol f \in \ol X$ be an NS5 resolution of $f$. Let $\psi^{*}$ be the specialization
     \begin{align*}
           (z_{k}', z_{k}'')\mapsto  (z_k, \hbar^{\w(\Zb_{k}')} z_k ).
   \end{align*}
   Then 
\begin{equation}\label{eq: cosepNS5vertex}
\MSver^{X}_{f}(Q)=\Phi_{\chamb_{\ol f},Y}\psi^{*}\left( \MSflaglim^{Y}(Q'^{-1})^{-1} \MSver^{\ol X}_{\ol f}(\ol Q)\right).
\end{equation}
\end{theorem}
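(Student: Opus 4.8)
The plan is to repeat the two-step induction that proves Theorem \ref{thm: ns5vertex}, substituting the co-separated analogue of every ingredient: Theorem \ref{thm: cosepcompareqdes} takes the place of Theorem \ref{thm: sepcompareqdes}, the specialization is $\psi^{*}\colon (z_k', z_k'')\mapsto(z_k,\hbar^{\w(\Zb_k')}z_k)$, and the vertex functions are evaluated at $Q$ (resp. $\ol Q$, $Q'^{-1}$) rather than at their inverses.

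\textbf{Step 1: weight-one D5 branes.} I would first prove \eqref{eq: cosepNS5vertex} under the additional hypothesis that every D5 brane of $X$ (hence of $\ol X$) has weight $1$, so that $X$ and $\ol X$ are cotangent bundles of type $A$ partial flag varieties and Theorem \ref{thm: qdeflag} provides Macdonald $q$-difference equations in the K\"ahler variables. As in the separated case, one inducts on $k(\ol X):=\sum_i(\w(\Zb_i)-1)$ and proves the stronger statement that \eqref{eq: cosepNS5vertex} holds \emph{and} that the poles of $\MSver^{X}_{f}(Q)$ in $Q$ coincide with those of $\MSflaglim^{X}$. The base case $k(\ol X)=0$ is the complete flag variety: Proposition \ref{prop:fullflagpoles} identifies the poles of $\MSver^{\ol X}_{\ol f}$ with those of $\MSflaglim^{\ol X}$, so the specialization $\psi^{*}\big((\MSflaglim^{\ol X})^{-1}\MSver^{\ol X}_{\ol f}\big)$ is legitimate (a simple zero cancels a simple pole). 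After dividing out the $\MSflaglim$ factors, both sides satisfy the same gauge-transformed Macdonald equation by Theorem \ref{thm: cosepcompareqdes}, so by the uniqueness statement Proposition \ref{prop: uniqueness} (transported to the $\MSmacdop$ normalization) it suffices to match leading terms as $a\to 0_{\chamb_{\ol f}}$; this follows from the definitions of $\MSflaglim^{X}$ and $\MSflaglim^{\ol X}$, the co-separated analogue of Lemma \ref{lem: different const NS5}, and $\lim_{a\to 0_{\chamb_{\ol f}}}\Phi_{\chamb_{\ol f}}=\lim_{a\to 0_{\chamb_{\ol f}}}\Phi_{\chamb_{f}}=1$. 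The inductive step is verbatim: resolving an NS5 brane of $\ol X$ produces $Z$ with $k(Z)=k(\ol X)-1$, the inductive hypothesis makes $\psi^{*}\big((\MSflaglim^{\ol X})^{-1}\MSver^{\ol X}_{\ol f}\big)$ well-defined, and the same Macdonald-equation argument closes the induction while pinning down the poles of $\MSver^{X}_{f}$.

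\textbf{Step 2: general D5 branes.} I would then remove the weight-one hypothesis by induction on $l(X):=\sum_i(\w(\Ab_i)-1)$, using the D5 compatibility of Corollary \ref{cor:D5resvertex}. Given arbitrary co-separated $X$, pick a D5 resolution $Z=\wt X$; the NS5 branes of $X$ and $Z$ are canonically identified, so resolving the same NS5 brane in the same way produces $\ol X$ and $\ol Z$ with $\ol Z$ a D5 resolution of $\ol X$ and with a common auxiliary variety $Y$. With $g$ an NS5 resolution of $f$, and $\tilde{f}_{\sharp}\in Z$, $\tilde{g}_{\sharp}\in\ol Z$ the distinguished D5 resolutions, Corollary \ref{cor:D5resvertex} gives $\varphi^{*}\MSver^{Z}_{\tilde{f}_{\sharp}}=\MSver^{X}_{f}$ and $\varphi^{*}\MSver^{\ol Z}_{\tilde{g}_{\sharp}}=\MSver^{\ol X}_{g}$, while $l(Z)=l(X)-1$ makes the inductive hypothesis available for the pair $Z,\ol Z$. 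Since $\psi^{*}$ agrees for the pairs $X,\ol X$ and $Z,\ol Z$, $\varphi^{*}$ agrees for the pairs $X,Z$ and $\ol X,\ol Z$, and $\Phi_{\chamb_{g},Y}=\Phi_{\chamb_{\tilde{g}_{\sharp}},Y}$, the four-line computation ending the proof of Theorem \ref{thm: ns5vertex} goes through unchanged, yielding \eqref{eq: cosepNS5vertex}.

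\textbf{Main obstacle.} Exactly as in the separated case, the crux is that $\psi^{*}$ lands on the boundary of the domain of convergence of the series defining $\MSver^{\ol X}_{\ol f}$, so one must work with its meromorphic continuation and verify that the singularity along the image of $\psi$ is a \emph{simple} pole cancelled precisely by the simple zero of $(\MSflaglim^{\ol X})^{-1}$. This is the real content of Step 1, and it is why the induction must carry along the exact location of the poles of $\MSver^{X}_{f}$; the base case ultimately rests on the singularity analysis of the Macdonald function from \cite{NSmac} recorded as Proposition \ref{prop:fullflagpoles}. Everything else --- the transformation identities behind $\MSmacdop_{f}$, the eigenvalue equality $\psi^{*}\mathsf{e}^{\ol X}=\mathsf{e}^{X}$, and the $\MSflaglim$ limit computations --- is routine once the separated analogues are in hand, since Theorem \ref{thm: cosepcompareqdes} already packages the relevant $q$-difference and eigenvalue statements in the co-separated normalization.
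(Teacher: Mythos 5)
Your proposal is correct and follows essentially the same route as the paper: the paper's entire proof of Theorem \ref{thm: cosepNS5vertex} is the remark that the argument for Theorem \ref{thm: ns5vertex} goes through verbatim after replacing Theorem \ref{thm: sepcompareqdes} with Theorem \ref{thm: cosepcompareqdes}, which is exactly the two-step induction you spell out.
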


\begin{corollary}
     Let $X$ be a bow variety and let $f\in X^{\At}$. The poles of $\ver^{X}_{f}(Q)$ in $Q$ are all simple.
\end{corollary}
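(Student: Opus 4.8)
The plan is to reduce the statement to the case $X=T^{*}\Fl$ of a (partial) flag variety, for which the $Q$-poles were already located while proving Theorem \ref{thm: ns5vertex}, and then to read off simplicity from the explicit product of Proposition \ref{prop: msflaglim}. First I would remove the dependence on the presentation of $X$: by Proposition \ref{prop: vertex and HW}, applied repeatedly, one has $\ver^{X}_{f}(Q)=\ver^{X'}_{f'}(Q\,q^{\beta})$ for some separated bow variety $X'$, a fixed point $f'$ matching $f$ under the Hanany--Witten isomorphism, and some $\beta\in\ZZ^{m-1}$. Rescaling $Q$ by a monomial in $q$ cannot change the order of any pole, so it suffices to treat $X$ separated (the co-separated case is identical, using Theorem \ref{thm: cosepNS5vertex} in place of Theorem \ref{thm: ns5vertex}).

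Next I would resolve the D5 branes. Splitting each D5 brane of weight $\geq 2$ into weight-one branes by iterated D5 resolutions and invoking Corollary \ref{cor:D5resvertex} at each step produces a separated bow variety $\tilde X$ with all D5 branes of weight one -- hence $\tilde X\cong T^{*}\Fl$ -- together with the distinguished resolution $\tilde f_{\sharp}$ of $f$ and the identity $\ver^{X}_{f}(Q)=\varphi^{*}\ver^{\tilde X}_{\tilde f_{\sharp}}(Q)$, where $\varphi^{*}$ substitutes only the equivariant parameters and fixes the K\"ahler variables $Q$. By Proposition \ref{prop:fullflagpoles} and Step 1 of the proof of Theorem \ref{thm: ns5vertex}, the quotient $\flaglim^{\tilde X}(Q)^{-1}\ver^{\tilde X}_{\tilde f_{\sharp}}(Q)$ is entire in $Q$, so every pole of $\ver^{\tilde X}_{\tilde f_{\sharp}}(Q)$ in $Q$ lies on a pole divisor of $\flaglim^{\tilde X}$; and by Proposition \ref{prop: msflaglim} these divisors involve only $Q,\hbar,q$, not the equivariant parameters. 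Therefore $\varphi^{*}$ neither moves these divisors nor raises any pole order -- a parameter specialization can increase a pole order only by forcing two distinct pole divisors to collide, and here the pole divisors are already fixed in $Q$-space. So it remains to show that $\flaglim^{\tilde X}(Q)$, equivalently $\MSflaglim^{\tilde X}(Q)$, has only simple poles.

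Finally I would inspect Proposition \ref{prop: msflaglim}: $\MSflaglim^{\tilde X}(Q)$ is a finite product of factors $\Phi(\,\cdot\,)/\Phi(\,\cdot\,)$, and since $\Phi(x)=\prod_{i\geq 0}(1-xq^{i})$ has only simple zeros, every pole is simple and comes from a denominator, i.e.\ from a divisor $(q\hbar)^{j}\prod_{i\leq l\leq k}q^{-\w(\Zb_{l})}Q_{l}^{-1}=q^{-d}$ with $1\leq i\leq m-1$, $1\leq j\leq\w(\Zb_{i})$, $i\leq k\leq m-1$, $d\geq 0$. A short check shows these are pairwise distinct: the monomial $\prod_{i\leq l\leq k}Q_{l}$ is a product of consecutive independent variables, so it recovers the interval $[i,k]$; and for a fixed interval the defining equation determines $(q\hbar)^{j}q^{d}$, which pins down both $j$ and $d$ since $\hbar$ and $q$ are algebraically independent. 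Hence no two factors share a pole divisor, so $\MSflaglim^{\tilde X}$, and therefore $\ver^{X}_{f}$, has only simple poles in $Q$. I expect the combinatorics of this last paragraph to be routine; the point needing the most care is the middle paragraph -- verifying that the $Q$-pole locus of the partial flag vertex is independent of the equivariant parameters, so that passing back through the D5 specialization $\varphi^{*}$ cannot create a double pole out of a simple one.
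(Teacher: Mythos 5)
Your proposal is correct and follows essentially the same route as the paper: reduce via Hanany--Witten and iterated D5 resolutions (Theorem \ref{thm: d5 vertex} / Corollary \ref{cor:D5resvertex}) to the weight-one D5 case, where Step 1 of the proof of Theorem \ref{thm: ns5vertex} already shows that the $Q$-poles of the vertex function coincide with those of $\MSflaglim^{X}$, which are independent of the equivariant parameters and hence unaffected by $\varphi^{*}$. The only content you add beyond the paper's two-sentence argument is the (routine but worth recording) verification that the pole divisors of the factors of $\MSflaglim^{X}$ in Proposition \ref{prop: msflaglim} are pairwise distinct, so the product indeed has only simple poles.
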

\begin{proof}
    The proof of Theorem \ref{thm: ns5vertex} showed this result when all D5 branes have weight 1. The general case now follows by iterating Theorem \ref{thm: d5 vertex}.
\end{proof}

\section{Mirror symmetry}\label{sec: mirsym}

In this section, we will prove the main theorem. Let $X$ be a bow variety with 3d mirror dual $X^{!}$. Fix the standard chambers $\chamb$ and $\chamb^{!}$. For a fixed point $f \in X^{\At}$, let $\Phi^{\pm}_{f}=\Phi\left((q-\hbar^{-1})N^{\pm}_{f}\right)$, where the attracting/repelling parts are taken with respect to $\chamb$. Similarly, for $f^{!} \in (X^{!})^{\At^{!}}$, let $\Phi^{\pm}_{f^{!}}=\Phi\left((q-(\hbar^{!})^{-1})N^{\pm}_{{f}^{!}}\right)$.

We normalize vertex functions $\MSver^{X}(Q)$ and $\MSver^{X^{!}}(Q^{!})$ as in \eqref{eq: def of MSver}. In particular, we use the class $\hbar(\alpha+\beta)^{\vee}$ (resp. $\alpha^{!}+\beta^{!}$) for the polarization in the vertex function of $X$ (resp. $X^{!}$). We normalize stable envelopes as in \eqref{eq: def of msstab}.

Define the mirror map $\mirmap:=\mirmap_{X^{!} \to X}$ by
\begin{align*}
   q^{!} \mapsto q\qquad 
   \hbar^{!} \mapsto \frac{1}{\hbar q}
   \qquad 
   (q^{!})^{-\w(\Zb_{i}^{!})} Q_{i}^{!}\mapsto u_{i}
   \qquad 
   u_{i}^{!} \mapsto  q^{\w(\Zb_{i})} Q_{i}.
\end{align*}

\begin{theorem}\label{thm: mirsym}
Mirror symmetry of vertex functions holds:
\[
\mirmap \left(\Phi^{+}_{f^{!}} \MSver^{X^{!}}_{f^!}(Q^{!})\right)=\sum_{g\in X^{\At}}\MSstab^X_{gf} \Phi^{-}_{g} \MSver^{X}_{g}(Q^{-1}).
\]
\end{theorem}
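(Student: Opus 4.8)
The plan is to reduce Theorem~\ref{thm: mirsym} to the case where both $X$ and $X^!$ are cotangent bundles of complete flag varieties, for which the statement is known from \cite{dinkms2}, by peeling off brane weights one at a time and tracking how both sides of the identity transform. Concretely, I would run a double induction on the total D5 excess $\sum_i(\w(\Ab_i)-1)$ and the total NS5 excess $\sum_i(\w(\Zb_i)-1)$ of $\D$ (equivalently, on the weights of $\D^!$). The base case, all branes of weight one, is exactly the statement that was proven for $T^*\Fl$ using the identification of the $q$-difference equations in both $Q$ and $u$ with Macdonald difference equations; by Hanany--Witten invariance (Proposition~\ref{prop: vertex and HW}, Theorem~\ref{thm: stab exists}) we may freely assume $\D$ is separated or co-separated when convenient, and the normalizations $\MSver$, $\MSstab$ have been rigged (\S\ref{sec: mirsym vertex normalization}, \eqref{eq: def of msstab}) precisely so that the identity is stable under these transitions.

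For the inductive step on D5 weights: choose a D5 brane $\Ab$ of weight $\geq 2$ in $\D$, split it, and form the D5 resolution $j:X\hookrightarrow\wt X$, $\varphi:\Tt\hookrightarrow\wt\Tt$. On the vertex side, Theorem~\ref{thm: d5 vertex} (and its fixed-point form Corollary~\ref{cor:D5resvertex}) gives $\ver^X_f = \varphi^*\ver^{\wt X}_{\tilde f_\sharp}$, so the right-hand side of Theorem~\ref{thm: mirsym} for $X$ is obtained from that for $\wt X$ by restricting to the sub-locus $\varphi$ and to the distinguished resolutions $\tilde f_\sharp$; the compatibility of stable envelope restrictions $\MSstab$ under D5 resolution is \cite[\S6.6]{BR}, and of the normal-bundle factors $\Phi^{\pm}$ via Corollary~\ref{cor: different tangents NS5}-type bookkeeping (note $\tilde f_\sharp$ is the \emph{maximal} resolution, so attracting/repelling directions are controlled). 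On the dual side, splitting a D5 brane of $\D$ is splitting an NS5 brane of $\D^!$, so $\MSver^{X^!}$ transforms by the NS5 property, Theorem~\ref{thm: cosepNS5vertex}: $\MSver^{X^!}_{f^!} = \Phi_{\chamb_{\bar f^!},Y}\,\psi^*\big((\MSflaglim^{Y})^{-1}\MSver^{\ol{X^!}}_{\bar f^!}\big)$. The heart of this step is to check that the mirror map $\mirmap$ intertwines $\varphi^*$ (on the $X$-side parameters) with $\psi^*$ composed with the $\MSflaglim^Y$ twist (on the $X^!$-side parameters) — this is a direct computation with the explicit formula for $\mirmap$, the definition of $\psi^*$ in \S\ref{sec: equivariant geometry of ns5}, the shifts $(q^!)^{-\w(\Zb_i^!)}$, and Lemma~\ref{lem: different const NS5}; mirror symmetry for stable envelopes in the form of Lemma~\ref{lemma: mirror symmetry stabs rephrasing} supplies the matching of the $\MSstab$ matrices throughout.

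The inductive step on NS5 weights is the mirror image: split an NS5 brane $\Zb$ of $\D$ (a D5 brane of $\D^!$), use Theorem~\ref{thm: ns5vertex} / Theorem~\ref{thm: cosepNS5vertex} on $\MSver^X$ and Theorem~\ref{thm: d5 vertex} on $\MSver^{X^!}$, and again verify that $\mirmap$ converts $\psi^*$ and the $\MSflaglim^Y$ factor on the $X$-side into $\varphi^*$ on the $X^!$-side. Because the two resolution theorems were engineered as mirror statements — Theorem~\ref{thm: intro d5 vertex} an enumerative fusion of $R$-matrices, Theorem~\ref{thm: intro ns5vertex} its dual — the two inductive steps are formally symmetric once the parameter dictionary is fixed. \textbf{The main obstacle} I anticipate is the NS5 step: here $\psi^*$ specializes a Kähler parameter to a point on the boundary of the radius of convergence of $\MSver^{\ol X}$, and one must show that after stripping the pole with $(\MSflaglim^Y)^{-1}$ the resulting analytic continuation matches, term by divisor, what the right-hand side sum over $g\in X^{\At}$ produces — i.e. that the poles introduced by analytic continuation of the vertex functions are exactly cancelled by the factors $\Phi^{\pm}$ and by the poles of $\MSstab^X_{gf}$, with no spurious residues. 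This is controlled by Proposition~\ref{prop:fullflagpoles} (all poles simple, located at the $\MSflaglim$ divisors) propagated through both resolution procedures, but assembling the bookkeeping so that the well-definedness of $\psi^*$ on the mirror side is manifest — rather than merely true after the fact — will require care, exactly as in the proof of Theorem~\ref{thm: ns5vertex} itself.
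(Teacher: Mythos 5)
Your proposal follows essentially the same route as the paper: a double induction on D5 and NS5 brane excess with base case $T^{*}\Fl$ (where the result is known from \cite{dinkms2}), matching an NS5 resolution on one side of the mirror with a D5 resolution on the other via the identity $\psi^{*}\circ\mirmap_{\ol X^{!}\to\ol X}=\mirmap_{X^{!}\to X}\circ\varphi^{*}$, and correctly isolating the analytic subtlety of the $\psi^{*}$ specialization as the main obstacle. The only cosmetic divergence is that for the D5-resolution-of-$X$ step the paper does not invoke D5 compatibility of stable envelopes directly, but instead passes to the dual formulation (Theorem \ref{thm: opmirsym}, via Proposition \ref{prop: duality stable envelopes}) and reruns the NS5 argument with $\Stab^{\opp}$ on the co-separated side.
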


First, we observe that it is enough to prove Theorem \ref{thm: mirsym} assuming that $X$ is separated and $X^{!}$ is coseparated. Indeed, by Proposition \ref{prop: HW vertex}, vertex functions change slightly under Hanany-Witten transition. On the other hand, so does the map $\mirmap$. These two shifts exactly compensate for each other, so the statement of Theorem \ref{thm: mirsym} is unchanged under Hanany-Witten transition.

The mirror symmetry theorem above can be equivalently stated using the stable envelope of $X^!$. To make this precise, for any bow variety $X$, we set
\begin{equation}\label{eq: def dual of msstab}
    \MSstab^{\opp}_{gf}(a, Q,\hbar):=\left(\frac{\det (\alpha_{\opp})|_{f}}{\det(N_{\chamb_{\opp, f}}^-)}\right)^{1/2}\frac{\Stab_{\chamb_{\opp}}^{\opp}(f)|_{g}}{\Stab_{\chamb_{\opp}}^{\opp}(g)|_{g}}(a, Q,\hbar)\left(\frac{\det (\alpha_{\opp})|_{g}}{\det(N_{\chamb_{\opp},g}^-)}\right)^{-1/2}.
\end{equation}
Then, we have:
\begin{theorem}\label{thm: opmirsym}
Theorem \ref{thm: mirsym} is equivalent to
\[
\kappa^{-1} \left(\Phi^+_{\chamb_{\opp},f} \cdot \MSver_f^X(Q^{-1})\right)=\sum_{g^!\in (X^!)^{\At^!}} \MSstab^{\opp, X^!}_{ g^!f^!}(a^{!},(Q^!)^{-1},\hbar^{!})\Phi^-_{\chamb_{\opp}^!, g^!} \MSver^{X^!}_{g^!}(Q^{!}).
\]
\end{theorem}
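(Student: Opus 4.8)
The plan is to derive Theorem~\ref{thm: opmirsym} from Theorem~\ref{thm: mirsym} purely formally, by rewriting the latter using the duality of stable envelopes (Proposition~\ref{prop: duality stable envelopes}) together with the two elementary but essential facts that the change of variables $\mirmap$ is an involution up to swapping the two sides (i.e.\ $\mirmap_{X^!\to X}$ and $\mirmap_{X\to X^!}$ are mutually inverse, after the exchange $\hbar^!\leftrightarrow 1/(\hbar q)$), and that under this change of variables the K\"ahler and equivariant parameters of $X$ and $X^!$ get interchanged. First I would apply $\kappa^{-1}=\mirmap_{X\to X^!}^{\;}$ to the whole identity of Theorem~\ref{thm: mirsym}, turning the left-hand side into $\Phi^+_{f^!}\MSver^{X^!}_{f^!}(Q^!)$ expressed on the $X^!$-side, and the right-hand side into a sum $\sum_{g\in X^{\At}} \kappa^{-1}(\MSstab^X_{gf})\,\kappa^{-1}(\Phi^-_g)\,\kappa^{-1}(\MSver^X_g(Q^{-1}))$. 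The point is then to recognize each of the three factors on the $X^!$-side.

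For the stable envelope factor, Lemma~\ref{lemma: mirror symmetry stabs rephrasing} already gives $\kappa^{-1}(\MSstab^X_{gf}) = \MSstab^{X^!}_{g^! f^!}$ in terms of the \emph{standard}-chamber stable envelope of $X^!$; to bring in the opposite chamber I would invoke Proposition~\ref{prop: duality stable envelopes}, which identifies $(S_{\chamb})^{-1}$ with the transpose of $S^{\opp}_{\chamb_{\opp}}$ after $z\mapsto z^{-1}$. Since the sum over $g$ in Theorem~\ref{thm: mirsym} is, after applying $\kappa^{-1}$, a sum over $g^!\in(X^!)^{\At^!}$, the matrix $\MSstab^X_{gf}$ becomes (its $\kappa^{-1}$-image, which is) a column of $\MSstab^{X^!}$; inverting and transposing it — which is exactly what happens when one solves the system for $\MSver^{X^!}$ in terms of $\MSver^X$ versus the reverse — produces the $\MSstab^{\opp,X^!}_{g^!f^!}(a^!,(Q^!)^{-1},\hbar^!)$ appearing in Theorem~\ref{thm: opmirsym}. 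Concretely, I would write Theorem~\ref{thm: mirsym} as a matrix equation $\vec V^{!} = M\,\vec V$ with $M_{fg}=\MSstab^X_{gf}$ (up to the $\Phi^\pm$ twists), solve it as $\vec V = M^{-1}\vec V^{!}$, and then identify $M^{-1}$, after $\kappa^{-1}$, with the $\MSstab^{\opp,X^!}$ matrix using Proposition~\ref{prop: duality stable envelopes} and the definition \eqref{eq: def dual of msstab}.

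For the $\Phi$-factors I would use the additivity rule $\Phi(x+y)=\Phi(x)\Phi(y)$ together with the decomposition $T_g X = N_g^- + N_g^+$ and the analogous decomposition for the opposite chamber, $N_g^{-,\chamb_{\opp}} = N_g^{+}$, $N_g^{+,\chamb_{\opp}}=N_g^{-}$, so that $\Phi^-_g \Phi^+_g = \Phi((q-\hbar^{-1})T_gX)$ and $\Phi^+_{\chamb_{\opp},g}=\Phi^-_g$; a parallel statement holds on the $X^!$-side with $(q,\hbar)$ replaced by $(q^!,\hbar^!)$. Matching the $\hbar^{-1}$ prefactor $(q-\hbar^{-1})$ with $(q^!-(\hbar^!)^{-1}) = (q - \hbar q) = -q(\hbar-1)$ under $\mirmap$, and tracking how $N^\pm$ of $X$ transforms into $N^\mp$ of $X^!$ (this is precisely the content of mirror symmetry of tangent weights, visible already in the $\hbar=1$ degeneration discussed after Theorem~\ref{thm: mirsym intro}), one gets exactly the $\Phi^+_{\chamb_{\opp},f}$ on the left and $\Phi^-_{\chamb^!_{\opp},g^!}$ on the right. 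Finally, the vertex-function factor: $\kappa^{-1}(\MSver^X_g(Q^{-1}))$ is by definition an object on the $X^!$-side written in the $X^!$ parameters, and since $\kappa^{-1}$ sends $Q_i \mapsto$ (a monomial in $u^!$) and $u_i\mapsto$ (a monomial in $Q^!$), the formal power series $\MSver^X_g(Q^{-1})$ — which is a series in $Q$ with coefficients rational in $u,\hbar,q$ — becomes a series in $u^!$; this is the natural partner of $\MSver^{X^!}_{g^!}(Q^!)$ on the other side and I would identify it as such, leaving it as $\MSver^X_g(Q^{-1})$ in the $\kappa^{-1}$-image just as the statement of Theorem~\ref{thm: opmirsym} does.

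The main obstacle, and the only genuinely delicate point, is bookkeeping the chamber bookkeeping: one must verify that inverting the (upper-triangular, standard-chamber) stable-envelope matrix and then transposing really does land on the \emph{opposite}-chamber stable envelope with the prescribed $z\mapsto z^{-1}$ and $\hbar^r$ shifts, and that these shifts are precisely absorbed into the normalization \eqref{eq: def dual of msstab} and into the $\mirmap$-image of the K\"ahler shifts hidden in the definition \eqref{eq: def of MSver} of $\MSver$. In other words, the hard part is not any single identity but checking that the half-determinant normalizations in \eqref{eq: def of msstab} and \eqref{eq: def dual of msstab}, the $(-\hbar^{1/2})$-shifts in \eqref{eq: def of MSver}, the quasi-periods of $\vartheta$ and $\Phi$, and the $q^{\w(\Zb_i)}$ factors in $\mirmap$ all conspire so that no stray monomial in $q,\hbar$ survives. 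I would organize this by first proving the statement at the level of quasi-periods (as in the proof of Lemma~\ref{lemma: mirror symmetry stabs rephrasing}), reducing the claimed equality to an equality of scalar functions with no poles and trivial quasi-periods, hence constants, and then pinning down the constant by evaluating at a single fixed point $g^!=f^!$ where $\MSstab^{\opp,X^!}_{f^!f^!}=1$ and the vertex functions both reduce to their leading term $1+O(Q)$.
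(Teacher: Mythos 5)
Your proposal follows essentially the same route as the paper: rewrite Theorem \ref{thm: mirsym} as a matrix identity, apply Lemma \ref{lemma: mirror symmetry stabs rephrasing} and Proposition \ref{prop: duality stable envelopes} to trade the standard-chamber matrix $\MSstab^{X}$ for the inverse transpose of $S^{\opp,X^!}_{\chamb^!_{\opp}}((Q^!)^{-1})$, invert, and absorb the half-determinant normalizations. The only difference is cosmetic: where you propose a quasi-period/rigidity argument for the final bookkeeping, the paper simply verifies the single identity $\det(\alpha^!)|_{f^!}/\det(N^-_{f^!})=\bigl(\det(\alpha^!_{\opp})|_{f^!}/\det(N^+_{f^!})\bigr)^{-1}$ by direct computation, and no cross-mirror matching of the $\Phi$-factors is needed since $\Phi^-_{\chamb}=\Phi^+_{\chamb_{\opp}}$ holds on each side separately.
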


\begin{proof}

For a given bow variety $X$ and a polarization $\alpha$, we define the diagonal matrix $L^{X}_{\alpha, \chamb}$ with entries 
\[
(L^{X}_{\alpha, \chamb})_{fg}=\left(\frac{\det(\alpha)|_{f}}{\det(N_{\chamb,f}^{ -})}\right)^{1/2}\delta_{fg}, \quad f,g \in X^{\At}.
\]
Using \eqref{stable envelope restriction}, the statement of Theorem \ref{thm: mirsym} can then be written in matrix notation as 
\[
\mirmap\left( \Phi^{+}_{\chamb^!} \cdot \MSver^{X^{!}}\right)= L^{X}_{\alpha, \chamb} \cdot  S^X_{\chamb} \cdot (L^{X}_{\alpha, \chamb})^{-1} \cdot \Phi^-_{\chamb} \cdot \MSver^X,
\]
where now $\alpha$ is the class \eqref{eq: class alpha for general bow variety}.
We can rewrite the equation above as follows:
\begin{align*}
    \mirmap\left( \Phi^{+}_{\chamb^!} \cdot \MSver^{X^{!}}(Q^{!})\right)
    &= \kappa \left(L^{X^{!}}_{\alpha^!, \chamb^!} \cdot  S^{X^!}_{\chamb^!} \cdot (L^{X^{!}}_{\alpha^!, \chamb^!})^{-1} \right)^{T}\cdot \Phi^-_{\chamb} \cdot \MSver^X(Q^{-1})
    \\
    &= \kappa \left( (L^{X^{!}}_{\alpha^!, \chamb^!})^{-1} \cdot  (S^{\opp, X^!}_{\chamb^!_{\opp}} )^{-1}((Q^!)^{-1})\cdot L^{X^{!}}_{\alpha^!, \chamb^!} \right)\cdot \Phi^-_{\chamb} \cdot \MSver^X(Q^{-1})
    \\
    &= \kappa \left( L^{X^{!}}_{\alpha^!_{\opp}, \chamb^!_{\opp}} \cdot  (S^{\opp, X^!}_{\chamb^!_{\opp}} )^{-1}((Q^!)^{-1})\cdot (L^{X^{!}}_{\alpha^!_{\opp}, \chamb^!_{\opp}})^{-1} \right)\cdot \Phi^-_{\chamb} \cdot \MSver^X(Q^{-1}).
\end{align*}
The first equality follows by Lemma \ref{lemma: mirror symmetry stabs rephrasing}, the second by Proposition \ref{prop: duality stable envelopes}, and the third by the equalities
\[
\frac{\det(\alpha^!)|_{f^!}}{\det(N^-_{f^!})}=\left(\frac{\det(\alpha_{\opp}^!)|_{f^!}}{\det(N^+_{f^!})}\right)^{-1} \qquad \forall f^!\in (X^!)^{\At^!},
\]
which can be checked with a straightforward computation.
Inverting the relevant matrices and noticing that $\Phi^-_{\chamb}=\Phi^+_{\chamb_{\opp}}$, we get 
\[
\kappa^{-1} \left(\Phi^+_{\chamb_{\opp}} \cdot \MSver^X(Q^{-1})\right)= L^{X^{!}}_{\alpha^!_{\opp}, \chamb^!_{\opp}} \cdot  S^{\opp, X^!}_{\chamb^!_{\opp}} ((Q^!)^{-1})\cdot (L^{X^{!}}_{\alpha^!_{\opp}, \chamb^!_{\opp}})^{-1} \Phi^{-}_{\chamb^!_{\opp}} \cdot \MSver^{X^{!}}(Q^!).
\]
The statement of the theorem can now be obtained by expanding both sides in the fixed point basis.

\end{proof}

\subsection{Reduction to weight 1 NS5 branes}

We will need the following lemma.
\begin{lemma}\label{lem: L identity}
Let $f,g, \in X^{\At}$ and let $\ol g$ be an NS5 resolution of $g$. Let $L_f=(L^{X}_{\alpha, \chamb})_{ff}$ where $\chamb$ is the standard chamber and $\alpha$ is the polarization \eqref{eq: class alpha for general bow variety}. Recall $Y$ from \S\ref{sec: equivariant geometry of ns5}. The following equation holds:
\[
\frac{\vartheta(N^{-}_{\ol g/Y})}{\vartheta(N_{\ol g/Y^{\hbar}})}(a \hbar^{-\gamma}) L_{g}^{-1} L_{f} \Phi_{\chamb_{\ol g} ,Y}  = L_{\ol g}^{-1} L_{\ol f_{\sharp}} \Phi^{-}_{\ol g,Y} .
    \]
\end{lemma}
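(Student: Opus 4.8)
The plan is to prove the identity by a direct computation whose principal step is to rewrite the four normalization constants $L_f,L_g,L_{\ol f_\sharp},L_{\ol g}$ in terms of data intrinsic to the fiber variety $Y$, after which the statement collapses to a functional-equation manipulation of $\vartheta$- and $\Phi$-factors attached to $\ol g$.

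First I would analyze the ratios $L_{\ol h}/L_h$ for $h\in\{f,g\}$, where $\ol h$ is the resolution in play ($\ol h=\ol f_\sharp$ when $h=f$, and $\ol h=\ol g$ when $h=g$). By Corollary \ref{cor: different tangents NS5} we have $T_{\ol h}\ol X-T_hX=T_{\ol h}Y$ as virtual $\Tt$-representations. By Remark \ref{remark torus action on fibers of NS5 resolutions} the $\At$-action induced on $Y$ by the Lagrangian correspondence differs from the tautological bow-variety action only by an $\hbar^{-\gamma(h)}$-twist, which does not affect $\At$-weights; hence the $\chamb$-attracting/repelling splitting of $T_{\ol h}\ol X$ restricts compatibly along $T_hX\hookrightarrow T_{\ol h}\ol X$, giving $\det(N^{-,\ol X}_{\ol h})=\det(N^{-,X}_h)\cdot\det(N^{-,Y}_{\ol h})$ after the twist. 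Using the explicit local formula \eqref{eq: class alpha for general bow variety} for the polarization together with the local modification of the quiver data of an NS5 resolution recalled in \S\ref{sec: ns5 resolutions}, I would then verify the companion identity $\det(\alpha^{\ol X})|_{\ol h}=\det(\alpha^X)|_h\cdot\det(T^{1/2}Y)|_{\ol h}$ up to a topologically trivial line that cancels in normalized ratios, where $T^{1/2}Y=TY^\hbar$ is the cotangent polarization of $Y=T^*Y^\hbar$ and $N_{\ol h/Y^\hbar}=TY^\hbar|_{\ol h}$ by Proposition \ref{proposition separated NS5 fusion ratios stabs}. Combining the two, $L_{\ol h}/L_h$ is exactly the stable-envelope normalization factor of $Y$ at $\ol h$ for the $\hbar$-twisted action; since the image of $\ol f_\sharp$ in $Y$ under Remark \ref{remark resolution of fixed points NS5 resolutions} is the all-crossing (resp. no-crossing) fixed point, which is minimal for $\chamb$, one checks this factor is trivial at $\ol f_\sharp$, so that substituting $L_f=L_{\ol f_\sharp}/(L_{\ol f_\sharp}/L_f)$ and $L_g=L_{\ol g}/(L_{\ol g}/L_g)$ makes the $\ol f_\sharp$-dependence cancel across the two sides of the lemma, reducing it to a pure $Y$-identity involving only $\ol g$.

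Second, I would verify that residual $Y$-identity by a brute-force computation with the functional equations $\Phi(x+y)=\Phi(x)\Phi(y)$, $\vartheta(x+y)=\vartheta(x)\vartheta(y)$, the elementary relation $\vartheta(x)=-x^{-1/2}\Phi(x)\Phi(qx^{-1})$, and the quasi-periods \eqref{eq: quasi-period theta function}: writing $TY|_{\ol g}=TY^\hbar|_{\ol g}+\hbar(TY^\hbar|_{\ol g})^\vee$, one expands every factor into a product over the $\At$-weights of $TY^\hbar|_{\ol g}$ and checks the equality weight by weight, sorting the weights according to whether each is attracting for $\chamb$ or for $\chamb_{\ol g}$. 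The genuine obstacle lies entirely in the first step: pinning down the sign and the $\hbar$-power in $\det(\alpha^{\ol X})|_{\ol h}=\det(\alpha^X)|_h\cdot\det(T^{1/2}Y)|_{\ol h}$, correctly carrying the $\hbar^{-\gamma(h)}$-twist of the $\At$-action on $Y$, and keeping the square-root conventions and the discrepancy between $\chamb$ and $\chamb_{\ol g}$ consistent throughout; once that bookkeeping is under control, the $q$-Pochhammer/theta manipulation of the second step is routine.
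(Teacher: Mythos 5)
Your proposal is correct and takes essentially the same route as the paper: the paper's own proof is precisely the one-line "direct computation from Corollary \ref{cor: different tangents NS5} and the explicit weight formulas \eqref{eq: tan flag}, with the $\vartheta$- and $\Phi$-factors cancelling up to a monomial absorbed by the $L$-terms," and your plan is a fleshed-out version of exactly that computation (your weight-by-weight check of the residual identity does close, e.g.\ a single symplectic pair $w+\hbar w^{-1}$ with $w$ attracting for $\chamb$ but repelling for $\chamb_{\ol g}$ contributes $\hbar^{1/2}w^{-1}$ on both sides). Your extra organizing step --- showing $L_{\ol f_{\sharp}}=L_{f}$ via minimality of $\ol f_{\sharp}$ and thereby reducing to an identity on $Y$ depending only on $\ol g$ --- is a sensible way to carry out the cancellation the paper leaves implicit; the only piece you should actually write down is the NS5 analogue of Proposition \ref{detalpha}, i.e.\ that $\det(\alpha^{\ol X})|_{\ol h}\det(\alpha^{X})|_{h}^{-1}\det(TY^{\hbar})|_{\ol h}^{-1}$ restricts to the same character of $\Tt$ at every $\ol h$, since that is what makes the "topologically trivial" factor drop out of the ratio $(L_{\ol f_{\sharp}}/L_{f})\cdot(L_{\ol g}/L_{g})^{-1}$.
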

\begin{proof}
    This can be proven by a straightforward computation using the formulas of \eqref{eq: tan flag} and Corollary \ref{cor: different tangents NS5}. In particular, the $\vartheta$ and $\Phi$ terms on each side all cancel up to a monomial in the equivariant parameters. This monomial is accounted for by the $L$ terms.
\end{proof}

\begin{proposition}\label{prop: NS5 induction}
    Let $X$ be a separated bow variety and let $\ol X$ be an NS5 resolution of $X$. If mirror symmetry of vertex functions holds for $\ol X$ and $\ol X^{!}$, then it holds for $X$ and $X^{!}$.
\end{proposition}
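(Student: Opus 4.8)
The plan is to deduce mirror symmetry for $(X,X^!)$ from mirror symmetry for $(\ol X,\ol X^!)$ by specializing K\"ahler parameters on the $\ol X$ side and the corresponding equivariant parameters on the $\ol X^!$ side. By the Hanany--Witten reduction at the start of this section one may assume $X$ separated and $X^!$ co-separated, so that the NS5 resolution $\ol X$ of $X$ (say resolving $\Zb=\Zb_k$ into $\Zb',\Zb''$ with $\w=\w'+\w''$) is mirror dual to a D5 resolution $\ol X^!$ of $X^!$. The two relevant specializations are the K\"ahler specialization $\psi^*$ of \S\ref{sub: NS5 resolutions of stable envelopes} (the one appearing in Theorem \ref{thm: ns5vertex}), applied on the $\ol X$ side, and the equivariant specialization $\varphi^*$ of \eqref{group homomorphism A-resolution} for the D5 resolution $X^!\hookrightarrow\ol X^!$, applied on the $\ol X^!$ side. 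First I would check, directly from the definitions of $\mirmap_{\ol X^!\to\ol X}$, $\mirmap_{X^!\to X}$, $\psi$ and $\varphi$, that the two mirror maps intertwine $\psi^*$ and $\varphi^*$; this is a routine computation with the three groups of variables.

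Next, fix $f\in X^{\At}$ and take $\ol f:=\ol f_\sharp$, the minimal NS5 resolution of $f$ for the standard chamber; from the tie-diagram descriptions one checks that $(\ol f_\sharp)^!=\wt{f^!}_\sharp$, the distinguished D5 resolution of $f^!$ in $X^!$ (``all ties on $\Zb',\Zb''$ crossing'' reflects to ``$\w'\w''$ crossings''). I would then write out mirror symmetry for $(\ol X,\ol X^!)$ at $\ol f$ and apply $\psi^*$. On the left-hand side, the intertwining together with Corollary \ref{cor:D5resvertex} (the D5 property of vertex functions, for $X^!$) and Lemma \ref{lemma: restrction Chern roots in A resolution} converts $\varphi^*\MSver^{\ol X^!}_{\wt{f^!}_\sharp}$ into $\MSver^{X^!}_{f^!}$, while the $\Phi^+_{\ol f^!}$ prefactor transforms---via the D5 analogue of the tangent comparison of Corollary \ref{cor: different tangents NS5}, a consequence of Lemma \ref{lemma: restrction Chern roots in A resolution}---into $\Phi^+_{f^!}$ times an explicit factor. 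On the right-hand side, Corollary \ref{cor: NS5 stab restrictions} forces $\MSstab^{\ol X}_{\ol g\,\ol f}=0$ unless $\ol g\in L^{\At}$, so by Lemma \ref{lemma points in lagrangian variety Z resolution} the sum reorganizes as $\sum_{g\in X^{\At}}\sum_{\ol g:\,p(\ol g)=g}$. To make $\psi^*$ well defined term by term here, I would first multiply the whole identity by the $\ol g$-independent scalar $\MSflaglim^Y(Q')^{-1}$ ($Y$ as in \S\ref{sec: equivariant geometry of ns5}, $Q'=z'_k/z''_k$); after that, Theorem \ref{thm: ns5vertex} applies to each term, giving $\psi^*\big(\MSflaglim^Y(Q')^{-1}\MSver^{\ol X}_{\ol g}\big)=\Phi_{\chamb_{\ol g},Y}^{-1}\MSver^X_g(Q^{-1})$, while Corollary \ref{cor: different tangents NS5} gives $\Phi^-_{\ol g}=\Phi^-_g\cdot\Phi^-_{\ol g,Y}$.

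After these substitutions the inner sum over the resolutions $\ol g$ of a fixed $g$ factors as $\Phi^-_g\,\MSver^X_g(Q^{-1})$ times $\sum_{\ol g:\,p(\ol g)=g}\MSstab^{\ol X}_{\ol g\,\ol f_\sharp}\,\Phi^-_{\ol g,Y}\,\Phi_{\chamb_{\ol g},Y}^{-1}$, and the crux is that this last sum collapses to $\MSstab^X_{gf}$ up to a $g$-independent constant: this is exactly Proposition \ref{proposition separated NS5 fusion ratios stabs} (NS5 fusion of stable envelopes) rewritten using Lemma \ref{lem: L identity}, which is precisely the identity converting the $\vartheta$-weights and $L$-normalizations of the fusion formula into the $\Phi$-factors appearing here. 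Thus the right-hand side becomes a constant $C$ times $\sum_{g}\MSstab^X_{gf}\Phi^-_g\MSver^X_g(Q^{-1})$, and I would conclude by checking that the left-hand side equals the \emph{same} $C$ times $\mirmap_{X^!\to X}\big(\Phi^+_{f^!}\MSver^{X^!}_{f^!}(Q^!)\big)$---the final bookkeeping step, which compares $\psi^*\MSflaglim^Y$, the D5-side $\Phi$-prefactor, and the $\MSflaglim^Y$, $\Phi_{\chamb_{\ol g},Y}$, $\vartheta$-weight combination on the right, all explicit ratios of $\Phi$'s and theta functions with known quasi-periods. Cancelling $C$ yields mirror symmetry for $(X,X^!)$.

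The hard part is not analytic: legitimacy of $\psi^*$ (that the $\MSver^{\ol X}_{\ol g}$ have only simple poles along the specialization locus and $\Stab^{\ol X}(\ol f_\sharp)$ has none) is imported from Theorems \ref{thm: ns5vertex} and \ref{main theorem proof NS5 resolution for stabs} rather than reproved. The real work is organizational: (i) the two mirror maps intertwine $\psi^*$ and $\varphi^*$; (ii) $(\ol f_\sharp)^!=\wt{f^!}_\sharp$, so the distinguished choices on the two sides of the mirror are compatible; and (iii) the normalization factors---$\MSflaglim^Y$, $\Phi_{\chamb_{\ol g},Y}$, the $L$-matrices, the $\vartheta$-weights, and the D5-side $\Phi$-prefactor---cancel exactly, which is Lemma \ref{lem: L identity} together with Lemma \ref{lem: different const NS5} in the weight-one case, bootstrapped to arbitrary D5 weights via Theorem \ref{thm: d5 vertex} exactly as in Step 2 of the proof of Theorem \ref{thm: ns5vertex}. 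I expect (iii) to be the main obstacle, since it requires carrying the $\gamma(g)$-twist of the torus action on $Y$ consistently through all three fusion statements.
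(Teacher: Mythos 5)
Your proposal is correct and follows essentially the same route as the paper: both rest on the NS5 fusion formula for stable envelopes (Proposition \ref{proposition separated NS5 fusion ratios stabs}), the NS5 property of vertex functions (Theorem \ref{thm: ns5vertex}), the support statement (Corollary \ref{cor: NS5 stab restrictions}), the normalization identity (Lemma \ref{lem: L identity}), the intertwining $\psi^{*}\circ\mirmap_{\ol X^{!}\to\ol X}=\mirmap_{X^{!}\to X}\circ\varphi^{*}$, and the D5 property (Theorem \ref{thm: d5 vertex}) applied to $X^{!}\hookrightarrow \ol X^{!}$. The only difference is cosmetic — you start from mirror symmetry for $\ol X$ at $\ol f_{\sharp}$ and specialize downward, whereas the paper starts from the right-hand side of the desired identity for $X$ and builds up to the left-hand side — but the chain of equalities and cancellations is the same.
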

\begin{proof}
Recall Proposition \ref{proposition separated NS5 fusion ratios stabs} and Theorem \ref{thm: ns5vertex}:
\[
\frac{\Stab^{X}_{\chamb}(f)\Big|_g}{\Stab^{X}_{\chamb}(g)\Big|_g}(a, z, \hbar) \\
  = \sum_{\bar g\in Y^{\At}} \frac{\vartheta(N_{\bar g/Y}^-)}{\vartheta(N_{\bar g/Y^{\hbar}})} (a\hbar^{-\gamma(g)})  \psi^{*}\left(
  \frac{\Stab^{\overline X}_{\chamb}(\bar f_{\sharp})\Big|_{\bar g}}{\Stab^{\overline X}_{\chamb}(\bar g)\Big|_{\bar g}}\right),
\]
and
\[
\MSver^{X}_{g}(Q^{-1})=\Phi_{\chamb_{\ol g},Y}  \psi^{*} \left(\MSflaglim^{Y}(Q')^{-1} \MSver^{\ol X}_{\ol g}(\ol Q^{-1})\right).
\]
To lighten notation in the lines below, we will omit the arguments $(Q^{-1})$, $(Q')$ and $(\ol Q^{-1})$. We will also omit the subscript $\chamb$.

Starting with the right hand side of Theorem \ref{thm: mirsym} and applying the previous two results, we have
\begin{align*}
&\sum_{g \in X^{\At}} \MSstab^{X}(f)|_{g} \Phi^{-}_{g} \MSver^{X}_{g}
=\sum_{g \in X^{\At}} L_{f} \frac{\Stab^{X}(f)|_{g}}{\Stab^{X}(g)|_{g}} L_{g}^{-1}\Phi^{-}_{g} \MSver^{X}_{g} \\
&=\sum_{g \in X^{\At}} L_{f} \left( \sum_{\ol g \in Y^{\At}} \frac{\vartheta(N_{\bar g/Y}^-)}{\vartheta(N_{\bar g/Y^{\hbar}})} (a\hbar^{-\gamma(g)})  \psi^{*} \left(\frac{\Stab^{\ol X}(\ol f_{\sharp})|_{\ol g}}{\Stab^{\ol X}(\ol g)|_{\ol g}}  \right) \right) L_{g}^{-1}\Phi^{-}_{g} \MSver^{X}_{g}\\
&=\sum_{g \in X^{\At}} L_{f} \sum_{\ol g \in Y^{\At}} \frac{\vartheta(N_{\bar g/Y}^-)}{\vartheta(N_{\bar g/Y^{\hbar}})} (a\hbar^{-\gamma(g)})  \psi^{*} \left(\frac{\Stab^{\ol X}(\ol f_{\sharp})|_{\ol g}}{\Stab^{\ol X}(\ol g)|_{\ol g}} (\MSflaglim^{Y})^{-1} \MSver^{\ol X}_{\ol g}  \right)  L_{g}^{-1}\Phi^{-}_{g} \Phi_{\chamb_{\ol g},Y}.
\end{align*}
Continuing with Lemma \ref{lem: L identity} and Corollary \ref{cor: NS5 stab restrictions}, we obtain
\begin{align*}
&\sum_{g \in X^{\At}}  \sum_{\ol g \in Y^{\At}}  \psi^{*} \left(\frac{\Stab^{\ol X}(\ol f_{\sharp})|_{\ol g}}{\Stab^{\ol X}(\ol g)|_{\ol g}} (\MSflaglim^{Y})^{-1} \MSver^{\ol X}_{\ol g} \right)  \frac{\vartheta(N_{\bar g/Y}^-)}{\vartheta(N_{\bar g/Y^{\hbar}})} (a\hbar^{-\gamma(g)})  L_{f} L_{g}^{-1}\Phi^{-}_{g} \Phi_{\chamb_{\ol g},Y} \\
&=\sum_{g \in X^{\At}} \sum_{\ol g \in Y^{\At}}  \psi^{*} \left(\frac{\Stab^{\ol X}(\ol f_{\sharp})|_{\ol g}}{\Stab^{\ol X}(\ol g)|_{\ol g}} (\MSflaglim^{Y})^{-1} \MSver^{\ol X}_{\ol g} \right) L_{\ol f_{\sharp}}  L_{\ol g}^{-1}   \Phi^{-}_{\ol g} \\
&=\psi^{*} \left( (\MSflaglim^{Y})^{-1} \sum_{k \in \ol{X}^{\At}}\MSstab^{\ol X}(\ol f_{\sharp})|_{k}  \Phi^{-}_{k} 
 \MSver^{\ol X}_{k}     \right).
\end{align*}
Applying mirror symmetry for $\ol X$, this is equal to
\begin{align*}
 &\psi^{*} \left((\MSflaglim^{Y})^{-1} \mirmap_{\ol X^{!} \to \ol X} \Phi^{+}_{(\ol f_{\sharp})^{!}} \MSver^{\ol X^{!}}_{(\ol f_{\sharp})^{!}}  \right).
\end{align*}
Let $\varphi^{*}$ be the variable specialization for the D5 resolution $X^{!} \hookrightarrow \wt{X^{!}}$. One readily checks that $\psi^{*} \mirmap_{\ol X^{!} \to \ol X} = \mirmap_{X^{!} \to X} \varphi^{*}$. So the previous line is equal to
\begin{align*}
 &=\psi^{*}(\MSflaglim^{Y})^{-1}  \psi^{*}  \mirmap_{\ol X^{!} \to \ol X} \Phi^{+}_{(\ol f_{\sharp})^{!}} \MSver^{\wt{X^{!}}}_{\wt{f^{!}_{\sharp}}}  \\
 &=\psi^{*}(\MSflaglim^{Y})^{-1}   \mirmap_{X^{!} \to X} \varphi^{*}  \Phi^{+}_{\wt{f^{!}_{\sharp}}} \MSver^{\wt{X^{!}}}_{\wt{f^{!}_{\sharp}}}.
 \end{align*}

Applying Theorem \ref{thm: d5 vertex}, the latter becomes
\begin{align*}
&\psi^{*} (\MSflaglim^{Y})^{-1}   \mirmap_{X^{!} \to X} \left( \varphi^{*} (\Phi^{+}_{\wt{f^{!}_{\sharp}}}) \MSver^{X^{!}}_{f^{!}} \right) \\
  &=\mirmap_{X^{!} \to X}  \Phi^{+}_{f^{!}} \MSver^{X^{!}}_{f^{!}},
\end{align*}
which is exactly the left hand side of Theorem \ref{thm: mirsym}. This completes the proof.
\end{proof}

\begin{proposition}
    Let $X$ be a separated bow variety and let $\wt X$ be a D5 resolution of $X$. If mirror symmetry of vertex functions holds for $\wt X$ and $(\wt X)^{!}$, then it holds for $X$ and $X^{!}$.
\end{proposition}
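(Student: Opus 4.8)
The plan is to run the argument in the proof of Proposition \ref{prop: NS5 induction} in the mirror. Since $X$ is separated, the D5 brane $\Ab$ being resolved is dual to an NS5 brane $\Zb^{!}$ of the \emph{co-separated} variety $X^{!}$ with $\w(\Zb^{!})=\w(\Ab)\geq 2$, and the D5 resolution $j\colon X\hookrightarrow\wt X$ dualizes to the NS5 resolution $(\wt X)^{!}=\ol{X^{!}}$ of $X^{!}$ obtained by splitting $\Zb^{!}$; moreover $(\ol{X^{!}})^{!}=\wt X$, so the hypothesis is exactly mirror symmetry for $\ol{X^{!}}$ and $(\ol{X^{!}})^{!}$. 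By Theorem \ref{thm: opmirsym}, mirror symmetry for $X$ and $X^{!}$ is equivalent to its op-form
\[
\kappa^{-1}\!\left(\Phi^{+}_{\chamb_{\opp},f}\cdot\MSver^{X}_{f}(Q^{-1})\right)=\sum_{g^{!}\in(X^{!})^{\At^{!}}}\MSstab^{\opp, X^{!}}_{ g^{!}f^{!}}\big(a^{!},(Q^{!})^{-1},\hbar^{!}\big)\,\Phi^{-}_{\chamb_{\opp}^{!},g^{!}}\,\MSver^{X^{!}}_{g^{!}}(Q^{!}),
\]
which is phrased entirely in terms of $X^{!}$, the op-stable envelopes $\MSstab^{\opp}$, and vertex functions of $X^{!}$---precisely the objects for which the co-separated NS5-resolution compatibilities Proposition \ref{prop: co-separated NS5 fusion ratios stabs opp} (for $\MSstab^{\opp}$) and Theorem \ref{thm: cosepNS5vertex} (for vertex functions) are available.

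I would then transport the computation in the proof of Proposition \ref{prop: NS5 induction} almost verbatim, now working on the $X^{!}$ side. Starting from the right-hand side of the op-form, substitute the co-separated op-fusion formula of Proposition \ref{prop: co-separated NS5 fusion ratios stabs opp} for $\MSstab^{\opp, X^{!}}_{g^{!}f^{!}}$ and Theorem \ref{thm: cosepNS5vertex} for $\MSver^{X^{!}}_{g^{!}}(Q^{!})$, with $Y$ the auxiliary bow variety of Proposition \ref{prop: co-separated NS5 fusion ratios stabs opp}. As in Proposition \ref{prop: NS5 induction}, the $\vartheta$-factors produced by localizing the pushforward along the Grassmannian fibration $p\colon L\to X^{!}$, the $\Phi$-factors, and the diagonal normalization matrices combine via the co-separated analogues of Lemma \ref{lem: L identity} and Corollary \ref{cor: different tangents NS5}; the terms indexed by fixed points of $\ol{X^{!}}$ lying outside $L$ drop out by the co-separated version of Corollary \ref{cor: NS5 stab restrictions}. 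The double sum thus collapses to $\psi^{*}$ of $(\MSflaglim^{Y})^{-1}$ times the right-hand side of the op-form of mirror symmetry for $\ol{X^{!}}$ at the distinguished resolution; applying the latter---equivalently, our hypothesis of mirror symmetry for $\wt X$ and $(\wt X)^{!}$---rewrites it as $\psi^{*}$ of $(\MSflaglim^{Y})^{-1}$ times the op-form's left-hand side $\kappa^{-1}\!\left(\Phi^{+}_{\chamb_{\opp},\tilde f_{\sharp}}\cdot\MSver^{\wt X}_{\tilde f_{\sharp}}(Q^{-1})\right)$.

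It remains to descend from $\wt X$ back to $X$. For this I invoke Theorem \ref{thm: d5 vertex} / Corollary \ref{cor:D5resvertex}, which give $\varphi^{*}\MSver^{\wt X}_{\tilde f_{\sharp}}=\MSver^{X}_{f}$ for the D5-resolution specialization $\varphi^{*}$, together with the bookkeeping identity $\psi^{*}\circ\mirmap_{(\wt X)^{!}\to\wt X}=\mirmap_{X^{!}\to X}\circ\varphi^{*}$ (the NS5 specialization on $(\wt X)^{!}$ being dual to the D5 specialization on $\wt X$), which mirrors the step ``$\psi^{*}\mirmap_{\ol X^{!}\to\ol X}=\mirmap_{X^{!}\to X}\varphi^{*}$'' in the proof of Proposition \ref{prop: NS5 induction}. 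A D5-resolution analogue of Corollary \ref{cor: different tangents NS5} relates $\varphi^{*}\Phi^{\pm}_{\tilde f_{\sharp}}$ to $\Phi^{\pm}_{f}$ up to a $\Phi$-factor attached to the fixed component; since $\psi^{*}(\MSflaglim^{Y})^{-1}$ cancels precisely against that factor---the cancellation being, in effect, mirror symmetry for the small cotangent-bundle-of-partial-flag variety $Y$, or checkable directly as in Lemma \ref{lem: different const NS5}---one arrives at $\kappa^{-1}(\Phi^{+}_{\chamb_{\opp},f}\cdot\MSver^{X}_{f}(Q^{-1}))$, the left-hand side of the op-form. By Theorem \ref{thm: opmirsym} this is mirror symmetry for $X$ and $X^{!}$.

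The main obstacle here is organizational rather than conceptual: one must first establish the co-separated, op-normalized versions of Lemma \ref{lem: L identity}, Corollary \ref{cor: different tangents NS5} and Corollary \ref{cor: NS5 stab restrictions}, and then keep the bookkeeping of op versus non-op stable envelopes, the two specialization maps $\varphi^{*}$ and $\psi^{*}$, and the auxiliary-variety $\Phi$- and $\MSflaglim$-factors consistent throughout, confirming in particular that no stray $\MSflaglim$ survives. The one genuinely analytic point---that $\psi^{*}$ is well-defined on the relevant expressions, the potential pole being simple and killed by a simple zero---is already supplied by Theorem \ref{thm: cosepNS5vertex}, so it does not need to be revisited.
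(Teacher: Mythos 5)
Your proposal is correct and follows essentially the same route as the paper: the authors likewise pass to the op-formulation of Theorem \ref{thm: opmirsym}, observe that $(\wt X)^{!}=\ol{X^{!}}$ is an NS5 resolution of the co-separated $X^{!}$, and rerun the argument of Proposition \ref{prop: NS5 induction} with Theorem \ref{thm: cosepNS5vertex} in place of Theorem \ref{thm: ns5vertex} and Proposition \ref{prop: co-separated NS5 fusion ratios stabs opp} in place of Proposition \ref{proposition separated NS5 fusion ratios stabs}. The paper states this in two sentences and leaves the bookkeeping implicit; you have simply spelled out the same details.
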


\begin{proof}
    Consider the equivalent formulation of mirror symmetry from Theorem \ref{thm: opmirsym}. Since D5 and NS5 resolutions are dual to each other, we must prove that mirror symmetry of vertex functions for the (co-separated) variety $(\wt{X})^{!}= \ol{X^{!}}$ implies mirror symmetry for $X^{!}$. Since $\ol{X^{!}}$ is an NS5 resolution of $X^{!}$, we can repeat the same argument in the proof of Proposition \ref{prop: NS5 induction}, using Theorem \ref{thm: cosepNS5vertex} instead of Theorem \ref{thm: ns5vertex} and Proposition \ref{prop: co-separated NS5 fusion ratios stabs opp} instead of Proposition \ref{proposition separated NS5 fusion ratios stabs}, to prove the statement.
\end{proof}

\begin{proof}[Proof of Theorem \ref{thm: mirsym}]
    Using the trick of \cite[\S5.11]{BR} (which shows that every separated or coseparated bow variety is Hanany-Witten equivalent to a bow variety whose branes all have strictly positive weights) along with Proposition \ref{prop: vertex and HW}, we can without loss of generality assume that all NS5 and D5 brane weights are nonzero. Then using the previous two propositions, mirror symmetry of vertex functions for bow varieties reduces to case when all D5 and NS5 branes have weight one. Such varieties are the self-dual cotangent bundles of complete flag varieties, for which mirror symmetry was proven in \cite{dinkms2}.
\end{proof}

\appendix
\section{Mirror symmetry, D5 resolutions, and NS5 resolutions for \texorpdfstring{$T^*\bbP^1$}{TP1}}
\label{appendix MS}

\setcounter{figure}{0}
\renewcommand\thefigure{A.\arabic{figure}}

\subsection{Setup}

In this appendix we demonstrate all our main results in the simplest nontrivial example, $X=T^{*}\mathbb{P}^{1}$, and explain their connection to known formulas in the theory of $q$-hypergeometric series. Consider the following 3d mirror dual bow varieties: 
\[
X=\ttt{\fs 1\fs 2\bs 1\bs}\qquad X^!=\ttt{\bs 1\bs 2\fs 1\fs}
\]
It is easy to check that they are both isomorphic to $T^*\bbP^1$. In both cases, the GIT presentation is as the quotient of the semistable locus of 
\[
 \{(x,y)\in T^*\Hom(\C^2, \C) \; | \; xy=0\}
\]
by the natural action of $G=\Cs$. A point $(x,y)$ is (semi)-stable iff $x$ is surjective. 

We denote by $\Tt=\Cs_{a_1}\times \Cs_{a_2}\times \Cs_{\hbar}$ (resp. $\Tt^!=\Cs_{a^!_1}\times \Cs_{a^!_2}\times \Cs_{\hbar}$) the torus acting on $X$ (resp. $X^!$). The fixed points of $X$ and $X^!$ can be described via tie diagrams as in Figure \ref{fig: fixexd points TP1}.

\begin{figure}
    \centering
    \begin{tikzpicture}[baseline=0,scale=.3]
\begin{scope}[xshift=0cm, yshift=6cm]
\draw [thick,red] (0.5,0) --(1.5,2); 
\draw[thick] (1,1) node [left] {$f_1=$}--(2.5,1) node [above] {$1$} -- (10,1);
\draw [thick,red](3.5,0) --(4.5,2);  
\draw [thick](4.5,1)--(5.5,1) node [above] {$2$} -- (6.5,1);
\draw [thick,blue](7.5,0) -- (6.5,2);  
\draw [thick](7.5,1) --(8.5,1) node [above] {$1$} -- (9.5,1); 
\draw[thick,blue] (10.5,0) -- (9.5,2);  

\draw [dashed, black](4.5,2.25) to [out=45,in=-225] (9.5,2.25);
\draw [dashed, black](1.5,2.25) to [out=45,in=-225] (6.5,2.25);
\end{scope}
\begin{scope}[xshift=15cm, yshift=6cm]
\draw [thick,red] (0.5,0) --(1.5,2); 
\draw[thick] (1,1) node [left] {$f_2=$}--(2.5,1) node [above] {$1$} -- (10,1);
\draw [thick,red](3.5,0) --(4.5,2);  
\draw [thick](4.5,1)--(5.5,1) node [above] {$2$} -- (6.5,1);
\draw [thick,blue](7.5,0) -- (6.5,2);  
\draw [thick](7.5,1) --(8.5,1) node [above] {$1$} -- (9.5,1); 
\draw[thick,blue] (10.5,0) -- (9.5,2);  

\draw [dashed, black](4.5,2.25) to [out=45,in=-225] (6.5,2.25);
\draw [dashed, black](1.5,2.25) to [out=45,in=-225] (9.5,2.25);
\end{scope}

\begin{scope}[xshift=0cm]
\draw [thick,red] (1.5,0) --(0.5,2); 
\draw[thick] (1,1) node [left] {$f_1^{!}=$}--(2.5,1) node [above] {$1$} -- (10,1);
\draw [thick,red](4.5,0) --(3.5,2);  
\draw [thick](4.5,1)--(5.5,1) node [above] {$2$} -- (6.5,1);
\draw [thick,blue](6.5,0) -- (7.5,2);  
\draw [thick](7.5,1) --(8.5,1) node [above] {$1$} -- (9.5,1); 
\draw[thick,blue] (9.5,0) -- (10.5,2);  

\draw [dashed, black](4.5,-0.25) to [out=-45,in=225] (9.5,-0.25);
\draw [dashed, black](1.5,-0.25) to [out=-45,in=225] (6.5,-0.25);
\end{scope}

\begin{scope}[xshift=15cm]
\draw [thick,red] (1.5,0) --(0.5,2); 
\draw[thick] (1,1) node [left] {$f^!_2=$}--(2.5,1) node [above] {$1$} -- (10,1);
\draw [thick,red](4.5,0) --(3.5,2);  
\draw [thick](4.5,1)--(5.5,1) node [above] {$2$} -- (6.5,1);
\draw [thick,blue](6.5,0) -- (7.5,2);  
\draw [thick](7.5,1) --(8.5,1) node [above] {$1$} -- (9.5,1); 
\draw[thick,blue] (9.5,0) -- (10.5,2);  

\draw [dashed, black](4.5,-0.25) to [out=-45,in=225] (6.5,-0.25);
\draw [dashed, black](1.5,-0.25) to [out=-45,in=225] (9.5,-0.25);
\end{scope}
\end{tikzpicture}
\label{fig: fixexd points TP1}
\caption{Fixed points of $T^*\bbP^1$ (on top) and their dual (on the bottom).}
\end{figure}
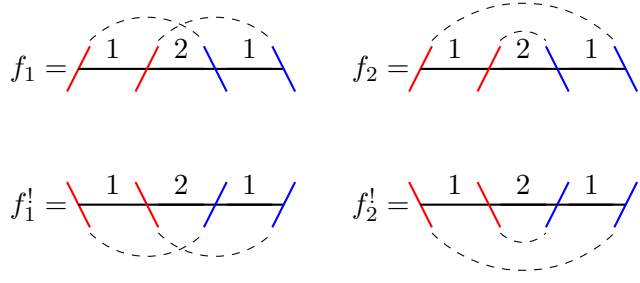

Their tangent spaces are
\[
T_{f_1} X= \frac{a_1}{a_2}+\hbar \frac{a_2}{a_1} \qquad T_{f_2} X= \frac{a_2}{a_1}+\hbar \frac{a_1}{a_2}
\]
\[
T_{f^!_1} X^!= \frac{a^!_2}{a^!_1}+\hbar^! \frac{a^!_1}{a^!_2} \qquad 
T_{f^!_2} X^!= \frac{a^!_1}{a^!_2}+\hbar^! \frac{a^!_2}{a^!_1}.
\]
For the standard chambers, namely $\{a_1<a_2\}$ and  $\{a^!_1<a^!_2\}$, we have $f_1<f_2$ and $f_2^!<f_1^!$.

\subsection{Vertex functions and $q$-hypergeometric series}\label{sec: TP1 vertex}

The vertex functions can be written down using Proposition \ref{prop: vertex formula}. For that, we need to fix a polarization of $X$, which, in agreement with Theorem \ref{thm: mirsym} we set equal to 
\[
\hbar(\alpha+\beta)^\vee= \frac{a_1}{t}+ \frac{a_2}{t}-1\in K_{\Tt}(X).
\]
where $t$ denotes the class of the tautological line bundle. The result for the fixed point $f_1$ is 
\begin{align*}
    \ver_{f_1}
    &=\sum_{d \geq 0} \frac{(1)_{-d} (\frac{a_2}{a_1})_{-d}}{(q\hbar)_{-d} (q\hbar \frac{a_2}{a_1})_{-d}} (\hbar^{-1} q^{-2} Q)^{d}
    \\
    &=\sum_{d \geq 0} \frac{(\hbar^{-1})_{d} (\hbar^{-1} \frac{a_1}{a_2})_{d}}{(q)_{d} (q\frac{a_1}{a_2})_{d}} (q\hbar)^{2d}(\hbar^{-1} q^{-2} Q)^{d}
    \\
    &=\sum_{d \geq 0} \frac{(\hbar^{-1})_{d} (\hbar^{-1} \frac{a_1}{a_2})_{d}}{(q)_{d} (q\frac{a_1}{a_2})_{d}} (\hbar Q)^{d}.
\end{align*}
Where in the second step we used the identity
\[
(x)_{-d}=\frac{(-q/x)^d q^{d(d-1)/2}}{(q/x)_d}.
\]
The computation for $V_{f_2}$ is similar. Overall, we obtain

\begin{align*}
    \ver_{f_1}&=\sum_{d \geq 0} \frac{(\hbar^{-1})_{d} (\hbar^{-1} \frac{a_1}{a_2})_{d}}{(q)_{d} (q\frac{a_1}{a_2})_{d}} (\hbar Q)^{d}=\hyp(\hbar^{-1}, \hbar^{-1}\frac{a_1}{a_2}; q\frac{a_1}{a_2}; \hbar Q)
    \\
    \ver_{f_2}&=\sum_{d \geq 0} \frac{(\hbar^{-1})_{d} (\hbar^{-1} \frac{a_2}{a_1})_{d}}{(q)_{d} (q\frac{a_2}{a_1})_{d}} (\hbar  Q)^{d}=\hyp(\hbar^{-1}, \hbar^{-1}\frac{a_2}{a_1}; q\frac{a_2}{a_1}; \hbar Q).
\end{align*}

\begin{remark}
    Here, $\hyp$ is Heine's basic hyper-geometric series
    \[
    \hyp(a,b,c;z)=\sum_{d\geq 0} \frac{(a)_{d} (b)_{d}}{(q)_{d} (c)_{d}} (z)^{d},
    \]
    see \cite{koelink-q} for a review. More generally, the vertex functions of $T^*\bbP^{n-1}$ recover the hyper-geometric series $\prescript{}{n}{\phi_{n-1}}$ for certain values of the parameters.
\end{remark}

\begin{remark}
    Notice that the vertex functions $V_{f_1}$ and $V_{f_2}$ only differ by the swap of variables $a_1 \leftrightarrow a_2$. This is a special case of Proposition \ref{prop: permute vertex}.
\end{remark}

We now compute the vertex functions of $X^!$. Since $ X$ and  $X^!$ are in the same HW equivalence class, it follows from Corollary \ref{corcosepV} that the vertex functions are equal up to a shift of $Q^{!}$ by a power of $q$ (after the trivial renaming of variables: $Q=Q^{!}$, $a=a^{!}$, $\hbar=\hbar^{!}$). Taking into account that $X^{!}$ uses the opposite of the alpha class as the polarization, namely
\[
\alpha^!+\beta^!=\frac{\hbar^! a_1^!}{t^!}+\frac{\hbar^! a_2^!}{t^!}-\hbar^!,
\]
and using Corollary \ref{corcosepV} (with $n=2$), we get

\begin{align*}
    \ver_{f^!_1}&=\sum_{d \geq 0} \frac{(\hbar^{!-1})_{d} (\hbar^{!-1} \frac{a^!_2}{a^!_1})_{d}}{(q)_{d} (q\frac{a^!_2}{a^!_1})_{d}} (\hbar^! q^{2} Q^!)^{d}=\hyp(\hbar^{!-1}, \hbar^{!-1}\frac{a^!_2}{a^!_1}; q\frac{a^!_2}{a^!_1}; \hbar^! Q^!)\\
    \ver_{f^!_2}&=\sum_{d \geq 0} \frac{(\hbar^{!-1})_{d} (\hbar^{!-1} \frac{a^!_1}{a^!_2})_{d}}{(q)_{d} (q\frac{a^!_1}{a^!_2})_{d}} (\hbar^! q^{2} Q^!)^{d}=\hyp(\hbar^{!-1}, \hbar^{!-1}\frac{a^!_1}{a^!_2}; q\frac{a^!_1}{a^!_2}; \hbar^! Q^!).
\end{align*}

\subsection{Mirror symmetry, summation formula, and connection formula}
\label{app: Mirror symmetry, summation formula, and connection formula}

Notice that $\w(\Zb_i)=\w(\Zb_i^!)=1$ for all $i=1,2$, hence applying \eqref{eq: def of MSver} we get 
\[
\MSver_{f_i}=\ver_{f_i}\qquad \MSver_{f^!_i}=\ver_{f^!_i}.
\]
To remove clutter, we set
\begin{align*}
    u&=a_1/a_2\\
    u^!&=a^!_1/a^!_2\\
    \Phi^-_{f_i}&=\Phi((q-\hbar^{-1})N_{f_i}^{-})\\
    \Phi^+_{f^!_i}&=\Phi((q-(\hbar^{!})^{-1}N_{f^!_i}^+).
\end{align*}
The mirror symmetry map $\mirmap$ is then given by\footnote{Since $q^{!} \mapsto q$, we have chosen not to write the former anywhere.}
\[
\hbar^!\mapsto \frac{1}{\hbar q} \qquad  q^{-1} Q^!\mapsto u \qquad u^!\mapsto qQ.
\]

To make Theorem \ref{thm: mirsym} explicit, we need formulas for the stable envelopes of $X$. Using \eqref{eq: def of msstab}, the polarization\footnote{Note that vertex functions and stable envelopes of $X$ use opposite polarizations.} $\alpha=\hbar(t/a_1 +t/a_2) -\hbar $, and the explicit formulas from \cite[Prop 5.12]{BR}, we obtain:

\begin{align*}
    \MSstab_{f_1f_2}&=\hbar^{-1/2}u^{-1}\frac{\Stab(f_2)|_{f_1}}{\Stab(f_1)|_{f_1}}=\hbar^{-1/2}u^{-1}\frac{\vartheta(\hbar)}{\vartheta(u)}\frac{\vartheta(uQ)}{\vartheta(Q)}
    \\
    \MSstab_{f_2f_1}&=\hbar^{1/2}\frac{\Stab(f_1)|_{f_2}}{\Stab(f_2)|_{f_2}}=0
    \\
    \MSstab_{f_1f_1}&=1
    \\
    \MSstab_{f_2f_2}&=1.
\end{align*}
Therefore, mirror symmetry for vertex functions of $T^*\bbP^1$ can be explicitly stated as follows:
\begin{proposition}
    Mirror symmetry for $T^*\bbP^1$ consists of the following statements:
    \begin{align}
            \mirmap \left(\Phi^+_{f^!_1} \ver^{X^{!}}_{f_1^!}(Q^{!})\right)&=\Phi_{f_1}^-\ver^{X}_{f_1}(Q^{-1})
            \label{eq: mir bbP1 f1}
            \\
            \mirmap \left(\Phi_{f_2^!}^+ \ver^{X^{!}}_{f_2^!}(Q^{!})\right)&=
            \MSstab_{f_1f_2}\Phi_{f_1}^-\ver^{X}_{f_1}(Q^{-1})+ \Phi_{f_2}^-\ver^{X}_{f_2}(Q^{-1}).
            \label{eq: mir bbP1 f2}
    \end{align}
\end{proposition}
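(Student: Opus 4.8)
The plan is to derive the two displayed identities by specializing Theorem~\ref{thm: mirsym} to $X = T^*\bbP^1$, and then, as an independent check in the elementary spirit of this appendix, to re-obtain them from classical $q$-hypergeometric transformations.

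First I would record that $X^{\At} = \{f_1, f_2\}$ (Figure~\ref{fig: fixexd points TP1}) and that $\MSver_{f_i} = \ver_{f_i}$ since $\w(\Zb_i) = 1$ for $i = 1,2$. Reading Theorem~\ref{thm: mirsym} with $f = f_1$ and then $f = f_2$ gives
\[
\mirmap\!\left(\Phi^+_{f_i^!}\,\ver^{X^!}_{f_i^!}(Q^!)\right) \;=\; \sum_{g \in X^{\At}} \MSstab^X_{g f_i}\,\Phi^-_g\,\ver^X_g(Q^{-1}), \qquad i = 1, 2 .
\]
I would then substitute the stable-envelope matrix entries computed just above — $\MSstab_{f_1 f_1} = \MSstab_{f_2 f_2} = 1$, $\MSstab_{f_2 f_1} = 0$ (upper-triangularity, since $f_1 < f_2$ in the standard chamber), and $\MSstab_{f_1 f_2} = \hbar^{-1/2} u^{-1}\,\vartheta(\hbar)\,\vartheta(uQ)/(\vartheta(u)\vartheta(Q))$ — so that the $i = 1$ equation collapses to \eqref{eq: mir bbP1 f1} and the $i = 2$ equation to \eqref{eq: mir bbP1 f2}. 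This already proves the proposition; the only care needed is bookkeeping: tracking the fixed-point bijection $f_i \leftrightarrow f_i^!$ and applying $\mirmap$ correctly to the factors $\Phi^{\pm}_{f_i^{(!)}}$, where a stray power of $q$ or $\hbar$ could otherwise slip in.

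For the independent check I would unwind $\mirmap$ on the left-hand sides using the formulas of \S\ref{sec: TP1 vertex}, writing both vertices as values of Heine's $\hyp$ and both $\Phi$-factors as ratios of infinite $q$-Pochhammer symbols. Identity \eqref{eq: mir bbP1 f1} then follows from Heine's first transformation of $\hyp$ (the accompanying Pochhammer prefactor matching exactly the ratio $\mirmap(\Phi^+_{f_1^!})/\Phi^-_{f_1}$), and degenerates in the $\hbar \to 1$ limit to the $q$-Gauss summation formula, consistently with the general degeneration discussed in the introduction. Identity \eqref{eq: mir bbP1 f2} becomes a three-term ``connection formula'' expressing one ${}_2\phi_1$ as a combination of two ${}_2\phi_1$'s, of the type found in \cite{Gasper_Rahman_2004}, with $\MSstab_{f_1 f_2}$ supplying precisely the theta-function ratio that occurs in such relations. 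The main obstacle on this second route is purely computational: choosing the right Heine transformation, matching all $q$-Pochhammer prefactors after the substitution $\mirmap$, and rewriting the theta quotient in $\MSstab_{f_1 f_2}$ in $q$-Pochhammer form so that it lines up with the hypergeometric connection coefficient.
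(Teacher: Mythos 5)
Your proposal is correct and follows the paper's route exactly: the proposition is obtained by specializing Theorem \ref{thm: mirsym} to $X=T^*\bbP^1$ and substituting the explicitly computed entries $\MSstab_{f_if_j}$ (with $\MSstab_{f_2f_1}=0$ and the diagonal entries equal to $1$), which is precisely how the paper justifies the statement. Your "independent check" via Heine's transformation and the three-term connection formula is likewise exactly the content of the paper's subsequent proposition, where \eqref{eq: mir bbP1 f1} is matched to Heine's transformation and \eqref{eq: mir bbP1 f2} to Watson's connection formula with $\MSstab_{f_1f_2}$ supplying the theta-quotient connection coefficient.
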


In the rest of the section, we provide a classical interpretation of these statements. For this, we recall the following classical results for the hypergeometric series $\hyp$, see for example \cite{Gasper_Rahman_2004}.  

\begin{proposition}[Heine's transformation formula]
\label{prop: Heine's summation formula}
Assume that $|z|<1$ and $|b|<1$. Then the following equation holds:
    \begin{equation*}
    \frac{\Phi(c)}{\Phi(b)}\hyp(a,b;c;z)=\frac{\Phi(az)}{\Phi(z)}\hyp(c/b, z,az,b).
\end{equation*}
\end{proposition}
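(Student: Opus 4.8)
\textbf{Proof proposal for Heine's transformation formula (Proposition \ref{prop: Heine's summation formula}).}

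The plan is to give the classical proof of Heine's transformation by iterating the $q$-binomial theorem. First I would recall the $q$-binomial theorem in the form $\sum_{n\geq 0}\frac{(a)_n}{(q)_n}x^n=\frac{\Phi(ax)}{\Phi(x)}$ for $|x|<1$, which in our notation is $\prescript{}{1}{\phi_0}(a;-;x)=\Phi(ax)/\Phi(x)$; this is already used elsewhere in the paper (e.g. in the proof of Proposition on $\flaglim^X$), so I may assume it. Starting from the series $\hyp(a,b;c;z)=\sum_{n\geq 0}\frac{(a)_n(b)_n}{(q)_n(c)_n}z^n$, I would write the ratio $(b)_n/(c)_n$ as a single $q$-integral-type expression: using $\frac{(b)_n}{(c)_n}=\frac{\Phi(b)\Phi(cq^n)}{\Phi(c)\Phi(bq^n)}$, and then expand $\frac{\Phi(cq^n)}{\Phi(bq^n)}=\frac{\Phi((c/b)\cdot bq^n)}{\Phi(bq^n)}=\sum_{m\geq 0}\frac{(c/b)_m}{(q)_m}(bq^n)^m$ by the $q$-binomial theorem (valid since $|bq^n|\leq|b|<1$).

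The key steps, in order: (1) substitute this expansion into the defining series for $\hyp(a,b;c;z)$, obtaining a double sum over $m,n\geq 0$; (2) interchange the order of summation — justified by absolute convergence under the hypotheses $|z|<1$, $|b|<1$ — and collect the inner sum over $n$; (3) recognize the inner sum: after pulling out $\frac{(c/b)_m}{(q)_m}b^m$, the sum over $n$ becomes $\sum_{n\geq 0}\frac{(a)_n}{(q)_n}(zq^m)^n=\frac{\Phi(azq^m)}{\Phi(zq^m)}$, again by the $q$-binomial theorem; (4) rewrite $\frac{\Phi(azq^m)}{\Phi(zq^m)}=\frac{\Phi(az)}{\Phi(z)}\cdot\frac{(z)_m}{(az)_m}$ using the same $\Phi$-ratio identity as in step one; (5) reassemble: the remaining sum over $m$ is $\frac{\Phi(az)}{\Phi(z)}\sum_{m\geq 0}\frac{(c/b)_m(z)_m}{(q)_m(az)_m}b^m=\frac{\Phi(az)}{\Phi(z)}\hyp(c/b,z;az;b)$; (6) divide through by $\Phi(b)/\Phi(c)$ — equivalently multiply the original identity's right side appropriately — to land exactly on the stated formula $\frac{\Phi(c)}{\Phi(b)}\hyp(a,b;c;z)=\frac{\Phi(az)}{\Phi(z)}\hyp(c/b,z;az;b)$. (I would double-check the bookkeeping of which argument plays the role of $x$ in each application of the $q$-binomial theorem, since that is the only place a sign or shift could slip.)

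The main obstacle is not any single computation but rather the convergence bookkeeping: one must verify that the double series converges absolutely so that the interchange of summation in step (2) is legitimate, and separately that each application of the $q$-binomial theorem is within its radius of convergence (this is where the hypotheses $|z|<1$ and $|b|<1$ are consumed — note $|zq^m|\leq|z|<1$ and $|bq^n|\leq|b|<1$ hold for all $m,n\geq 0$ since $|q|<1$). Once convergence is secured, the rest is a formal rearrangement. Since this is a well-known classical identity, I would simply cite \cite{Gasper_Rahman_2004} for the statement and sketch the iterated-$q$-binomial-theorem argument above for completeness, rather than belaboring the estimates.
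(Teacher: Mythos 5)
Your proposal is correct, and it is the standard textbook derivation: the double application of the $q$-binomial theorem (first to $\Phi(cq^n)/\Phi(bq^n)$ with expansion variable $bq^n$, then to the inner sum over $n$ with variable $zq^m$), together with the identity $\Phi(xq^m)=\Phi(x)/(x)_m$, lands exactly on the stated formula, and your convergence remarks (absolute convergence of the double series under $|z|<1$, $|b|<1$) are the right justification for the interchange of summation. The paper itself gives no proof of this proposition --- it is quoted as a classical fact with a citation to Gasper--Rahman --- so your treatment (cite the reference and sketch the iterated $q$-binomial argument) matches the paper's intent and supplies the proof that the cited source contains.
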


\begin{proposition}[Watson's connection formula] In the region where the series in both sides are convergent, we have:
\label{prop: connection formula}
\begin{multline}
    \hyp(a,b;c;z)=
    \frac{\Phi(a)\Phi(c/b)}{\Phi(c)\Phi(a/b)}\frac{\Phi(bz)\Phi(q/bz)}{\Phi(z)\Phi(q/z)}\hyp(b, bq/c; bq/a; cq/abz)\\+\frac{\Phi(b)\Phi(c/a)}{\Phi(c)\Phi(b/a)}\frac{\Phi(az)\Phi(q/az)}{\Phi(z)\Phi(q/z)}\hyp(a, aq/c; aq/b; cq/abz).
\end{multline}
\end{proposition}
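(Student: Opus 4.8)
The plan is to recognize both sides of the identity as solutions of one and the same scalar second-order $q$-difference equation in $z$, and then to match them. Concretely, with the $q$-shift operator $\eta_q f(z)=f(qz)$, the series $\hyp(a,b;c;z)$ is annihilated by the operator
\[
\mathcal{L}_{a,b,c}:=\bigl(1-(1+c/q)\eta_q+(c/q)\eta_q^2\bigr)-z\bigl(1-(a+b)\eta_q+ab\,\eta_q^2\bigr),
\]
as one checks directly from the coefficient recursion $c_{n+1}(1-q^{n+1})(1-cq^n)=c_n(1-aq^n)(1-bq^n)$ for $\hyp$. The operator $\mathcal{L}_{a,b,c}$ is manifestly symmetric in $a$ and $b$, has order two, and $\hyp(a,b;c;z)$ is its unique solution holomorphic at $z=0$ with value $1$ there.

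First I would verify that each of the two functions on the right-hand side solves $\mathcal{L}_{a,b,c}F=0$. Writing $S_b(z)=\tfrac{\Phi(bz)\Phi(q/bz)}{\Phi(z)\Phi(q/z)}\,\hyp(b,bq/c;bq/a;cq/abz)$ and $S_a(z)$ for the $a\leftrightarrow b$ counterpart, the theta quotients are the $q$-analogues of the indicial prefactors $z^{\rho}$: using $\Phi(qx)=\Phi(x)/(1-x)$ one computes that $S_a$ and $S_b$ scale by definite monomials in $a,b,z$ under $\eta_q$, matching the two exponents of $\mathcal{L}_{a,b,c}$ at $z=\infty$. After the substitution $w=cq/abz$ the equation $\mathcal{L}_{a,b,c}F=0$ transforms, up to the theta prefactor, into the $q$-hypergeometric equation $\mathcal{L}_{b,bq/c,bq/a}$ (resp. $\mathcal{L}_{a,aq/c,aq/b}$) satisfied by the inner $\hyp$; this identifies $S_a,S_b$ as the two Frobenius-type solutions at infinity. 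This step is classical (Heine--Thomae), but I would carry it out explicitly to keep the appendix self-contained.

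Next, since the solution space of $\mathcal{L}_{a,b,c}$ over the field of $q$-elliptic functions of $z$ is two-dimensional and $S_a,S_b$ are independent --- their leading quasi-exponents at $\infty$ are distinct (differing by a nontrivial power of $a/b$), so no elliptic function can relate them --- there are coefficients $A,B$ with $\hyp(a,b;c;z)=A\,S_b(z)+B\,S_a(z)$. It then remains to show $A,B$ are genuine constants and to evaluate them as $\tfrac{\Phi(a)\Phi(c/b)}{\Phi(c)\Phi(a/b)}$ and $\tfrac{\Phi(b)\Phi(c/a)}{\Phi(c)\Phi(b/a)}$. The first point I would settle by controlling the poles and growth of all three solutions so that the a priori elliptic connection coefficients have no poles and are bounded, hence constant; the evaluation I would then obtain by matching leading asymptotics as $z$ approaches the two relevant poles of the theta prefactors (equivalently, by extracting residues), where each inner $\hyp$ degenerates to $1$ and the $q$-Gauss summation fixes the normalization.

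The main obstacle is precisely this last step: rigorously ruling out $z$-dependence of the connection coefficients and then pinning their values, since both require careful asymptotic and pole bookkeeping rather than formal algebra. I expect the cleanest rigorous route to be the classical alternative --- Watson's $q$-analogue of the Barnes contour integral representing $\hyp(a,b;c;z)$ --- in which closing the contour toward the poles of the $1/\sin$ factor reproduces the left-hand series, while closing it toward the $a$- and $b$-poles of the denominator $q$-Pochhammer factors reproduces the two right-hand series, with $A,B$ emerging directly as the residue prefactors; there the sole analytic input is the decay estimate on the integrand along the contour at infinity, valid in the stated convergence region. Either route reduces the identity to the bookkeeping of residues and $q$-Pochhammer quasi-periods.
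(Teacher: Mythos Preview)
The paper does not give its own proof of this proposition; it simply records Watson's connection formula as a classical result and cites \cite{Gasper_Rahman_2004}. Your proposal therefore goes well beyond what the paper does.

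That said, your outline is sound and in fact converges on the standard textbook argument. The $q$-difference equation approach you sketch first is viable but, as you correctly flag, the delicate point is showing the connection coefficients are genuinely $z$-independent constants rather than merely elliptic in $z$; this requires real analytic control. The Barnes contour integral route you mention at the end is precisely Watson's original proof and is the one presented in Gasper--Rahman \S4.3: one represents $\hyp(a,b;c;z)$ as a Mellin--Barnes type integral, then closes the contour in the opposite direction to pick up two geometric series of residues, one from the $a$-poles and one from the $b$-poles, which resum to the two right-hand $\hyp$ series with the stated $\Phi$-prefactors. The only analytic input is the decay of the integrand on large arcs, valid for $|cq/abz|<1$. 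So if you want a self-contained proof, the contour integral is the cleanest; your $q$-difference equation discussion is a good heuristic for why the formula must have the shape it does, but is harder to make rigorous on its own.
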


\begin{proposition}
    Mirror symmetry for $f_1$, namely equation \eqref{eq: mir bbP1 f1}, follows from Heine's transformation formula.
    Mirror symmetry for $f_2$, namely equation \eqref{eq: mir bbP1 f2}, follows from Heine's transformation formula and the connection formula.
\end{proposition}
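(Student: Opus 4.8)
The strategy is to verify the two mirror symmetry identities by direct substitution into the classical $q$-hypergeometric transformation formulas of Propositions \ref{prop: Heine's summation formula} and \ref{prop: connection formula}, having first expressed every term (the vertex functions $\ver_{f_i}$, $\ver_{f_i^!}$, the $\Phi^\pm$ factors, and the off-diagonal $\MSstab_{f_1 f_2}$) as an explicit $q$-product in the variables $u$, $Q$, $\hbar$, $q$ using the formulas assembled in \S\ref{sec: TP1 vertex} and \S\ref{app: Mirror symmetry, summation formula, and connection formula}. Throughout, I would use the identity $\vartheta(x)=(x^{1/2}-x^{-1/2})\prod_{n>0}(1-q^nx)(1-q^nx^{-1})=-x^{-1/2}\Phi(x)\Phi(qx^{-1})$ to rewrite the theta functions appearing in $\MSstab_{f_1 f_2}$ in terms of $\Phi$, so that both sides of each identity become products of $\Phi$'s times basic hypergeometric series.

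\textbf{Step 1: the $f_1$ identity.} First I would compute both sides of \eqref{eq: mir bbP1 f1} explicitly. On the right, $\Phi_{f_1}^- = \Phi((q-\hbar^{-1})\,u) = \Phi(qu)\Phi(\hbar^{-1}u)^{-1}$ (the repelling direction of $T_{f_1}X = \tfrac{a_1}{a_2}+\hbar\tfrac{a_2}{a_1}$ with respect to $\{a_1<a_2\}$ is $u=a_1/a_2$), and $\ver_{f_1}(Q^{-1}) = \hyp(\hbar^{-1},\hbar^{-1}u;qu;\hbar Q^{-1})$. On the left, I apply $\mirmap$ ($\hbar^!\mapsto (\hbar q)^{-1}$, $u^! = a_1^!/a_2^! \mapsto qQ$, $q^{-1}Q^!\mapsto u$, hence $Q^!\mapsto qu$) to $\Phi^+_{f_1^!}\,\ver_{f_1^!}(Q^!)$, where $T_{f_1^!}X^! = \tfrac{a_2^!}{a_1^!}+\hbar^!\tfrac{a_1^!}{a_2^!}$ so the attracting direction for $\{a_1^!<a_2^!\}$ is $\hbar^!\,a_1^!/a_2^!$, giving $\Phi^+_{f_1^!} = \Phi((q-(\hbar^!)^{-1})\hbar^! u^!) = \Phi(q\hbar^! u^!)\Phi(u^!)^{-1}$, and $\ver_{f_1^!}(Q^!) = \hyp((\hbar^!)^{-1}, (\hbar^!)^{-1}\tfrac{a_2^!}{a_1^!}; q\tfrac{a_2^!}{a_1^!}; \hbar^! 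Q^!)$. After applying $\mirmap$ and simplifying the $\Phi$-prefactors, the claimed equality becomes precisely an instance of Heine's transformation formula (Proposition \ref{prop: Heine's summation formula}) with a suitable identification of $(a,b,c,z)$ — the main bookkeeping is matching the parameters and checking the prefactor $\Phi(c)/\Phi(b)$ versus $\Phi(az)/\Phi(z)$ transforms correctly under the substitution.

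\textbf{Step 2: the $f_2$ identity.} For \eqref{eq: mir bbP1 f2} I would again make all three terms explicit. The left side, after $\mirmap$, is a single ${}_2\phi_1$ times a $\Phi$-prefactor; on the right there are two terms, the $f_1$ term carrying the extra factor $\MSstab_{f_1 f_2} = \hbar^{-1/2}u^{-1}\,\vartheta(\hbar)\vartheta(uQ)/(\vartheta(u)\vartheta(Q))$, which I rewrite in terms of $\Phi$'s. The plan is to recognize the right side as exactly the two-term sum on the right of Watson's connection formula (Proposition \ref{prop: connection formula}), with $(a,b,c,z)$ chosen — as forced by Step 1's identification — so that the $\hyp$ appearing in the $f_2$ term of the right side matches $\ver_{f_2}(Q^{-1})$, the $\hyp$ in the $f_1$ term matches (a Heine-transform of) $\ver_{f_1}(Q^{-1})$, and the $\Phi$-prefactors on the two terms of Watson's formula reproduce $\Phi_{f_2}^-$ and $\MSstab_{f_1 f_2}\,\Phi_{f_1}^-$ respectively. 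Here one uses Step 1 to replace $\ver_{f_1}(Q^{-1})$ by its Heine transform before matching, so that both occurrences of $\hyp$ on the right side are in the "connection-formula" form. The left side is then identified with $\hyp(a,b;c;z)$ on the left of Watson's formula.

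\textbf{Main obstacle.} The conceptual content is light — it is entirely a matter of making the correct change of variables — but the principal difficulty is the prefactor bookkeeping: the $\Phi^\pm$ factors, the theta-to-$\Phi$ conversion in $\MSstab_{f_1 f_2}$, the arguments of $\hyp$ (which involve ratios like $aq/c$, $cq/abz$ in Watson's formula), and the shift conventions built into $\mirmap$ (the factors $q^{\w(\Zb_i)}$, $(q^!)^{-\w(\Zb_i^!)}$, which here are all $q^{\pm 1}$ since $\w(\Zb_i)=\w(\Zb_i^!)=1$) must all be tracked simultaneously and shown to conspire into an exact instance of the classical formulas. I expect the $f_2$ case to be the harder of the two, since one must simultaneously match two hypergeometric terms and their prefactors, and must invoke the $f_1$ identity (itself Heine's formula) to put the $f_1$ summand into the right shape; getting the signs and half-integer powers of $\hbar$ in $\MSstab_{f_1 f_2}$ to line up with the $\Phi$-ratios in Watson's formula is where a careful computation is unavoidable. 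Once the dictionary is fixed, however, both statements follow immediately from Propositions \ref{prop: Heine's summation formula} and \ref{prop: connection formula}, as claimed.
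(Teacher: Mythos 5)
Your strategy coincides with the paper's: make every term of \eqref{eq: mir bbP1 f1} and \eqref{eq: mir bbP1 f2} explicit as a ratio of $\Phi$'s times a ${}_2\phi_1$, identify the $f_1$ identity with Heine's transformation (Proposition \ref{prop: Heine's summation formula}), and obtain the $f_2$ identity from Watson's connection formula (Proposition \ref{prop: connection formula}) combined with one further application of Heine; the theta-to-$\Phi$ conversion $\vartheta(x)=x^{1/2}\Phi(qx)\Phi(x^{-1})$ that absorbs $\MSstab_{f_1f_2}$ into the prefactor of the first Watson term is exactly the step the paper performs.

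There is, however, one concrete error in your Step 1 that would make the verification fail as written. You take the attracting direction of $T_{f_1^!}X^!=\frac{a_2^!}{a_1^!}+\hbar^!\frac{a_1^!}{a_2^!}$ for the chamber $\{a_1^!<a_2^!\}$ to be $\hbar^!\,a_1^!/a_2^!$. By the paper's convention --- which you apply correctly to $T_{f_1}X$ on the other side of the duality --- the weight $a_1^!/a_2^!$ is \emph{repelling} for the standard chamber, so $N^{+}_{f_1^!}=a_2^!/a_1^!$ and
\[
\Phi^{+}_{f_1^!}=\Phi\left((q-(\hbar^!)^{-1})\tfrac{a_2^!}{a_1^!}\right)=\frac{\Phi\left(q\,(u^!)^{-1}\right)}{\Phi\left((\hbar^!)^{-1}(u^!)^{-1}\right)},
\]
which $\mirmap$ sends to $\Phi(Q^{-1})/\Phi(\hbar Q^{-1})$ --- precisely the prefactor $\Phi(c)/\Phi(b)$ of Heine's formula for $(a,b,c,z)=(\hbar q,\hbar Q^{-1},Q^{-1},u/\hbar)$. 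With your assignment the prefactor would instead be $\mirmap\left(\Phi(q\hbar^! u^!)/\Phi(u^!)\right)=\Phi(\hbar^{-1}qQ)/\Phi(qQ)$, and the resulting equation is not an instance of Heine's transformation (indeed it is false). Once this attracting/repelling assignment is corrected, the remaining bookkeeping closes exactly as you describe and as the paper carries out.
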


\begin{proof}
Using all the formulas above, equation \eqref{eq: mir bbP1 f1} can be explicitly written as 
\[
\frac{\Phi(Q^{-1})}{\Phi(\hbar Q^{-1})}\hyp(\hbar q, \hbar Q^{-1}; Q^{-1}; \frac{u}{\hbar})=\frac{\Phi(qu)}{\Phi(\hbar^{-1} u)}\hyp(\hbar^{-1}, \hbar^{-1} u; qu; \hbar Q^{-1}).
\]
Hence, it directly follows from Proposition \ref{prop: Heine's summation formula} upon setting 
\[
a=\hbar q\qquad b=\hbar Q^{-1} \qquad c=Q^{-1} \qquad z=u/\hbar.
\]     
For the second fixed point $f_2$, we proceed as follows. Firstly, notice that \eqref{eq: mir bbP1 f2} claims that 
\begin{multline}
\label{eq: explicit mirror bbP1 f2}
    \frac{\Phi(\hbar^{-1}q^{-1}Q^{-1})}{\Phi( q^{-1}Q^{-1})}\hyp(q\hbar, \hbar q^2Q; q^2Q; \frac{u}{\hbar})=\\
    \hbar^{-1/2}u^{-1}\frac{\vartheta(\hbar)}{\vartheta(u)}\frac{\vartheta(uQ)}{\vartheta(Q)}\frac{\Phi(qu)}{\Phi(\hbar^{-1}u)} \hyp(\hbar^{-1}, \hbar^{-1}u; qu; \hbar Q^{-1})
    \\
    +\frac{\Phi(q\hbar u)}{\Phi(u)} \hyp(\hbar^{-1}, \hbar^{-1}u^{-1}; qu^{-1}; \hbar Q^{-1}).
\end{multline}
Substituting
\[
a=\hbar^{-1}\qquad b=\hbar^{-1}u\qquad c=qu\qquad z=\hbar q^2 Q,
\]
in Proposition \ref{prop: connection formula}, we get 
\begin{multline*}
    \hyp(\hbar^{-1},\hbar^{-1}u;qu;\hbar q^2 Q)=
    \frac{\Phi(\hbar^{-1})\Phi(q\hbar)}{\Phi(qu)\Phi(u^{-1})}\frac{\Phi(uq^2Q)\Phi(u^{-1}q^{-1}Q^{-1})}{\Phi(\hbar q^2 Q)\Phi(\hbar^{-1}q^{-1}Q^{-1})}\hyp(\hbar^{-1}u, \hbar^{-1};qu;  \hbar Q^{-1})
    \\
    +\frac{\Phi(\hbar^{-1}u)\Phi(q\hbar u)}{\Phi(qu)\Phi(u)}\frac{\Phi(q^2 Q)\Phi(q^{-1}Q^{-1})}{\Phi(\hbar q^2 Q)\Phi(\hbar^{-1}q^{-1}Q^{-1})}\hyp(\hbar^{-1}, \hbar^{-1}u^{-1}; qu^{-1}; \hbar Q^{-1}).
\end{multline*}
Multiplying by $\frac{\Phi(\hbar^{-1}q^{-1}Q^{-1})}{\Phi( q^{-1}Q^{-1})}\frac{\Phi(\hbar q^2 Q)}{\Phi(q^2Q)}\frac{\Phi(qu)}{\Phi(\hbar^{-1}u)}$ and noticing that, by definition, $\hyp(a,b;c;z)=\hyp(b,a;c;z)$, we obtain 
\begin{multline*}
\frac{\Phi(\hbar^{-1}q^{-1}Q^{-1})}{\Phi( q^{-1}Q^{-1})}\frac{\Phi(\hbar q^2 Q)}{\Phi(q^2Q)}\frac{\Phi(qu)}{\Phi(\hbar^{-1}u)}\hyp(\hbar^{-1},\hbar^{-1}u;qu;\hbar q^2 Q)=
\\
\frac{\Phi(\hbar^{-1})\Phi(q\hbar)}{\Phi(u^{-1})\Phi(\hbar^{-1}u)}\frac{\Phi(uq^2Q)\Phi(u^{-1}q^{-1}Q^{-1})}{\Phi( q^2 Q)\Phi(q^{-1}Q^{-1})}\hyp(\hbar^{-1}, \hbar^{-1}u;qu;  \hbar Q^{-1})
    \\
    +\frac{\Phi(q\hbar u)}{\Phi(u)}\hyp(\hbar^{-1}, \hbar^{-1}u^{-1}; qu^{-1}; \hbar Q^{-1}) .
\end{multline*}
Using the product presentation of the theta function $\vartheta(x)=x^{1/2}\Phi(qx)\Phi(x^{-1})$, we get 
\[
\frac{\Phi(\hbar^{-1})\Phi(q\hbar)}{\Phi(u^{-1})\Phi(qu)}\frac{\Phi(uq^2Q)\Phi(u^{-1}q^{-1}Q^{-1})}{\Phi( q^2 Q)\Phi(q^{-1}Q^{-1})}=\hbar^{-1/2}u^{-1}\frac{\vartheta(\hbar)}{\vartheta(u)}\frac{\vartheta(uQ)}{\vartheta(Q)}.
\]
Combining the last two equations, we deduce 
\begin{multline*}
    \frac{\Phi(\hbar^{-1}q^{-1}Q^{-1})}{\Phi( q^{-1}Q^{-1})}\frac{\Phi(\hbar q^2 Q)}{\Phi(q^2Q)}\frac{\Phi(qu)}{\Phi(\hbar^{-1}u)}\hyp(\hbar^{-1}, \hbar^{-1}u ; qu; \hbar q^2 Q)=
    \\
    \hbar^{-1/2}u^{-1}\frac{\vartheta(\hbar)}{\vartheta(u)}\frac{\vartheta(uQ)}{\vartheta(Q)}\frac{\Phi(qu)}{\Phi(\hbar^{-1}u)} \hyp(\hbar^{-1}, \hbar^{-1}u; qu; \hbar Q^{-1})
    \\
    +\frac{\Phi(q\hbar u)}{\Phi(u)} \hyp(\hbar^{-1}, \hbar^{-1}u^{-1}; qu^{-1}; \hbar Q^{-1}).
\end{multline*}
The last formula can now be matched with \eqref{eq: explicit mirror bbP1 f2} by applying Proposition \ref{prop: Heine's summation formula} to their first terms on the left hand sides. This concludes the proof.
\end{proof}

\subsection{D5 and NS5 properties}

In this last section, we take advantage of the previous explicit formulas to shed light on the D5 and NS5 properties of the vertex functions in the elementary case of $T^*\bbP^1$. 

We start with the D5 property for the pair 
\[
X=\ttt{\fs 1\fs 2\bs} \qquad \wt X= \ttt{\fs 1\fs 2\bs 1\bs}=T^*\bbP^1
\]
The bow variety $X$ is acted on by $\Tt=\Cs_{a}\times \Cs_{\hbar}$ while $\wt X$ is acted on by the larger torus $\wt{\Tt}=\Cs_{a_1}\times \Cs_{a_2}\times \Cs_{\hbar}$. The fixed locus of $X$ is a singleton, represented by the tie diagram 
\[
\begin{tikzpicture}[baseline=0,scale=.4]

\draw [thick,red] (0.5,0) --(1.5,2); 
\draw[thick] (1,1) node [left] {$f=$}--(2.5,1) node [above] {$1$} -- (7,1);
\draw [thick,red](3.5,0) --(4.5,2);  
\draw [thick](4.5,1)--(5.5,1) node [above] {$2$} -- (6.5,1);
\draw [thick,blue](7.5,0) -- (6.5,2);  

\draw [dashed, black](4.5,2.25) to [out=45,in=-225] (6.5,2.25);
\draw [dashed, black](1.5,2.25) to [out=45,in=-225] (6.5,2.25);

\end{tikzpicture}
\]
On the other hand, the variety $\wt X$ has two fixed points, displayed in Figure \ref{fig: fixexd points TP1}. The distinguished resolution of $f\in X^{\Tt}$ is $f_2\in (\wt X)^{\wt \Tt}$.

It can be easily checked that $V_f=1$.\footnote{Note that this is non-obvious because the GIT presentation of $X$ is nontrivial, and as a consequence the moduli space of quasimaps are nonzero. Nonetheless, all the associated curve counts are zero except in degree zero.}. One way to see this is to apply Hanany-Witten transition to rewrite $X$ as a bow variety in which all the dimensions are $0$ and invoke Proposition \ref{prop: vertex and HW}. The vertex functions of $\wt X$ are computed in Section \ref{sec: TP1 vertex} (where $\wt X$ was simply denoted by $X$). 
The D5 property, Theorem \ref{thm: d5 vertex} states that 
\[
V_{f}=\left(V_{f_2}\right)\Big|_{\substack{a_1=a\hbar^{-1}\\a_2=a}},
\]
i.e. that 
\[
1=\sum_{d \geq 0} \frac{(\hbar^{-1})_{d} (1)_{d}}{(q)_{d} (q\hbar)_{d}} (\hbar Q)^{d},
\]
which trivially holds because $(1)_d=0$ unless $d=0$. 

We now consider the easiest nontrivial NS5 property for the vertex functions. For this, set 
\[
X=\ttt{\fs 2\bs 1\bs}=T^*\Gr(0,2) \qquad \ol X= \ttt{\fs 1\fs 2\bs 1\bs}=T^*\bbP^1
\]
The only fixed point in $X$ has tie diagram 
\[
\begin{tikzpicture}[baseline=0,scale=.4]
\draw[thick] (4,1) node [left] {$f=$}--(4,1) -- (10,1);
\draw [thick,red](3.5,0) --(4.5,2);  
\draw [thick](4.5,1)--(5.5,1) node [above] {$2$} -- (6.5,1);
\draw [thick,blue](7.5,0) -- (6.5,2);  
\draw [thick](7.5,1) --(8.5,1) node [above] {$1$} -- (9.5,1); 
\draw[thick,blue] (10.5,0) -- (9.5,2);  

\draw [dashed, black](4.5,2.25) to [out=45,in=-225] (9.5,2.25);
\draw [dashed, black](4.5,2.25) to [out=45,in=-225] (6.5,2.25);
\end{tikzpicture}
\]
while the two fixed points in $\ol X$ are, once again, those in Figure \ref{fig: fixexd points TP1}. By Proposition \ref{prop: vertex formula}, or by using Hanany-Witten transition and Proposition \ref{prop: vertex and HW}, the vertex function of $f$ is just the constant function $1$. Both fixed points $f_1,f_2 \in \ol X$ are NS5 resolutions of $f$. So Theorem \ref{thm: ns5vertex} gives two equations to verify. From Proposition \ref{prop: msflaglim}, we have 
\[
\MSflaglim^{Y}(Q)=\frac{\Phi\left( Q^{-1} \right)}{\Phi\left(\hbar Q^{-1}\right)}
\]
Theorem \ref{thm: ns5vertex} states that
\[
1=\frac{\Phi\left(q \frac{a_1}{a_2}\right)}{\Phi\left(\hbar^{-1} \frac{a_1}{a_2}\right)} \left(\frac{\Phi\left(\hbar Q^{-1} \right)}{\Phi\left( Q^{-1}\right)}\sum_{d \geq 0} \frac{(\hbar^{-1})_{d} (\hbar^{-1} \frac{a_1}{a_2})_{d}}{(q)_{d} (q\frac{a_1}{a_2})_{d}} (\hbar Q^{-1})^{d}\right)\Bigg|_{Q=\hbar}
\]
and
\[
1= \frac{\Phi\left(q \frac{a_2}{a_1}\right)}{\Phi\left(\hbar^{-1} \frac{a_2}{a_1}\right)} 
 \left(\frac{\Phi\left(\hbar  Q^{-1} \right)}{\Phi\left( Q^{-1}\right)}\sum_{d \geq 0} \frac{(\hbar^{-1})_{d} (\hbar^{-1} \frac{a_2}{a_1})_{d}}{(q)_{d} (q\frac{a_2}{a_1})_{d}} (\hbar Q^{-1})^{d}\right)\Bigg|_{Q=\hbar}
\]
Since the series defining the vertex functions converge when $|\hbar Q^{-1}|<1$, we must first apply Proposition \ref{prop: Heine's summation formula} to obtain the analytic continuation. For the first equality, this gives
\begin{multline*}
\frac{\Phi\left(q \frac{a_1}{a_2}\right)}{\Phi\left(\hbar^{-1} \frac{a_1}{a_2}\right)} \left(\frac{\Phi\left(\hbar Q^{-1} \right)}{\Phi\left( Q^{-1}\right)} \sum_{d \geq 0} \frac{(\hbar^{-1})_{d} (\hbar^{-1} \frac{a_1}{a_2})_{d}}{(q)_{d} (q\frac{a_1}{a_2})_{d}} (\hbar Q^{-1})^{d}\right)\Bigg|_{Q=\hbar} \\
=\frac{\Phi\left(q \frac{a_1}{a_2}\right)}{\Phi\left(\hbar^{-1} \frac{a_1}{a_2}\right)} \left(\frac{\Phi\left(\hbar Q^{-1} \right)}{\Phi\left( Q^{-1}\right)} \frac{\Phi\left( Q^{-1} \right)}{\Phi\left( \hbar Q^{-1}\right)} \frac{\Phi\left( \hbar^{-1} \frac{a_1}{a_2}\right)}{\Phi\left( q \frac{a_1}{a_2}\right)} \hyp(\hbar q, \hbar Q^{-1},Q^{-1};\frac{a_1}{\hbar a_2})\right)\Bigg|_{Q=\hbar}
\end{multline*}
which is manifestly equal to $1$. The second equality follows similarly. 

Note that for Heine's transformation to be valid, we must assume that $|a_1/a_2|<1$ in the first case and $|a_2/a_1|<1$ in the second. This parallels the fact that $\ver_{f_1}$ is holomorphic in a neighborhood of of $a_1/a_2=0$ but not $a_2/a_1=0$, while the opposite is true for $\ver_{f_2}$. Likewise, the prefactors $\Phi_{\chamb_{f_{i}}}$ also reflect this.

\printbibliography

\end{document}